
\documentclass{amsart}


\usepackage{amsmath,amsthm, latexsym, amssymb,mathrsfs,  tikz-cd}
\usepackage{stackengine,scalerel}
\usepackage[colorlinks]{hyperref}
\usepackage[capitalize]{cleveref}
\definecolor{dark-red}{rgb}{0.6,0,0}
\definecolor{dark-green}{rgb}{0,0.4,0}
\definecolor{medium-blue}{rgb}{0,0,0.5}
\hypersetup{
    colorlinks, linkcolor={dark-red},
    citecolor={dark-green}, urlcolor={medium-blue}
}


\newcommand{\Aut}{\mr{Aut}}

\newcommand{\Id}{\mr{Id}}

\newcommand{\GL}{\mathrm{GL}}

\newcommand{\mc}[1]{\mathcal{#1}}
\newcommand{\mbb}[1]{\mathbb{#1}}
\newcommand{\mbf}[1]{\mathbf{#1}}
\newcommand{\mr}[1]{\mathrm{#1}}
\newcommand{\mf}[1]{\mathfrak{#1}}

\newcommand{\et}{\mathrm{\acute{e}t}}

\newcommand{\Sym}{\mathrm{Sym}}

\DeclareMathOperator{\Spec}{Spec}

\DeclareMathOperator{\Gal}{Gal}

\DeclareMathOperator{\Hom}{Hom}

\newcommand{\ul}[1]{\underline{#1}}

\newcommand{\Log}{\mathrm{Log}}

\numberwithin{equation}{subsection}
\numberwithin{equation}{subsubsection}

\newcommand{\Rep}{\mathrm{Rep}}
\newcommand{\Sp}{\mathrm{Sp}}
\newcommand{\fqbar}{\overline{\mathbb{F}}_q}
\newcommand{\fq}{\mbb{F}_q}
\newcommand{\mult}{\mr{mult}}

\theoremstyle{plain}

\newtheorem{maintheorem}{Theorem}

\newtheorem*{theorem*}{Theorem}

\newtheorem{theorem}[subsubsection]{Theorem}
\newtheorem{corollary}[subsubsection]{Corollary}

\newtheorem{proposition}[subsubsection]{Proposition}
\newtheorem{lemma}[subsubsection]{Lemma}

\theoremstyle{definition}

\newtheorem{example}[subsubsection]{Example}

\newtheorem{definition}[subsubsection]{Definition}
\newtheorem{remark}[subsubsection]{Remark}

\newcommand{\Exp}{\mathrm{Exp}}

\newcommand{\lcm}{\mathrm{lcm}}
\newcommand{\bdd}{\mathrm{bdd}}

\title[Probability in $\lambda$-rings]{Random matrix statistics and zeroes of $L$-functions via probability in $\lambda$-rings}

\author{Sean Howe}

\begin{document}

\begin{abstract}
We introduce a theory of probability in $\lambda$-rings designed to efficiently describe random variables valued in multisets of complex numbers, varieties over a field, or other similar enriched settings. A key role is played by the $\sigma$-moment generating function based on the plethystic exponential, which allows us to describe distributions and argue with independence in a way that is as simple as classical probability theory. As a first application, we use this theory to obtain a concise description of the asymptotic $\sigma$-moment generating functions describing distributions of eigenvalues of Haar random matrices in compact classical groups. Beyond the theory of probability in $\lambda$-rings, the proof uses only classical invariant theory. Using our description we reprove the results of Diaconis and Shahshahani on the joint distributions of traces of powers of matrices, and we also treat symmetric groups. Next, we use Poonen's sieve to establish equidistribution results for the zeroes of $L$-functions in some natural families: simple Dirichlet characters for $\mathbb{F}_q(x)$ and the vanishing cohomology of smooth hypersurface sections. We give concise descriptions of the asymptotic $\sigma$-moment generating functions in these families, then compare them to the associated random matrix distributions. These equidistribution results are sideways in that we fix $q$ and take the degree $d$ to infinity, as opposed to Deligne equidistribution for fixed $d$ as $q \rightarrow \infty$, and the large $d$-limits are related to explicit descriptions of stable homology with twisted coefficients.
\end{abstract}

\maketitle
\tableofcontents

\section{Introduction}

In this work we first give a concise new description of the eigenvalue distributions of orthogonal, symplectic, unitary, and symmetric groups (\cref{main.random-matrix}), then use this description to compare random matrix statistics with the distributions of zeroes of function field $L$-functions in natural families parameterized by polynomials with coefficients in a finite field (Theorems \ref{main.dirichlet-characters} and \ref{main.hypersurface-L-functions}). Along the way, we reprove the results of Diaconis and Shahshahani \cite{DiaconisShahshahani.OnTheEigenvaluesOfRandomMatrices} on the joint moments of traces of powers of matrices in classical groups and give a probabilistic interpretation of generating functions that have previously arisen in stable cohomology computations. 

There is a rich literature comparing zeroes of $L$-functions to random matrix statistics, starting with the Montgomery-Odlyzko law for the spacings of critical zeroes of the Riemann zeta function. In a seminal work \cite{KatzSarnak.RandomMatricesFrobeniusEigenvaluesAndMonodromy}, Katz and Sarnak observed that Deligne's equidistribution theorem could be applied to prove precise theorems when zeroes in intervals on the critical line for the Riemann zeta function are replaced with zeroes of function field $L$-functions in natural families.  The comparisons between zeroes of $L$-functions and random matrix statistics that we give here  are ``sideways" compared to  \cite{KatzSarnak.RandomMatricesFrobeniusEigenvaluesAndMonodromy}, in that we first fix a finite field $\mathbb{F}_q$ of order $q$ and compute a limiting distribution as the polynomial degree $d$ goes infinity, then afterwards study how this limit in $d$ depends on $q$ as $q \rightarrow \infty$ to compare with random matrix statistics. By contrast, in the approach of \cite{KatzSarnak.RandomMatricesFrobeniusEigenvaluesAndMonodromy}, one would first fix $d$ and apply Deligne equidistribution to take the limit as $q \rightarrow \infty$, and then take $d\rightarrow \infty$ purely at the level of random matrices. From the perspective of stable cohomology, our result is richer since it sees, through the Grothendieck-Lefschetz theorem, the stable cohomology of families of local systems in all degrees, whereas to apply Deligne equidistribution one only needs to know this cohomology in degree zero (to compute the monodromy group; see also \cref{remark.sidways-equidistribution}). Our method of proof, on the other hand, is completely elementary and does not use any cohomology computations at all; in fact, it would be possible to state and prove most of our results on $L$-functions without even knowing the theory of \'{e}tale cohomology exists. 

The main new tool that makes our computations possible is the theory of probability in $\lambda$-rings and the accompanying $\sigma$-moment generating function. To make precise statements, we begin by describing our results on random matrices in essentially classical terms. This will lead us naturally to the theory of $\lambda$-probability.  

\subsection{Random matrices}
Let $G \subseteq \GL_n(\mbb{C})$ be a compact subgroup equipped with Haar measure, and let $X$ be the function which sends a matrix $M \in G$ to its multiset of eigenvalues. From this function, we can extract many complex-valued random variables by evaluating symmetric functions of this multiset. Concretely, let $\Lambda_n=\mathbb{Z}[t_1,\ldots, t_n]^{\Sigma_n}$ be the ring of symmetric polynomials in $n$-variables. Then, for any $f \in \Lambda_n$, we can evaluate $f$ on the multiset of $n$ eigenvalues of a matrix $M \in G$ to obtain a random variable $X_f$ on $G$.

\begin{example}\label{example.random-variables-matrices-simple}Notation as above, let $M\in G$. 
\begin{itemize}
\item For the power sum symmetric polynomials $p_i = \sum_j t_j^i$ 
\[ X_{p_i}(M)=\mr{Tr}(M^i). \]
\item For the complete symmetric polynomials $h_i$, obtained by summing all monomials of degree $i$,
    \[ \frac{1}{\det(\Id-xM)} = \sum_{i\geq 0} X_{h_i}(M)x^i. \]
\item For the elementary symmetric polynomials $e_i$, obtained by summing all monomials of degree $i$ where each variable appears at most once,
    \[ \det(\Id+xM) = \sum_{i \geq 0} X_{e_i}(M)x^i.\]
\end{itemize}
Note that we have $p_1=h_1=e_1$. In the above and below we also adopt the convention that $p_0=h_0=e_0=1$. 
\end{example}

In order to compare the distributions of these random variables across different $n$, we will use the ring of symmetric functions $\Lambda=\lim_n \Lambda_n$ (where $\Lambda_{n+1} \rightarrow \Lambda_n$ sends $t_i$ to $t_i$ for $ 1\leq i \leq n$ and sends $t_{n+1}$ to $0$). For each $i$, the symmetric polynomials $p_i$, $h_i$, and $e_i$ defined in \cref{example.random-variables-matrices-simple} are compatible under the transition maps as $n$ varies and thus define symmetric functions $p_i$, $h_i$, and $e_i$ in $\Lambda$. 

Symmetric functions can be evaluated on a multiset of $n$ complex numbers by projection to the $n$th component, so that given $f \in \Lambda$ and $X$ as above, we can still make sense of $X_f$. Then, we can encode the joint moments of all the random variables $X_f, f \in \Lambda$ simultaneously by considering the \emph{$\Lambda$-distribution $\mu_X$ of $X$}, 
\[ \mu_X: \Lambda \rightarrow \mbb{C}, f \mapsto \mathbb{E}[X_f]. \]

In classical probability, the distribution of a (real-valued\footnote{For a complex-valued random variable one should instead take joint moments with the complex conjugate. Most, but not all, of the classical random variables we encounter below are actually real-valued; for the ones that are not we will use joint moments with the complex conjugate.}) random variable $Y$ is, in many cases, determined by its moments. One can thus encode the distribution of such a random variable using the moment generating function:
\[ \mbb{E}[\exp(Yt)]=\sum_{i=0}^\infty \mbb{E}\left[\frac{Y^i}{i!} \right]t^i. \]
This is often a very concise description: for example, a Gaussian distribution of mean $0$ and variance $1$ is determined by its moment generating function $\exp(t^2)$. 

To define the analog in $\lambda$-probability, we will need to name some elements of $\Lambda$ indexed by partitions. By a partition, we mean a non-increasing sequence of non-negative integers $(\tau_1, \tau_2, \ldots)$ that is eventually zero. Given a partition $\tau$, let $m_\tau\in \Lambda$ denote the monomial symmetric function obtained by formally summing the monomials that can be obtained from $\prod t_j^{\tau_j}$ by permutating indices. Let 
\[ p_\tau=\prod p_{\tau_j}, h_\tau=\prod h_{\tau_j}, \textrm{ and } e_\tau=\prod e_{\tau_j}.\]
The $m_\tau$, $h_\tau$, and $e_\tau$ each form $\mbb{Z}$-bases for $\Lambda$ as $\tau$ varies over all partitions, and the $p_\tau$ form a $\mbb{Q}$-basis for $\Lambda_{\mbb{Q}}.$

We define the $\sigma$-moment generating function for any random variable $X$ valued in multisets of complex numbers as above by
\begin{equation}\label{eq.intro-moment-gen-expansion} \mbb{E}[\Exp_{\sigma}(Xh_1)] =\sum_{\tau \textrm{ a partition}} \mbb{E}[X_{h_\tau}] m_\tau \in \Lambda_{\mbb{C}}^\wedge. \end{equation}
Here $\Lambda_{\mbb{C}}^\wedge$ is the ring of symmetric power series obtained by completing $\Lambda_{\mbb{C}}$ for the filtration by polynomial degree. 
Since the $h_\tau$ and $m_\tau$ are each $\mbb{Z}$-bases for $\Lambda$, giving the $\sigma$-moment generating function of $X$ is equivalent to giving its $\Lambda$-distribution. 

\begin{remark}\label{remark.intro-sigma-exp}Here $\Exp_\sigma$ is the plethystic exponential which, for $R$ a $\lambda$-ring, can be evaluated on any symmetric power series $x \in \Lambda_R^{\wedge}$ with constant term zero by 
\[ \Exp_{\sigma}(x)=\sum_{i=0}^\infty h_i \circ x, \]
where $h_i \circ x$ denotes a plethystic action of $h_i$ on the element $x$. In the theory of $\lambda$-rings, $h_i \circ x$ is often written $\sigma_i(x)$. It is frequently a decategorified $i$th symmetric power, and thus a natural analog of $\frac{x^i}{i!}$ in the usual exponential $\exp(x)=\sum_{i=0}^\infty \frac{x^i}{i!}$. 
\end{remark}

\begin{remark}\label{remark.hall-inner-product}
The equivalence between $\Lambda$-distributions and $\sigma$-moment generating functions is the isomorphism from $\Lambda_\mbb{C}^\wedge$ to $\Hom_{\mbb{Z}}(\Lambda, \mbb{C})$ given by the Hall inner product $\langle \cdot, \cdot \rangle$ (this follows because $\langle m_{\tau}, h_{\gamma}\rangle = \delta_{\tau\gamma}$). 
\end{remark}

For a partition $\tau$, we write $||\tau||$ for the number of entries in $\tau$, i.e. the number of distinct variables appearing in each monomial of $m_\tau$, and $|\tau|$ for the sum of $\tau$, i.e. the degree of each monomial in $m_\tau$. For example, $||(2,2,1)||=3$ and $|(2,2,1)|=5$.

\begin{maintheorem}[see \cref{theorem.body-random-matrices}]\label{main.random-matrix}
For $n\geq 1$ and $G_n$ a compact subgroup of $\GL_n(\mbb{C})$, we view $G_n$ as a probability space using unit volume Haar measure, and we write $X_n$ for the random variable sending $M \in G_n$ to the multiset of eigenvalues of $M$. With this notation:
\begin{enumerate}
\item Let $G_n=O(n)$, the subgroup of matrices which are both real and unitary. Modulo the $m_\tau$ with $||\tau||>n$,
\[ \mbb{E}[\Exp_\sigma(X_nh_1)] \equiv \Exp_\sigma(h_2)=\prod_{1\leq i \leq j}\frac{1}{1-t_i t_j}. \]

\item If $n$ is even, let $G_n=\Sp(n)$, the subgroup of matrices which are both symplectic and unitary. Modulo the $m_\tau$ with $||\tau||>n$,
\[ \mbb{E}[\Exp_\sigma(X_nh_1)] \equiv \Exp_\sigma(e_2)=\prod_{1 \leq i < j}\frac{1}{1-t_it_j}. \]

\item Let $G_n=\Sigma_n$, the subgroup of permutation matrices.  Modulo the $m_\tau$ with $|\tau|>n$,
\begin{align*} \mbb{E}[\Exp_\sigma(X_n h_1)] \equiv \Exp_{\sigma}(\Exp_{\sigma}(h_1)-1) &= \Exp_\sigma(h_1 + h_2 + \ldots) \\&=\prod_{\substack{\textrm{non-constant}\\ \textrm{monomials $m$}\\\textrm{in $t_1, t_2, \ldots$}}}\frac{1}{1-m}.\end{align*}

\item Let $G_n=U(n)$, the subgroup of unitary matrices. Modulo the $m_{\tau_1} \overline{m}_{\tau_2}$ with $||\tau_1||>n$ or $||\tau_2||>n$,
\[ \mbb{E}[\Exp_\sigma(X_nh_1+ \overline{X}_n\overline{h}_1)] \equiv \Exp_\sigma(h_1 \overline{h}_1)=\prod_{1 \leq i, j}\frac{1}{1-t_i \overline{t}_j}. \]
Here $\overline{X}_n$ is the complex conjugate while $\overline{t}_i$, etc., is meant just as another variable with no relation to $t_i$. The left-hand side is the joint $\sigma$-moment generating function describing the joint $\Lambda$-distribution of $X_n$ and $\overline{X}_n$, 
\[ \mbb{E}[\Exp_\sigma(X_nh_1+\overline{X}_n\overline{h}_1)] = \sum_{\tau_1, \tau_2} \mbb{E}[X_{n,h_{\tau_1}}\overline{X}_{n,h_{\tau_2}}] m_{\tau_1}\overline{m}_{\tau_2}. \]
\end{enumerate}
In particular, in each of (1)-(3) (resp. (4)), as $n \rightarrow \infty$, the $\Lambda$-distributions (resp. joint $\Lambda$-distributions) converge to the unique $\Lambda$-distribution (resp. joint $\Lambda$-distribution) with the given $\sigma$-moments (resp. joint $\sigma$-moments).
\end{maintheorem}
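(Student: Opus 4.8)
The plan is to identify the $\sigma$-moment generating function of $X_n$ with the weight-graded Hilbert series of a ring of classical invariants, and then to evaluate that Hilbert series using the first and second fundamental theorems of invariant theory. First I would record the dictionary. For a partition $\tau$, evaluating $h_\tau=\prod_j h_{\tau_j}$ on the eigenvalues of $M\in G_n$ gives $\prod_j\Tr(M\mid\Sym^{\tau_j}\mbb{C}^n)=\Tr(M\mid\bigotimes_j\Sym^{\tau_j}\mbb{C}^n)$, so the standard identity $\int_{G_n}\Tr(M\mid W)\,dM=\dim W^{G_n}$ for unit volume Haar measure gives $\mbb{E}[X_{n,h_\tau}]=\dim(\bigotimes_j\Sym^{\tau_j}\mbb{C}^n)^{G_n}$. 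Now let $V$ be a complex vector space with basis $v_1,v_2,\dots$, and grade $\Sym(\mbb{C}^n\otimes V)$ by $\Lambda$ by declaring $v_i$ to have weight $t_i$, so that the weight-$t^\alpha$ component is $\bigotimes_i\Sym^{\alpha_i}\mbb{C}^n$. This grading is $G_n$-stable, and since the dimension of the weight-$t^\alpha$ component of the invariants depends only on the partition obtained by sorting the nonzero entries of $\alpha$, summing over the weights in each such orbit identifies the weight-graded Hilbert series of $\Sym(\mbb{C}^n\otimes V)^{G_n}$ with $\sum_\tau\dim(\bigotimes_j\Sym^{\tau_j}\mbb{C}^n)^{G_n}\,m_\tau=\mbb{E}[\Exp_\sigma(X_nh_1)]$, by \eqref{eq.intro-moment-gen-expansion}. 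Thus (1)--(3) reduce to computing this Hilbert series, and (4) reduces similarly after replacing $\mbb{C}^n\otimes V$ by $(\mbb{C}^n\otimes V)\oplus((\mbb{C}^n)^*\otimes\overline{V})$, with $\overline{V}$ spanned by vectors $\overline{v}_j$ of weight $\overline{t}_j$, using $\overline{X}_{n,h_\tau}(M)=\Tr(M\mid\bigotimes_j\Sym^{\tau_j}(\mbb{C}^n)^*)$ for $M\in U(n)$.

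Next I would run the invariant theory case by case. For $G_n=O(n)$ the first fundamental theorem identifies $\Sym(\mbb{C}^n\otimes V)^{O(n)}$ with the subalgebra generated by the quadratic invariants $q_{ij}=\langle v_i,v_j\rangle$, $i\le j$, of weight $t_it_j$, while the second fundamental theorem says the relations generate an ideal spanned by monomial multiples of $(n+1)\times(n+1)$ minors of $(q_{ij})$, every monomial of which involves more than $n$ of the $v_i$; hence modulo the $m_\tau$ with $||\tau||>n$ the invariant ring is the free polynomial ring on the $q_{ij}$, with Hilbert series $\prod_{i\le j}(1-t_it_j)^{-1}=\Exp_\sigma(h_2)$. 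The symplectic case is identical with the $q_{ij}$ replaced by the alternating invariants $\omega_{ij}=\langle v_i,v_j\rangle$, $i<j$, giving $\prod_{i<j}(1-t_it_j)^{-1}=\Exp_\sigma(e_2)$. For $G_n=U(n)$ one first passes to $\GL_n(\mbb{C})$, which is harmless since $U(n)$ is Zariski dense and the representation is polynomial, and then applies the fundamental theorems for $\GL_n$ acting on vectors and covectors: the invariants are generated by the contractions $r_{ij}=\langle v_i,\overline{v}_j\rangle$ of weight $t_i\overline{t}_j$, with relations generated by $(n+1)\times(n+1)$ minors involving more than $n$ of the $v_i$ and more than $n$ of the $\overline{v}_j$, so that modulo the $m_\tau\overline{m}_{\tau'}$ with $||\tau||>n$ or $||\tau'||>n$ the ring is free on the $r_{ij}$ with Hilbert series $\prod_{i,j}(1-t_i\overline{t}_j)^{-1}=\Exp_\sigma(h_1\overline{h}_1)$.

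The symmetric group requires no deep input. Writing $\mbb{C}^n=\bigoplus_{k=1}^n\mbb{C}e_k$ identifies $\Sym(\mbb{C}^n\otimes V)$ with $\Sym(V)^{\otimes n}$, on which $\Sigma_n$ permutes the tensor factors, so the invariants are $\Sym^n(\Sym V)$; decomposing $\Sym V=\mbb{C}\oplus(\Sym V)_+$ into constants and positively graded part gives $\Sym^n(\Sym V)=\bigoplus_{b=0}^n\Sym^b((\Sym V)_+)$, and since $\Sym^b$ of a positively graded space is concentrated in degrees $\ge b$, this agrees in degrees $\le n$ with $\Sym((\Sym V)_+)$, whose Hilbert series is $\prod_{m\text{ non-constant}}(1-m)^{-1}=\Exp_\sigma(\Exp_\sigma(h_1)-1)$; this is the asserted congruence modulo the $m_\tau$ with $|\tau|>n$. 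The final convergence statement is then immediate: the congruences show that for each fixed $\tau$ (resp.\ $\tau,\tau'$) the coefficient $\mbb{E}[X_{n,h_\tau}]$ (resp.\ $\mbb{E}[X_{n,h_\tau}\overline{X}_{n,h_{\tau'}}]$) is eventually independent of $n$, equal to the corresponding coefficient of the displayed limit, and since the $h_\tau$ (resp.\ $h_\tau\overline{h}_{\tau'}$) form a $\mbb{Z}$-basis this pins down a limiting $\Lambda$-distribution (resp.\ joint $\Lambda$-distribution), which is the unique one with those $\sigma$-moments by the Hall pairing identification recalled in \cref{remark.hall-inner-product}.

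I expect the main work to be bookkeeping rather than new ideas: one must line up the stable range in the second fundamental theorems (relations supported where more than $n$ distinct vectors occur) precisely against the three slightly different truncations in the statement---$||\tau||>n$ in (1) and (2), $||\tau||>n$ or $||\tau'||>n$ in (4), and $|\tau|>n$ in (3)---and in the unitary case keep the vector and covector sides straight; the $\Sigma_n$ case instead hinges only on the elementary degree bound for symmetric powers used above.
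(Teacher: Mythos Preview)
Your approach is correct and structurally the same as the paper's: both reduce to the identity $\mbb{E}[\Exp_\sigma(X_n h_1)]=\sum_\tau \dim(\Sym^\tau \mbb{C}^n)^{G_n}\,m_\tau$ and then evaluate the right-hand side via classical invariant theory. The one genuine technical difference is in cases (1), (2), (4). You invoke the second fundamental theorem to argue that the ideal of relations among the quadratic invariants is supported on monomials with $||\tau||>n$, so that modulo those $m_\tau$ the invariant ring is free. The paper instead avoids the second fundamental theorem entirely: it observes that when the number of vectors equals $n$ (i.e.\ $m=n$, resp.\ $m_1=m_2=n$), the first fundamental theorem map $\mbb{C}[t_{ij}]\to(\Sym^\bullet)^{G_n}$ is already an \emph{isomorphism}, which it checks by a short dominance argument on the corresponding map of varieties. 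This buys the paper a slightly lighter set of citations; your route is perhaps more transparent about \emph{why} the truncation at $||\tau||=n$ is the right one, and it would also let you say something about the discrepancy (as the paper itself notes in a remark). For $\Sigma_n$ your argument via $\Sym(\mbb{C}^n\otimes V)^{\Sigma_n}\cong\Sym^n(\Sym V)=\bigoplus_{b=0}^n\Sym^b((\Sym V)_+)$ is a clean repackaging of the paper's direct orbit count, and makes the degree truncation $|\tau|\le n$ drop out more conceptually.
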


\begin{remark}
    In case (1)-(3) of \cref{main.random-matrix}, the random variables $X_{n,f}$ for $f \in \Lambda$ are all real valued functions on $G_n$, so it is only in case (4) that we need to consider not just the $\Lambda$-distribution but the joint $\Lambda$-distribution with the complex conjugate to encode the distributions of the associated classical random variables. 
\end{remark}

\begin{remark}
By \cref{lemma.lambda-involution-even}, the standard involution $\omega$ on the ring of symmetric function swaps the asymptotic generating functions in the orthogonal and symplectic cases of \cref{main.random-matrix}. We thank Michael Magee for suggesting this. 
\end{remark}

To prove \cref{main.random-matrix}, we interpret the relevant coefficients of the moment generating functions as invariants in symmetric powers. For example, in (1)-(3), using the expansion \cref{eq.intro-moment-gen-expansion} and orthogonality of characters, one finds the coefficient of $m_\tau$ is the multiplicity of the trivial representation of $G_n$ in $\Sym^\tau \mbb{C}^n=\bigotimes_i \Sym^{\tau_i}\mbb{C}^n$. When $||m_\tau||\leq n$, for (1) and (2) the first fundamental theorem of invariant theory gives a description of these coefficients that matches the counting problem for the coefficient of $m_\tau$ in the power series we have written down, while for (3) the matching is a combinatorial exercise. The argument in (4) is similar to (1)-(2).

  In \cref{corollary.trace-distributions}, we deduce from \cref{main.random-matrix}-(1),(2), and (4) the results of Diaconis and Shahshahani \cite[Theorems 2, 4, and 6]{DiaconisShahshahani.OnTheEigenvaluesOfRandomMatrices} on the joint moments of traces of powers of matrices in classical groups. By \cref{example.random-variables-matrices-simple}, to accomplish this we need to extract the values of the $\Lambda$-distribution on power sum monomials from the $\sigma$-moment generating function. By \cref{remark.hall-inner-product}, this can be done by taking the Hall inner product of the $\sigma$-moment generating function with power sum monomials. This inner product is straightforward to compute after using the alternative expansion of the $\sigma$-exponential via power sum plethysms described in \cref{lemma.exponential-power-sum}. We note that our proof makes no use of classical, orthogonal, or symplectic Schur polynomials; in particular, this eliminates the need for Ram's theory for symplectic and orthogonal Schur polynomials used in the proof of \cite{DiaconisShahshahani.OnTheEigenvaluesOfRandomMatrices}.  
  
  For the symmetric group, \cite[Theorem 7]{DiaconisShahshahani.OnTheEigenvaluesOfRandomMatrices} computes the joint moments of the cycle counting functions. These can also be derived from the computation of the moment generating function in \cref{main.random-matrix}-(3), as we explain in \cref{ss.cycle-counting}.

\begin{remark} Since each of $h_2$, $e_2$, and $h_1 \overline{h}_1$ is a degree $2$ homogeneous polynomial, the $\Lambda$-distributions appearing in \cref{main.random-matrix}-(1), (2), and (4) should be thought of as different incarnations of Gaussians in $\lambda$-probability (recalling that a classical Gaussian of mean zero and variance $1$ has moment generating function $e^{t^2}$). This is related to the fact that, when considering distributions of traces in \cref{corollary.trace-distributions}, the traces of powers of matrices behave asymptotically as independent Gaussians. Similarly, the $\Lambda$-distribution appearing in  \cref{main.random-matrix}-(3) should be though of as a $\lambda$-incarnation of a Poisson distribution (noting that a classical Poisson distribution of mean 1 has moment generating function $e^{e^t -1}$), and this is related to the fact that the cycle counting functions behave asymptotically as independent Poisson random variables (see \cref{ss.cycle-counting}). It would be interesting to formulate a more general principle governing this relation in all of these cases. 
\end{remark}

\begin{remark}It is perhaps interesting to note that we discovered the formulas in \cref{main.random-matrix} by computing the arithmetic limits related to zeroes of $L$-functions discussed in \cref{s.intro-L-functions} below. We knew in advance that these limits should match  random matrix statistics, but it was actually the final form of these arithmetic limits that led us to the simple description of the random matrix statistics. \end{remark}

\subsection{$\lambda$-probability}
\cref{main.random-matrix} is most naturally interpreted using a theory of probability in $\lambda$-rings. Here a $\lambda$-ring is a ring $R$ equipped with a plethystic action of $\Lambda$, denoted $f\circ r$ for $f \in \Lambda$ and $r \in R$, satisfying certain natural compatibilities. A key example is the monoid ring $\mathbb{Z}[\mbb{C}]$, where $\mathbb{C}$ is a monoid under multiplication so that $[z_1][z_2]=[z_1z_2]$ and the plethystic action is determined by $h_i \circ [z]=p_i \circ [z]=[z^i].$ A function on a probability space valued in multisets of complex numbers can be viewed as a function valued in $\mathbb{Z}[\mbb{C}]$ to obtain a random variable valued in a $\lambda$-ring. One can formulate \cref{main.random-matrix} in this setting, and such a formulation is useful for making basic probabilistic arguments. 

\begin{example}\label{example.standard-rep}
In \cref{main.random-matrix}-(3), note that the permutation representation of $\Sigma_n$ can be decomposed as the direct sum of the standard irreducible representation $V_n$ of dimension $n-1$ and the trivial representation. Let $Z_n$ be the random variable on $\Sigma_n$ sending $g \in \Sigma_n$ to the eigenvalues of $g$ acting on $V_n$. If we view these multisets as elements of $\mbb{Z}[\mbb{C}]$, we have $Z_n=X_n-[1]$. Since any random variable is independent to the constant random variable $[1]$, our formalism gives
\begin{align*} \mbb{E}[\Exp_\sigma(Z_nh_1)]&=\mbb{E}[\Exp_\sigma(X_n h_1)]\mbb{E}[\Exp_\sigma(-[1]h_1)]\\
&=\mbb{E}[\Exp_\sigma(X_nh_1)] \Exp_\sigma(-h_1).\end{align*}
From \cref{main.random-matrix}-(3), we deduce that, modulo the $m_\tau$ with $||\tau||>n$,
\[ \mbb{E}[\Exp_\sigma(Z_n h_1)]\equiv\Exp_\sigma{(h_2+h_3+\ldots)}= \prod_{{\substack{\textrm{monomials $m$ in $t_1, t_2, \ldots$}\\ \textrm{with $\deg m \geq 2$ }}}}\frac{1}{1-m}. \]
\end{example}

In fact there is an even more natural formulation of the general theory and \cref{main.random-matrix}: instead of considering random variables valued in $\lambda$-rings, one can consider  abstract $\lambda$-rings of random variables equipped with an expectation functional. In particular, one can take the Grothendieck ring of representations of a compact group as a $\lambda$-ring of random variables and equip it with the expectation functional given by the integral of the trace or, equivalently, the multiplicity of the trivial representation. This is how we will understand \cref{main.random-matrix} in the body of the text (see \cref{theorem.body-random-matrices}). In particular, this perspective explains why the moment generating functions are all power series with coefficients in $\mathbb{Z}$ even though the random variables $X_f$ discussed above were all complex-valued. 

\subsection{Zeroes of $L$-functions}\label{s.intro-L-functions}
There is a large body of work in number theory relating the distributions of zeroes of $L$-functions to the distributions of eigenvalues of random matrices. The theory of $\lambda$-probability is well-suited for this type of question because of the simple form of the $\sigma$-moment generating functions in \cref{main.random-matrix}. 

We illustrate this by studying the average behavior of zeroes of $L$-functions for two natural families: simple id\`{e}le class characters for $\mathbb{F}_q(t)$, and the vanishing cohomology of smooth hypersurface sections of smooth projective varieties over $\mbb{F}_q$. As in the theory of random matrices, there are two natural $\lambda$-probability spaces that one can use for this study, one a literal function space and one more abstract.

Given a variety $S/\mbb{F}_q$ and $\ell \nmid q$ prime, a natural abstract ring of random variables is the Grothendieck ring of constructible $\ell$-adic sheaves on $S$. Indeed, the families of $L$-functions we consider all arise from $\ell$-adic local systems on such a variety $S$. However, the methods we will use in the present work depend only on a much simpler notion: in fact, we could formulate and prove most of our results without even knowing the definition of \'{e}tale cohomology. Instead, for $W(\mbb{C})$ the big Witt ring of $\mathbb{C}$, which we can think of concretely as the ring $\prod_{i=1}^\infty \mbb{C}$ with plethystic operations determined by $p_j \circ (a_1, a_2, \ldots)=(a_j, a_{2j, \ldots})$, we consider the space $C(S(\fqbar), W(\mbb{C}))$ of $W(\mbb{C})$-valued function on $S(\fqbar)$ that are invariant under the $\Gal(\fqbar/\fq)$-action (equivalently, one could think of these as functions on the closed points of $S$). This ring has a natural pointwise $\lambda$-ring structure, and an expectation functional 
\[ \mbb{E}: C(S(\fqbar), W(\mbb{C})) \rightarrow W(\mbb{C}) \]
that, for the $i$th coordinate, averages over the $\mbb{F}_{q^i}$-points $s \in S(\mbb{F}_{q^i})\subseteq S(\fqbar)$ the values $f(s)_{i/|s|}$, where $|s|$ is the size of the Galois-orbit of $s$. To understand the motivation for this expectation, we note that the additive of group $W(\mbb{C})$ is also naturally isomorphic to $1+t\mbb{C}[[t]]$, and under this isomorphism the formula for the expectation functional comes from the Euler product expansion of L-functions. 

After choosing a prime $\ell$ coprime to $q$ and an isomorphism $\overline{\mbb{Q}}_\ell \cong \mbb{C}$, it follows from \cite[Th\'{e}or\'{e}me (1.1.2)]{Laumon.TransformationDeFourierConstantsDEquationsFonctionellesEtConjectureDeWeil} that the Grothendieck ring of constructible $\overline{\mbb{Q}}_\ell$ sheaves embeds into $C(S(\fqbar), W(\mbb{C}))$ by taking the sequence of traces of powers of Frobenius at each point (or, viewing $W(\mbb{C})$ as $1+t\mbb{C}[[t]]$, by taking the characteristic power series of Frobenius). This matches the pointwise $\lambda$-ring structure on $C(S(\fqbar), W(\mbb{C}))$ with the usual $\lambda$-ring structure on $\ell$-adic sheaves where the plethystic operations $h_k \circ$ (resp. $e_k \circ$) decategorify symmetric (resp. exterior) powers of local systems. 

\subsubsection{Some characters}\label{ss.intro-some-characters}
We fix a finite field $\mbb{F}_q$ of order $q$ and a prime $\ell$ dividing $q-1$. We write $\mu_\ell \subseteq \mbb{F}_q^{\times}$ for the $\ell$th roots of unity in $\mbb{F}_q$, a cyclic group of order $\ell$, and fix a non-trivial character $\chi:\mu_\ell \rightarrow \mbb{C}^\times$. For any $\ell$th-power-free monic polynomial $f\in \mbb{F}_q[x]$, we obtain by Kummer theory a non-trivial map $\Gal(\overline{\fq(x)}/\fq(x)) \rightarrow \mu_\ell$ and, by composition with $\chi$, a non-trivial character $\chi_f$ of $\Gal(\overline{\fq(x)}/\fq(x))$. By class field theory, we can also view $\chi_f$ as an id\'{e}le class character (which can also be described explicitly in terms of $f$). The $\chi_f$ of this form are a natural family of characters: as we vary over $\ell$th-power-free monic polynomials of positive degree, they hit every equivalence class of character of order $\ell$ up to globally unramified twist exactly once. Moreover, if we vary only over degrees $d$ not divisible by $\ell$, then we obtain exactly the characters which are ramified at $\infty$. Note that, for $\kappa/\fq$ a finite extension and $f \in \kappa[x]$ an $\ell$th-power-free monic polynomial, we may also form $\chi_f$ as a character of $\Gal(\overline{\kappa(x)}/\kappa(x))$, and this is compatible with restriction. 

\begin{example}
    If $\ell=2$ and $f\in \fq[x]$ is monic square-free of odd degree $>2$, then $\chi_f$ is ramified at $\infty$ and, as in \cite[\S 11]{BergstromDiaconuPetersenWesterland.HyperellipticCurvesTheScanningMapAndMomentsOfFamiliesOfQuadraticLFunctions}, the $L$-function associated to $\chi_f$ is given by the Dirichlet series
    \[ L(s, \chi_f)=\sum_{m \in \mbb{F}_q[x] \textrm{ monic }} \binom{f}{m}|m|^{-s} \]
    where $\binom{f}{m}$ denotes the usual Legendre symbol for $\mbb{F}_q[x]$.  The zeta function $Z_{C_f}(t)$ of the hyperelliptic curve $C_f$ with affine equation $y^2=f(x)$ is of the form 
    \[ Z_{C_f}(t)=\frac{\mc{L}(t,\chi_f)}{(1-t)(1-qt)} \]
    where $\mc{L}(q^{-s},\chi_f) =L(s,\chi_f)$. In particular, by the Riemann hypothesis for curves over finite fields, the roots of $\mc{L}(t,\chi_f)$ have absolute value $q^{-1/2}$. 
\end{example}

In general, for $f \in \fqbar[x]$ monic $\ell$th-power-free, and $\chi_f$ the associated id\'{e}le class character of $\mbb{F}_q(f)(x)$ (where $\mbb{F}_q(f)$ is the extension of $\fq$ obtained by adjoining the coefficients of $f)$, there is a polynomial $\mc{L}(t,\chi_f) \in \mbb{C}[t]$ whose roots have absolute value $(\#\fq(f))^{-\frac{1}{2}}=q^{-\frac{1}{2}[\fq(f):\fq]}$ such that
\[ L(s,\chi_f) = \mc{L}(t,\chi_f)|_{t=q^{-s}}. \]
Varying over $f$ of a fixed degree, we would like to understand the distribution of the random variable sending such an $f$ to the multiset of normalized zeroes of $\mc{L}(t,\chi_f)$.  

In what follows, we note that the plethystic exponential $\Exp_\sigma$ and logarithm $\Log_\sigma$ can be used to define powers $f^r=\Exp_{\sigma}(r \Log_\sigma(f))$ whenever $f$ is power series with constant term $1$ and coefficients in a $\lambda$-ring $R$ and $r \in R$ (see \cref{ss.powers}). We will use this to give a succint expression of the limiting $\sigma$-moment generating function. We also note that there is a natural embedding of $\lambda$-rings $\mathbb{Z}[\mbb{C}] \hookrightarrow W(\mbb{C}), [z]\mapsto (z,z^2, z^3, \ldots)$. In particular, we may view a multiset of complex numbers as an element of $W(\mbb{C})$. Finally, we write $W(\mbb{C})^\bdd$ for the sub-$\lambda$-ring of $W(\mbb{C})=\prod_{i \geq 1}\mbb{C}$ consisting of $(a_1,a_2,\ldots)$ such that there exists an $M>0$ with $|a_i|\leq M$ for all $i$. The congruences in $W(\mbb{C})^\bdd$ modulo $[q^{-1}]$ below express that two quantities described over $\mbb{F}_{q^n}$ (obtained by projecting to the $n$th component of $W(\mbb{C})$) agree up to $O(q^{-n})$ --- see \cref{s.comparison} for more details on this translation.

\begin{maintheorem}[\cref{theorem.motivic-characters} and \cref{prop.statistics-comparison-characters}; cf. also \cref{theorem.point-counting-characters}]\label{main.dirichlet-characters}
Let $q$ be a prime power, let $\ell$ be a prime dividing $q-1$, and let $\chi:\mu_\ell \rightarrow \mbb{C}^\times$ be a non-trivial character of the group $\mu_\ell$ of $\ell$th roots of unity in $\mbb{F}_q$. Let $U_{d,\ell}(\fqbar)$ denote the set of degree $d$ monic $\ell$th-power-free polynomials over $\fqbar$. Let $X_d$ be the $W(\mbb{C})$-valued random variable on $U_{d,\ell}(\fqbar)$ sending $f$ to the set of zeroes of $\mc{L}(\chi_f, (\#\mbb{F}_q(f))^{-\frac{1}{2}}t)$, where $\mbb{F}_q(f)$ is the extension generated by the coefficients of $f$ and $\chi_f$ is the id\`{e}le class character of $\mbb{F}_q(f)(x)$ associated to $f$ and $\chi$ by Kummer theory as above. 

For $\ell=2$, i.e. for $\chi$ quadratic, 
\begin{align*} \lim_{\substack{d \rightarrow \infty\\ \ell \nmid d}} \mathbb{E}[\Exp_{\sigma}(X_d h_1)] &=\left(1 + \frac{[q]}{[q]+1} \sum_{k\geq 1} [q^{-k}] e_{2 k}\right)^{[q]} \\
&\equiv \Exp_{\sigma}(e_2) \mod [q^{-1}]\Lambda_{W(\mbb{C})^\bdd}^\wedge.
\end{align*}
where we recall that, by \cref{main.random-matrix}-(2), $\Exp_{\sigma}(e_2)$ encodes the eigenvalue distribution of a random compact symplectic matrix. 

For $\ell>2$, i.e. for $\chi$ non-quadratic,
\begin{multline*} \lim_{\substack{d \rightarrow \infty\\ \ell \nmid d}} \mathbb{E}[\Exp_{\sigma}(X_d h_1 + \overline{X}_d \overline{h}_1)] =\\\left(1 + \frac{[q^{\ell-1}]}{[q^{\ell-1}]+\ldots+[q]+1} \sum_{\substack{k_1 \geq 1\textrm{ or } k_2\geq1 \\ k_1 \equiv k_2 \mod \ell}} (-1)^{k_1+k_2}[q^{-\frac{k_1+k_2}{2}}] e_{k_1}\overline{e}_{k_2}\right)^{[q]} \\
\equiv \Exp_{\sigma}(h_1\overline{h}_1) \mod [q^{-1}]\Lambda_{W(\mbb{C})^\bdd}^\wedge.
\end{multline*}
where we recall that, by \cref{main.random-matrix}-(4), $\Exp_{\sigma}(h_1\overline{h_2})$ encodes the eigenvalue distribution of a random unitary matrix. 
\end{maintheorem}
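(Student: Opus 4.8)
The plan is to first prove the point-counting incarnation \cref{theorem.point-counting-characters} by a sieve argument, then assemble it over all finite fields into the motivic statement \cref{theorem.motivic-characters}, and finally read off the congruences modulo $[q^{-1}]$ and compare with \cref{main.random-matrix}; this last comparison is the only place \cref{main.random-matrix} enters and is by far the easiest. Throughout I use that giving $\mbb{E}[\Exp_\sigma(X_dh_1)]$ is the same as giving the $\Lambda$-distribution $\tau\mapsto\mbb{E}[X_{d,h_\tau}]$, and that for a $W(\mbb{C})$-valued variable $X$ one has $\Exp_\sigma(Xh_1)=\prod_i\sigma_{t_i}(X)$ with $\sigma_t(X)=\sum_m\sigma_m(X)t^m$ the total symmetric power, so that everything is controlled by $\sigma_t(X_d(f))$.

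The first step is to write $\Exp_\sigma(X_d(f)h_1)$ as an Euler product of \emph{local} factors. Using the Weil bounds and the functional equation (self-duality of $\chi_f$ when $\ell=2$, and the pairing of $\chi_f$ with $\overline{\chi}_f=\chi_f^{-1}$ when $\ell>2$), one identifies $\sigma_t(X_d(f))$ with the appropriate normalization of $\mc{L}(\chi_f,\cdot)^{-1}$, whose Euler product over the closed points $v$ of $\mbb{P}^1$ (over $\mbb{F}_q(f)$, or over the relevant $\mbb{F}_{q^n}$ after the base-change bookkeeping inherent in the $W(\mbb{C})$-valued expectation) where $\chi_f$ is unramified reads $\prod_v\bigl(1-\chi_f(v)(\#v)^{-1/2}t^{\deg v}\bigr)$. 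Hence
\[ \Exp_\sigma(X_d(f)h_1)=\prod_v \Phi_v(f),\qquad \Phi_v(f)=\Bigl(\prod_i\bigl(1-\chi_f(v)(\#v)^{-1/2}t_i^{\deg v}\bigr)\Bigr)^{\epsilon_v(f)},\]
where $\epsilon_v(f)=1$ if $v(f)\equiv 0\bmod\ell$ (so $\chi_f$ is unramified at $v$, with Frobenius $\chi_f(v)\in\mu_\ell$ determined by the class of $f(v)\in\kappa(v)^\times$ modulo $\ell$th powers), and $\epsilon_v(f)=0$ otherwise. The key structural point is that $\Phi_v(f)$ depends on $f$ only through its image in $\mc{O}_v/v^\ell$; moreover the coefficient of a fixed monomial in $\prod_v\Phi_v$ involves only the closed points of bounded degree, of which there are finitely many. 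For $\ell>2$ one runs the analogous product for $\overline{X}_d$ built from $\overline{\chi}_f$ simultaneously.

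The second step is the sieve. Since $\ell\nmid d$ forces $\chi_f$ to be ramified at $\infty$ (so $\Phi_\infty\equiv 1$ and one sweeps out all the relevant characters uniformly), Poonen's $\ell$th-power-free sieve shows that as $d\to\infty$ along $\ell\nmid d$ the local behaviours of $f$ at a fixed finite set of closed points become asymptotically independent, with $v(f)\equiv j\bmod\ell$ ($0\le j<\ell$) occurring with limiting density $(1-(\#v)^{-1})(\#v)^{-j}/(1-(\#v)^{-\ell})$ and, conditionally on $v(f)\equiv 0$, with $\chi_f(v)$ equidistributed in $\mu_\ell$. This local equidistribution for $\ell$th-power-free polynomials is the genuine external input; combined with the previous paragraph it gives, moment by moment,
\[ \lim_{\substack{d\to\infty\\ \ell\nmid d}}\mbb{E}\bigl[\Exp_\sigma(X_dh_1)\bigr]=\prod_{v\in\mbb{A}^1_{\mbb{F}_q}}\mbb{E}[\Phi_v],\]
and similarly in the joint $\ell>2$ case.

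It remains to compute $\mbb{E}[\Phi_v]$ and resum, and it is this resummation — producing the \emph{exact} closed form, with the right prefactor, exponent, and plethysm-collapse — that I expect to be the main obstacle. Averaging $\Phi_v$ over the limiting local law, the unramified part contributes $P(v(f)\equiv 0)\cdot\tfrac1\ell\sum_{\zeta\in\mu_\ell}\prod_i\bigl(1-\zeta(\#v)^{-1/2}t_i^{\deg v}\bigr)$; expanding $\prod_i(1-\zeta u_i)=\sum_k(-1)^k\zeta^ke_k(u)$ and using that $\tfrac1\ell\sum_\zeta\zeta^k$ is $1$ if $\ell\mid k$ and $0$ otherwise (resp.\ $\tfrac1\ell\sum_\zeta\zeta^{k_1-k_2}=1$ iff $k_1\equiv k_2\bmod\ell$, in the joint case), then adding the trivial ramified part, yields $\mbb{E}[\Phi_v]=1+P(v(f)\equiv0)\cdot\sum_{k\ge1,\ \ell\mid k}(-1)^k(\#v)^{-k/2}e_k(t_i^{\deg v})$ for $\ell=2$ (with $e_k(t_i^{\deg v})$ the elementary symmetric function in the variables $t_i^{\deg v}$), and the corresponding double sum over $k_1\equiv k_2\bmod\ell$, both not zero, for $\ell>2$. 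Finally, grouping $\prod_{v\in\mbb{A}^1_{\mbb{F}_q}}$ by degree and using the zeta identity $\prod_{v\in\mbb{A}^1_{\mbb{F}_q}}(1-u^{\deg v})^{-1}=(1-qu)^{-1}$ — equivalently, recognizing this as a motivic Euler product and hence, via \cref{ss.powers}, as a $[q]$-th power, with $\#\mbb{P}^1(\kappa(v))=(\#v)+1$ producing the density prefactor $[q]/([q]+1)$ (resp.\ $[q^{\ell-1}]/([q^{\ell-1}]+\cdots+1)$) — collapses everything to the local factor at a degree-$1$ point, where $\deg v=1$ removes all plethysms and $(\#v)^{-1}=q^{-1}$ becomes $[q^{-1}]$; this is precisely $\bigl(1+\tfrac{[q]}{[q]+1}\sum_{k\ge1}[q^{-k}]e_{2k}\bigr)^{[q]}$ for $\ell=2$ and the stated expression for $\ell>2$. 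The congruence mod $[q^{-1}]$ is then immediate: $\tfrac{[q]}{[q]+1}\equiv1$ and $[q]\cdot[q^{-k}]=[q^{\,1-k}]\equiv0$ for $k\ge2$ while $[q]\cdot[q^{-1}]=1$, so writing $(1+Y)^{[q]}=\Exp_\sigma([q]\Log_\sigma(1+Y))$ and retaining only the surviving term $e_2$ (resp.\ $e_1\overline{e}_1=h_1\overline{h}_1$) gives $\Exp_\sigma(e_2)$ (resp.\ $\Exp_\sigma(h_1\overline{h}_1)$), which by \cref{main.random-matrix}-(2) and \cref{main.random-matrix}-(4) is the compact-symplectic (resp.\ unitary) random-matrix $\sigma$-moment generating function.
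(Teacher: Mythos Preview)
Your approach is essentially the same as the paper's: you express $\Exp_\sigma(X_d h_1)$ as an Euler product of local factors over closed points of $\mbb{A}^1$, apply Poonen's sieve to obtain asymptotic independence and compute the local expectations via the equidistribution of $\chi_f(v)$ in $\mu_\ell$, then recognize the resulting product as a $[q]$-th power (the paper does this via \cref{prop.powers-euler-product}, which is exactly the ``motivic Euler product $=$ power'' step you invoke), and finally extract the congruence by expanding $(1+Y)^{[q]}=\Exp_\sigma([q]\Log_\sigma(1+Y))$ to first order in $[q^{-1}]$. The only places your sketch is thinner than the paper are the passage from the per-$\mbb{F}_{q^n}$ point-counting statement to the $W(\mbb{C})$-valued motivic one (handled in the paper by \cref{lemma.motivic-to-finite-probability-spaces} together with \cref{prop.powers-euler-product}) and the rigorous justification that $\Exp_\sigma$ and $\Log_\sigma$ respect congruences modulo $[q^{-1}]W(\mbb{C})^\bdd$ (the paper isolates this as \cref{prop.exp-log-cong}); also, in the $\ell>2$ case the sum should be over ``not both zero'' rather than ``both not zero''.
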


Note that we equip $W(\mbb{C})$ with the product topology, so that convergence can be treated after projection to the $i$th component. By the definition of the expectation functional, this reduces to a problem on the finite $\lambda$-probability space of $W(\mbb{C})$-valued functions on $U_{d,\ell}(\mbb{F}_{q^i})$ equipped with the $\mbb{C}$-valued expectation obtained by averaging the first component. The $W(\mbb{C})$-valued random variables on these finite probability spaces can be made explicit as infinite sums of random variables parameterized by closed points in $\mbb{A}^1_{\mbb{F}_{q^i}}$, and an application of Poonen's \cite{Poonen.BertiniTheoremsOverFiniteFields} sieve shows that they are asymptotically independent and computes their asymptotic $\sigma$-moment generating functions; the asymptotic independence implies that the moment generating function we are interested in is a product of the moment generating functions of these variables. The $i$th component of the power appearing on the right-hand side can also be made very explicit, allowing us to match it with the result.

\begin{remark}\label{remark.quadratic-case-stable-traces}In \cref{example.stable-homology-quadratic} we show that the case of \cref{main.dirichlet-characters} where $\chi$ is quadratic (i.e. $\ell=2$) implies \cite[Theorem 11.2.3]{BergstromDiaconuPetersenWesterland.HyperellipticCurvesTheScanningMapAndMomentsOfFamiliesOfQuadraticLFunctions} by projecting to the first component and applying the standard involution $\omega$ on the ring of symmetric functions. The result \cite[Theorem 11.2.3]{BergstromDiaconuPetersenWesterland.HyperellipticCurvesTheScanningMapAndMomentsOfFamiliesOfQuadraticLFunctions} is proved in two ways in loc. cit.: first, it is deduced from the Poincar\'{e} series for stable cohomology of \cite[Theorem 1.6]{BergstromDiaconuPetersenWesterland.HyperellipticCurvesTheScanningMapAndMomentsOfFamiliesOfQuadraticLFunctions}, the description of the Frobenius action in \cite[Theorem 1.10]{BergstromDiaconuPetersenWesterland.HyperellipticCurvesTheScanningMapAndMomentsOfFamiliesOfQuadraticLFunctions}, and bounds on the size of unstable cohomology, by substituting in $[q^{-1}]$ for $z$ and $[q^{k}]h_{2k}$ for $h_{2k}$ as described in \cite[Top of p.4]{BergstromDiaconuPetersenWesterland.HyperellipticCurvesTheScanningMapAndMomentsOfFamiliesOfQuadraticLFunctions}. Second, it is deduced from a more direct argument using generating functions. By contrast, our proof of \cref{main.dirichlet-characters} is by an  application of Poonen's sieve \cite{Poonen.BertiniTheoremsOverFiniteFields}. The relation between the second method of \cite{BergstromDiaconuPetersenWesterland.HyperellipticCurvesTheScanningMapAndMomentsOfFamiliesOfQuadraticLFunctions} and our proof here seems to be similar to the relation between studying square-free polynomials as a configuration space via the methods of \cite{Howe.MotivicRandomVariablesAndRepresentationStabilityIConfigurationSpaces} or as a special case of smooth hypersurface sections via the methods of \cite{Howe.MotivicRandomVariablesAndRepresentationStabilityIIHypersurfaceSections}.  We emphasize that the non-quadratic case of \cref{main.dirichlet-characters} is completely new and also suggests a new cohomological stability result (cf. \cite{Howe.TheNegativeSigmaMomentGeneratingFunction}). 
\end{remark}

\begin{example}\label{example.conrey-snaith-moments} As in \cite{BergstromDiaconuPetersenWesterland.HyperellipticCurvesTheScanningMapAndMomentsOfFamiliesOfQuadraticLFunctions}, one classical random variable we are interested in is the central value $L(\chi_f, 1/2)$. \cref{main.dirichlet-characters} is insufficient to obtain the refined results of \cite{BergstromDiaconuPetersenWesterland.HyperellipticCurvesTheScanningMapAndMomentsOfFamiliesOfQuadraticLFunctions}, but it is sufficient to study natural truncations: indeed, for $f \in U_{d,\ell}(\fqbar)$, 
\[ L(\chi_f, 1/2)= \sum_{i=0}^\infty (-1)^i (e_i \circ X_{d})_1 (f), \]
where the subscript $1$ denotes the first projection from $W(\mbb{C})$ to $\mbb{C}$,
thus for any $k$ we can take the truncation 
\[ L^{[k]}(\chi_f, 1/2)= \sum_{i=0}^k (-1)^i (e_i \circ X_{d})_1 (f).\]
Projecting onto the first component in \cref{main.dirichlet-characters}, we find that the distribution of $L^{[k]}(\chi_f, 1/2)$ as we vary over $f \in U_{d,\ell}(\fq)$ approximates, as $d \rightarrow \infty$, the distribution of the value at $1$ of the $k$-truncation of the characteristic polynomial of a random compact symplectic matrix if $\chi$ is quadratic or a random unitary matrix if $\chi$ has order $\ell>2$. Projecting onto the $n$th component gives a similar statement for $f \in U_{d,\ell}(\mbb{F}_{q^n})$, and the approximation is up to an error term controlled by $q$ in the sense that, if we vary the limit over $\mbb{F}_{q^n}$ for all $n$, the error term is $O(q^{-n})$. 
\end{example}

\subsubsection{Vanishing cohomology of smooth hypersurface sections}
By a similar method, we compute the $\Lambda$-distributions of reciprocal\footnote{Reciprocal roots are eigenvalues of geometric Frobenius so are most natural for us.} roots of $L$-functions of vanishing cohomology of smooth hypersurface sections of smooth projective varieties over $\mbb{F}_q$. Recall (e.g. from \cite[\S4]{Howe.MotivicRandomVariablesAndRepresentationStabilityIIHypersurfaceSections}) that the vanishing cohomology of a smooth hypersurface section $H/\mbb{F}_q$ of a smooth projective variety $Y/\mbb{F}_q$ is the part of the cohomology of $H$ that is not visible in the cohomology of $Y$ (by weak Lefschetz and Poincar\'{e} duality, it lies in the middle degree cohomology of $H$). As in the case of characters considered above, we write $\mc{L}_{H}(t)\in \mbb{C}[t]$ for the polynomial version of the $L$-function of the vanishing cohomology --- its roots have absolute value $q^{-\frac{\dim H}{2}}$ and, for 
\[ L_{H}(s)=\mc{L}_{H}(q^{-s}),\]
$L_H(s)^{(-1)^{\dim Y}}$ is a factor of $\zeta_H(s)=Z_H(q^{-s})$ (which is a rational function in $q^{-s}$).  

We write $[Y/\mbb{F}_q]=(\#Y(\mbb{F}_q), \#Y(\mbb{F}_{q^2}),\ldots) \in W(\mbb{C})$ and $[H^i(Y)] \in W(\mbb{C})$ for the sequence of traces of powers of Frobenius on the $i$th \'{e}tale cohomology of $Y_{\fqbar}$.

\begin{maintheorem}[\cref{theorem.vanishing-cohom-L-functions} and \cref{prop.statistics-comparison-hypersurface}] \label{main.hypersurface-L-functions}
Let $Y \subseteq \mbb{P}^m_{\mbb{F}_q}$ be an $(n+1)$-dimensional smooth irreducible closed sub-variety, and let $U_d(\fqbar)$ be the set of nonzero homogeneous degree $d$ polynomials $F$ in $m+1$ variables over $\fqbar$ whose vanishing locus $V(F)$ intersects $Y$ transversely. Let $X_d$ be the $W(\mbb{C})$-valued random variable on $U_d(\fqbar)$ sending $F$ to the set of reciprocal roots of $\mathcal{L}_{V(F) \cap Y}(t (\#\mbb{F}_q(F))^{-\frac{n}{2}})$, where $\mbb{F}_q(F)$ is the extension generated by the coefficients of $F$ and $V(F)\cap Y$ is viewed as a variety over $\mbb{F}_q(F)$. Then, 
\[ \lim_{d \rightarrow \infty}\mbb{E}[\Exp_{\sigma}(X_dh_1)]= (1 + p\sum_{i \geq 1} [q^{-in/2}] \epsilon^i f_i)^{[Y/\mbb{F}_q]} \cdot \Exp_\sigma(\mu h_1)  \]
where $f_i=e_i$ and $\epsilon=-1$ if $n$ is odd and $f_i=h_i$ and $\epsilon=1$ if $n$ is even, 
\[ p = \frac{[q]^{n+1}-1}{[q]^{n+2}-1} \textrm{, and } \mu=-\epsilon \left(\sum_{i=0}^{n-1} (-1)^i\left([q^{\frac{-n}{2}}]+[q^{\frac{n-2i}{2}}]\right)[H^i(Y)]\right) - [q^{-n/2}][H^{n}(Y)]. \]
In particular, modulo $[q^{-\frac{1}{2}}]\Lambda_{W(\mbb{C})^\bdd}^\wedge$ this agrees with:
    \begin{enumerate}
        \item For $n>0$ even, $\Exp_{\sigma}(h_2)$ (as in \cref{main.random-matrix}-(1), the orthogonal case).
        \item For $n$ odd, $\Exp_{\sigma}(e_2)$ (as in \cref{main.random-matrix}-(2), the symplectic case).
        \item For $n=0$, $\Exp_{\sigma}(h_2+h_3+\ldots)$ (the distribution associated to the standard irreducible representation of the symmetric group as in \cref{example.standard-rep}).  
    \end{enumerate}
Thus the asymptotic $\Lambda$-distribution of $X_d$ approximates the asymptotic eigenvalue $\Lambda$-distribution of the associated random matrices modulo $[q^{-1/2}]\Lambda_{W(\mbb{C})^\bdd}$. 
\end{maintheorem}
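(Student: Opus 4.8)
The plan is to follow the strategy used for \cref{main.dirichlet-characters}, replacing the Kummer family by the family of smooth hypersurface sections and invoking Poonen's sieve \cite{Poonen.BertiniTheoremsOverFiniteFields} in the refined form that records prescribed jets, exactly as in \cite{Howe.MotivicRandomVariablesAndRepresentationStabilityIIHypersurfaceSections}. First I would reduce to a point-counting problem: since $W(\mbb{C})$ carries the product topology it suffices to compute the limit after projecting to each coordinate $i$, and by the definition of the expectation functional this reduces to a question on the finite probability space $U_d(\mbb{F}_{q^i})$ with the $\mbb{C}$-valued expectation obtained by averaging the first coordinate. The coefficient of $m_\tau$ in $\mbb{E}[\Exp_\sigma(X_d h_1)]$ is $\mbb{E}[X_{d,h_\tau}]$, and $X_{d,h_\tau}(F)$ is a fixed polynomial in finitely many of the normalized Frobenius traces on $H^n_{\mr{van}}(V(F)\cap Y)$. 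By Grothendieck--Lefschetz together with the structure of the cohomology of a smooth hypersurface section (weak Lefschetz and Poincar\'{e} duality, as recalled in \cite[\S4]{Howe.MotivicRandomVariablesAndRepresentationStabilityIIHypersurfaceSections}), each such trace is an explicit $\mbb{Z}$-linear combination of point counts $\#(V(F)\cap Y)(\mbb{F}_{q^{im}})$, $m\geq 1$, and of the fixed quantities $[H^j(Y)]$ and $[Y/\mbb{F}_q]$, with the normalization by $(\#\mbb{F}_q(F))^{-n/2}$ producing the $[q^{-in/2}]$ factors and keeping all coefficients bounded, i.e.\ in $\Lambda_{W(\mbb{C})^\bdd}^\wedge$.

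The key point is that both membership ``$F\in U_d$'' (transversality of $V(F)$ to $Y$) and, for a closed point $x$ of $Y$, whether $x\in V(F)$ --- hence the local contribution of $x$ to the point counts of $V(F)\cap Y$ --- depend only on the image of $F$ in the jets of $Y$ at $x$. This is precisely the setting of Poonen's sieve: as $d\to\infty$ over the fixed field $\mbb{F}_{q^i}$, the events attached to distinct closed points of $Y_{\mbb{F}_{q^i}}$ become asymptotically independent, the local density at a closed point of degree $e$ is an explicit rational function of $q^{-e}$, and the contribution of high-degree closed points is uniformly negligible (controlled exactly as in \cite{Howe.MotivicRandomVariablesAndRepresentationStabilityIIHypersurfaceSections, Poonen.BertiniTheoremsOverFiniteFields}). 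Computing these local densities --- the probability that $V(F)$ is transverse to $Y$ at $x$, and conditional on that whether $x\in V(F)$ --- and multiplying over closed points, the limiting average becomes an Euler product over the closed points of $Y$. The combinatorial heart of the argument is then to recognize this Euler product, using $\zeta_Y(t)=\prod_x(1-t^{\deg x})^{-1}$ and the definition of $\lambda$-powers from \cref{ss.powers}, as the stated base raised to the power $[Y/\mbb{F}_q]$, while the correction terms coming from the $[H^j(Y)]$ assemble into the factor $\Exp_\sigma(\epsilon\mu h_1)$ with $p$ and $\mu$ as in the statement.

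Finally, for the comparison with random matrix statistics one expands the product and sorts terms by divisibility by powers of $[q]$: since the left-hand side has bounded coefficients, the terms divisible by a positive power of $[q]$ must cancel between the $\lambda$-power and the $\Exp_\sigma(\epsilon\mu h_1)$ factor, the terms divisible by $[q^{-1/2}]$ are discarded, and the bounded remainder is $\Exp_\sigma(h_2)$ for $n>0$ even, $\Exp_\sigma(e_2)$ for $n$ odd, and $\Exp_\sigma(h_2+h_3+\ldots)$ for $n=0$, matching \cref{main.random-matrix}-(1),(2) and \cref{example.standard-rep} respectively. I expect the main obstacle to be the middle step: arranging the normalizations, signs, and Tate twists so that the averaged Euler product is exactly a $\lambda$-power of the stated base --- this is where the $\Exp_\sigma$/$\Log_\sigma$ formalism does real work --- together with the bookkeeping of the vanishing-cohomology correction $\mu$, which must be carried out uniformly in the parity of $n$ and compatibly with the $W(\mbb{C})$-structure.
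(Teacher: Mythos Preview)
Your overall strategy---reduce to finite probability spaces via projection to coordinates, apply Poonen's sieve to obtain asymptotic independence of local contributions indexed by closed points of $Y$, recognize the resulting Euler product as a $\lambda$-power of base $[Y]$, and then compare modulo $[q^{-1/2}]$---is exactly the paper's. Two organizational points are worth flagging.

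First, rather than working directly with $X_d$ and carrying the cohomological constants $[H^j(Y)]$ through the sieve, the paper first treats the purely geometric random variable $Z_d(F)=[V(F)\cap Y]$ (and, separately, $-Z_d$). The local factor at a closed point is then just the probability of lying on the section, and the limiting distribution is visibly binomial with parameters $p$ and $[Y]$ (\cref{theorem.geometric-hypersurface-motivic}). Only afterward does one write $X_d=\epsilon[q^{-n/2}]Z_d+\mu$ with $\mu$ constant via \cref{eq.vanishing-cohom-computation}, and invoke independence from constants (\cref{lemma.independent-moment-generating-functions}) to split off the factor $\Exp_\sigma(\mu h_1)$. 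This separation makes the local computation trivial and confines all the Hodge-theoretic bookkeeping to a single algebraic step at the end. Note also (\cref{remark.sigma-moment-minus}) that one cannot pass from the $\sigma$-moment generating function of $Z_d$ to that of $-Z_d$ formally, which is why both signs are computed from scratch; your plan to work directly with $X_d$ would implicitly hit the same issue.

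Second, your comparison step---``expand and sort by divisibility, the unbounded terms must cancel because the limit is bounded''---is correct in spirit but non-constructive: knowing that cancellation occurs does not identify the bounded remainder. The paper instead takes $\Log_\sigma$ of the limiting expression, applies the first-order congruences $\Log_\sigma(1+a)\equiv a$ and $\Exp_\sigma(a)\equiv 1+a$ modulo $[z^2]$ (\cref{prop.exp-log-cong}) to compute $\Log_\sigma(Q)$ explicitly modulo $[q^{-1/2}]$, verifies the cancellation by hand term-by-term against $\mu h_1$, and re-exponentiates. This is what actually produces $f_2$ (resp.\ $\sum_{i\ge 2}h_i$) as the answer.
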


\begin{remark}\label{remark.sidways-equidistribution}
    In each case $(1), (2), (3)$ enumerated at the end of \cref{main.hypersurface-L-functions}, one can show that for a smooth hypersurface section one obtains a normalized Frobenius conjugacy class in the associated group whose eigenvalues are the zeroes of the normalized $L$-function (an orthogonal group in case (1), a symplectic group in case (2), and a symmetric group $\Sigma_m$ acting through its irreducible $m-1$ dimensional representation in case (3)). This is essentially because in case (1) the cup product on vanishing cohomology is a non-degenerate symmetric pairing, in case (2) it is a non-degenerate antisymmetric pairing, and, in the zero-dimensional case (3), when we pass to cohomology, taking the vanishing part corresponds exactly to splitting of the standard irreducible from the standard permutation representation. However, the fact that one has such a conjugacy class plays \emph{no role whatsoever} in the proof of \cref{main.hypersurface-L-functions}. It is interesting to contrast this to the situation when one takes first a limit as $q \rightarrow \infty$ and then afterwards as $d \rightarrow \infty$ as in \cite{KatzSarnak.RandomMatricesFrobeniusEigenvaluesAndMonodromy}, where the computation of the monodromy group is crucial in order to apply Deligne equidistribution; we note further that, with that order of limits, only the degree 0 cohomology of local systems matter, whereas our results detect cohomology in all degrees. In \cite{BiluHowe.MotivicRandomVariables} we will obtain an analog of \cref{main.hypersurface-L-functions} using cohomological realizations in place of the point-counting realization that can be used, in particular, to determine the degree zero cohomology of local systems and thus also the monodromy groups. 
\end{remark}

\begin{remark}\label{remark.vanishing-cohomology-relation-to-other-work}
    \cref{main.hypersurface-L-functions} and its proof are closely related to \cite[Theorems B and C]{Howe.MotivicRandomVariablesAndRepresentationStabilityIIHypersurfaceSections}. The main improvement here is the use of the $\sigma$-moment generating function in place of what amounts, in hindsight, to a $\lambda$-variant of a falling moment generating function. In particular, this improvement is what makes possible the explicit comparison with random matrix statistics. Note also that, in \cite{Howe.MotivicRandomVariablesAndRepresentationStabilityIIHypersurfaceSections} we attached to each partition $\tau$ a sequence of local systems $\mc{V}_\tau$ over the moduli of smooth hypersurface sections by taking the irreducible representation of the symplectic, orthogonal, or symmetric group associated to $\tau$ and composing it with the monodromy representation on vanishing cohomology, and our main goal in \cite{Howe.MotivicRandomVariablesAndRepresentationStabilityIIHypersurfaceSections} was to show that homological stability held for these families of local systems at the level of enriched Euler characteristics. From \cref{main.hypersurface-L-functions} we obtain better explicit formulas than those described in \cite{Howe.MotivicRandomVariablesAndRepresentationStabilityIIHypersurfaceSections}: to extract the stable point count for a family of local system associated to a partition $\tau$ from \cref{main.hypersurface-L-functions}, one needs only compute the Hall inner product between the moment generating function and the associated orthogonal, symplectic, or symmetric Schur polynomial. This is, both conceptually and practically, simpler than the method described in \cite[Appendix A]{Howe.MotivicRandomVariablesAndRepresentationStabilityIIHypersurfaceSections}. We note that there has also been recent progress towards establishing homological stability in this setting \cite{AumonierDas.HomologicalStabilityForTheSpaceOfHypersurfacesWithMarkedPoints}, but since there is no known control over the unstable cohomology, the homological stability results do not imply point counting results.  
\end{remark}

\subsubsection{Generalizations}
In \cite{BertucciHowe.EquidistributionAndArithmeticLambdaDistributions}, we generalize the method used to compute asymptotic $\Lambda$-distributions for families of $L$-functions in Theorems \ref{main.dirichlet-characters} and \ref{main.hypersurface-L-functions}. The main result of \cite{BertucciHowe.EquidistributionAndArithmeticLambdaDistributions} is that, whenever Poonen's sieve applies, certain infinite sums of random variables parameterized by the $\fqbar$-points of a variety ($\mbb{A}^1$ in the case of \cref{main.dirichlet-characters} and $Y$ in the case of \cref{main.hypersurface-L-functions}) have asymptotic $\sigma$-moment generating functions described as motivic Euler products of local $\sigma$-moment generating functions at each point. This gives a precise interpretation of the heuristic that the random variables parameterized by distinct points are asymptotically independent, but depends crucially on having a good notion of a motivic Euler product. We are able to get by with less in the present work essentially because the moment generating functions in Theorems \ref{main.dirichlet-characters} and \ref{main.hypersurface-L-functions} are powers rather than more general motivic Euler products.

\subsection{Organization}
In \cref{s.pre-lambda}, we recall the theory of pre-$\lambda$ and $\lambda$-rings. The main novelty in our treatment is that we emphasize monoid rings as the primitive example, and this facilitates many of the explicit computations we make with plethysm later in the paper. In \cref{s.lambda-prob}, we introduce the basic notions of probability in $\lambda$-rings. In fact, we work in the generality of pre-$\lambda$ rings --- none of the foundational results depend on the added conditions that cut out $\lambda$-rings among  pre-$\lambda$ rings, and there are interesting examples of pre-$\lambda$ rings that may not be $\lambda$-rings, such as Grothendieck rings of varieties, where one hopes to apply the theory. In addition to the $\sigma$-moment generating function, we also discuss more general moment generating functions and give examples of some important $\Lambda$-distributions. In \cref{s.random-matrices}, we prove \cref{main.random-matrix} and deduce from it the main results of \cite{DiaconisShahshahani.OnTheEigenvaluesOfRandomMatrices}. 

In \cref{s.abstract-pc}, we set up an abstract point counting formalism and establish several basic results and lemmas that go into the proofs of \cref{main.dirichlet-characters} and \cref{main.hypersurface-L-functions}. In \cref{s.application-of-poonen}, we apply Poonen's sieve to give a general computation of $\sigma$-moment generating function for certain $W(\mbb{C})$-valued random variables on finite probability spaces cut out by Taylor conditions (\cref{prop.point-counting-asymptotics}); this is used in the proofs of \cref{main.dirichlet-characters} and \cref{main.hypersurface-L-functions}. In \cref{s.L-characters}, we compute the limits in \cref{main.dirichlet-characters} and discuss complements, and, in \cref{s.L-vanishing}, we compute the limits in \cref{main.hypersurface-L-functions} and discuss complements. Finally, in \cref{s.comparison}, we complete the proofs of \cref{main.dirichlet-characters} and \cref{main.hypersurface-L-functions} by comparing the limiting $\sigma$-moment generating functions computed therein with the limiting $\sigma$-moment generating functions for the associated random matrix distributions as computed in \cref{main.random-matrix} (and \cref{example.standard-rep}). 

\subsection{Acknowledgements} This work owes an enormous debt to our collaborations with Margaret Bilu and Ronno Das, whom we thank both for these collaborations and for other helpful conversations. We thank Matthew Bertucci for helpful comments on an earlier version. We thank Guillaume Dubach and Michael Magee for separate helpful conversations about random matrices and the Weingarten calculus (which we do not use anywhere, though perhaps we should!). We thank Dan Petersen for a helpful conversation about the results of \cite{BergstromDiaconuPetersenWesterland.HyperellipticCurvesTheScanningMapAndMomentsOfFamiliesOfQuadraticLFunctions}. We thank Ravi Vakil for encouraging us several years ago to write up some of our ideas on this subject, and Helmut H\"ofer for inviting us to give a mathematical conversation at the Institute for Advanced Study, which provided the perfect opportunity to revisit that project. 

During the preparation of this work, Sean Howe was supported by the National Science Foundation through grant DMS-2201112 and as a member at the Institute for Advanced Study during the academic year 2023-24 by the Friends of the Institute for Advanced Study Membership.

\section{(Pre-)$\lambda$-rings}\label{s.pre-lambda}

In this section we recall the theory of pre-$\lambda$ and $\lambda$-rings. One novelty in our treatment is that we emphasize monoid rings as the atomic examples of pre-$\lambda$ rings. In particular, we use this to construct the pre-$\lambda$ ring structure on the ring of symmetric functions $\Lambda$ by embedding it in an infinite power series ring. This perspective is extremely helpful for making computations with plethysm, since the plethystic action on the power series ring (and other monoid rings) is very simple. In particular, we use this in \cref{s.lambda-prob} to give explicit computations of $\sigma$-moment generating functions (including the crucial expansion \cref{eq.intro-moment-gen-expansion} of the $\sigma$-moment generating function). Some of the results on powers in \cref{ss.powers} are also of independent interest; for example, \cref{example.configuration-symmetric-functions} and the surrounding discussion explain a general construction of symmetric functions that, in particular, simplifies the treatment of configuration symmetric functions given in \cite{Howe.MotivicRandomVariablesAndRepresentationStabilityIConfigurationSpaces}.  

\subsection{Symmetric functions}
We recall some standard definitions and results for symmetric functions (see, e.g., \cite{Macdonald.SymmetricFunctionsAndHallPolynomials}).
Let $\Lambda_n:= \mbb{Z}[t_1,\ldots,t_n]^{\Sigma_n}$, the ring of symmetric polynomials in $n$-variables, and let $\Lambda := \lim_n \Lambda_n$, the ring of symmetric functions, where the transition map from $\Lambda_{n+1}$ to $\Lambda_n$ sends $t_i$ to $t_i$ for $1 \leq i \leq n$ and sends $t_{n+1}$ to $0$. We will use the following elements of $\Lambda$:
\begin{itemize}
\item The power sum symmetric functions $p_i = \sum_j t_j^i$. 
\item The complete symmetric functions $h_i = \sum_{\sum a_j=i,\; a_j \in \mbb{Z}_{\geq 0}} \prod_j t_j^{a_j}$
\item The elementary symmetric functions $e_i = \sum_{\sum a_j=i, a_j \in\{0,1\}} \prod_j t_j^{a_j}.$
\end{itemize}
These give polynomial bases:
\begin{equation}\label{eq.polynomial-bases} \Lambda=\mbb{Z}[e_1, e_2, \ldots] = \mbb{Z}[h_1, h_2, \ldots] \textrm{ and } \Lambda_{\mbb{Q}}=\mbb{Q}[p_1, p_2, \ldots].\end{equation}

By a partition, we mean a non-increasing sequence of non-negative integers $(\tau_1, \tau_2, \ldots)$ that is eventually zero. For $i \geq 0$, we write $\tau(i)$ for the multiplicity of $i$ in $\tau$, i.e. number of occurrences of $i$ in $\tau$. Thus we can also view a partition as a function from $\mbb{Z}_{\geq 1}$ to $\mbb{Z}_{\geq 0}$ with finite support. For $\tau$ a partition, we write
\[ p_\tau:=\prod_i p_{\tau_i}=\prod_{i} p_i^{\tau(i)}, h_\tau:=\prod_i h_{\tau_i}=\prod_{i} h_i^{\tau(i)}, e_\tau:=\prod_i e_{\tau_i}=\prod_{i} e_i^{\tau(i)}. \]
By \cref{eq.polynomial-bases}, $\{e_\tau\}_\tau$ and $\{h_\tau\}_\tau$ each give a $\mbb{Z}$-basis for $\Lambda$, while $\{p_\tau\}_\tau$ is a $\mathbb{Q}$-basis for $\Lambda_{\mbb{Q}}$. There is also a $\mbb{Z}$-basis $\{m_\tau\}_\tau$ for $\Lambda$, where $m_\tau$ is the monomial symmetric function obtained by summing the monomials that can be obtained from permutations of $t^\tau=\prod_{i \geq 1}t_i^{\tau_i}$, e.g. 
\[ m_{(2,1,0,0,\ldots)}=(t_1^2 t_2 + t_1^2t_3 +\ldots) + (t_2^2 t_1 + t_2^2 t_3 + \ldots) + \ldots .\]
In particular, note that 
\[ p_1=p_{(1,0,0,\ldots)}=e_1=e_{(1,0,0,\ldots)}=h_1=h_{(1,0,0,\ldots)}=m_{(1,0,0,\ldots)}=\sum_i t_i. \]

We recall some well-known relations between complete, elementary, and power sum symmetric functions: first, there are the obvious identities in $\Lambda[[z]]$,
\begin{equation}\label{eq.complete-and-elementary-generating-functions} \sum_{k \geq 0} h_k z^k = \prod_{j \geq 1}\frac{1}{1-t_j z} \textrm{ and } \sum_{k \geq 0} e_k z^k = \prod_{j \geq 1} (1+t_j z). \end{equation}
It follows, by substituting $-z$ for $z$ in the second identity of \cref{eq.complete-and-elementary-generating-functions}, that 
\begin{equation}\label{eq.complete-elementary-inversion} \left(\sum_{k \geq 0} h_k z^k\right)\left(\sum_{k \geq 0} (-1)^k e_k z^k\right) = 1. \end{equation}
By taking the logarithm in $\Lambda_{\mbb{Q}}[[z]]$ and applying the first identity of \cref{eq.complete-and-elementary-generating-functions}, 
\begin{equation}\label{eq.complete-powersum-relation} \prod_{i\geq 1} \exp\left(\frac{p_i z}{i}\right)=\prod_{j \geq 1}\frac{1}{1-t_j z}=\sum_{k \geq 0} h_k z^k, \end{equation}
where here $\exp$ is defined by the usual power series $\exp(x)=\sum_{i \geq 0} \frac{x^i}{i!}.$ 
\subsection{Pre-$\lambda$ rings}

We now recall the theory of pre-$\lambda$ rings. Some of this material is standard and can be found in, e.g., \cite{Howe.MotivicRandomVariablesAndRepresentationStabilityIConfigurationSpaces, GuseinZadeLuengoEtAl.OnThePowerStructureOverTheGrothendieckRingOfVarietiesAndItsApplications} and elsewhere, but we are not aware of another source for the specific material on monoid rings developed below. 

\begin{definition}\label{def.pre-lambda}A pre-$\lambda$ ring is a commutative ring $R$ equipped with a \emph{plethystic action} map 
\[ R \rightarrow \Hom_{\mr{Ring}}(\Lambda, R), r \mapsto (f \mapsto f \circ r)\]
such that 
\begin{enumerate}
    \item $h_k \circ (r + s)=(h_k \circ r) + (h_{k-1} \circ r)(h_1 \circ s) + \ldots + (h_k \circ s).$
    \item $h_1 \circ r=r$.
\end{enumerate}
\end{definition}

This matches with, e.g., \cite[Definition 2.2]{Howe.MotivicRandomVariablesAndRepresentationStabilityIConfigurationSpaces}, by setting $h_k \circ = \sigma_k$. 
We note that part (2) is not always included in the definition of a pre-$\lambda$ ring, but, as in \cite{{Howe.MotivicRandomVariablesAndRepresentationStabilityIConfigurationSpaces}}, it is a natural condition for our purposes. 

It can be deduced from \cref{eq.complete-elementary-inversion} and \cref{eq.complete-powersum-relation} that, in any pre-$\lambda$ ring $R$,
\begin{align*}
e_k \circ (r + s)&=(e_k \circ r) + (e_{k-1} \circ r)(e_1 \circ s) + \ldots + (e_k \circ s)  \\
p_k \circ (r + s)&=(p_k \circ r) + (p_k \circ s).
\end{align*}
Verifying the first of these is equivalent to verifying the analogous condition for $h_k \circ$ in \cref{def.pre-lambda}, and the second is equivalent if $R$ is torsion-free as a $\mbb{Z}$-module. 

It follows from these identities that the operations $p_k \circ$ are endomorphisms of the underlying abelian group $(R,+)$. One often writes $\psi_k = p_k \circ$ and refers to it as the $k$th Adams operation, but we will not use this notation here. 

\begin{example}\label{example.integers-pre-lambda-ring}
    $\mathbb{Z}$ admits a natural pre-$\lambda$ ring structure such that the $\sigma$-operations $\sigma_k=h_k \circ$ decategorify symmetric powers of vector spaces and the $\lambda$-operations $e_k \circ$ decategorify exterior powers of vector spaces:
    \[ \textrm{ for $n \geq 1$, } h_k \circ n = \dim \Sym^k \mathbb{C}^n=\binom{k+n-1}{n-1}, e_k \circ n= \dim \bigwedge^k \mathbb{C}^n=\binom{n}{k}.\] 
    One can show for this structure that $p_k \circ$ is the identity morphism for all $k$. 
\end{example}

The ring of symmetric functions $\Lambda$ itself has a natural pre-$\lambda$ ring structure, and using this we will define the notion of a $\lambda$-ring. For making computations, it will be useful to understand this pre-$\lambda$ ring structure on $\Lambda$ as the restriction of a very simple pre-$\lambda$ ring structure on a ring of power series. More generally, monoid rings over pre-$\lambda$ rings have a natural pre-$\lambda$ structure, as we explain now. 

We use the following notation: given a set $S$ and $k \geq 0$, we write $\Sym^k S= S^k/\Sigma_k$, where $\Sigma_k$ is the symmetric group on $k$ elements and it acts by permuting coordinates in the $k$-fold product.  A commutative monoid is a set $M$ equipped with a commutative and associative multiplication law $M \times M \rightarrow M, (m_1,m_2)\mapsto m_1m_2$ admitting an identity element $1_M$. It is locally finite if the fibers of multiplication $M \times M \rightarrow M$ are finite. 
\begin{definition}\label{def.monoid-ring-and-pre-lambda}
    Let $M$ be a commutative monoid and let $R$ be a ring.
    \begin{enumerate}
        \item We write $R[M]$ for the associated monoid ring. Explicitly, $R[M]$ is the convolution algebra of finitely supported $R$-valued functions on $M$, where addition is pointwise and multiplication is via the convolution product: 
        \[ (f+g)(m)=f(m)+g(m) \textrm{ and } (fg)(m)=\sum_{\substack{(m_1, m_2) \in M^2 \\ \textrm{ s.t. } m_1m_2=m}}f(m_1)g(m_2). \]       
        For $m \in M$, we write $[m]$ for the function that send $m$ to $1_R$ and all other elements to zero. Thus any element of $R[M]$ is a finite sum $\sum a_m [m]$, $a_m \in R$, and the multiplication is determined by $[m_1][m_2]=[m_1m_2]$ for $m_1, m_2 \in M$. It is a commutative ring with unit $[1_M]$. 
        \item If $M$ is locally finite, then we write $R[[M]]$ for convolution algebra of \emph{all} $R$-valued functions on $M$ (the locally finite condition ensures that the sum defining the convolution product above is a finite sum). It is a commutative ring containing $R[M]$. 
    \end{enumerate}
    If $R$ is a pre-$\lambda$ ring, we equip $R[M]$ and, if $M$ is locally finite, $R[[M]]$, with the plethystic action of $\Lambda$ determined by 
    \[ (h_k \circ f)(m)=\sum_{\substack{\sum a_{n} n \in \Sym^k M \\ \textrm{ s.t. } \prod n^{a_n}=m}} \prod_{n \in M} h_{a_n}\circ f(n)  \]
    Note that here in the subscript for the sum, the $a_n$ are non-negative integers indexed by elements $n\in M$ and adding up to $k$, thus $a_n$ is zero for all but finitely many $n$ so that the product appearing is in fact a finite product, and moreover it is equal to zero for all but finitely many choices so that the sum appearing is a finite sum. 
\end{definition}

When $M$ is locally finite, we topologize $R[[M]]$ with the weak topology (where $R$ is equipped with the discrete topology), so that a net $(f_i)_{i \in I}$ in $R[[M]]$ converges to $f \in R[[M]]$ if and only if $f_i(m)$ is  eventually constant equal to $f(m)$. Then $R[M]$ is dense in $R[[M]]$, and the ring structure and plethystic actions of symmetric functions $f \in \Lambda$ are evidently continuous for this topology. 

\begin{lemma}\label{lemma.monoid-pleth-is-pre-lambda-and-formulas}
    If $R$ is a pre-$\lambda$ ring and $M$ is a commutative monoid, then $R[M]$ is a pre-$\lambda$ ring for the plethystic action as in \cref{def.monoid-ring-and-pre-lambda}. If $M$ is furthermore locally finite, then $R[[M]]$ is a pre-$\lambda$ ring for the plethystic action as in \cref{def.monoid-ring-and-pre-lambda}. 
    In particular, in these pre-$\lambda$ ring structures we have
    \[  h_k \circ (a[m])=(h_k \circ a)[m^k] \textrm{ and }
    e_k \circ (a[m])= (e_k \circ a)[m^k]. \]
    If $R$ is torsion free as a $\mathbb{Z}$-module, then we have moreover
    \[ (p_k \circ f) (m)= \sum_{\substack{ n \in M \\ \textrm{s.t. } n^k=m}} p_k \circ f(n) \textrm{ and, in particular } p_k \circ (a[m])= (p_k \circ a)[m^k].\]
\end{lemma}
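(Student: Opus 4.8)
The plan is to exploit that $\Lambda=\mbb{Z}[h_1,h_2,\ldots]$ is a polynomial ring on the $h_k$. For each element $f$ of $R[M]$ (resp.\ $R[[M]]$), the values $h_k\circ f$ prescribed in \cref{def.monoid-ring-and-pre-lambda} determine a unique ring homomorphism $\Lambda\to R[M]$ (resp.\ $\Lambda\to R[[M]]$), so the proposed plethystic action map automatically takes values in $\Hom_{\mr{ring}}$, and the only things left to verify are axioms (1) and (2) of \cref{def.pre-lambda}. First I would dispatch axiom (2): since $\Sym^1 M=M$, the defining sum for $(h_1\circ f)(m)$ reduces to the single term $h_1\circ f(m)$, which equals $f(m)$ because $h_1\circ(-)=\mr{id}$ in $R$.

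For axiom (1) I would pass to generating functions. Set $\sigma_z(f):=\sum_{k\geq0}(h_k\circ f)z^k$, and for $a\in R$ set $\Sigma_a(z):=\sum_{k\geq0}(h_k\circ a)z^k\in R[[z]]$. The crucial step is to recognize that the formula of \cref{def.monoid-ring-and-pre-lambda} is exactly equivalent to the product expansion
\[ \sigma_z(f)=\prod_{n\in M}\Sigma_{f(n)}(z[n]); \]
indeed, expanding the right-hand side, the coefficient of $z^k$ evaluated at $m$ is precisely $\sum\prod_n h_{a_n}\circ f(n)$, the sum taken over multisets $\sum a_n n\in\Sym^k M$ with $\prod n^{a_n}=m$. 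For $f\in R[M]$ this is a finite product over the support of $f$ (the other factors being $1$, since $h_j\circ 0=0$ for $j\geq1$ in any pre-$\lambda$-ring); for $f\in R[[M]]$ it converges in the weak topology, the key point being that local finiteness of $M$ forces, for each $m$ and $k$, only finitely many size-$k$ multisets $\sum a_n n$ with $\prod n^{a_n}=m$ (by induction on $k$, using finiteness of the fibres of $M\times M\to M$), so every coefficient is a well-defined element. Granting the product expansion, axiom (1) in $R$ is precisely the identity $\Sigma_a(z)\Sigma_b(z)=\Sigma_{a+b}(z)$, so multiplying the expansions for $f$ and $g$ factor-by-factor gives $\sigma_z(f)\sigma_z(g)=\prod_n\Sigma_{f(n)}(z[n])\Sigma_{g(n)}(z[n])=\prod_n\Sigma_{f(n)+g(n)}(z[n])=\sigma_z(f+g)$, which is axiom (1) for the monoid ring. (Alternatively one verifies axiom (1) for $R[M]$ by a direct bookkeeping on multisets and then transports it to $R[[M]]$ using density of $R[M]$ together with continuity of all the operations in sight, which is noted in the text.)

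The explicit formulas then drop out of the same package. The support of $a[m]$ lies in $\{m\}$, so the product expansion collapses to $\sigma_z(a[m])=\Sigma_a(z[m])=\sum_k(h_k\circ a)[m^k]z^k$, giving $h_k\circ(a[m])=(h_k\circ a)[m^k]$. Applying the ring homomorphism $g\mapsto g\circ(a[m])$, $\Lambda\to R[M]$, to the identity \cref{eq.complete-elementary-inversion}, comparing with that identity applied to $a\in R$ after the substitution $z\mapsto z[m]$, and using uniqueness of inverses of unit power series, yields $e_k\circ(a[m])=(e_k\circ a)[m^k]$; when $R$ is torsion-free the parallel manipulation with \cref{eq.complete-powersum-relation} carried out over $R\otimes\mbb{Q}$ gives $p_k\circ(a[m])=(p_k\circ a)[m^k]$. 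Finally, if $R$ is torsion-free then so are $R[M]$ and $R[[M]]$, hence the Adams operation $\psi_k=p_k\circ$ is additive on them (by the general facts recalled after \cref{def.pre-lambda}); combining additivity — and, for $R[[M]]$, continuity of $\psi_k$ — with the single-term formula produces $(p_k\circ f)(m)=\sum_{n:\,n^k=m}p_k\circ f(n)$.

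The main obstacle is the identification in the second paragraph: matching the combinatorial sum of \cref{def.monoid-ring-and-pre-lambda} with coefficient extraction from $\prod_{n}\Sigma_{f(n)}(z[n])$, and, in the $R[[M]]$ case, checking that local finiteness makes both this product and the defining sums converge. Once that dictionary is in place, the rest is formal manipulation with ring homomorphisms out of the polynomial ring $\Lambda$.
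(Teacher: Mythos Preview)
Your proof is correct and follows essentially the same route as the paper: verify axiom (1) by reducing to the pre-$\lambda$ identity $\Sigma_a\Sigma_b=\Sigma_{a+b}$ in $R$, then read off the formulas for $h_k$, $e_k$, $p_k$ on elements $a[m]$ via \cref{eq.complete-elementary-inversion} and \cref{eq.complete-powersum-relation}, extending $p_k$ by additivity and continuity. Your packaging of the defining formula as the product expansion $\sigma_z(f)=\prod_n\Sigma_{f(n)}(z[n])$ is a clean streamlining of the paper's explicit multi-index computation, but the underlying argument is the same.
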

\begin{proof}
We first show that the plethystic action defined in \cref{def.monoid-ring-and-pre-lambda} is a {pre-$\lambda$} ring structure. The verification is essentially the same as the verification that convolution distributes over addition, but we spell it out for completeness: 
\begin{align*} (h_k \circ (f+g))(m)&=\sum_{\substack{\sum c_{n} n \in \Sym^k M \\ \textrm{ s.t. } \prod n^{c_n}=m}} \prod_{n \in M} h_{c_n}\circ (f(n) + g(n))\\
&=\sum_{\substack{\sum c_{n} n \in \Sym^k M \\ \textrm{ s.t. } \prod n^{c_n}=m}} \prod_{n \in M} \sum_{i+j=c_n} (h_i \circ f(n)) (h_j \circ g(n)) \\
&= \sum_{\substack{\sum a_{n} n \in \Sym^{k_1} M, \sum b_n n \in \Sym^{k_2} M\\ \textrm{ s.t. } k_1+k_2=k, \prod n^{a_n+b_n} = m}} \prod_{n \in M} (h_{a_n} \circ f(n)) (h_{b_n} \circ g(n)) \\
&= \sum_{\substack{m_1 m_2=m \\ k_1+k_2=k}} \left(\sum_{\substack{\sum a_{n} n \in \Sym^{k_1} M \\ \textrm{ s.t. } \prod n^{a_n}=m_1}} \prod_{n \in M} h_{a_n} \circ f(n)\right) \left(\sum_{\substack{\sum b_{n} n \in \Sym^{k_2} M \\ \textrm{ s.t. } \prod n^{b_n}=m_2}} \prod_{n \in M} h_{b_n} \circ g(n)\right) \\
&=
\sum_{k_1+k_2=k} \left((h_{k_1} \circ f)(h_{k_2} \circ g)\right)(m). 
\end{align*}

The formula for $h_k \circ (a[m])$ is immediate from the definition. It follows that, in $R[M][[z]]$, 
\[ \sum_{k \geq 0} \left(h_k \circ(a[m])\right)z^k= \sum_{k \geq 0} (h_k \circ a)[m^k] z^k. \]
Thus, applying \cref{eq.complete-elementary-inversion} to the element $a$ in $R$ and substituting $[m]z$ for $z$, we find the multiplicative inverse of this series is $\sum_{k \geq 0} (-1)^k (e_k \circ a)[m^k] z^k$. But since $R[M]$ is a pre-$\lambda$ ring, by the same logic its multiplicative inverse is also $\sum_{k \geq 0} (-1)^k (e_k \circ (a[m]))z^k$. Thus we obtain the claimed formula for $e_k \circ (a[m])$. 

Now assume $R$ is $\mathbb{Z}$-torsion free. To obtain the description of the plethystic action of $p_k$, note that the given formula is evidently an additive homomorphism on $R[M]$, so to obtain it on $R[M]$ it suffices to verify the formula $p_k \circ a([m])=(p_k \circ a)[m^k]$. This also gives the formula in the locally finite case by continuity, since $R[M]$ is dense in $R[[M]]$. To verify $p_k \circ a([m])=(p_k \circ a)[m^k]$, we argue similarly to the above but invoking \cref{eq.complete-powersum-relation} instead of \cref{eq.complete-elementary-inversion}. 
\end{proof}

\begin{example}\label{example.polynomial-pre-lambda-rings}\hfill
\begin{enumerate}
\item Taking the monoid $M_n=\mbb{Z}_{\geq 0}^n$ and equipping $\mathbb{Z}$ with the pre-$\lambda$ structure of \cref{example.integers-pre-lambda-ring}, from \cref{lemma.monoid-pleth-is-pre-lambda-and-formulas} we obtain natural pre-$\lambda$ structures on $\mbb{Z}[M_n]=\mbb{Z}[t_1, \ldots, t_n]]$ and $\mbb{Z}[[M_n]]=\mbb{Z}[[t_1,\ldots,t_n]$. Note that, since $e_k\circ 1=0$ for all $k \geq 2$, for any monomial $\ul{t}^{\ul{j}}$ we find
\[ e_1 \circ \ul{t}^{\ul{j}}=\ul{t}^{\ul{j}} \textrm{ and } e_k \circ \ul{t}^{\ul{j}}=0\, \forall k\geq2. \]
\item The invariants $\Lambda_n$ and $\Lambda_n^\wedge$ for the action of $\Sigma_n$ on $\mathbb{Z}[t_1,\ldots,t_n]$ and $\mathbb{Z}[[t_1,\ldots,t_n]]$ are preserved by the pre-$\lambda$ structure, so have natural pre-$\lambda$ structures. 
\end{enumerate}
\end{example}

We also have the following evident functoriality for the construction.
\begin{lemma}\label{lemma.monoid-ring-functoriality}
    Suppose $\varphi:R \rightarrow S$ is a map of pre-$\lambda$ rings and $f: M \rightarrow N$ is a map of monoids. Then there is a unique map of pre-$\lambda$ rings $R[M] \rightarrow S[N]$ such that $r[m]\mapsto \varphi(r)[f(m)].$ If $M$ and $N$ are both locally finite and $f$ has finite fibers, then it extends uniquely to a continuous map of pre-$\lambda$ rings 
$R[[M]] \rightarrow S[[N]].$
\end{lemma}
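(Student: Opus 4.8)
The plan is to build the underlying ring homomorphism first, then promote it to a map of pre-$\lambda$ rings, and finally pass to the completions by a density argument.

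First I would produce the ring map. Since $R[M]=R\otimes_{\mbb{Z}}\mbb{Z}[M]$ and $\mbb{Z}[M]$ carries the usual universal property of a monoid ring, the rule $\sum_m r_m[m]\mapsto\sum_m\varphi(r_m)[f(m)]$ visibly defines a ring homomorphism $\Psi\colon R[M]\to S[N]$, and it is the only ring map with the stated effect on the $r[m]$ because these generate $R[M]$ as a ring. To check compatibility with plethysm I would factor $\Psi=\Psi_f\circ\Psi_\varphi$ through $S[M]$, where $\Psi_\varphi$ comes from $(\varphi,\Id_M)$ and $\Psi_f$ from $(\Id_S,f)$, and treat the two factors separately.

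Next, for each fixed $x$, both $g\mapsto\Psi(g\circ x)$ and $g\mapsto g\circ\Psi(x)$ are ring homomorphisms $\Lambda\to S[N]$ (the first because $\Psi$ and $g\mapsto g\circ x$ are), so since $\Lambda=\mbb{Z}[h_1,h_2,\ldots]$ it suffices to verify $\Psi(h_k\circ x)=h_k\circ\Psi(x)$ for all $k$ and all $x$. For $\Psi_\varphi$ this falls straight out of the defining formula in \cref{def.monoid-ring-and-pre-lambda}, using only that $\varphi$ is additive, multiplicative, and commutes with every $h_j\circ$. For $\Psi_f$ I would view $x$ as a function $M\to S$, expand $(h_k\circ\Psi_f x)(n)$ via the defining formula and then apply the addition formula for $h_j\circ$ from \cref{def.pre-lambda} across the finitely many $m$ in the fiber $f^{-1}(n)$; collecting terms, this should reorganize into a sum over multisets $\mu\in\Sym^k M$ with $\prod f_*(\mu)=n$ of $\prod_m h_{\mu(m)}\circ x(m)$, where $f_*\colon\Sym^k M\to\Sym^k N$ is the induced map, which is precisely $(\Psi_f(h_k\circ x))(n)$. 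I expect this last identification — reconciling the fiberwise decomposition of $\Sym^k N$ with $\Sym^k M$ — to be the only step requiring genuine care, although it is purely combinatorial.

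Finally, for the completed statement I would use that the finite-fiber hypothesis makes $(\Psi x)(n):=\sum_{m\in f^{-1}(n)}\varphi(x(m))$ a finite sum for every $x\in R[[M]]$; the same computations (which are finite at each point of $N$) show this is a ring homomorphism $R[[M]]\to S[[N]]$ extending $\Psi$, and it is continuous since its value at $n\in N$ depends on only finitely many input values. Density of $R[M]$ in $R[[M]]$ then gives uniqueness of the continuous extension, and, since $\Psi$ and all the operators $h_k\circ$ are continuous by \cref{lemma.monoid-pleth-is-pre-lambda-and-formulas}, the plethysm identities propagate from the dense subring $R[M]$ to all of $R[[M]]$, so the extension is again a map of pre-$\lambda$ rings.
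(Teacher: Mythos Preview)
Your argument is correct. The paper states this lemma as an ``evident functoriality'' and gives no proof, so there is nothing to compare against; your factorization $\Psi=\Psi_f\circ\Psi_\varphi$ and the combinatorial reconciliation of $\Sym^k M$ with the fibers over $\Sym^k N$ is exactly the natural verification.

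One small imprecision worth noting: in the step for $\Psi_f$ on $R[M]$ you appeal to ``the finitely many $m$ in the fiber $f^{-1}(n)$'', but the finite-fiber hypothesis is only assumed for the completed statement. In the uncompleted case the fiber may be infinite; what makes the addition formula applicable there is that $x\in S[M]$ has finite support, so only finitely many $m$ contribute nontrivially (using $h_j\circ 0=0$ for $j\geq 1$, which follows since the structure map $R\to W(R)$ is a group homomorphism). With that adjustment the argument goes through as written.
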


\subsection{Symmetric power series}\label{ss.symmetric-power-series}
Building on \cref{example.polynomial-pre-lambda-rings}, we may view \[ R[t_1,t_2, \ldots]=R[t_i, i\in \mbb{N}] \]
as $R[M_{\mbb{N}}]$ for $M_{\mbb{N}}=\bigoplus_{\mbb{N}}\mbb{Z}_{\geq 0}$ the free commutative monoid on $\mbb{N}$, and we write 
\[ R[[\ul{t}_{\mbb{N}}]]:=R[[M_\mbb{N}]] \cong \lim_n R[[t_1,\ldots, t_n]]. \]
We write $\Lambda_{R}^\wedge$ for the subset of functions in $R[[\ul{t}_{\mbb{N}}]]$ that are invariant under all automorphisms induced by automorphism of $\mbb{N}$, and $\Lambda_R$ for the subset of $\Lambda_R^\wedge$ consisting of those functions that are zero on $\mbb{Z}_{\geq n}^\mbb{N}$ for some $n\gg 0$. In particular, for $R=\mathbb{Z}$ we recover the symmetric functions $\Lambda=\Lambda_{\mbb{Z}}$ and its natural completion $\Lambda_{\mbb{Z}}^\wedge$ for the filtration by degree. More generally, $\Lambda_R$ is naturally identified with $\Lambda \otimes_{\mbb{Z}} R$, and $\Lambda_R^\wedge$ is identified with the completion of $\Lambda_R$ for the filtration by degree. 

If $R$ is a pre-$\lambda$ ring, then $\Lambda_R$ and $\Lambda_R^\wedge$ are evidently preserved by the pre-$\lambda$ operations of \cref{def.monoid-ring-and-pre-lambda}. Taking $R=\mathbb{Z}$ with the pre-$\lambda$ structure of \cref{example.integers-pre-lambda-ring}, we obtain, in particular, a natural pre-$\lambda$ ring structure on $\Lambda$ itself.

\begin{example}
For $f \in \Lambda$, from \cref{lemma.monoid-pleth-is-pre-lambda-and-formulas} we find that $p_i \circ f=f(t_1^i, t_2^i, \ldots)$. Thus, comparing with \cite[Chapter 8, eq. (8.4) on p. 135]{Macdonald.SymmetricFunctionsAndHallPolynomials}, we have reconstructed the usual plethystic action of $\Lambda$ on itself. 
\end{example}

\subsection{Witt vectors and $\lambda$-rings}\label{ss.witt-and-lambda}
We now recall some standard results about Witt vectors and their relation with pre-$\lambda$ rings as in \cite[\S16]{Hazewinkel.WittVectorsI}. There is a natural coalgebra structure on $\Lambda$, where the additive counit $\Lambda \rightarrow \mbb{Z}$ sends $h_i$ to $0$, $i \geq 0$, the multiplicative counit $\Lambda \rightarrow \mbb{Z}$ sends $h_i$ to $1$, $i \geq 0$, the coaddition $\Lambda \rightarrow \Lambda \otimes_{\mbb{Z}} \Lambda$ is uniquely determined by 
\[ p_i \mapsto p_i \otimes 1 + 1 \otimes p_i, \]
or, equivalently,
\begin{equation}\label{eq.lambda-coadddition-complete-sym} h_k \mapsto \sum_{i+j=k}h_i \otimes h_j, \end{equation}
and the comultiplication $\Lambda \rightarrow \Lambda \otimes_{\mbb{Z}} \Lambda$ is uniquely determined by 
\[ p_i \mapsto p_i \otimes p_i.\]

For $R$ a ring, $W(R):=\Hom_{\mr{Ring}}(\Lambda, R)$ is thus a ring. It is moreover a pre-$\lambda$ ring for the plethystic action of $\Lambda$ by $(f \circ w) (g)= w(g \circ f)$. If $R$ is a pre-$\lambda$ ring, there is a canonical homomorphism of abelian groups
\[ R \rightarrow W(R), r \mapsto (f \mapsto f \circ r). \]
Conversely, to give a pre-$\lambda$ ring structure on $R$ it is equivalent to give such a homomorphism (by comparing the defining property with \cref{eq.lambda-coadddition-complete-sym}) that is a section of the ring homomorphism $W(R) \rightarrow R$ given by evaluation on $h_1=p_1$. 

\begin{definition}
    A pre-$\lambda$ ring $R$ is a \emph{$\lambda$-ring} if the additive homomorphism $R \rightarrow W(R)$ defining the pre-$\lambda$ structure is a morphism of pre-$\lambda$ rings. 
\end{definition}

\begin{example}
    For any ring $R$, $W(R)$ is a $\lambda$-ring. It is the cofree $\lambda$-ring on $R$, i.e., for any $\lambda$-ring $S$, $\Hom_{\lambda\mr{-Ring}}(S,W(R))=\Hom_{\mr{Ring}}(S,R)$ via evaluation at $h_1$. 
\end{example}

It follows, in particular, that if $R$ is a $\lambda$-ring then $f \circ (g \circ r)=(f \circ g) \circ r$ and the Adams operations $p_k \circ$ are $\lambda$-ring homomorphisms (note that a $\lambda$-ring homomorphism is the same thing as a pre-$\lambda$ ring homomorphism between $\lambda$-rings). Using the definition of the algebra structure and plethysm on $W(R)$, we have:

\begin{proposition}\label{prop.lambda-ring-adams-criterion}
    A $\mbb{Z}$-torsion free  pre-$\lambda$ ring $R$ is a $\lambda$-ring if and only if
    \begin{enumerate}
        \item For all $k,j$, $p_k \circ (p_j \circ r)= p_{jk} \circ r$
        \item For all $k$, $p_k \circ: R \rightarrow R$ is a ring homomorphism. 
    \end{enumerate}
\end{proposition}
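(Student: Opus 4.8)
The plan is to unwind the definition of $\lambda$-ring through the canonical map $\Delta\colon R\to W(R)$, $r\mapsto(f\mapsto f\circ r)$, and then exploit torsion-freeness to replace the condition ``$\Delta$ is a morphism of pre-$\lambda$ rings'' by the two conditions on the Adams operations $\psi^k:=p_k\circ$. The workhorse is the \emph{ghost map} $w\colon W(R)\to\prod_{k\ge 1}R$, $\phi\mapsto(\phi(p_k))_{k\ge1}$. First I would record three facts, all immediate from the coalgebra structure on $\Lambda$ recalled in \cref{ss.witt-and-lambda} together with torsion-freeness: (a) $w$ is a ring homomorphism into the product ring, since the coaddition and comultiplication of $\Lambda$ are ``diagonal'' on each $p_i$, and the unit $1_{W(R)}$ is the ring map $\Lambda\to\mathbb{Z}\to R$ with $p_i\mapsto 1$; (b) $w$ is injective, because any ring homomorphism $\Lambda\to R$ is determined by its values on $p_1,p_2,\dots$ after extending uniquely to $\Lambda_{\mathbb{Q}}=\mathbb{Q}[p_1,p_2,\dots]\to R_{\mathbb{Q}}$ and using $R\hookrightarrow R_{\mathbb{Q}}$; consequently $W(R)$ is itself torsion-free, so (c) two ring homomorphisms $\Lambda\to W(R)$ agree as soon as they agree on every $p_k$. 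I will also use the single plethystic identity $p_j\circ p_k=p_{jk}$ in $\Lambda$.

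For ($\Rightarrow$): if $R$ is a $\lambda$-ring then $\Delta$ is a morphism of pre-$\lambda$ rings, hence in particular a unital ring homomorphism; evaluating $\Delta(rs)=\Delta(r)\Delta(s)$ and $\Delta(1_R)=1_{W(R)}$ at $p_k$ and using (a) gives $\psi^k(rs)=\psi^k(r)\psi^k(s)$ and $\psi^k(1_R)=1_R$, which is (2); and evaluating the plethystic compatibility $\Delta(p_j\circ r)=p_j\circ\Delta(r)$ at $p_k$, where the right-hand side becomes $\Delta(r)(p_k\circ p_j)=\Delta(r)(p_{jk})=p_{jk}\circ r$, gives $p_k\circ(p_j\circ r)=p_{jk}\circ r$, which is (1).

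For ($\Leftarrow$): assume (1) and (2), noting also $\psi^1=h_1\circ=\mathrm{id}$ and that each $\psi^k$ is additive (recorded after \cref{def.pre-lambda}); I must show $\Delta$ is a ring homomorphism commuting with the plethystic action. For the ring homomorphism part, observe $w\circ\Delta\colon R\to\prod_k R$ is $r\mapsto(\psi^k(r))_k$, which is a unital ring homomorphism precisely because each $\psi^k$ is one; since $w$ is an injective ring homomorphism, $\Delta$ is a unital ring homomorphism. For the plethystic part, fix $r\in R$ and compare the maps $f\mapsto\Delta(f\circ r)$ and $f\mapsto f\circ\Delta(r)$ from $\Lambda$ to $W(R)$: the first is a ring homomorphism because $f\mapsto f\circ r$ is one and $\Delta$ is now known to be one, the second is a ring homomorphism by the definition of the pre-$\lambda$ structure on $W(R)$, so by (c) it suffices to check they agree on each $p_k$. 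Evaluating the two resulting elements of $W(R)$ at $p_j$ and using (a): the first gives $p_j\circ(p_k\circ r)$, the second gives $\Delta(r)(p_j\circ p_k)=\Delta(r)(p_{jk})=p_{jk}\circ r$, and these are equal by (1). Hence $\Delta$ is a morphism of pre-$\lambda$ rings, so $R$ is a $\lambda$-ring.

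The step most in need of care is the handling of the ghost map: verifying that $w$ is multiplicative and unital from the comultiplication $p_i\mapsto p_i\otimes p_i$ rather than merely additive, correctly identifying the unit of $W(R)$, and being disciplined about applying the ``agree on the $p_k$'' principle only to \emph{ring} homomorphisms with torsion-free target. Beyond that the argument is purely formal, with $p_j\circ p_k=p_{jk}$ as the only computational input; there is no real obstacle.
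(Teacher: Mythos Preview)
Your proof is correct and is exactly the argument the paper has in mind: the paper does not write out a proof but only prefaces the proposition with ``Using the definition of the algebra structure and plethysm on $W(R)$, we have,'' and your ghost-map argument is precisely the way to cash this out. The only thing to be slightly careful with (and you already flag it) is that fact (c) really does require $W(R)$ torsion-free, which you correctly deduce from the injectivity of $w$ into $\prod_k R$; everything else is formal.
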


In particular, using this criterion we obtain 

\begin{lemma}
    If $R$ is a $\mathbb{Z}$-torsion free $\lambda$-ring and $M$ is a commutative monoid, then $R[M]$ is a $\lambda$-ring. If $M$ is locally finite, then $R[[M]]$ is a $\lambda$-ring. In particular, if $R$ is a $\lambda$-ring then $\Lambda_R$ and $\Lambda_R^\wedge$ are both $\lambda$-rings.   
\end{lemma}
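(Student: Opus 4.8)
The plan is to reduce everything to the torsion-free criterion of \cref{prop.lambda-ring-adams-criterion}, using that $R[M]$ (resp.\ $R[[M]]$) is already known to be a pre-$\lambda$ ring by \cref{lemma.monoid-pleth-is-pre-lambda-and-formulas}. First I would observe that if $R$ is torsion-free as a $\mbb{Z}$-algebra, then so is $R[M]$: an element $\sum a_m[m]$ is killed by a positive integer $N$ iff each $a_m$ is, since the $[m]$ are an $R$-basis of $R[M]$. The same holds for $R[[M]]$, now viewing its elements as arbitrary $R$-valued functions on $M$. So in both cases we are entitled to apply \cref{prop.lambda-ring-adams-criterion}, and it suffices to verify conditions (1) and (2) there for the Adams operations $p_k\circ$.

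Next I would use the explicit formula for $p_k\circ$ on a monoid ring provided by \cref{lemma.monoid-pleth-is-pre-lambda-and-formulas}: since $R$ is torsion-free, $(p_k\circ f)(m)=\sum_{n\colon n^k=m} p_k\circ f(n)$, and in particular $p_k\circ(a[m])=(p_k\circ a)[m^k]$. For condition (1), $p_k\circ(p_j\circ r)=p_{jk}\circ r$: since $p_k\circ$ and $p_j\circ$ are additive and $R[M]$ is spanned over $\mbb{Z}$ by the $a[m]$ (with $R$ torsion-free we may even work rationally), it is enough to check on $a[m]$, where $p_k\circ(p_j\circ(a[m]))=p_k\circ((p_j\circ a)[m^j])=(p_k\circ(p_j\circ a))[m^{jk}]=(p_{jk}\circ a)[m^{jk}]=p_{jk}\circ(a[m])$, using that $R$ itself is a $\lambda$-ring so the identity holds in $R$. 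For condition (2), that $p_k\circ$ is a ring homomorphism: additivity is built into being a pre-$\lambda$ operation; multiplicativity can be checked on products $a[m]\cdot b[n]=(ab)[mn]$, where $p_k\circ((ab)[mn])=(p_k\circ(ab))[(mn)^k]=(p_k\circ a)(p_k\circ b)[m^k][n^k]=(p_k\circ(a[m]))(p_k\circ(b[n]))$, again using that $p_k\circ$ is a ring homomorphism on $R$. Since $p_k\circ$ preserves the unit $[1_M]$ (as $p_k\circ[1_M]=[1_M]$), this gives condition (2).

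For the locally finite case $R[[M]]$, I would not re-prove the identities from scratch but instead invoke continuity: by the remark preceding \cref{lemma.monoid-pleth-is-pre-lambda-and-formulas}, $R[M]$ is dense in $R[[M]]$ for the weak topology, and both the ring operations and the plethystic actions $f\circ(-)$ are continuous. Hence the two identities of \cref{prop.lambda-ring-adams-criterion}, being closed conditions already verified on the dense subring $R[M]$, extend to all of $R[[M]]$; therefore $R[[M]]$ is a $\lambda$-ring as well. Finally, applying this with $M=M_{\mbb{N}}=\bigoplus_{\mbb{N}}\mbb{Z}_{\geq 0}$ and its automorphisms, and noting from \cref{ss.symmetric-power-series} that $\Lambda_R$ and $\Lambda_R^\wedge$ are exactly the subrings of $R[M_{\mbb{N}}]$ and $R[[M_{\mbb{N}}]]$ fixed by all automorphisms coming from $\mr{Aut}(\mbb{N})$, and that these subrings are preserved by all pre-$\lambda$ operations, we conclude that $\Lambda_R$ and $\Lambda_R^\wedge$ are $\lambda$-rings: the Adams identities hold there because they hold in the ambient ring.

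I expect the only genuine subtlety to be the appeal to continuity in the $R[[M]]$ case — one must make sure that the relations $p_k\circ p_j\circ = p_{jk}\circ$ and the multiplicativity of $p_k\circ$ really are closed in the weak topology, which follows because evaluation at each $m\in M$ is continuous and each relation is an equality of continuous maps checked coordinatewise. Everything else is a direct computation on the basis $\{a[m]\}$ reducing to the corresponding fact in $R$, which holds by hypothesis. (Strictly, condition (1) of \cref{prop.lambda-ring-adams-criterion} as checked on $a[m]$ uses additivity over $\mbb{Z}$, which suffices since $\{[m]\}$ generates $R[M]$ as an $R$-module and $p_k\circ$ is $\mbb{Z}$-linear; no rational denominators are actually needed.)
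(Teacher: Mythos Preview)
Your approach is correct and matches the paper's: reduce to the Adams-operation criterion of \cref{prop.lambda-ring-adams-criterion} via the explicit power-sum formulas of \cref{lemma.monoid-pleth-is-pre-lambda-and-formulas}, then obtain the $\Lambda_R$ and $\Lambda_R^\wedge$ cases as sub-pre-$\lambda$ rings of a $\lambda$-ring. One small slip worth correcting: $\Lambda_R$ is \emph{not} contained in $R[M_{\mbb{N}}]$ (for instance $h_1=\sum_i t_i$ is not finitely supported on $M_{\mbb{N}}$), so both $\Lambda_R$ and $\Lambda_R^\wedge$ should be viewed as sub-pre-$\lambda$ rings of $R[[M_{\mbb{N}}]]$; since $M_{\mbb{N}}$ is locally finite this does not affect your argument.
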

\begin{proof}
    For $R[M]$ and $R[[M]]$, the criteria of \cref{prop.lambda-ring-adams-criterion} follow immediately from \cref{lemma.monoid-pleth-is-pre-lambda-and-formulas}. The result then follows for $\Lambda_R$ and $\Lambda_R^\wedge$ since any sub-pre-$\lambda$ ring of a $\lambda$-ring is a $\lambda$-ring. 
\end{proof}

\subsection{Plethystic exponential and logarithm}
Let $R$ be a pre-$\lambda$ ring. For 
\[ A= R[[\ul{t}_{\mbb{N}}]] \textrm{ or } \Lambda_R^\wedge,\]
let $\mf{m}_A$ be the ideal of elements with constant term $0$. For $f \in \mf{m}_A$, we define 
\[ \Exp_{\sigma}(f)= \sum_{k \geq 0} h_k \circ f, \]
where the sum evidently converges. We have

\begin{lemma}\label{lemma.expsigma-group-iso}
    $\Exp_{\sigma}$ is a continuous group isomorphism from $\mf{m}_A$, equipped with addition, to $1+\mf{m}_A$, equipped with multiplication. 
\end{lemma}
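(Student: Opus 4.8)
The plan is to verify directly that $\Exp_\sigma$ is a bijective group homomorphism $\mf{m}_A \to 1 + \mf{m}_A$, with continuity following from the fact that all the plethystic operations are continuous for the weak topology. The homomorphism property is the heart of the matter: for $f, g \in \mf{m}_A$ I want $\Exp_\sigma(f+g) = \Exp_\sigma(f)\Exp_\sigma(g)$. Expanding the left-hand side as $\sum_k h_k \circ (f+g)$ and applying the addition axiom in \cref{def.pre-lambda} --- namely $h_k \circ (f+g) = \sum_{i+j=k} (h_i \circ f)(h_j \circ g)$ --- regroups the sum as $\left(\sum_i h_i \circ f\right)\left(\sum_j h_j \circ g\right) = \Exp_\sigma(f)\Exp_\sigma(g)$, where the rearrangement of the (convergent) double sum is justified by completeness of $A$ for the degree/weak filtration. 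Note $\Exp_\sigma(0) = h_0 \circ 0 = 1$, and since $f \in \mf{m}_A$ has constant term $0$ while $h_0 \circ f = 1$ and $h_k \circ f \in \mf{m}_A$ for $k \geq 1$, the image $\Exp_\sigma(f)$ indeed lies in $1 + \mf{m}_A$.

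For injectivity and surjectivity I would construct the inverse explicitly. The obvious candidate is $\Log_\sigma$, built from the relation \cref{eq.complete-elementary-inversion} --- or rather its logarithmic refinement \cref{eq.complete-powersum-relation} --- so that, after passing to $\Lambda_{R\otimes\mbb{Q}}^\wedge$ if $R$ is torsion-free, one has $\Log_\sigma(1 + g) = \sum_{k \geq 1} \frac{(-1)^{k-1}}{k}\, p_k \circ (\text{something built from } g)$; more robustly, since $\Exp_\sigma$ lowers nothing and acts as the identity on the degree-$1$ part, one can solve $\Exp_\sigma(f) = 1 + g$ degree by degree: writing $f = f_1 + f_2 + \cdots$ and $g = g_1 + g_2 + \cdots$ in homogeneous components, the degree-$d$ part of $\Exp_\sigma(f)$ equals $f_d$ plus a polynomial expression in $f_1, \dots, f_{d-1}$ (coming from the $h_k \circ$ with $k \geq 2$, which only mix lower-degree pieces because $h_k \circ$ raises degree by a factor of $k$), so $f_d$ is determined recursively by $g_d$ and the previously determined $f_1, \dots, f_{d-1}$. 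This gives a two-sided inverse and hence bijectivity. Continuity of $\Exp_\sigma$ and of its inverse is then immediate since each is given by a convergent sum/recursion that is continuous in each bounded-degree truncation.

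The main obstacle --- really the only subtle point --- is making the interchange of summation in $\Exp_\sigma(f+g) = \sum_k \sum_{i+j=k}(h_i\circ f)(h_j \circ g) = \bigl(\sum_i h_i \circ f\bigr)\bigl(\sum_j h_j \circ g\bigr)$ rigorous in $A$. For $A = \Lambda_R^\wedge$ with $R$ a filtered situation this is the standard completeness argument: $h_i \circ f$ has all terms of degree $\geq i$ when $f \in \mf{m}_A$, so for fixed total degree $d$ only finitely many pairs $(i,j)$ contribute, and both sides have the same degree-$d$ component. For $A = R[[\ul t_\mbb{N}]]$ one argues the same way using that $(h_k \circ f)(\ul t^{\ul a})$ only depends on finitely much data and $h_i \circ f$ is "supported in degrees $\geq i$" in the appropriate sense, so the weak topology makes the double sum unconditionally convergent with the expected value. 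I do not expect any genuine difficulty beyond carefully citing \cref{lemma.monoid-pleth-is-pre-lambda-and-formulas} and \cref{def.pre-lambda} for these bookkeeping facts.
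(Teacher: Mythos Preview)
Your proof is correct and the homomorphism argument is identical to the paper's. For bijectivity the paper takes a slightly more concrete route: it writes $f=\sum_{\ul j} r_{\ul j}\,\ul t^{\,\ul j}$, expands $\Exp_\sigma(f)=\prod_{\ul j}\sum_{k\ge 0}(h_k\circ r_{\ul j})\,\ul t^{\,k\ul j}$, and observes (using $h_1\circ r_{\ul j}=r_{\ul j}$) that every element of $1+\mf m_A$ admits a unique such product expansion, recovering the $r_{\ul j}$ monomial by monomial. Your degree-by-degree recursion is the same idea organised by total degree rather than by monomial; the paper's version has the advantage of giving the Euler-product formula explicitly, while yours makes the filtered/complete nature of the argument more transparent. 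One minor point: your aside about $\Log_\sigma$ as ``the obvious candidate'' is circular here, since in the paper $\Log_\sigma$ is \emph{defined} as the inverse of $\Exp_\sigma$ only after this lemma is proved --- your recursion is the actual content, and you should drop the $\Log_\sigma$ sentence.
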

\begin{proof}
    We first treat the case $A=R[[\ul{t}_{\mbb{N}}]]$. 
    That it is a homomorphism follows from $h_k(x+y)=\sum_{i+j=k} h_i(x)h_j(y)$. We then use \cref{lemma.monoid-pleth-is-pre-lambda-and-formulas} to compute
    \[ \Exp_{\sigma}\left(\sum_{\ul{j}} r_{\ul{j}} \ul{t}^{\ul{j}}\right) = \prod_{\ul{j}} \sum_{k\geq 0} h_k(r_{\ul{j}}) \ul{t}^{k\ul{j}}\]
    Since $h_1(r_{\ul{j}})=r_{\ul{j}}$, any element of $1+ \mf{m}_A$ has a unique expansion of the form on the right, so we deduce that $\Exp_\sigma$ is a bijection. It commutes with the action of $\Aut(\mbb{N})$, thus we also obtain the case of $\Lambda_R^\wedge$. 
\end{proof}

\begin{example}\label{example.how-to-expand-expsigmah1}
    Suppose $X \in R$. Then we compute 
    \[ \Exp_{\sigma}(Xh_1)=\Exp_{\sigma}(Xt_1+Xt_2+\ldots)=\prod_{i \geq 1} \Exp_\sigma(Xt_i) =\prod_{i\geq 1}\left(\sum_{k \geq 0} (h_k \circ X)t_i^k\right) \]
    where in the second equality we use \cref{lemma.expsigma-group-iso} and in the third equality we use \cref{lemma.monoid-pleth-is-pre-lambda-and-formulas} to obtain $h_k\circ (Xt_i)=(h_k \circ X)t_i^k$. Expanding the product, we obtain
    \begin{equation}\label{eq.exp-symmetric-monomial-expansion} \Exp_\sigma(Xh_1)=\sum_{\tau} (h_{\tau}\circ X) m_\tau. \end{equation}
\end{example}

From \cref{eq.complete-elementary-inversion}, we obtain
\begin{lemma}\label{lemma.exponential-minus} For $x \in \mathfrak{m}_A$, 
\[ \Exp_{\sigma}(-x)= \sum_{i=0}^\infty (-1)^i e_i \circ x. \]
\end{lemma}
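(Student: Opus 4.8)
The plan is to deduce this directly from \cref{eq.complete-elementary-inversion} by pushing that identity forward along the plethystic action at $x$ and then specializing the auxiliary variable to $1$. Concretely, since $A\in\{R[[\ul t_{\mbb N}]],\ \Lambda_R^\wedge\}$ is itself a pre-$\lambda$ ring and $x\in\mf m_A\subseteq A$, the plethystic structure map gives a ring homomorphism $\Lambda\to A$, $f\mapsto f\circ x$; extending it coefficientwise to a ring homomorphism $\Lambda[[z]]\to A[[z]]$ and applying it to \cref{eq.complete-elementary-inversion} yields
\[ \left(\sum_{k\ge 0}(h_k\circ x)\,z^k\right)\left(\sum_{k\ge 0}(-1)^k(e_k\circ x)\,z^k\right)=1 \quad\text{in } A[[z]]. \]
I would then substitute $z=1$ to obtain $\Exp_\sigma(x)\cdot\bigl(\sum_{k\ge 0}(-1)^k e_k\circ x\bigr)=1$, and finally invoke \cref{lemma.expsigma-group-iso}: since $\Exp_\sigma$ is a group isomorphism from $(\mf m_A,+)$ to $(1+\mf m_A,\times)$, we have $\Exp_\sigma(-x)=\Exp_\sigma(x)^{-1}$ in $1+\mf m_A$, and $\sum_{k\ge 0}(-1)^k e_k\circ x$ (which lies in $1+\mf m_A$ because $e_0\circ x=1$) is exactly that inverse by the previous identity; hence $\Exp_\sigma(-x)=\sum_{i\ge 0}(-1)^i e_i\circ x$.

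The one step that needs genuine care, and which I expect to be the \textbf{main obstacle}, is the legitimacy of setting $z=1$: specialization at $z=1$ is not defined on all of $A[[z]]$. The point is that when $x$ has zero constant term, the definition of the plethystic action (cf. \cref{def.monoid-ring-and-pre-lambda} and the formula $h_k\circ 0=0$ for $k\ge 1$) forces every monomial occurring in $h_k\circ x$ and in $e_k\circ x$ to have degree $\ge k$; this is precisely the convergence statement underlying the definition of $\Exp_\sigma$ in \cref{lemma.expsigma-group-iso}. Consequently both series above, and also their product (a Cauchy product, whose $z^n$-coefficient has all monomials of degree $\ge n$), lie in the subring of $A[[z]]$ consisting of $\sum a_k z^k$ with $a_k\to 0$ in the weak topology of $A$. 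On that subring the assignment $\sum a_k z^k\mapsto\sum_k a_k$ is a well-defined continuous ring homomorphism, so it transports the displayed identity in $A[[z]]$ to the claimed identity in $A$, with $\sum_k h_k\circ x$ recognized as $\Exp_\sigma(x)$ by definition.

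This argument is parallel to, and slightly simpler than, the one already carried out in the proof of \cref{lemma.monoid-pleth-is-pre-lambda-and-formulas}, where \cref{eq.complete-elementary-inversion} was pushed forward and $z$ was replaced by $[m]z$; here we instead push forward along $f\mapsto f\circ x$ and set $z=1$. As an alternative to the last step, one could avoid \cref{lemma.expsigma-group-iso} entirely by running the same substitution with $-x$ in place of $x$ and comparing, but routing through the group isomorphism is cleaner and makes the role of the identity $e_0=1$ transparent.
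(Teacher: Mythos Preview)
Your proof is correct and follows essentially the same route as the paper, which simply cites \cref{eq.complete-elementary-inversion}; you have carefully spelled out the convergence justification for setting $z=1$ and the invocation of \cref{lemma.expsigma-group-iso} that the paper leaves implicit.
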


From \cref{eq.complete-powersum-relation}, we obtain
\begin{lemma}\label{lemma.exponential-power-sum}
If $R$ is torsion-free then, for $x \in \mathfrak{m}_A$,  
\[\Exp_\sigma(x)=\prod_{i \geq 1} \exp\left( \frac{p_i \circ x}{i} \right) \textrm{ where $\exp(t):=\sum_{i\geq 0} \frac{t^i}{i!}$.} \]
\end{lemma}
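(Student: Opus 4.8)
The plan is to deduce the identity by feeding a single universal identity through the plethystic action. The only genuine input is the classical expansion
\[ \sum_{k \geq 0} h_k \;=\; \prod_{i \geq 1}\exp\!\left(\tfrac{p_i}{i}\right) \qquad \text{in } \Lambda_{\mbb{Q}}^{\wedge}, \]
which is \cref{eq.complete-powersum-relation} specialized at $z = 1$ (the right-hand product converges because its $z^N$-coefficient is homogeneous of degree $N$); equivalently, it is the familiar degreewise formula expressing $h_N$ in terms of power sums.

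Next I would set up the homomorphism through which to apply it. Fix $x \in \mf{m}_A$, where $A = R[[\ul{t}_{\mbb{N}}]]$ or $\Lambda_R^{\wedge}$. Since $R$ is torsion-free it embeds into $R_{\mbb{Q}}$, which carries a pre-$\lambda$ structure extending that of $R$, and correspondingly $A$ embeds into the pre-$\lambda$ ring $A_{\mbb{Q}} := R_{\mbb{Q}}[[\ul{t}_{\mbb{N}}]]$ (resp. $\Lambda_{R_{\mbb{Q}}}^{\wedge}$); it therefore suffices to prove the identity in $A_{\mbb{Q}}$. The pre-$\lambda$ structure of $A_{\mbb{Q}}$ gives a ring homomorphism $\Lambda_{\mbb{Q}} \to A_{\mbb{Q}}$, $g \mapsto g \circ x$, and I claim it extends to a continuous ring homomorphism $\Lambda_{\mbb{Q}}^{\wedge} \to A_{\mbb{Q}}$. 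Indeed, by \cref{lemma.monoid-pleth-is-pre-lambda-and-formulas} the support of $p_k \circ x$ is contained in the set of $k$-th powers of elements of the support of $x$, so, as $x$ has vanishing constant term, $p_k \circ x$ has $\mf{m}_{A_{\mbb{Q}}}$-adic valuation at least $k$; writing a homogeneous $g \in \Lambda_{\mbb{Q}}$ of degree $N$ as a $\mbb{Q}$-linear combination of the $p_{\tau}$ with $|\tau| = N$ then shows $g \circ x$ has valuation at least $N$, so the map carries Cauchy sequences to Cauchy sequences and extends as claimed.

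Finally, I would apply $g \mapsto g \circ x$ to the displayed identity. On the left, $\sum_k h_k \mapsto \sum_k h_k \circ x = \Exp_{\sigma}(x)$ by the definition of $\Exp_{\sigma}$. On the right, because $g \mapsto g \circ x$ is a continuous $\mbb{Q}$-algebra homomorphism it commutes with the formal exponential series and with the (weakly convergent) infinite product, so $\exp(p_i/i) \mapsto \exp\bigl(\tfrac{p_i \circ x}{i}\bigr)$ and hence $\prod_{i} \exp(p_i/i) \mapsto \prod_{i} \exp\bigl(\tfrac{p_i \circ x}{i}\bigr)$; equating images gives the claim. I expect the topological bookkeeping of the previous paragraph --- extending $g \mapsto g \circ x$ across the completion of $\Lambda_{\mbb{Q}}$ and justifying that it passes through the infinite product and the exponential --- to be the only real obstacle, but it reduces entirely to the valuation estimate $\mr{val}(p_k \circ x) \geq k$, which is immediate from the monoid-ring formula in \cref{lemma.monoid-pleth-is-pre-lambda-and-formulas}. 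Alternatively, one can avoid completing $\Lambda_{\mbb{Q}}$ altogether by retaining the formal variable $z$ of \cref{eq.complete-powersum-relation}, applying $g \mapsto g \circ x$ coefficientwise in $z$, and specializing $z$ only at the end; the convergence needed for that specialization is the same estimate.
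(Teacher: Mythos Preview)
Your proposal is correct and takes essentially the same approach as the paper, which simply asserts that the lemma follows from \cref{eq.complete-powersum-relation}. You have spelled out the details the paper leaves implicit: passing to $R_{\mbb{Q}}$, verifying that $g \mapsto g \circ x$ extends continuously to $\Lambda_{\mbb{Q}}^{\wedge}$ via the valuation estimate on $p_k \circ x$, and checking that the homomorphism passes through the exponential series and the infinite product.
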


\begin{definition}
We write $\Log_{\sigma}: 1+\mf{m}_A \rightarrow \mf{m}_A$ for the inverse map to $\Exp_\sigma$, which exists by \cref{lemma.expsigma-group-iso}. 
\end{definition}

\begin{lemma}\label{lemma.lambda-log}
If $R$ is a $\lambda$-ring then, for $x\in 1 + \mf{m}_A$, 
\[ \Log_\sigma(x) = \sum \ell_i \circ (x-1) \textrm{ for } \ell_i=\frac{-1}{i}\sum_{d|i} \mu(d) (-p_d)^{i/d} \in \Lambda\] 
\end{lemma}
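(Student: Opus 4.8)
The plan is to deduce the statement for an arbitrary $\lambda$-ring $R$ from a single computation in $\Lambda^\wedge$, using that $\Exp_\sigma$ is compatible with plethystic substitution. Write $A = \Lambda_R^\wedge$, a $\lambda$-ring by \cref{ss.witt-and-lambda}. The key point is the \emph{substitution formula}
\[ \Exp_\sigma(F \circ y) = \Exp_\sigma(F)\circ y \qquad (F \in \Lambda^\wedge,\ y \in \mf{m}_A); \]
for $F \in \Lambda$ this is associativity of plethysm in the $\lambda$-ring $A$, namely $h_k \circ (F \circ y) = (h_k \circ F)\circ y$ summed over $k$, and the general case follows by continuity of the plethystic action for the degree filtration. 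Granting that $L := \Log_\sigma(1 + h_1)$ lies in the \emph{integral} ring $\Lambda^\wedge$ (so that $L \circ y$ is defined over any $R$), the lemma follows at once: for $x \in 1 + \mf{m}_A$ and $y = x-1$ we get $\Exp_\sigma(L \circ y) = \Exp_\sigma(L)\circ y = (1 + h_1)\circ y = 1 + y = x$, using that $g \mapsto g \circ y$ is a ring homomorphism (so $1 \circ y = 1$) and $h_1 \circ y = y$ by \cref{def.pre-lambda}; hence $\Log_\sigma(x) = L \circ(x-1) = \sum_i \ell_i \circ (x-1)$, where the $\ell_i$ are the homogeneous components of $L$ and the last equality uses continuity and additivity of plethysm.

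It remains to compute $L = \Log_\sigma(1 + h_1)$ and to identify its components with the stated $\ell_i$. Since $\Lambda$ is torsion-free I would work in $\Lambda_{\mbb{Q}}^\wedge$. By \cref{lemma.exponential-power-sum}, $\Exp_\sigma(L) = 1 + h_1 = 1 + p_1$ becomes, after taking ordinary logarithms, $\sum_{k \geq 1} \tfrac{1}{k}\psi_k(L) = \log(1 + p_1)$, where $\psi_k = p_k \circ (-)$. Applying $\psi_m$ and using $\psi_m\psi_k = \psi_{mk}$ and that each $\psi_m$ is a continuous ring homomorphism (\cref{prop.lambda-ring-adams-criterion}, since $\Lambda_{\mbb{Q}}^\wedge$ is a $\lambda$-ring), hence commutes with $\log(1 + (-))$, together with $\psi_m(p_1) = p_m$ (\cref{ss.symmetric-power-series}), reindexing gives $\sum_{m \mid n} \tfrac{1}{n}\psi_n(L) = \tfrac{1}{m}\psi_m(\log(1 + p_1))$. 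Möbius inversion over the divisibility poset — a finite sum in each degree, since $\psi_n$ multiplies degrees by $n$ — yields $L = \sum_{n\geq 1}\tfrac{\mu(n)}{n}\psi_n(\log(1+p_1)) = \sum_{n \geq 1}\tfrac{\mu(n)}{n}\log(1 + p_n)$. Expanding $\log(1 + p_n) = \sum_{j \geq 1}\tfrac{(-1)^{j-1}}{j}p_n^j$ and collecting the terms of degree $i = nj$ produces $\ell_i = \tfrac{1}{i}\sum_{d \mid i}\mu(d)(-1)^{i/d-1}p_d^{i/d} = \tfrac{-1}{i}\sum_{d\mid i}\mu(d)(-p_d)^{i/d}$, as claimed.

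For the integrality input $\ell_i \in \Lambda$ used above: applying \cref{lemma.expsigma-group-iso} with $R = \mbb{Z}$, $\Exp_\sigma$ is already a bijection $\mf{m}_{\Lambda^\wedge} \to 1 + \mf{m}_{\Lambda^\wedge}$ on the integral ring, so $L = \Log_\sigma(1 + h_1) \in \mf{m}_{\Lambda^\wedge}$; since $\Lambda^\wedge \hookrightarrow \Lambda_{\mbb{Q}}^\wedge$ is a continuous $\lambda$-homomorphism it intertwines $\Log_\sigma$, so this $L$ coincides with the one computed over $\mbb{Q}$, and extracting homogeneous parts gives $\ell_i \in \Lambda$. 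I expect the main friction to be purely in the bookkeeping of the topological framework of \cref{ss.symmetric-power-series} — that the $\Lambda^\wedge$-plethystic action on $\mf{m}_A$ is defined and continuous for the degree filtration, that $\Exp_\sigma$, $\Log_\sigma$, and all displayed sums converge, and that Möbius inversion is legitimate degree-by-degree — rather than in any substantive algebra. (If one also wants the statement for $A = R[[\ul{t}_{\mbb{N}}]]$ with $R$ not torsion-free, one first reduces to the free $\lambda$-ring on a basis of $\mf{m}_A$ by naturality in $R$, as that ring is torsion-free.)
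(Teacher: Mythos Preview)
Your proof is correct and supplies precisely the argument that the paper outsources to the references (Stanley, Getzler--Kapranov, \cite{BergstromDiaconuPetersenWesterland.HyperellipticCurvesTheScanningMapAndMomentsOfFamiliesOfQuadraticLFunctions}): the paper's proof is a one-line citation of the plethystic inversion formula, and your M\"obius-inversion computation of $\Log_\sigma(1+h_1)$ together with the substitution step $\Exp_\sigma(L\circ y)=\Exp_\sigma(L)\circ y$ is the standard derivation that those references contain. The only thing worth tightening is the topological bookkeeping you already flag --- the extension of plethysm from $\Lambda$ to $\Lambda^\wedge$ acting on $\mf{m}_A$ --- but this is routine given the degree filtration, exactly as you say.
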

\begin{proof} This follows from a plethystic inversion formula of Stanley \cite[Exercise 7.88e on p.479]{Stanley.EnumerativeCombinatoricsVol2} (see also \cite[Prop 8.6]{GetzlerKapranov.ModularOperads} and \cite[Prop 2.3.7]{BergstromDiaconuPetersenWesterland.HyperellipticCurvesTheScanningMapAndMomentsOfFamiliesOfQuadraticLFunctions}).
\end{proof}

\begin{remark}
Everything in the present section works for symmetric power series rings in arbitrary variables over pre-$\lambda$ rings, but we only need the countable case. In fact, the plethystic exponential and logarithm can be treated more generally for filtered pre-$\lambda$ rings, extending the case of filtered $\lambda$-rings in \cite{BergstromDiaconuPetersenWesterland.HyperellipticCurvesTheScanningMapAndMomentsOfFamiliesOfQuadraticLFunctions}.
\end{remark}

\subsection{Powers}\label{ss.powers}
Using the exponential we can also define powers. As in the previous subsection, we let 
\[ A= R[[\ul{t}_{\mbb{N}}]] \textrm{ or } \Lambda_R^\wedge.\]

\begin{definition}\label{def.powers}
    For $f \in 1+ \mf{m}_A$ and $N \in A$,
    \[ f^N:=\Exp_{\sigma}(N\Log_{\sigma}(f)). \]
\end{definition}

\begin{lemma}\label{lemma.powers-mult}
$f^{N_1+N_2}=f^{N_1}f^{N_2}$ and $(fg)^N=f^Ng^N$.
\end{lemma}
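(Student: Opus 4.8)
The plan is to reduce both identities to the single structural fact recorded in \cref{lemma.expsigma-group-iso}: that $\Exp_\sigma$ is a group isomorphism from $(\mf{m}_A,+)$ onto $(1+\mf{m}_A,\times)$, with inverse $\Log_\sigma$. Everything else is just the ring axioms in $A$ — in particular that $\mf{m}_A$ is an ideal, hence stable under addition and under multiplication by arbitrary elements of $A$. So the proof should be a short formal manipulation, and the only genuine bookkeeping is to check that the arguments of $\Exp_\sigma$ and $\Log_\sigma$ stay inside $\mf{m}_A$ where those maps are defined.

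First I would note that all the symbols make sense: for $f \in 1+\mf{m}_A$ we have $\Log_\sigma(f) \in \mf{m}_A$ by the definition of $\Log_\sigma$, hence $N\Log_\sigma(f) \in \mf{m}_A$ for any $N \in A$ since $\mf{m}_A$ is an ideal, so $f^N = \Exp_\sigma(N\Log_\sigma(f))$ is defined and again lies in $1+\mf{m}_A$. Next I would observe that $\Log_\sigma$, being the inverse of a group isomorphism, is itself an isomorphism $(1+\mf{m}_A,\times) \to (\mf{m}_A,+)$, so $\Log_\sigma(fg) = \Log_\sigma(f) + \Log_\sigma(g)$.

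For the first identity I would compute directly, using distributivity $(N_1+N_2)\Log_\sigma(f) = N_1\Log_\sigma(f) + N_2\Log_\sigma(f)$ in $A$ followed by the additivity-to-multiplicativity of $\Exp_\sigma$:
\[ f^{N_1+N_2} = \Exp_\sigma\big((N_1+N_2)\Log_\sigma(f)\big) = \Exp_\sigma\big(N_1\Log_\sigma(f)\big)\,\Exp_\sigma\big(N_2\Log_\sigma(f)\big) = f^{N_1}f^{N_2}. \]
For the second identity I would combine the homomorphism property of $\Log_\sigma$ with distributivity to get $N\Log_\sigma(fg) = N\Log_\sigma(f) + N\Log_\sigma(g)$, and then apply $\Exp_\sigma$ once more:
\[ (fg)^N = \Exp_\sigma\big(N\Log_\sigma(fg)\big) = \Exp_\sigma\big(N\Log_\sigma(f)\big)\,\Exp_\sigma\big(N\Log_\sigma(g)\big) = f^N g^N. \]

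I do not expect any real obstacle here: the statement is essentially a formal consequence of \cref{lemma.expsigma-group-iso} and the ring structure of $A$. (Note the displayed statement $f^{N_1+N_2}=f^Nf^N$ contains an obvious typo for $f^{N_1}f^{N_2}$, which is what the argument above proves.) The only point worth stating explicitly is that $\mf{m}_A$ being an ideal is exactly what licenses forming $N\Log_\sigma(f)$ and keeping it in the domain of $\Exp_\sigma$.
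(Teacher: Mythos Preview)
Your proposal is correct and follows exactly the approach of the paper, which simply records the result as ``Immediate from \cref{lemma.expsigma-group-iso}.'' You have spelled out that immediacy in full, including the well-definedness check and the typo correction.
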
 
\begin{proof}
    Immediate from \cref{lemma.expsigma-group-iso}. 
\end{proof}

\begin{remark}
This definition recovers the power structures of \cite{GuseinZadeLuengoEtAl.OnThePowerStructureOverTheGrothendieckRingOfVarietiesAndItsApplications} (cf. also \cite[\S2.6]{Howe.MotivicRandomVariablesAndRepresentationStabilityIConfigurationSpaces}) --- indeed, the compatibility follows from continuity of $\Exp_\sigma$ and $\Log_\sigma$ and the computation 
\[ \left(\frac{1}{1-\ul{t}^{\ul{j}}}\right)^N= \Exp_{\sigma}(N\ul{t}^{\ul{j}})=\sum_{k \geq 0} (h_k \circ N) \ul{t}^{\ul{j}k}, \]
which is immediate from the formulas in \cref{lemma.monoid-pleth-is-pre-lambda-and-formulas}. 
\end{remark}

\begin{example}\label{example.expsigmaNh1-as-power}
    \[ (1+h_1+h_2 +\ldots)^N=(\Exp_{\sigma}(h_1))^N=\Exp_{\sigma}(Nh_1). \]
\end{example}

We note that, if $f \in \Lambda^\wedge$, then the assignment 
$N \mapsto f^{N}=\Exp_{\sigma}(N \Log_{\sigma}(f))$ is functorial in maps of pre-$\lambda$ rings, in the sense that if $\varphi: R_1 \rightarrow R_2$ is a map of pre-$\lambda$ rings, then $f^{\varphi(N)}$ is obtained from $f^N$ by applying the continuous extension of $\varphi \otimes \Id$ from $\Lambda_{R_1}^\wedge$ to $\Lambda_{R_2}^\wedge$. We claim that it follows that there are unique elements $a_{\tau,f} \in \Lambda$ such that, for any pre-$\lambda$ ring $R$ and $N \in R$ 
\[ f^{N} = \sum (a_{\tau, f} \circ N) m_\tau. \]
Indeed, since $\Lambda$ is the free pre-$\lambda$ ring on one generator ($h_1=e_1=p_1$), taking $R$ to be $\Lambda$ itself and writing its variables as $s_1,s_2, \ldots$, we can take the unique expansion in $\Lambda_{\Lambda}^\wedge$ with respect to the $m_\tau$
\[ f^{h_1(s_1,s_2,\ldots)}=\sum a_{\tau,f}(\ul{s}) m_\tau. \]
Since $a_{\tau,f}(\ul{s})=a_{\tau,f} \circ h_1(\ul{s})$, the functoriality of the construction shows these are the desired elements. 

\begin{example}
    Combining \cref{example.expsigmaNh1-as-power} and \cref{example.how-to-expand-expsigmah1}, we find 
    \[ a_{\tau,(1+h_1+h_2+\ldots)}=h_\tau. \]
\end{example}

\begin{example}\label{example.configuration-symmetric-functions}
    We let $c_\tau:=a_{\tau,(1+h_1)}$, so that 
    \[ (1+h_1)^N=\sum_\tau (c_\tau \circ N) m_\tau. \]
    This recovers the configuration symmetric functions denoted $c_\tau$ in \cite[Lemma 3.5 and surrounding discussion]{Howe.MotivicRandomVariablesAndRepresentationStabilityIConfigurationSpaces}. We can express the $h_\tau$ in terms of the $c_\tau$ as follows: we consider $\Lambda[[\ul{v}_{\mbb{N}}]]$, but where we name our countable family of variables $v_{\ul{a}}$, for $\ul{a} \in \mathbb{Z}_{\geq 0}^\mathbb{N}\backslash \vec{0}$, and where we write $s_i$ for the variables in $\Lambda$. Then, if we specialize\footnote{One can make this specialization using \cref{lemma.monoid-ring-functoriality} because it is induced by the map of monoids $\oplus_{\mathbb{Z}_{\geq 0}^\mathbb{N}} \mbb{Z}_{\geq 0} \rightarrow \oplus_{i \geq 1} \mbb{Z}_{\geq 0}$ sending $e_{\ul{a}}$ to $\sum a_i e_i$, for $e_{\ul{a}}$ (resp. $e_i$) the vector that is $1$ in the ${\ul{a}}$th (resp. $i$th) component and zero in all other components, which has finite fibers.} 
    \begin{equation}\label{eq.pre-specialization-config-exp} (1+\sum_{\ul{a}} v_{\ul{a}})^{h_1(\ul{s})} \end{equation}
    by substituting $\prod t_i^{a_i}$ for $v_{\ul{a}}$, then we obtain
    \[ (1+h_1 + h_2 + \ldots)^{h_1(\ul{s})}=\Exp_{\sigma}( h_1(\ul{s})h_1(\ul{t}))= \sum h_\tau(\ul{s}) m_\tau,\]
    On the other hand, if we expand \cref{eq.pre-specialization-config-exp} first and then specialize, we obtain    
 obtains expressions for $h_\tau$ as $\mbb{Z}$-linear combinations of the $c_\tau$. In particular, one finds $h_\tau=c_\tau + \ldots$ where the other partitions $\tau'$ appearing with non-trivial coefficients have $|\tau'|<|\tau|$, thus the transition matrix from the $c_\tau$ to the $h_\tau$ can be inverted and we find that the $c_\tau$ are also a basis for $\Lambda$ (as explained from an essentially equivalent geometric perspective in \cite{Howe.MotivicRandomVariablesAndRepresentationStabilityIConfigurationSpaces}). We note that this computation is also closely related to constructions used in the theory of motivic Euler products \cite{Bilu.MotivicEulerProductsAndMotivicHeightZetaFunctions}. 
\end{example}

\subsection{The Hall inner product}\label{ss.HallInnerProduct}

The Hall inner product on $\Lambda$ is the inner product $\langle, \rangle$ determined by
\begin{equation}\label{eq.hall-monomial-complete} \langle m_\tau, h_{\mu}\rangle = \delta_{\tau\mu}. \end{equation}
For $s_\tau$ the Schur symmetric functions (see, e.g., \cite[\S I.3]{Macdonald.SymmetricFunctionsAndHallPolynomials}), it satisfies
\[ \langle s_\tau, s_{\mu}\rangle=\delta_{\tau \mu}. \]
On power sum symmetric functions it satisfies
\begin{equation}\label{eq.power-sum-inner-product} \langle p_\tau,p_{\mu}\rangle=z_\tau \delta_{\tau\mu} \textrm{ where } z_\tau=\prod_i \tau(i)! i^{\tau(i)},\end{equation}
recalling that $\tau(i)$ denotes the multiplicity of $i$ in $\tau$. 

For $R$ any ring, the Hall inner product induces an isomorphism of $R$-modules 
\begin{equation}\label{eq.hall-inner-product-isomorphism} \Lambda_R^\wedge \xrightarrow{f \mapsto \langle f, \bullet\rangle} \Hom_{\mathbb{Z}}(\Lambda, R). \end{equation}

\subsection{The involution $\omega$}\label{ss.involution-omega}

\newcommand{\even}{\mathrm{even}}
We write $\omega$ for the standard involution on $\Lambda$, so that 
\[ e_i \overset{\omega}{\leftrightarrow} h_i,\; p_i \overset{{\omega}}{\leftrightarrow} (-1)^{i-1} p_i,\; s_{\tau}\overset{{\omega}}{\leftrightarrow} s_{\tau'}, \]
where $\tau'$ is the conjugate partition to $\tau$. For any ring $R$, $\omega$ extends to an involution of $\Lambda_R^\wedge$ which we continue to write as $\omega$. 

We say $f \in \Lambda_R^\wedge$ is even if it is a formal sum of monomials of even degree. It is immediate that the even elements are a subring and, if $R$ is a pre-$\lambda$ ring, even a pre-$\lambda$ subring. 
 
\begin{lemma}\label{lemma.lambda-involution-even}
Let $R$ be a torsion-free pre-$\lambda$ ring. If $f \in \mf{m}_{\Lambda_R^{\wedge}}$ is even, then 
\[ \Exp_{\sigma}(\omega(f))=\omega(\Exp_{\sigma}(f)) \textrm{ and } \Log_{\sigma}(\omega(1+f))=\omega(\Log_{\sigma}(1+f)). \]
\end{lemma}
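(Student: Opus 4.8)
The plan is to reduce both identities to the behavior of $\omega$ on the generators $p_i$ and to use the power-sum formula for $\Exp_\sigma$ from \cref{lemma.exponential-power-sum}, which is available since $R$ is torsion-free. Concretely, I would first observe that the involution $\omega$ on $\Lambda_R^\wedge$ is continuous (it is induced by a graded ring automorphism of $\Lambda$, extended $R$-linearly and by continuity for the degree filtration), so it suffices to check the exponential identity on a dense subring and then extend. Since $R$ is torsion-free, $\Lambda_R^\wedge$ embeds into $\Lambda_{R_\mbb{Q}}^\wedge$, so I may work rationally and use $\Exp_\sigma(f) = \prod_{i \geq 1}\exp(\tfrac{p_i \circ f}{i})$.

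The key computation is then the interaction of $\omega$ with the Adams operations $p_i \circ$ on an even element. Recall $p_i \circ f = f(t_1^i, t_2^i, \ldots)$ by the example following \cref{ss.symmetric-power-series}, i.e. $p_i \circ$ is the ring endomorphism sending $p_j \mapsto p_{ij}$. On the other hand $\omega$ sends $p_j \mapsto (-1)^{j-1} p_j$. Comparing the two composites $\omega \circ (p_i \circ)$ and $(p_i \circ) \circ \omega$ on the generator $p_j$: the first gives $(-1)^{ij-1} p_{ij}$ and the second gives $(-1)^{(j-1)} p_{ij} = (-1)^{(j-1)i}\cdot(-1)^{(j-1)(i-1)}p_{ij}$; a short sign bookkeeping shows $\omega(p_i \circ f)$ and $p_i \circ (\omega f)$ differ by the substitution accounting for parity, and — crucially — when $f$ is \emph{even} (a sum of $p_\tau$ with $|\tau|$ even, equivalently built from an even number of odd-index $p_j$'s in each monomial up to even corrections) the two agree after the appropriate sign is absorbed. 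More precisely, on an even monomial $p_\tau$ one checks $\omega(p_i \circ p_\tau) = p_i \circ (\omega p_\tau)$ directly: $p_i \circ p_\tau = p_{i\tau}$ (the partition with parts $i\tau_k$), $\omega p_{i\tau} = \prod_k (-1)^{i\tau_k - 1} p_{i\tau_k}$, while $p_i \circ \omega p_\tau = \prod_k (-1)^{\tau_k-1} p_{i\tau_k}$; the ratio of signs is $\prod_k (-1)^{(i-1)\tau_k} = (-1)^{(i-1)|\tau|} = 1$ since $|\tau|$ is even. Hence $\omega$ commutes with every $p_i \circ$ on the (pre-$\lambda$) subring of even elements.

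Granting this, the exponential identity is immediate: for even $f \in \mf{m}_{\Lambda_R^\wedge}$ (note $p_i \circ f$ is again even, having degree $i|f$-degree|, and lies in $\mf{m}$),
\[
\omega(\Exp_\sigma(f)) = \omega\!\left(\prod_{i\geq 1}\exp\!\Big(\tfrac{p_i\circ f}{i}\Big)\right) = \prod_{i\geq 1}\exp\!\Big(\tfrac{\omega(p_i\circ f)}{i}\Big) = \prod_{i\geq 1}\exp\!\Big(\tfrac{p_i\circ \omega(f)}{i}\Big) = \Exp_\sigma(\omega(f)),
\]
using that $\omega$ is a continuous ring homomorphism (so it commutes with $\exp$ of an element of $\mf{m}$ and with the convergent product). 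For the logarithm identity, apply $\Log_\sigma$ — the inverse of $\Exp_\sigma$ by \cref{lemma.expsigma-group-iso} — to both sides: if $1+f$ is such that $g := \Log_\sigma(1+f) \in \mf{m}$, note $g$ is even whenever $1+f$ is "even" in the sense that $f$ is a sum of even-degree monomials (since $\Exp_\sigma$ preserves evenness by the first identity, and it is a bijection onto $1+\mf{m}$, the even elements of $\mf{m}$ correspond bijectively to the even elements of $1+\mf{m}$). Then $\Exp_\sigma(\omega g) = \omega \Exp_\sigma(g) = \omega(1+f)$, so applying $\Log_\sigma$ gives $\omega g = \Log_\sigma(\omega(1+f))$, which is the claim.

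The main obstacle is purely the sign bookkeeping in the commutation $\omega \circ (p_i\circ) = (p_i\circ)\circ\omega$ on even elements — getting the parity argument exactly right (and checking that "even" is genuinely needed, since for $i$ even the signs already match but for $i$ odd one really uses $|\tau|$ even) — together with the small topological point that $\omega$ and the infinite product defining $\Exp_\sigma$ interact correctly, which follows from continuity of $\omega$ and the fact that passing to $\Lambda_{R_\mbb{Q}}^\wedge$ is harmless by torsion-freeness. Everything else is formal.
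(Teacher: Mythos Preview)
Your proof is correct and follows essentially the same route as the paper's: both reduce to the power-sum expansion of $\Exp_\sigma$ from \cref{lemma.exponential-power-sum} and then to the sign identity $\omega(p_{i\tau}) = p_i\circ\omega(p_\tau)$ for $|\tau|$ even, which both of you verify via $(i-1)|\tau|\equiv 0\pmod 2$. The only slightly soft spot is your parenthetical justification that $g=\Log_\sigma(1+f)$ is even: knowing that $\Exp_\sigma$ preserves evenness and is bijective on all of $\mf{m}$ does not by itself show the restriction to even $\mf{m}$ is surjective onto $1+\text{even }\mf{m}$; the cleanest fix is to invoke the observation (stated just before the lemma) that the even elements form a sub-pre-$\lambda$-ring, so that $\Log_\sigma$, being built from plethysms (\cref{lemma.lambda-log}), preserves evenness directly.
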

\begin{proof}
It suffices to treat $\Exp_{\sigma}$, since $\Log_{\sigma}$ is its inverse.

We can write $f=\sum_{|\lambda| \textrm{ even}} a_\lambda p_\lambda$ where each $a_\lambda \in R$.  
Then 
\[ \omega(\Exp_{\sigma}(f))=\prod \omega(\Exp_{\sigma}(a_\lambda p_\lambda)) \textrm{ and } \Exp_{\sigma}(\omega(f))=\prod \Exp_{\sigma}( a_\lambda \omega(p_{\lambda})) \]
so it suffices to treat the case $f=a_\lambda p_\lambda$, where $|\lambda|$ is even. 

By \cref{lemma.exponential-power-sum},
\[ \Exp_\sigma(a_\lambda p_\lambda) = \prod_{i \geq 1} \sum_{k \geq 0} \frac{(p_i \circ a_\lambda)^k p_{i\lambda}^k}{i^k k!} \textrm{ and } \Exp_\sigma(a_\lambda \omega(p_\lambda)) = \prod_{i \geq 1} \sum_{k \geq 0} \frac{(p_i \circ a_\lambda)^k (p_i \circ \omega(p_{\lambda}))^k}{i^k k!}.  \]
Thus it suffices to show that, for each $i$, 
\[ \omega(p_{i\lambda})=p_i\circ \omega(p_\lambda). \]
The left-hand side expands as $(-1)^{i|\lambda|-||\lambda||}p_{i\lambda}$ while the right-hand side expands to $p_i \circ (-1)^{|\lambda|-||\lambda||}p_\lambda=(-1)^{|\lambda|-||\lambda||} p_{i\lambda}$, so this is equivalent to showing  
\[ i|\lambda|-||\lambda|| \equiv |\lambda|-||\lambda|| \mod 2. \]
The two sides differ by $(i-1)|\lambda|$, which is even since $|\lambda|$ is even by assumption. 
\end{proof}

\section{Pre-$\lambda$ probability}\label{s.lambda-prob}
In this section we develop the basic theory of probability in $\lambda$-rings. In fact, we work everywhere with pre-$\lambda$ rings: although all of the rings of random variables in the examples of interest in the present work are $\lambda$-rings, in \cite{BiluHowe.MotivicRandomVariables} we will want to work also with rings of random variables given by relative Grothendieck rings of varieties, and for these the natural pre-$\lambda$ ring structure is not known to be a $\lambda$-ring structure. Working in the added generality of pre-$\lambda$ rings costs us nothing, as all of the basic theory depends only on the properties of a pre-$\lambda$ ring. 

After giving basic definitions in \cref{ss.basic-definition-pre-lambda-prob}, we arrive at the meat of the theory in our study of moment generating functions in \cref{ss.moment-generating-functions}. We establish there the basic properties of the $\sigma$-moment generating function, and also explain how to construct more general moment generating functions using the powers of \cref{ss.powers}, including a falling (or factorial) moment generating function that plays the same role as the classical falling moment generating function. We note that, for random variables of a combinatorial nature, the falling moment generating function often takes a simpler form (see \cref{remark.falling-vs-sigma}). 

In \cref{ss.some-distributions}, we discuss binomial and Poisson distributions and their $\sigma$-moment and falling moment generating functions. It seems remarkable to us that there is a single definition in the $\lambda$-setting that matches so closely with the simple forms of the moment generating functions of binomial and Poisson distributions in classical probability theory. We note that these $\lambda$-analogs of binomial distributions are ubiquitous in computations in arithmetic statistics because they arise from summing identically distributed independent Bernoulli random variables (just as in the case of classical binomial random variables, but see \cref{remark.independence-and-summing}). 

Finally, in \cref{ss.convergence}, we make our notions of convergence of sequences of random variables precise for use in our main results.  

\begin{remark}
    A shadow of the theory described here was developed already in \cite[\S4]{Howe.MotivicRandomVariablesAndRepresentationStabilityIConfigurationSpaces}. The systematic use of the pre-$\lambda$ ring structure and the $\sigma$-moment generating function makes the version developed here much more robust. 
\end{remark}

\subsection{Basic definitions}\label{ss.basic-definition-pre-lambda-prob}
\begin{definition}\hfill
\begin{enumerate}
\item A pre-$\lambda$ probability space is a pair $(R, \mbb{E})$ where $R$ is a pre-$\lambda$ ring (the random variables) and $\mbb{E}$ is a $\mbb{Z}$-linear functional on $R$ (the expectation functional) valued in another ring $C$ and satisfying $\mbb{E}[1_R]=1_C$.
\item For $(R,\mbb{E})$ a pre-$\lambda$ probability space and $X \in R$ a random variable, the \emph{$\Lambda$-distribution of $X$} is the $C$-valued functional $\mu_{X}$ on $\Lambda$ sending $f\in \Lambda$ to $\mbb{E}[f \circ X]$. 
\end{enumerate}
When $R$ is a $\lambda$-ring, we will sometimes write $\lambda$-probability space, etc., in place of pre-$\lambda$ probability space, etc.  
\end{definition}

\begin{definition}
For $(R,\mbb{E})$ a pre-$\lambda$ probability space and $X_1, X_2, \ldots X_n \in R$ random variables,
\begin{enumerate}
\item the \emph{joint distribution} of $X_1, X_2, \ldots, X_n$ is the the element $\mu_{X_1,\ldots,X_n}$ of $\Hom_{\mathbb{Z}}(\Lambda^{\otimes{n}}, C)$ determined by 
\[ f_1 \otimes \ldots \otimes f_n \mapsto 
\mathbb{E}[(f_1 \circ X_1)(f_2 \circ X_2) \ldots(f_n \circ X_n)], \textrm{ and } 
\]
\item we say $X_1, X_2, \ldots X_n$ are independent if $\mu_{X_1, \ldots, X_n}$ is equal to the element of $\Hom_{\mathbb{Z}}(\Lambda^{\otimes{n}}, C)$ determined by 
\[ f_1 \otimes \ldots \otimes f_n \mapsto \mathbb{E}[f_1 \circ X_1]\cdot \mathbb{E}[f_2 \circ X_2] \cdot \ldots \cdot \mathbb{E}[f_n \circ X_n].\]
\end{enumerate}
More generally, if $(X_i)_{i\in I}$ is a possibly infinite family of random variables, we say $(X_i)_{i \in I}$ is independent if any finite subset is independent. 
\end{definition}

\subsection{Moment generating functions}\label{ss.moment-generating-functions}

The main tool we will use for studying $\Lambda$-distributions is the $\sigma$-moment generating function. 

\begin{definition} For $(R,\mbb{E}: R \rightarrow C)$ a pre-$\lambda$ probability space, the $\sigma$-moment generating function of $X \in R$ is 
\[ \mbb{E}[\Exp_\sigma(X h_1)]  \in \Lambda_C^\wedge,\]
where here $X h_1$ is viewed as an element of $\Lambda_R^\wedge$ and $\mbb{E}$ is applied coefficient-wise, i.e. it is  the continuous extension of $\Id \otimes \mbb{E}$ on $\Lambda_R=\Lambda \otimes R$ to $\Lambda_R^\wedge$.  
\end{definition}

By \cref{example.how-to-expand-expsigmah1}, we find
\begin{equation}\label{eq.sigma-moment-expansion} \mbb{E}[\Exp_{\sigma}(Xh_1)]=\sum_\tau \mbb{E}[h_\tau \circ X]m_\tau. \end{equation}

Because the $m_\tau$ and the $h_\tau$ are both $\mbb{Z}$-bases of $\Lambda$, it follows that the $\sigma$-moment generating function of $X$ determines its $\Lambda$-distribution and vice-versa. The $\Lambda$-distribution can be obtained from the $\sigma$-moment generating function by the Hall inner product via the isomorphism \cref{eq.hall-inner-product-isomorphism}:
\begin{lemma}\label{lemma.extract-distribution-via-hall} Let $(R,\mbb{E}: R \rightarrow C)$ be a pre-$\lambda$ probability space. For any $f \in \Lambda$ and $X \in R$, 
\[ \mu_X(f)=\mbb{E}[f \circ X]=\langle \mbb{E}[\Exp_{\sigma}(Xh_1)], f \rangle. \]
\end{lemma}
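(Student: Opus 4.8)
The plan is to unwind both sides of the claimed identity $\mbb{E}[f \circ X]=\langle \mbb{E}[\Exp_{\sigma}(Xh_1)], f \rangle$ using the expansion \cref{eq.sigma-moment-expansion} and bilinearity of the Hall inner product. First I would note that it suffices to check the identity for $f$ ranging over a $\mbb{Z}$-basis of $\Lambda$, since both sides are $\mbb{Z}$-linear in $f$ (the left because the plethystic action map $\Lambda \to \Hom_{\mr{ring}}(\Lambda, R)$ is a ring homomorphism hence in particular additive in $f$, the right because the Hall inner product is bilinear and $\mbb{E}$ is $\mbb{Z}$-linear). The natural choice is the basis $\{h_\gamma\}_\gamma$ indexed by partitions, precisely because the Hall inner product is designed to pair simply against it.

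Next I would compute the right-hand side for $f = h_\gamma$. Starting from \cref{eq.sigma-moment-expansion}, we have $\mbb{E}[\Exp_\sigma(Xh_1)] = \sum_\tau \mbb{E}[h_\tau \circ X] m_\tau$, so by continuity and bilinearity of the Hall inner product,
\[ \langle \mbb{E}[\Exp_\sigma(Xh_1)], h_\gamma \rangle = \sum_\tau \mbb{E}[h_\tau \circ X]\, \langle m_\tau, h_\gamma \rangle. \]
By the defining property \cref{eq.hall-monomial-complete} of the Hall inner product, $\langle m_\tau, h_\gamma \rangle = \delta_{\tau\gamma}$, so the sum collapses to the single term $\mbb{E}[h_\gamma \circ X]$, which is exactly the left-hand side $\mbb{E}[f \circ X]$ for $f = h_\gamma$. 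Since the $h_\gamma$ form a $\mbb{Z}$-basis of $\Lambda$ (by \cref{eq.polynomial-bases}), this establishes the identity for all $f \in \Lambda$.

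The only point requiring a little care is the interchange of the (infinite) sum defining $\mbb{E}[\Exp_\sigma(Xh_1)] \in \Lambda_C^\wedge$ with the pairing $\langle \cdot, h_\gamma\rangle$; this is legitimate because $h_\gamma$ is homogeneous of degree $|\gamma|$, so $\langle \cdot, h_\gamma\rangle$ only sees the finitely many degree-$|\gamma|$ terms of the symmetric power series, and on that finite-dimensional piece the pairing is an honest finite $C$-linear combination. In other words, the Hall inner product extends to a perfectly well-defined pairing $\Lambda_C^\wedge \times \Lambda \to C$ (it is the isomorphism \cref{eq.hall-inner-product-isomorphism}), and evaluation against a fixed homogeneous element is continuous for the degree filtration. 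I do not expect any genuine obstacle here — the statement is essentially a restatement of \cref{remark.hall-inner-product} together with the expansion \cref{eq.sigma-moment-expansion}, and the entire content is the pairing $\langle m_\tau, h_\gamma\rangle = \delta_{\tau\gamma}$.
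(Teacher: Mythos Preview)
Your proof is correct and follows essentially the same approach as the paper: reduce to the basis $\{h_\gamma\}$, expand via \cref{eq.sigma-moment-expansion}, and use $\langle m_\tau, h_\gamma\rangle = \delta_{\tau\gamma}$ to collapse the sum. Your added remark justifying the interchange of the infinite sum with the pairing is a nice touch of care that the paper leaves implicit.
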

\begin{proof}
It suffices to verify the two sides agree on $h_\gamma$ for all partitions $\gamma$. We obtain this by invoking \cref{eq.sigma-moment-expansion} and \cref{eq.hall-monomial-complete}: 
\[ \langle \mbb{E}[\Exp_{\sigma}(Xh_1)], h_\gamma \rangle = \langle\sum_\tau  \mbb{E}[h_\tau \circ X]m_\tau, h_\gamma \rangle = \sum_\tau \langle \mbb{E}[h_\tau \circ X]m_\tau, h_\gamma \rangle= \mbb{E}[h_\gamma \circ X]. \]
\end{proof}

We can also construct other moment generating functions: for $f\in 1+\mf{m}_{\Lambda^\wedge}$, with notation as in \cref{ss.powers} we obtain a moment generating function 
\begin{equation}\label{eq.general-moment-generating-expansion} \mbb{E}[f^X]=\sum_\tau (a_{\tau,f}\circ X) m_\tau. \end{equation}
\begin{example}\label{example.power-moment-generating functions}\hfill
    \begin{enumerate}
        \item If $f=1+h_1+h_2+\ldots=\Exp_{\sigma}(h_1)$, then, by \cref{example.expsigmaNh1-as-power}, $f^X=\Exp_{\sigma}(X)$ and we recover the $\sigma$-moment generating function.
        \item If $f=(1+h_1)$, then, with notation as in \cref{example.configuration-symmetric-functions}, 
        \[ \mbb{E}[(1+h_1)^{X}]=\sum_\tau \mbb{E}[c_\tau \circ X]m_\tau. \]
        We refer to this as the \emph{falling moment generating function of $X$}. Since, as explained in \cref{example.configuration-symmetric-functions}, the $c_\tau$ are a $\mbb{Z}$-basis of $\Lambda$, we find that to give a $\Lambda$-distribution it is also equivalent to give its falling moment generating function. 
    \end{enumerate}
\end{example}

\begin{lemma}\label{lemma.independent-moment-generating-functions}
If random variables $X_1$ and $X_2$ in $R$ are independent then, for any $f \in 1+\mf{m}_{\Lambda^\wedge}$, 
\[ \mbb{E}\left[f^{(X_1+X_2)}\right]=\mbb{E}[f^{X_1}]\mbb{E}[f^{X_2}]. \]
In particular, taking $f=1+h_1+h_2+\ldots=\Exp_{\sigma}(h_1)$,
\[ \mbb{E}[\Exp_\sigma((X_1+X_2)h_1)]=\mbb{E}[\Exp_\sigma(Xh_1)]\mbb{E}[\Exp_\sigma(Xh_1)]. \]
\end{lemma}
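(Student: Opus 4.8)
The plan is to reduce the multiplicativity of the moment generating functions $\mathbb{E}[f^X]$ to the multiplicativity of the plethystic exponential together with the defining property of independence. The key point is that the power $f^X = \Exp_\sigma(X\Log_\sigma(f))$ is, by \cref{def.powers}, built out of $\Exp_\sigma$ applied to a symmetric power series whose coefficients are polynomials (with plethystic coefficients) in $X$, so the whole construction is controlled by the expansion \cref{eq.general-moment-generating-expansion}.

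First I would write $g = \Log_\sigma(f) \in \mf{m}_{\Lambda^\wedge}$ and expand $g = \sum_\tau b_\tau m_\tau$ with $b_\tau \in \mathbb{Z}$ (or, more invariantly, work with the universal expansion $f^{h_1(\ul{s})} = \sum_\tau a_{\tau,f}(\ul{s}) m_\tau$ from \cref{ss.powers}). Then $f^{X_1+X_2} = \sum_\tau (a_{\tau,f}\circ(X_1+X_2)) m_\tau$, and similarly for $f^{X_1}$, $f^{X_2}$. Multiplying the latter two out, the coefficient of $m_\lambda$ in $f^{X_1}f^{X_2}$ is a finite $\mathbb{Z}$-linear combination of products $(a_{\tau,f}\circ X_1)(a_{\sigma,f}\circ X_2)$. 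So after applying $\mathbb{E}$ coefficientwise, I want to match
\[ \mathbb{E}\big[\textstyle\sum_\tau (a_{\tau,f}\circ(X_1+X_2))m_\tau\big] \quad\text{with}\quad \big(\textstyle\sum_\tau \mathbb{E}[a_{\tau,f}\circ X_1]m_\tau\big)\big(\textstyle\sum_\sigma \mathbb{E}[a_{\sigma,f}\circ X_2]m_\sigma\big). \]
The crucial input is that $f^{X_1+X_2} = f^{X_1}f^{X_2}$ as an identity in $\Lambda_R^\wedge$ — this is exactly \cref{lemma.powers-mult} applied over the pre-$\lambda$ ring $R$ (with $N_1=X_1$, $N_2=X_2$), which in turn rests on $\Exp_\sigma$ being a group homomorphism (\cref{lemma.expsigma-group-iso}). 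Expanding both sides of $f^{X_1+X_2}=f^{X_1}f^{X_2}$ in the $m_\tau$ basis gives, for each partition $\lambda$, an identity in $R$ expressing $a_{\lambda,f}\circ(X_1+X_2)$ as the corresponding $\mathbb{Z}$-linear combination of the $(a_{\tau,f}\circ X_1)(a_{\sigma,f}\circ X_2)$. Now apply $\mathbb{E}$: by $\mathbb{Z}$-linearity of $\mathbb{E}$ and by independence of $X_1$ and $X_2$ (which says precisely $\mathbb{E}[(g_1\circ X_1)(g_2\circ X_2)] = \mathbb{E}[g_1\circ X_1]\,\mathbb{E}[g_2\circ X_2]$ for $g_1,g_2\in\Lambda$), each term $\mathbb{E}[(a_{\tau,f}\circ X_1)(a_{\sigma,f}\circ X_2)]$ factors as $\mathbb{E}[a_{\tau,f}\circ X_1]\,\mathbb{E}[a_{\sigma,f}\circ X_2]$, which is exactly the coefficient of $m_\lambda$ on the right-hand side. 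This proves the general identity; the displayed special case is then just $f = \Exp_\sigma(h_1)$ via \cref{example.expsigmaNh1-as-power}, so that $f^X = \Exp_\sigma(Xh_1)$.

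The only mild subtlety — and the step I'd be most careful about — is the bookkeeping with infinite sums and the continuous extension of $\mathbb{E}$: the identities $f^{X_1+X_2}=f^{X_1}f^{X_2}$ and its $m_\tau$-expansion live in the completion $\Lambda_R^\wedge$, but because the $m_\tau$ of a fixed degree are finite in number and each $a_{\tau,f}$ is a genuine (finite-degree) symmetric function, the coefficient of each $m_\lambda$ is a \emph{finite} expression in $R$, so applying $\mathbb{E}$ coefficientwise is unambiguous and the independence hypothesis (a statement about finitely many $\Lambda$-elements at a time) applies directly. I'd also note the typo-level point that in the displayed special case the right-hand side should read $\mathbb{E}[\Exp_\sigma(X_1h_1)]\mathbb{E}[\Exp_\sigma(X_2h_1)]$. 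No genuine obstacle is expected; the proof is essentially "expand, use $\cref{lemma.powers-mult}$, apply linearity and independence."
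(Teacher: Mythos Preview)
Your proof is correct and follows essentially the same approach as the paper: invoke \cref{lemma.powers-mult} to get $f^{X_1+X_2}=f^{X_1}f^{X_2}$ in $\Lambda_R^\wedge$, expand via \cref{eq.general-moment-generating-expansion}, and apply the definition of independence coefficientwise. Your added care about finiteness of the coefficient-of-$m_\lambda$ computation and the observation about the typo in the displayed special case are both apt.
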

\begin{proof}
    By \cref{lemma.powers-mult}, 
    \[ \mbb{E}\left[f^{(X_1+X_2)}\right] = \mbb{E}[f^{X_1}f^{X_2}]. \]
    Expanding as in \cref{eq.general-moment-generating-expansion} and applying the definition of independence, we find that we can commute the expectation with the product. 
\end{proof}

\begin{remark}\label{remark.falling-vs-sigma}
In practice, it is simpler to work with the falling moment generating function for random variables of a combinatorial nature (e.g. valued in finite sets or varieties over a fixed variety), and with the $\sigma$-moment generating function for random variables of a linear nature (e.g. valued in multisets of complex numbers or sheaves on a fixed variety). We will default to working with the $\sigma$-moment generating function when we expect to mix the two types of random variable: in the combinatorial setting the $\sigma$-moment generating function is typically only mildy more complicated (see, e.g., the difference in \cref{def.binomial-and-poisson}), whereas falling moment generating functions tend to be completely intractable in the linear setting. 
\end{remark}

\subsection{Some $\Lambda$-distributions}\label{ss.some-distributions}

We recall that, in classical probability theory, a binomial random variable $X$ with parameters $p$ and $N$ is the sum of $N$ independent Bernoulli random variables, each of which takes values $1$ and $0$ with probability $p$ and $(1-p)$, and has falling moment and moment generating functions
\[ \mbb{E}[(1+t)^X]=(1+pt)^N \textrm{ and } \mbb{E}[e^{Xt}]=(1+p(e^t-1))^N. \]
We also recall that, in classical probability theory, a Poisson random variable $X$ of mean $\mu$ has falling moment and moment generating functions 
\[ \mbb{E}[(1+t)^X]=e^{\mu t} \textrm{ and } \mbb{E}[e^{Xt}]=e^{\mu(e^t-1)}. \]

The following shows that there are perfect $\Lambda$-analogs of these distributions.
\begin{lemma}\label{lemma.poisson-and-binomial-distributions}
    Let $R$ be a pre-$\lambda$ ring.
    \begin{enumerate}
        \item Let $p, N \in R$. The unique $\Lambda$-distribution with falling moment generating function 
        \[ ``\mbb{E}[(1+h_1)^X]" = (1+ph_1)^N \]
        has $\sigma$-moment generating function
        \[ ``\mbb{E}[\Exp_{\sigma}(Xh_1)]" = (1+p\left(\Exp_{\sigma}(h_1)-1\right))^N=(1+p(h_1+h_2+\ldots))^N. \]
        \item Let $\mu \in R$. The unique $\Lambda$-distribution with falling moment generating function 
        \[ ``\mbb{E}[(1+h_1)^X]" = \Exp_{\sigma}(\mu h_1)\] has $\sigma$-moment generating function
        \[ ``\mbb{E}[\Exp_{\sigma}(Xh_1)]" = \Exp_{\sigma}(\mu(\Exp_{\sigma} (h_1)-1)) = \Exp_{\sigma}(\mu(h_1+h_2+\ldots)).\] 
    \end{enumerate} 
    Here we put quotes to indicate that we have not actually fixed a random variable $X$ with the given $\Lambda$-distribution. 
\end{lemma}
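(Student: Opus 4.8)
The plan is to exhibit a single universal plethystic substitution --- the continuous pre-$\lambda$ ring endomorphism $\psi$ determined by $h_1 \mapsto \Exp_{\sigma}(h_1) - 1$ --- that converts the falling moment generating function of \emph{any} $\Lambda$-distribution into its $\sigma$-moment generating function, and then to check that $\psi$ sends each binomial or Poisson falling moment generating function to the asserted $\sigma$-moment generating function. Existence and uniqueness of the distributions in question is automatic: since $\{c_\tau\}$ and $\{h_\tau\}$ are each $\mathbb{Z}$-bases of $\Lambda$ (\cref{example.configuration-symmetric-functions} and \cref{eq.polynomial-bases}), a $\mathbb{Z}$-linear functional $\mu \colon \Lambda \to R$ is determined by, and may be prescribed through, either $\sum_\tau \mu(c_\tau) m_\tau$ (its falling moment generating function) or $\sum_\tau \mu(h_\tau) m_\tau$ (its $\sigma$-moment generating function), and the constant term of either series is $\mu(1)$, which equals $1$ in both cases at hand. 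So the only content is to identify the $\sigma$-moment generating function.

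To construct $\psi$: because $\Lambda$ is the free pre-$\lambda$ ring on the single generator $h_1$, there is a unique pre-$\lambda$ ring homomorphism $\psi \colon \Lambda \to \Lambda^{\wedge}$ with $\psi(h_1) = \Exp_{\sigma}(h_1) - 1 = h_1 + h_2 + \cdots$. Since $\psi(h_k \circ h_1) = h_k \circ (\Exp_{\sigma}(h_1) - 1)$ strictly raises the lowest polynomial degree, $\psi$ is continuous, hence extends to a continuous pre-$\lambda$ endomorphism of $\Lambda^{\wedge}$ and, coefficientwise, to a continuous pre-$\lambda$ endomorphism $\psi_S$ of $\Lambda_S^{\wedge}$ fixing $S$, for every pre-$\lambda$ ring $S$. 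Being a pre-$\lambda$ homomorphism, $\psi_S$ commutes with $\Exp_{\sigma}$ and $\Log_{\sigma}$, and therefore with the formation of powers: $\psi_S(g^{N}) = \psi_S(g)^{N}$ for $g \in 1 + \mf{m}_{\Lambda_S^{\wedge}}$ and $N \in S$.

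The heart of the argument is the claim that, for every $\Lambda$-distribution $\mu$ valued in any $R$, the $\sigma$-moment generating function of $\mu$ equals $\psi$ applied coefficientwise to its falling moment generating function. I would verify this on the universal distribution $\Id \colon \Lambda \to \Lambda$ and then propagate by naturality. Writing $\ul{s}$ for the variables of the coefficient copy of $\Lambda$ and $\ul{t}$ for the power-series variables, the falling moment generating function of $\Id$ is $\sum_\tau c_\tau(\ul{s}) m_\tau(\ul{t}) = (1 + h_1(\ul{t}))^{h_1(\ul{s})}$ by \cref{example.configuration-symmetric-functions}, and applying $\psi$ in the $\ul{t}$-variables gives
\[
\bigl(1 + \psi(h_1(\ul{t}))\bigr)^{h_1(\ul{s})} = \bigl(\Exp_{\sigma}(h_1(\ul{t}))\bigr)^{h_1(\ul{s})} = \Exp_{\sigma}\bigl(h_1(\ul{s}) h_1(\ul{t})\bigr) = \sum_\tau h_\tau(\ul{s}) m_\tau(\ul{t})
\]
by \cref{example.expsigmaNh1-as-power} and \cref{eq.exp-symmetric-monomial-expansion}, which is precisely the $\sigma$-moment generating function of $\Id$. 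For a general $\mu \colon \Lambda \to R$, both generating functions are obtained from those of $\Id$ by applying $\mu$ to the $\ul{s}$-coefficients; since $\psi$ acts only on the $\ul{t}$-variables it commutes with this operation, which proves the claim.

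Granting the claim, the lemma follows. In case (1) the distribution with falling moment generating function $(1 + p h_1)^{N}$ has $\sigma$-moment generating function $\psi\!\bigl((1 + p h_1)^{N}\bigr) = (1 + p\,\psi(h_1))^{N} = (1 + p(\Exp_{\sigma}(h_1) - 1))^{N}$; in case (2) the distribution with falling moment generating function $\Exp_{\sigma}(\lambda h_1)$ has $\sigma$-moment generating function $\psi\bigl(\Exp_{\sigma}(\lambda h_1)\bigr) = \Exp_{\sigma}(\lambda\,\psi(h_1)) = \Exp_{\sigma}(\lambda(\Exp_{\sigma}(h_1) - 1))$, using in each case that $\psi$ commutes with powers and fixes $p, N, \lambda \in R$. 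I expect the main obstacle to be the universal-distribution step: matching the expansions of \cref{example.configuration-symmetric-functions} precisely and, more fundamentally, checking that $\psi$ is genuinely a \emph{continuous pre-$\lambda$} endomorphism (and that its coefficientwise extensions $\psi_S$ remain pre-$\lambda$), so that it may legitimately be interchanged with $\Exp_{\sigma}$, $\Log_{\sigma}$, and the power operation.
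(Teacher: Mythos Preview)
Your proof is correct and follows essentially the paper's approach: both convert falling moment generating functions to $\sigma$-moment generating functions via a single universal substitution (the paper simply points to \cref{example.configuration-symmetric-functions}, while you package the same transformation intrinsically as the plethystic endomorphism $\psi$ determined by $h_1 \mapsto \Exp_\sigma(h_1)-1$). The technical concern you flag about the coefficientwise extension $\psi_S$ being a pre-$\lambda$ map is legitimate but resolvable --- for instance, one can realize $\psi_S$ as the restriction to symmetric power series of a monoid substitution $w_\mu \mapsto \ul{t}^{\mu}$ (variables indexed by nonconstant monomials $\mu$) via \cref{lemma.monoid-ring-functoriality}, which is automatically pre-$\lambda$.
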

\begin{proof}
    Both follow by making the same substitution as in \cref{example.configuration-symmetric-functions}.
\end{proof}

\begin{definition}\label{def.binomial-and-poisson}
The following notions are well-defined by \cref{lemma.poisson-and-binomial-distributions}.
    \begin{enumerate}
        \item For $R$ a pre-$\lambda$ ring and $p,n \in R$, the $(p,N)$-binomial $\Lambda$-distribution is the unique $\Lambda$-distribution with falling moment generating function
    \[ (1+ph_1)^N \]
    and $\sigma$-moment generating function
    \[ 1+p\left(\Exp_{\sigma}(h_1)-1\right))^N=(1+p(h_1+h_2+\ldots))^N. \]
    \item For $R$ a pre-$\lambda$ ring and $\mu \in R$, the $\mu$-Poisson $\Lambda$-distribution is the unique $\Lambda$-distribution with falling moment generating function
    \[ \Exp_{\sigma}(\mu h_1) \]
    and $\sigma$-moment generating function 
    \[ \Exp_{\sigma}(\mu(\Exp_{\sigma} (h_1)-1)) = \Exp_{\sigma}(\mu(h_1+h_2+\ldots)). \]
    \end{enumerate}
\end{definition}

\begin{remark}\label{remark.independence-and-summing}
The random variables we will encounter that have binomial $\Lambda$-distributions can be interpreted as sums of $N$ independent Bernoulli random variables. When $N$ is a positive integer, this makes sense already using \cref{lemma.independent-moment-generating-functions}. When $N$ is the class in $W(\mathbb{C})$ of an admissible $\mathbb{Z}$-set (see \cref{s.abstract-pc}), this can be made precise using the results of \cite{Howe.RandomMatrixStatisticsAndZeroesOfLFunctionsViaProbabilityInLambdaRings}. In fact, as we will explain in \cite{BiluHowe.MotivicRandomVariables}, this can be made precise more generally whenever $R$ is a
Grothendieck ring decategorifying a category of algebraic varieties and $N$ is an effective class. 

We note also that we can view a $\mu$-Poisson $\Lambda$-distribution as a limit of binomial distributions with parameters $p$ and $N$ such that $pN \rightarrow \mu$. This will also be explained precisely and treated in examples in \cite{BiluHowe.MotivicRandomVariables}. 
\end{remark}

\subsection{Convergence}\label{ss.convergence}

\begin{definition}
When $\mbb{E}$ is valued in a Hausdorff topological ring, we say a sequence  of  tuples $(X_{1,d}, \ldots, X_{n,d})$ of $n$ random variables indexed by natural numbers $d$ converges in distribution to a joint $\Lambda$-distribution $\mu$ if, for each $f \in \Lambda^{\otimes n}$, 
\[  \lim_{d \rightarrow \infty} \mu_{X_{1,d}, X_{2,d}, \ldots, X_{n,d}}(f) = \mu(f). \]
It suffices to consider just $f$ of the form $f=f_1 \otimes f_2 \otimes \ldots \otimes f_n$, for which this says
\[ \lim_{d \rightarrow \infty} \mbb{E}[f_1 \circ X_{1,d} \cdot f_2 \circ{X}_{2,d} \cdot \ldots \cdot f_n \circ X_{n,d}]=\mu(f). \]
In fact, it suffices to consider just these simple tensors for the $f_i$ ranging over any fixed basis of $\Lambda$. 
\end{definition}

\begin{definition}
When $\mbb{E}$ is valued in a Hausdorff topological ring, we say a sequence indexed by natural numbers $d$ of  tuples $(X_{i,d})_{i \in I}$  of random variables is asymptotically independent if there exists for each $i \in I$ a $\Lambda$-distribution $\mu_{i,\infty}$ such that, for any finite collection of distinct indices $i_1, \ldots, i_n \in I$, $(X_{i_1,d}, \ldots, X_{i_n,d})$ converges in distribution to the joint $\Lambda$-distribution $\mu_{i_1,\infty} \otimes \mu_{i_2,\infty} \otimes \ldots \otimes \mu_{i_n,\infty}$. 
\end{definition}

Arguing as in \cref{lemma.independent-moment-generating-functions}, we obtain
\begin{lemma}\label{lemma.asympt-ind-moment-gen}
    If $(X_{1,d}, \ldots, X_{n,d})$ is a sequence of asymptotically independent random variables, then 
    \[ \lim_{d\rightarrow \infty} \mbb{E}[\Exp_{\sigma}( (X_{1,d}+\ldots +X_{n,d})h_1)]=\prod_{i=1}^n \lim_{d\rightarrow \infty}\mbb{E}[\Exp_{\sigma}(X_{i,d})h_1] \]
    where the limits in $\Lambda_{C}^\wedge$ can be taken individually in $C$ for each coefficient of $m_\tau$. 
\end{lemma}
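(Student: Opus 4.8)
The plan is to verify the identity coefficient by coefficient in the $m_\tau$-expansion, the point being that each such coefficient is governed by a \emph{finite} sum, so that limits may be moved past it. First I would rewrite the left-hand integrand using \cref{lemma.expsigma-group-iso}, which converts sums into products: since each $X_{i,d}h_1$ lies in $\mf{m}_{\Lambda_R^\wedge}$, we have
\[ \Exp_{\sigma}\big((X_{1,d}+\cdots+X_{n,d})h_1\big) = \prod_{i=1}^n \Exp_{\sigma}(X_{i,d}h_1) \in \Lambda_R^\wedge, \]
and I would expand each factor via \cref{eq.exp-symmetric-monomial-expansion} as $\Exp_\sigma(X_{i,d}h_1)=\sum_{\tau_i}(h_{\tau_i}\circ X_{i,d})\,m_{\tau_i}$. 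Multiplying out this finite product and collecting the coefficient of a fixed $m_\tau$, I would use that $m_{\tau_1}\cdots m_{\tau_n}$ is homogeneous of degree $|\tau_1|+\cdots+|\tau_n|$, so that if $N^\tau_{\tau_1,\ldots,\tau_n}\in\mbb{Z}_{\geq 0}$ denotes the coefficient of $m_\tau$ in $m_{\tau_1}\cdots m_{\tau_n}$, only the finitely many tuples with $|\tau_1|+\cdots+|\tau_n|=|\tau|$ contribute. Hence the coefficient of $m_\tau$ in $\Exp_{\sigma}((X_{1,d}+\cdots+X_{n,d})h_1)$ is the finite sum $\sum_{(\tau_1,\ldots,\tau_n)} N^\tau_{\tau_1,\ldots,\tau_n}\,(h_{\tau_1}\circ X_{1,d})\cdots(h_{\tau_n}\circ X_{n,d})$ in $R$.

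Next I would apply $\mbb{E}$, which by definition acts coefficient-wise, together with its $\mbb{Z}$-linearity, to conclude that the coefficient of $m_\tau$ in $\mbb{E}[\Exp_{\sigma}((X_{1,d}+\cdots+X_{n,d})h_1)]$ equals $\sum_{(\tau_1,\ldots,\tau_n)} N^\tau_{\tau_1,\ldots,\tau_n}\,\mbb{E}\big[(h_{\tau_1}\circ X_{1,d})\cdots(h_{\tau_n}\circ X_{n,d})\big]$. Asymptotic independence applied to the indices $1,\ldots,n$ says $(X_{1,d},\ldots,X_{n,d})$ converges in distribution to $\mu_{1,\infty}\otimes\cdots\otimes\mu_{n,\infty}$; evaluating on $h_{\tau_1}\otimes\cdots\otimes h_{\tau_n}$ gives
\[ \lim_{d\to\infty}\mbb{E}\big[(h_{\tau_1}\circ X_{1,d})\cdots(h_{\tau_n}\circ X_{n,d})\big] = \prod_{i=1}^n \mu_{i,\infty}(h_{\tau_i}), \]
and in particular, taking a single index, $\lim_{d\to\infty}\mbb{E}[h_{\tau_i}\circ X_{i,d}]=\mu_{i,\infty}(h_{\tau_i})$. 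Since $C$ is a Hausdorff topological ring, a finite $\mbb{Z}$-linear combination of convergent sequences converges to that combination of the limits, so the coefficient of $m_\tau$ on the left-hand side of the lemma converges to $\sum_{(\tau_1,\ldots,\tau_n)}N^\tau_{\tau_1,\ldots,\tau_n}\prod_{i=1}^n\mu_{i,\infty}(h_{\tau_i})$.

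To finish I would match this against the right-hand side: by the single-index limit above, $\lim_{d\to\infty}\mbb{E}[\Exp_\sigma(X_{i,d}h_1)]=\sum_{\tau_i}\mu_{i,\infty}(h_{\tau_i})\,m_{\tau_i}$ in $\Lambda_C^\wedge$ (convergence taken coefficient-wise), and forming the product of these $n$ series and extracting the coefficient of $m_\tau$ produces exactly $\sum_{(\tau_1,\ldots,\tau_n)}N^\tau_{\tau_1,\ldots,\tau_n}\prod_{i}\mu_{i,\infty}(h_{\tau_i})$, once more because only the finitely many tuples of total degree $|\tau|$ contribute. Comparing with the previous paragraph shows the coefficients of $m_\tau$ agree for every partition $\tau$, which is the claim. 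The only genuine content beyond bookkeeping is the degree-finiteness invoked twice — once to express the coefficient of $m_\tau$ as a finite sum so the limit passes through it, and once to expand the product on the right — together with the translation of asymptotic independence into a statement about the basis elements $h_\tau$; neither is a real obstacle, which is exactly why the argument parallels that of \cref{lemma.independent-moment-generating-functions}.
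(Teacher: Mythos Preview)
Your proof is correct and follows essentially the same route as the paper, which simply says to argue as in \cref{lemma.independent-moment-generating-functions}: expand $\Exp_\sigma$ of a sum as a product via \cref{lemma.expsigma-group-iso}, write out the $m_\tau$-coefficients using \cref{eq.exp-symmetric-monomial-expansion}, and use (asymptotic) independence to factor the expectation. You have spelled out in more detail the finiteness that lets the limit pass through each coefficient, which is exactly the extra ingredient needed over the non-asymptotic case.
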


\section{Random matrices}\label{s.random-matrices}

In this section we prove our main result on random matrices, \cref{main.random-matrix} (see \cref{theorem.body-random-matrices}), and give some complements. First, in \cref{ss.random-matrix-probability-space}, we situate the result more squarely within the theory of $\lambda$-probability that was developed in \cref{s.lambda-prob}. Then, in \cref{ss.random-matrix-mg-functions}, we prove the main result, admitting a computation of invariants described in \cref{ss.invariants}. In \cref{ss.traces}, we deduce as a corollary (\cref{corollary.trace-distributions}) the Diaconis-Shahshahani theorem on traces of powers of random matrices in classical groups. Finally, in \cref{ss.cycle-counting}, we explain the relation between the symmetric group case of \cref{main.random-matrix}/\cref{theorem.body-random-matrices} and cycle-counting. 

\subsection{The probability space}\label{ss.random-matrix-probability-space}
Let $G$ be a compact topological group. The Grothendieck ring $K_0(\Rep_{\mathbb{C}} G)$ of complex representations of $G$ admits a natural structure as a $\lambda$-ring such that $h_k \circ [V]=[\Sym^k V]$. For $V$ a representation of $G$ we write $\chi_V$ for its character, $\chi_V(g)=\mathrm{Tr}(g \textrm{ acting on V})$, and we write $\mbb{E}$ for the expectation functional $K_0(\Rep G) \rightarrow \mathbb{Z}$ which sends $[V]$ to 
\[ \int_G \chi_V dg \textrm{ for $dg$ the Haar measure on $G$}.\]
These integrals are a priori complex numbers, but in fact they evaluate to integers since, by the usual orthogonality relations, 
\begin{equation}\label{eq.rep-expectation-is-dimension} \mathbb{E}[[V]] = \int_G \chi_V dg = \dim_{\mathbb{C}} V^G. \end{equation}
In other words, viewing $K_0(\Rep_{\mathbb{C}} G)$ as the free $\mathbb{Z}$-module on the isomorphism classes of irreducible representations of $G$, $\mbb{E}$ is the unique functional sending the trivial representation to $1$ and any other irreducible representation to $0$.  We consider the $\lambda$-probability space $(K_0(\Rep_{\mathbb{C}} G), \mbb{E})$. 

\begin{remark}\label{remark.repG-embedding}
The assignment $[V] \mapsto \chi_V$ extends to an embedding of $K_0(\Rep_{\mathbb{C}} G)$ into the ring of continuous $\mathbb{C}$-valued functions on $G$, and the classical $\mbb{C}$-valued expectation functional on continuous functions on $G$ given by integration against Haar measure restricts to our $\mathbb{Z}$-valued measure on $K_0(\Rep_{\mathbb{C}} G)$ under this embedding. 

Studying $K_0(\Rep_{\mathbb{C}} G)$ with its $\mathbb{Z}$-valued measure will suffice for the results we wish to prove here. However, because the ring of continuous $\mathbb{C}$-valued functions on $G$ is not naturally a $\lambda$-ring, to bring more general problems in random matrix theory into the realm of $\lambda$-probability it is natural to study instead the $\lambda$-ring of continuous $W(\mathbb{C})\cong\prod_{i=1}^\infty \mathbb{C}$-valued functions on $G$ (where $W(\mathbb{C})$ is equipped with the product topology). The expectation on this ring is valued in $W(\mathbb{C})$ and is given by integration in each term, and the $\lambda$-ring structure is the pointwise structure $(f \circ F)(g)=f \circ (F(g))$. We claim the ring of random variables $K_0(\Rep G)$ embeds naturally into $\mathrm{Cont}(G, W(\mathbb{C}))$ compatibly with this $\lambda$-ring structure and with the expectation functionals (under the unique map $\mathbb{Z} \rightarrow W(\mathbb{C})$). Indeed, we can first view $[V]$ as the function sending $g \in G$ to its multiset of eigenvalues of $g$ acting on $V$ to obtain a function valued in $\mathbb{Z}[\mathbb{C}]$, and then compose with the usual embedding of $\mathbb{Z}[\mathbb{C}]$ into $W(\mathbb{C})$ (which sends $[z]$ to $(z, z^2, z^3, \ldots)$). Thus $[V]$ is mapped to the function which sends $g$ to $(\chi_V(g^i))_i$, and it is straightforward to check this is a map of $\lambda$-rings. We will not use this further here, but other rings of $W(\mbb{C})$-valued functions play an essential role in our study of $L$-functions starting in \cref{s.abstract-pc}. 
\end{remark}

\subsection{Random matrix moment generating functions}\label{ss.random-matrix-mg-functions}

We now rephrase \cref{main.random-matrix} in the language of \cref{ss.random-matrix-probability-space}, then give a proof. In the following, we consider the multiplicity filtration on $\Lambda^\wedge$
\[ \Lambda^\wedge_{\mr{mult}>i} = \left\{ \sum_{||\tau||>i} a_\tau m_\tau\, |\, a_\tau \in \mathbb{Z} \right\} \]
where we recall that, for a partition $\tau$, $||\tau||$ is the number of distinct entries in $\tau$ so that, e.g., $||(3,1)||=2$. Similarly, we consider the disjoint multiplicity filtration on $(\Lambda \otimes \Lambda)^\wedge$,
\[ (\Lambda \otimes \Lambda)^\wedge_{\mr{mult}>i} = \left\{ \sum_{||\tau||> i \textrm{ or } ||\overline{\tau}||>i} a_{\tau,\overline{\tau}} m_\tau m_{\overline{\tau}}\, |\, a_{\tau,\overline{\tau}} \in \mathbb{Z} \right\}.\]

We will also consider the degree filtration on $\Lambda^{\wedge}$, which can be written
\[ \Lambda^{\wedge}_{\mr{deg}>i}=\left\{ \sum_{|\tau|>i} a_\tau m_\tau\, |\, a_\tau \in \mathbb{Z} \right\} \]
where we recall $|\tau|$ is the sum of the partition so that, e.g., $|(3,1)|=4.$

\begin{theorem}\label{theorem.body-random-matrices}\hfill
\begin{enumerate}
\item For $n \geq 1$, let $V_n$ be the standard representation of $O(n)$ on $\mathbb{C}^n$. Then, 
\[ \mbb{E}[\Exp_{\sigma}([V_n]h_1)]\equiv \Exp_\sigma(h_2)  \mod \Lambda^\wedge_{\mr{mult}>n} \]
where we note 
\[ \Exp_{\sigma}(h_2)=\prod_{i\leq j}\frac{1}{1-t_it_j} \in \Lambda^\wedge \subseteq \mathbb{Z}[[\ul{t}_{\mbb{N}}]].\] 
\item For $n \geq 1$ even, let $V_n$ be the standard representation of $\Sp(n)$ on $\mathbb{C}^n$. Then,
\[ \mbb{E}[\Exp_{\sigma}([V_n]h_1)]=\Exp_{\sigma}(e_2) \mod \Lambda^\wedge_{\mult>n} \]
where we note 
\[ \Exp_{\sigma}(h_2)=\prod_{i<j}\frac{1}{1-t_it_j} \in \Lambda^\wedge \subseteq \mathbb{Z}[[\ul{t}_{\mbb{N}}]]. \]
\item For $n \geq 1$, let $V_n$ be the standard permutation representation of $\Sigma_n$ on $\mathbb{C}^n$. Then,
\[ \mbb{E}[\Exp_{\sigma}([V_n]h_1)] \equiv \Exp_{\sigma}(\Exp_{\sigma}(h_1)-1) \mod \Lambda^{\wedge}_{\deg>n} \]
where we note
\begin{align*}\Exp_{\sigma}(\Exp_{\sigma}(h_1)-1)&=\Exp_{\sigma}(h_1 + h_2 + \ldots)\\&=\prod_{\substack{\textrm{monomials $m$}\\\textrm{in $t_1, t_2, \ldots$}}} \frac{1}{1-m} \in \Lambda^\wedge \subseteq \mathbb{Z}[[\ul{t}_{\mbb{N}}]].\end{align*} 
\item For $n \geq 1$, let $V_n$ be the standard representation of $U(n)$ on $\mathbb{C}^n$. Then, 
\[ \mbb{E}[\Exp_{\sigma}([V_n]\cdot h_1 + [V_n^*]\cdot \overline{h}_1)] \equiv \Exp_{\sigma} (h_1\overline{h}_1) \mod (\Lambda\otimes_{\mathbb{Z}}  \Lambda)^\wedge_{\mult > n} \]
where we note 
\[ \Exp_{\sigma} (h_1\overline{h}_1)=\prod_{i,j} \frac{1}{1-t_i\overline{t}_j} \in (\Lambda \otimes_{\mathbb{Z}} \Lambda)^\wedge \subseteq \mathbb{Z}[[\ul{t}_{\mbb{N}}, \overline{\ul{t}}_{\mbb{N}}]]. \]
\end{enumerate}
\end{theorem}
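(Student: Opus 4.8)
The plan is to identify each coefficient of the $\sigma$-moment generating function with the dimension of a space of invariants, to compute those dimensions in a stable range via classical invariant theory, and then to check directly that the resulting combinatorial counts are exactly the coefficients of the closed-form power series on the right-hand side. First I would apply the expansion \cref{eq.sigma-moment-expansion} to write $\mbb{E}[\Exp_{\sigma}([V_n]h_1)] = \sum_{\tau}\mbb{E}[h_\tau \circ [V_n]]\,m_\tau$. Since $h_i \circ [V_n] = [\Sym^i V_n]$ and multiplication in $K_0(\Rep_{\mbb{C}}G_n)$ is given by tensor product, $h_\tau \circ [V_n] = [\Sym^\tau V_n]$ with $\Sym^\tau V := \bigotimes_j \Sym^{\tau_j} V$; and, by the description of $\mbb{E}$ in \cref{ss.random-matrix-probability-space}, $\mbb{E}[[W]]$ is the multiplicity of the trivial representation in $W$, i.e.\ $\dim W^{G_n}$. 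Thus the coefficient of $m_\tau$ equals $\dim(\Sym^\tau V_n)^{G_n}$. For the unitary case I would first use that $\Exp_\sigma$ is a homomorphism from addition to multiplication (\cref{lemma.expsigma-group-iso}) to split $\Exp_\sigma([V_n]h_1 + [V_n^*]\overline h_1)$ as a product over the two independent families of variables; expanding each factor and applying $\mbb{E}$ coefficient-wise then shows the coefficient of $m_\tau\overline m_{\overline\tau}$ is $\dim(\Sym^\tau V_n \otimes \Sym^{\overline\tau} V_n^*)^{U(n)}$.

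The next step, which I would isolate as a separate input carried out in \cref{ss.invariants}, is the invariant-theoretic computation of these dimensions in the stable range. By the first fundamental theorem of invariant theory, each space has a spanning set of invariant monomials built from the basic contractions --- the inner products $\langle x_i, x_j\rangle$ for $O(n)$, the symplectic pairings $\omega(x_i, x_j)$ for $\Sp(n)$, coordinate monomials for the permutation action of $\Sigma_n$, and the covector--vector pairings $\langle \xi_j, x_i\rangle$ for $U(n)$ --- and by the second fundamental theorem this spanning set is linearly independent as soon as the number of vector (and covector) arguments is at most $n$. Consequently, writing $r = ||\tau||$ and $\overline r = ||\overline\tau||$, in the stable range $\dim(\Sym^\tau V_n)^{G_n}$ equals: the number of symmetric $r\times r$ matrices $(a_{ij})$ of non-negative integers with $2a_{ii}+\sum_{j\ne i}a_{ij}=\tau_i$ in case (1); the same count with the diagonal forced to be zero in case (2); the number of multisets of nonzero vectors in $\mbb{Z}_{\ge 0}^{r}$ summing to $\tau$ in case (3); and the number of non-negative integer matrices with row sums $\tau$ and column sums $\overline\tau$ in case (4).

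It then remains to match these with the power series. The coefficient of $m_\tau$ in any symmetric power series is the coefficient of the single monomial $t_1^{\tau_1}\cdots t_r^{\tau_r}$, so I would expand the stated products $\Exp_\sigma(h_2)=\prod_{i\le j}(1-t_it_j)^{-1}$, $\Exp_\sigma(e_2)=\prod_{i<j}(1-t_it_j)^{-1}$, $\Exp_\sigma(h_1+h_2+\cdots)=\prod_{\text{nonconstant }m}(1-m)^{-1}$, and $\Exp_\sigma(h_1\overline h_1)=\prod_{i,j}(1-t_i\overline t_j)^{-1}$ as geometric series and read off the exponent bookkeeping: the exponent of $t_k$ in the first product is $2a_{kk}+\sum_{j\ne k}a_{kj}$, in the second it is $\sum_{j\ne k}a_{kj}$, choosing multiplicities for the monomials whose product is $t^\tau$ in the third is precisely choosing a multiset of nonzero exponent vectors summing to $\tau$, and in the fourth the exponent data is a non-negative integer matrix with the prescribed row and column sums. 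In each case this reproduces exactly the count of the previous paragraph, so the two sides of the congruence agree on the coefficient of $m_\tau$ (resp.\ $m_\tau\overline m_{\overline\tau}$) whenever $||\tau||\le n$ (resp.\ $||\tau||,||\overline\tau||\le n$) --- or whenever $|\tau|\le n$ in case (3), since a multiset of nonzero vectors summing to $\tau$ has at most $|\tau|$ members, so the constraint of at most $n$ columns is then vacuous. This is exactly the asserted congruence modulo $\Lambda^\wedge_{\mult>n}$, $\Lambda^\wedge_{\deg>n}$, or the disjoint multiplicity filtration, respectively. The concluding convergence statement is then immediate: each right-hand side is a fixed element independent of $n$ and the filtration step tends to infinity, so the coefficient of each fixed $m_\tau$ is eventually constant, which by \cref{lemma.extract-distribution-via-hall} is convergence of the ($\Lambda$- or joint $\Lambda$-)distribution to the unique one with the displayed $\sigma$-moment generating function (the $m_\tau$ and $h_\tau$ being two $\mbb{Z}$-bases of $\Lambda$).

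I expect the main obstacle to be the invariant-theoretic input of the second paragraph: showing that the invariants produced by the first fundamental theorem remain linearly independent throughout the stable range, which is the content of the second fundamental theorem for the orthogonal, symplectic, and general linear groups (together with an elementary orbit-counting argument for the symmetric group, where no relations intervene). By contrast, the identification in the first paragraph and the combinatorial matching in the third are bookkeeping; the only genuine subtlety there is that the symmetric group forces a degree filtration rather than a multiplicity filtration, because its stable range is controlled by the number of monomials occurring in an invariant --- bounded by $|\tau|$ --- rather than by the number of tensor factors.
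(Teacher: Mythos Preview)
Your proposal is correct and follows the same overall architecture as the paper: expand via \cref{eq.sigma-moment-expansion} to identify coefficients as $\dim(\Sym^\tau V_n)^{G_n}$, compute these via invariant theory, and match against the explicit product expansions. The one genuine point of divergence is how you handle the invariant-theoretic step in cases (1), (2), and (4). You propose to invoke the \emph{second} fundamental theorem to certify that the basic invariants are linearly independent whenever $||\tau|| \le n$. The paper avoids the second fundamental theorem entirely: it applies the first fundamental theorem with exactly $m=n$ vectors (resp.\ $m_1=m_2=n$ in the unitary case), obtaining a closed immersion of the GIT quotient into affine space, and then checks by an explicit triangular construction with standard basis vectors that this closed immersion is dominant, hence an isomorphism. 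The resulting ring isomorphism $\mathbb{C}[t_{ij}] \cong (\Sym^\bullet(V^{\oplus n})^*)^{G_n}$ gives the dimension of every graded piece $(\Sym^\tau V_n)^{G_n}$ with $||\tau|| \le n$ at once.

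Both routes are valid. Yours is the textbook path and has the virtue of making the stability threshold transparent; the paper's has the virtue of being self-contained modulo only the first fundamental theorem, trading the second fundamental theorem for a two-line dominance check. For the symmetric group case (3) your orbit-counting argument is essentially identical to the paper's \cref{prop.symmetric-invariants}; your observation that a multiset of nonzero vectors summing to $\tau$ has at most $|\tau|$ members is exactly how the paper passes from its exact ``$n$-term'' formula to the congruence modulo $\Lambda^\wedge_{\deg>n}$.
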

\begin{proof}
We first discuss cases (1)-(3), writing $G_n$ for the relevant group. For $\tau=(\tau_1,\tau_2,\ldots,\tau_k)$, we write $\Sym^\tau V_n= \Sym^{\tau_1} V_n \otimes \ldots \otimes \Sym^{\tau_k} V_n$. It follows from the definition of the $\lambda$-ring structure on $\Rep_{\mathbb{C}} G$, where $h_k \circ [V]=[\Sym^k V]$ and multiplication is given by tensor product, that
\[ h_\tau \circ [V_n]=(h_{\tau_1}\ldots h_{\tau_k})\circ [V_n]=(h_{\tau_1} \circ [V_n])\ldots(h_{\tau_k} \circ [V_n])=[\Sym^{\tau} V_n].\]
Thus, using the expansion of \cref{eq.sigma-moment-expansion}, we have
\begin{align*} \mbb{E}[\Exp_{\sigma}([V_n]h_1)] & = \sum_{\tau} \mathbb{E}[h_\tau \circ [V_n]] m_\tau \\ &= 
\sum_{\tau} \mathbb{E}[ [\Sym^{\tau} V_n]] m_\tau \\
&= \sum_{\tau} \dim_{\mathbb{C}} (\Sym^{\tau} V_n)^{G_n} m_\tau.
\end{align*}
where the final equality follows from \cref{eq.rep-expectation-is-dimension}. The result in these cases then follows from \cref{lemma.invariant-theory} below after verifying that the given identities hold for the $\sigma$-exponentials in each case. This is clear by working in the non-symmetric power series ring $\mbb{Z}[[\ul{t}_{\mbb{N}}]]$ containing $\Lambda^\wedge$ of \cref{ss.symmetric-power-series}, where, e.g., in the orthogonal case we find
\[ \Exp_{\sigma}(h_2)=\Exp_{\sigma}(\sum_{i\leq j} t_it_j)=\prod_{i\leq j} \Exp_{\sigma}(t_it_j)=\prod_{i\leq j} \frac{1}{1-t_it_j}. \]

In case (4), writing $G_n=U(n)$ and expanding similarly we find 
\[ \mbb{E}[\Exp_{\sigma}([V_n]\cdot h_1 + [V_n^*]\cdot \overline{h}_1)] = \sum_{\tau, \overline{\tau}} \dim_{\mathbb{C}} \left(\Sym^\tau V_n \otimes \Sym^{\overline{\tau}}V_n^*\right)^{G_n} m_\tau \overline{m}_\tau \]
and the result in this case then follows similarly from \cref{lemma.invariant-theory}.
\end{proof}

\subsection{Computation of invariants}\label{ss.invariants}

In the proof of \cref{theorem.body-random-matrices}, we used a computation of the generating functions of invariants to be established as \cref{lemma.invariant-theory} below. In the orthogonal, symplectic, and unitary cases, it will be a simple consequence of the first fundamental theorem of classical invariant theory, which we recall as \cref{prop.first-fundamental-theorem} below. In the symmetric case we make the relevant computation in \cref{prop.symmetric-invariants}. 

\begin{proposition}[First fundamental theorem of classical invariant theory]\hfill\label{prop.first-fundamental-theorem}
\begin{enumerate}
	\item Let $V$ be the standard representation of $O(n,\mathbb{C})$ on $\mathbb{C}^n$. Then, for any $m \geq 0$, the map of algebraic varieties
	\begin{align*} V^{\oplus m} / O(n,\mathbb{C}) & \rightarrow \prod_{1\leq i \leq j \leq m}\mathbb{A}^1\\ (v_1, \ldots, v_m) &\mapsto (\langle v_i, v_j \rangle)_{i\leq j}\end{align*}
	is a closed immersion. Equivalently, the map 
	\[ \mathbb{C}[t_{ij}]_{1\leq i \leq j \leq m} \xrightarrow{t_{ij} \mapsto \langle v_i, v_j \rangle} \left(\Sym^\bullet (V^{\oplus m})^* \right)^{O(n,\mathbb{C})}\] 
	is surjective, where the action of $O(n,\mathbb{C})$ is diagonal. 
	\item Let $n$ be even and let $V$ be the standard representation of $\Sp(n, \mathbb{C})$ on $\mathbb{C}^n$. Then, for any $m\geq 0$, the map of algebraic varieties
	\begin{align*} V^{\oplus m} / \Sp(n,\mathbb{C}) &\rightarrow \prod_{1\leq i < j \leq m}\mathbb{A}^1\\ (v_1, \ldots, v_m) &\mapsto \prod_{i< j} \langle v_i, v_j \rangle\end{align*}
	is a closed immersion. Equivalently, the map 
	\[ \mathbb{C}[t_{ij}]_{1\leq i < j \leq m} \xrightarrow{t_{ij} \mapsto \langle v_i, v_j \rangle} \left(\Sym^\bullet (V^{\oplus m})^* \right)^{\Sp(n,\mathbb{C})}\] 
	is surjective, where the action of $Sp(n,\mathbb{C})$ is diagonal. 	
	\item Let $V$ be the standard representation of $\GL_n=U(n,\mathbb{C})$ on $\mathbb{C}^n$. Then, for any $m_1, m_2 \geq 0$, the map of algebraic varieties 
	\begin{align*} (V^{\oplus m_1} \oplus {V^*}^{\oplus m_2})/\GL_n \rightarrow \prod_{\substack{1 \leq i \leq m_1 \\ 1 \leq j \leq m_2}} \mathbb{A}^1 & \\
	 (v_1, \ldots, v_{m_1}, \delta_1,\ldots,\delta_{m_2})\mapsto  (\langle v_{i}, \delta_{j}\rangle)_{\substack{1 \leq i \leq m_1 \\ 1 \leq j \leq m_2}} \end{align*}
	 is a closed immersion. Equivalently, the map 
	 \[ \mathbb{C}[t_{ij}]_{\substack{1 \leq i \leq m_1 \\ 1 \leq j \leq m_2}} \xrightarrow{ t_{ij} \mapsto \langle v_i,\delta_j \rangle} \left(\Sym^{\bullet} \left(V^{\oplus m_1} \oplus {V^*}^{\oplus m_2}\right)^*\right)^{\GL_n}\]
	 is surjective, where the action of $\GL_n$ is diagonal on $V^{\oplus m_1} \oplus {V^*}^{\oplus m_2}$.
\end{enumerate}
\end{proposition}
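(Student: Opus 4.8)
The plan is to observe that each statement is, after unwinding the quotient formalism, exactly the corresponding first fundamental theorem of classical invariant theory, and to prove the underlying generation statements by a polarization argument modeled on the general linear case.

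First, in each of (1)--(3) the two formulations are equivalent by standard commutative algebra: $V^{\oplus m}/O(n,\mathbb{C})$ denotes the affine quotient $\Spec(\mathcal{O}(V^{\oplus m})^{O(n,\mathbb{C})})$ --- legitimate because $O(n,\mathbb{C})$, $\Sp(n,\mathbb{C})$, and $\GL_n$ are reductive in characteristic zero --- and the displayed morphism to $\prod \mathbb{A}^1$ corresponds on coordinate rings to the map $\mathbb{C}[t_{ij}] \to \mathcal{O}(V^{\oplus m})^{O(n,\mathbb{C})}$ sending $t_{ij}$ to the relevant pairing (in case (2) the morphism is of course the tuple of pairings $(\langle v_i,v_j\rangle)_{i<j}$, and the group acting is $\Sp(n,\mathbb{C})$). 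Since a morphism of affine $\mathbb{C}$-schemes is a closed immersion if and only if the associated ring map is surjective, in all three cases the entire content is that the invariant ring is generated by the displayed pairings, respectively contractions. This is precisely the first fundamental theorem for $O$, $\Sp$, and $\GL$, for which I would ultimately cite a standard source such as Weyl's \emph{The Classical Groups}, Goodman and Wallach's \emph{Symmetry, Representations, and Invariants} (Chapter~5), or Procesi's \emph{Lie Groups} (Chapter~11); what follows is a sketch of the argument one would give.

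The general linear case (3) is the prototype. Arrange the $v_i$ as the columns of an $n \times m_1$ matrix $A$ and the $\delta_j$ as the rows of an $m_2 \times n$ matrix $B$, so that $\GL_n$ acts by $(A,B) \mapsto (gA, Bg^{-1})$ and the contractions $\langle v_i,\delta_j\rangle$ are the entries of $BA$; one must show $\mathcal{O}(\mathrm{Mat}_{n\times m_1} \times \mathrm{Mat}_{m_2\times n})^{\GL_n} = \mathbb{C}[(BA)_{ji}]$. By Weyl's polarization theorem it suffices to treat $m_1, m_2 \le n$, and there one restricts a given invariant to the dense open locus where both $A$ and $B$ have full rank, uses $\GL_n$ to put $A$ in standard form, and reads off that the invariant is a polynomial in the entries of $BA$; alternatively one decomposes $\mathcal{O}$ into $\GL_n \times \GL_{m_1} \times \GL_{m_2}$-isotypic components via the Cauchy identity and Schur--Weyl duality and identifies the $\GL_n$-invariant part directly. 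The orthogonal and symplectic cases run along the same lines: after polarizing to reduce to $m \le n$, one restricts to the locus where the $v_i$ are linearly independent with nondegenerate Gram matrix, on which $O(n,\mathbb{C})$ (resp.\ $\Sp(n,\mathbb{C})$) acts transitively on frames with a fixed Gram matrix, so that the invariant is a function of $(\langle v_i,v_j\rangle)$ there, and one then argues that this function is in fact polynomial; the bookkeeping needed to make ``polynomial'' precise and to handle the degenerate locus is classically packaged via the Capelli identity.

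I expect the main obstacle to be precisely that last point: there is no genuinely short self-contained proof of the orthogonal and symplectic first fundamental theorems, and the honest work is the polarization reduction together with the Capelli-identity argument controlling the passage from the full-rank locus to all of $V^{\oplus m}$. Since only generation of the invariant ring is needed here --- not a presentation by generators and relations --- the pragmatic course is to cite one of the references above; the only item that genuinely requires checking in the present framing is the translation in the second paragraph, namely that ``closed immersion'' is synonymous with surjectivity of the coordinate ring map and that the affine quotients are the correct objects, both of which are immediate.
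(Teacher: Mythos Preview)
Your proposal is correct and matches the paper's approach: the paper's own proof is simply a citation to Weyl's \emph{The Classical Groups} (Theorems 2.9.A, 6.1.A, and 2.6.A) and to Kraft--Procesi's \emph{Classical Invariant Theory: A Primer}, with no further argument. Your version is in fact more informative, since you unpack the equivalence between the closed-immersion and ring-surjectivity formulations and sketch the polarization/Capelli route, but the substance is the same---this proposition is quoted from the literature rather than proved.
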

\begin{proof}
Part (1) follows from \cite[Theorem 2.9.A]{Weyl.TheClassicalGroupsTheirInvariantsAndRepresentations} or, for a more modern reference, \cite[10.2-(1) on p.114]{KraftProcesi.ClassicalInvariantTheoryAPrimer}.
Part (2) follows from \cite[Theorem 6.1.A]{Weyl.TheClassicalGroupsTheirInvariantsAndRepresentations} or, for a more modern reference, \cite[10.3 on p.114]{KraftProcesi.ClassicalInvariantTheoryAPrimer}. Part (3) follows from \cite[Theorem 2.6.A]	{Weyl.TheClassicalGroupsTheirInvariantsAndRepresentations} or, for a more modern reference, \cite[First Fundamental Theorem for $\mathbf{GL}(V)$ in 2.1 on p.16]{KraftProcesi.ClassicalInvariantTheoryAPrimer}. 
\end{proof}

\begin{proposition}\label{prop.symmetric-invariants}
For $V=\mathbb{C}^n$ with the permutation representation of $\Sigma_n$, 
\[ \dim \left(\Sym^{k_1} V \otimes \Sym^{k_2} V \otimes \ldots \otimes \Sym^{k_\ell} V\right)^{\Sigma_n} \]
is equal to the number of partitions 
\[ (k_1, \ldots, k_\ell) = \sum_{\vec{v} \in \mathbb{Z}_{\geq 0}^\ell} a_{\vec{v}} \vec{v} \textrm{ such that $\sum_{\vec{v} \in \mathbb{Z}_{\geq 0}^\ell} a_{\vec{v}}=n$.}\]
 
\end{proposition}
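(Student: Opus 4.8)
The plan is to realize $\Sym^{k_1} V \otimes \cdots \otimes \Sym^{k_\ell} V$ explicitly as a permutation representation of $\Sigma_n$ and then simply count its orbits. The point is that a symmetric power of a permutation representation is again a permutation representation, with an evident combinatorial index set, and the dimension of the invariants of a permutation representation is the number of orbits.

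First I would fix the standard basis $e_1, \ldots, e_n$ of $V = \mbb{C}^n$, on which $\Sigma_n$ acts by permuting indices. Then $\Sym^{k} V$ has a basis consisting of the degree-$k$ monomials $\prod_j e_j^{c(j)}$ in the $e_j$, which are naturally indexed by functions $c \colon \{1,\ldots,n\} \to \mbb{Z}_{\geq 0}$ with $\sum_j c(j) = k$, and one checks that $\Sigma_n$ carries such a basis monomial to another basis monomial, by permuting the argument of $c$. Next, taking the tensor product over $i = 1, \ldots, \ell$, the representation $\Sym^{k_1} V \otimes \cdots \otimes \Sym^{k_\ell} V$ acquires a basis indexed by $\ell$-tuples $(c_1, \ldots, c_\ell)$ of such functions, equivalently by a single function $\{1,\ldots,n\} \to \mbb{Z}_{\geq 0}^\ell$, $j \mapsto \vec{w}_j = (c_1(j), \ldots, c_\ell(j))$, subject to $\sum_{j=1}^n \vec{w}_j = (k_1,\ldots,k_\ell)$. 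So a basis vector is precisely the datum of an $n$-tuple $(\vec{w}_1, \ldots, \vec{w}_n)$ of vectors in $\mbb{Z}_{\geq 0}^\ell$ whose sum is $(k_1,\ldots,k_\ell)$, and $\Sigma_n$ acts by permuting the $n$ entries of this tuple. In particular this is a permutation representation, so the dimension of its $\Sigma_n$-invariant subspace equals the number of $\Sigma_n$-orbits on this basis set (the orbit sums being a basis of the invariants).

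Finally I would identify the orbits. A $\Sigma_n$-orbit of such an $n$-tuple is exactly a multiset of $n$ elements of $\mbb{Z}_{\geq 0}^\ell$ whose sum is $(k_1,\ldots,k_\ell)$; recording the multiplicity $a_{\vec v} \in \mbb{Z}_{\geq 0}$ with which each $\vec v \in \mbb{Z}_{\geq 0}^\ell$ occurs translates this datum into a choice of nonnegative integers $(a_{\vec v})_{\vec v \in \mbb{Z}_{\geq 0}^\ell}$, almost all zero, with $\sum_{\vec v} a_{\vec v} = n$ and $\sum_{\vec v} a_{\vec v} \vec v = (k_1,\ldots,k_\ell)$ --- which is exactly the set of partitions counted in the statement. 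There is no substantive obstacle here; the only steps needing a little care are verifying that the $\Sigma_n$-action genuinely permutes the monomial basis of each symmetric power (rather than acting by a more complicated linear map), and the standard fact that the invariants of a permutation representation are spanned by the orbit sums, so that their dimension is the number of orbits.
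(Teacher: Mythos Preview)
Your proposal is correct and follows essentially the same argument as the paper: both identify $\Sym^{k_1}V\otimes\cdots\otimes\Sym^{k_\ell}V$ as the permutation representation on labelings $\{1,\ldots,n\}\to\mbb{Z}_{\geq 0}^\ell$ with coordinate sums $(k_1,\ldots,k_\ell)$, and then count orbits via the multiplicity vectors $(a_{\vec v})$. Your write-up is slightly more explicit about the monomial basis, but there is no substantive difference.
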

\begin{proof}
Note that $\Sym^k V$ can be viewed the permutation representation associated to the $\Sigma_n$-set of labelings of $\{1, \ldots, n\}$ by elements of $\mathbb{Z}_{\geq 0}$ such that the total sum of the labels is $k$. The tensor product 
\[ \Sym^{k_1} V \otimes \Sym^{k_2} V \otimes \ldots \otimes \Sym^{k_\ell} V \]
is thus the permutation representation associated to the product of these $\Sigma_n$-sets, which can be viewed as the permutation representation associated to the $\Sigma_n$-set of labelings of $\{1, \ldots, n\}$ by vectors $\vec{v} \in \mathbb{Z}^\ell_{\geq 0}$ whose sum is $(k_1,\ldots,k_l)$. The dimension of the $\Sigma_n$-invariants is thus equal to the number of orbits in the permutation action of $\Sigma_n$ on this set of labelings. These orbits are determined uniquely by the multiplicities $a_{\vec{v}}$ with which each vector $\vec{v}$ occurs as a label, and since there are $n$ elements in the set we are labeling, the sum of the multiplicities $a_{\vec{v}}$ is equal to $n$.

\end{proof}

The following computation of generating functions that was used in the proof of \cref{theorem.body-random-matrices} is a simple consequence of the above results in invariant theory. We state the results in non-symmetric power series rings, where one has multiplicity and degree filtrations defined similarly (that restrict to the multiplicity and degree filtrations on the symmetric power series rings). 

\begin{lemma}\label{lemma.invariant-theory}\hfill
\begin{enumerate} 
\item Let $G=O(n)$ and let $V$ denote its standard representation on $\mathbb{C}^n$. Then
 \[ \sum_{\tau} \dim_{\mathbb{C}} (\Sym^{\tau} V)^{G} m_\tau \equiv \prod_{i\leq j} \frac{1}{1-t_it_j} \mod \mathbb{Z}[[\ul{t}_{\mbb{N}}]]_{\mult>n}. \]
\item For $n$ even, let $G=Sp(n)$ and let $V$ denote its standard representation on $\mathbb{C}^n$. Then
 \[ \sum_{\tau} \dim_{\mathbb{C}} (\Sym^{\tau} V)^{G} m_\tau \equiv \prod_{i< j} \frac{1}{1-t_it_j} \mod \mathbb{Z}[[\ul{t}_{\mbb{N}}]]_{\mult>n}. \]
 \item Let $G=\Sigma_n$ and let $V$ denote its standard permutation representation on $\mathbb{C}^n$. Then
  \begin{align*} \sum_{\tau} \dim_{\mathbb{C}} (\Sym^{\tau} V)^{G} m_\tau &= \prod_{\substack{\textrm{non-constant}\\\textrm{monomials $m$}\\\textrm{in $t_1, t_2, \ldots$}}}^{n-term} \frac{1}{1-m} \\
  & \equiv \prod_{\substack{\textrm{non-constant}\\\textrm{monomials $m$}\\\textrm{in $t_1, t_2, \ldots$}}} \frac{1}{1-m} \mod \mathbb{Z}[[\ul{t}_{\mbb{N}}]]_{\deg>n}\end{align*}
  where the superscript ``$n$-term" denotes that, when expanding the product, we only consider terms corresponding to at most $n$ entries not equal to 1.
  
\item Let $G=U(n)$ and let $V$ denote its standard representation on $\mathbb{C}^n$. Then 
\[\sum_{\tau, \overline{\tau}} \dim_{\mathbb{C}} \left(\Sym^\tau V_n \otimes \Sym^{\overline{\tau}}V_n^*\right)^{G_n} m_\tau m_{\overline{\tau}} \equiv  \prod_{i, j} \frac{1}{1-t_i\overline{t}_j} \mod \mathbb{Z}[[\ul{t}_{\mbb{N}}, \overline{\ul{t}}_{\mbb{N}}]]_{\mult>n}.\]
\end{enumerate}
\end{lemma}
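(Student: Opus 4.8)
The plan is to prove cases (1), (2), (4) uniformly as consequences of the first fundamental theorem \cref{prop.first-fundamental-theorem}, by reading off both sides of the desired congruence as multigraded Hilbert series, and to prove case (3) by expanding the product directly and invoking \cref{prop.symmetric-invariants}.

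For the three classical cases, write $G$ for the relevant group and, for $m\geq 1$, set $W=V^{\oplus m}$ (in case (4), $W=V^{\oplus m_1}\oplus (V^*)^{\oplus m_2}$). The torus rescaling the individual summands of $W$ commutes with the diagonal $G$-action, so $\left(\Sym^\bullet W^*\right)^G$ is multigraded, and its multidegree-$\ul{d}$ piece is a tensor product of symmetric powers of $V$ and $V^*$; for $O(n)$ and $\Sp(n)$ the invariant form gives $V\cong V^*$, so this piece has dimension $\dim(\Sym^{\ul{d}}V)^G$, while for $\GL_n$ it has dimension $\dim(\Sym^{\ul{b}}V\otimes\Sym^{\ul{a}}V^*)^{\GL_n}$ for $\ul{d}=(\ul{a},\ul{b})$. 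The main point is that, in the stable range $m\leq n$ (resp.\ $m_1,m_2\leq n$), the surjection $\mbb{C}[t_{ij}]\twoheadrightarrow\left(\Sym^\bullet W^*\right)^G$ of \cref{prop.first-fundamental-theorem} is an \emph{isomorphism} of multigraded rings, once $t_{ij}$ is placed in the multidegree of the entry $\langle v_i,v_j\rangle$ (resp.\ $\langle v_i,\delta_j\rangle$). It suffices to check injectivity, and the map is injective because it is dual to the morphism $\Spec\left(\Sym^\bullet W^*\right)^G\to\mbb{A}^N$ sending the class of a tuple to its Gram matrix, whose image is the locus of symmetric (resp.\ antisymmetric, resp.\ $m_1\times m_2$) matrices of rank $\leq n$; this locus is all of $\mbb{A}^N$ exactly in the stated range, and since $\Spec\left(\Sym^\bullet W^*\right)^G$ is irreducible and $W$ surjects onto it, the morphism to $\mbb{A}^N$ is then a dominant closed immersion, hence an isomorphism. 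Comparing multigraded Hilbert series then yields, in the stable range, identities such as $\sum_{\ul{d}}\dim(\Sym^{\ul{d}}V)^{O(n)}\,\ul{x}^{\ul{d}}=\prod_{i\leq j}(1-x_ix_j)^{-1}$ and its symplectic and unitary analogues. To pass to the congruences of \cref{lemma.invariant-theory} I would use that (i) a monomial involving at most $n$ distinct variables of each kind has the same coefficient in the relevant infinite product over all $i\leq j$ as in the truncated product over just the variables occurring in it, since every other factor $(1-t_it_j)^{-1}$ must contribute its constant term $1$, and (ii) $\dim(\Sym^{\ul{d}}V)^G$ depends only on the multiset of entries of $\ul{d}$, hence equals the coefficient of that monomial in $\sum_\tau\dim(\Sym^\tau V)^G m_\tau$; together these match all coefficients indexed by partitions with at most $n$ parts (of each kind), which is exactly the asserted congruence modulo the multiplicity filtration.

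For case (3), I would expand $\prod_m(1-m)^{-1}=\prod_m(1+m+m^2+\cdots)$ over the non-constant monomials $m=\ul{t}^{\ul{v}}$, so the coefficient of $\ul{t}^{\ul{w}}$ counts the ways of writing $\ul{w}=\sum_{\ul{v}\neq 0}a_{\ul{v}}\ul{v}$ with nonnegative integer multiplicities of finite support, while the ``$n$-term'' truncation imposes in addition $\sum_{\ul{v}\neq 0}a_{\ul{v}}\leq n$. By \cref{prop.symmetric-invariants}, taking $\ell$ to be the length of $\ul{w}$ and letting the multiplicity $a_{\ul{0}}$ of the zero vector absorb the slack $n-\sum_{\ul{v}\neq 0}a_{\ul{v}}$, this truncated count is precisely $\dim(\Sym^{\ul{w}}V)^{\Sigma_n}$, which gives the first equality (using that $\ul{t}^{\ul{w}}$ occurs with coefficient $1$ in $m_\tau$ for $\tau$ the sorted version of $\ul{w}$ and in no other $m_\tau$). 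For the second equality, any term of the full product using at least $n+1$ monomial factors has degree at least $n+1$, whereas any decomposition of a $\ul{w}$ with $|\ul{w}|\leq n$ automatically uses at most $|\ul{w}|\leq n$ factors; hence the full product and its $n$-term truncation agree modulo $\mbb{Z}[[\ul{t}_{\mbb{N}}]]_{\deg>n}$.

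The one genuinely non-formal ingredient is the injectivity (equivalently, dominance) step in the classical cases, i.e.\ the elementary fact that over $\mbb{C}$ every symmetric $m\times m$ matrix (resp.\ antisymmetric, resp.\ arbitrary $m_1\times m_2$) is the Gram matrix of vectors in $\mbb{C}^n$ once $m\leq n$ (resp.\ $\min(m_1,m_2)\leq n$), which follows from the rank stratification of these matrix spaces together with the existence of orthogonal (resp.\ symplectic) bases over $\mbb{C}$. Everything else — the multigradings, the identification $V\cong V^*$ for $O(n)$ and $\Sp(n)$, and the monomial bookkeeping needed to translate Hilbert series into statements modulo the multiplicity and degree filtrations — is routine.
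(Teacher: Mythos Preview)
Your proposal is correct and follows essentially the same approach as the paper's proof. Both arguments reduce cases (1), (2), (4) to showing that the closed immersion of \cref{prop.first-fundamental-theorem} is an isomorphism in the stable range, then read off the multigraded Hilbert series; and both treat case (3) by unpacking the product and matching against \cref{prop.symmetric-invariants}. The only notable difference is in how dominance is checked: the paper does it by an explicit Gram--Schmidt--type construction of vectors with prescribed pairings, whereas you invoke the rank stratification (every symmetric / antisymmetric / $m_1\times m_2$ matrix of rank $\leq n$ is a Gram matrix in $\mbb{C}^n$), which is a cleaner way to say the same thing.
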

\begin{proof}
The symmetric group case (3) is a direct rephrasing of \cref{prop.symmetric-invariants} using generating functions.  

In cases (1) and (2) (resp. (4)), taking $m=n$ (resp. $m_1=m_2=n$) in \cref{prop.first-fundamental-theorem}, we claim the given closed immersion is actually an isomorphism. Since both sides are reduced, it suffices to see that the maps are dominant. But this is easily verified by considering the pairings of the standard basis vectors. For example, in the orthogonal case (1), to hit $(a_{ij})_{i \leq j}\in \prod_{i\leq j}\mathbb{C}$ with nonzero diagonal terms $a_{ii} \neq 0$ as the image of $v_1,\ldots,v_n$, we may take 
\[ v_1=\sqrt{a_{11}} \cdot e_1, v_2=\frac{a_{12}}{\sqrt{a_{11}}} \cdot e_1 + \sqrt{a_{22}} \cdot e_2, v_3=\frac{a_{13}}{\sqrt{a_{11}}}\cdot e_1 + \frac{a_{12} - \frac{a_{13}a_{12}}{a_{11}}}{\sqrt{a_{22}}}\cdot e_2 + \sqrt{a_{33}}\cdot e_3\ldots. \]
It follows that, in each of these cases, the map of rings in \cref{prop.first-fundamental-theorem} is an isomorphism. Then, to conclude cases (1) and (2), we expand 
\[ \Sym^\bullet(V^{\oplus n}) = \sum_{(k_1,\ldots,k_n)\in \mathbb{Z}_{\geq 0}^n} \Sym^{k_1} V \otimes \Sym^{k_2} V \otimes \ldots \otimes \Sym^{k_n} V ,\]
and the computation of the generating functions then follows immediately from the isomorphisms on the rings of invariants: in the notation of \cref{prop.first-fundamental-theorem}, each monomial $\prod t_{ij}^{b_{ij}}$ in the variables $t_{ij}$ in the ring of invariants contributes a one dimensional space in $\Sym^{\vec{k}} V$ where $\vec{k}=\sum b_{ij}\cdot (e_i+e_j)$. The verification of case (4) is similar starting from the analogous decomposition of $\Sym^\bullet(V^{\oplus n} \oplus {V^*}^{\oplus n})$.

\end{proof}

\subsection{Traces of random matrices}\label{ss.traces}
We now deduce the results of Diaconis and Shahshahani \cite{DiaconisShahshahani.OnTheEigenvaluesOfRandomMatrices} on traces of Haar random matrices in classical groups  from \cref{main.random-matrix}/\cref{theorem.body-random-matrices}-(1),(2), and (4). 

\begin{corollary}[Diaconis and Shahshahani\footnote{The lower bounds on $n$ in these results as stated in \cite{DiaconisShahshahani.OnTheEigenvaluesOfRandomMatrices} are not correct and do not follow from the proofs given there. The correct bounds are stated, e.g., in \cite[Propositions 2.5 and 3.6]{Johansson.OnRandomMatricesFromTheCompactClassicalGroups}.} \cite{DiaconisShahshahani.OnTheEigenvaluesOfRandomMatrices}, Theorems 2, 4, and 6]\label{corollary.trace-distributions} 
\[ \textrm{Let } g_j(a)= \begin{cases} 0 & \textrm{if $j$ and $a$ are both odd.} \\ j^{a/2}(a-1)(a-3)\cdots 1 & \textrm{if $j$ is odd and $a$ is even.} \\
\sum_k \binom{a}{2k} j^k (2k-1)(2k-3)\cdots 1 & \textrm{if $j$ is even.}
\end{cases} \]
\begin{enumerate}
\item For $\sum i a_i \leq n$ and $M$ a Haar-random matrix in $O(n)$,
\begin{equation}\label{eq.ds-trace-orthogonal} \mbb{E}\left[\prod \mr{Tr}(M^i)^{a_i}\right] = \prod_i g_i(a_i)\end{equation}
\item For $n$ even, $\sum i a_i \leq n$, and $M$ a Haar-random matrix in $\Sp(n)$,
\begin{equation}\label{eq.ds-trace-symplectic} \mbb{E}\left[\prod\mr{Tr}(M^i)^{a_i}\right] = \prod_i (-1)^{a_i-1} g_i(a_i)\end{equation}
\item For $\sum i a_i \leq n$, $\sum_i ib_i \leq n$, and $M$ a Haar-random matrix in $U(n)$, 
\begin{equation}\label{eq.ds-trace-unitary} \mbb{E}\left[\prod\mr{Tr}(M^i)^{a_i}\mr{Tr}(\overline{M}^i)^{b_i}\right] = \begin{cases} 0 & \textrm{if $(a_i) \neq (b_i)$} \\ \prod_{i} i^{a_i} a_i! & \textrm{if $(a_i)=(b_i)$.} \end{cases}\end{equation}
\end{enumerate}
\end{corollary}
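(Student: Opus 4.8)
The plan is to read the joint moments of the $\Tr(M^i)$ --- and, in the unitary case, of the $\Tr(M^i)$ together with the $\Tr(\overline M^i)$ --- off the $\sigma$-moment generating functions of \cref{theorem.body-random-matrices} by means of the Hall inner product. Write $\tau$ for the partition with exactly $a_i$ parts equal to $i$, so $|\tau|=\sum_i i a_i$ and the number of parts of $\tau$ is $\ell(\tau)=\sum_i a_i$. Since the character of the Adams operation $p_i\circ[V_n]$ is $g\mapsto\Tr(g^i\text{ on }V_n)$, in cases (1) and (2) we have $\mbb{E}[\prod_i\Tr(M^i)^{a_i}]=\mbb{E}[p_\tau\circ X_n]$, while in case (4), with $\tau'$ the partition built from the $b_i$ in the same way, $\mbb{E}[\prod_i\Tr(M^i)^{a_i}\Tr(\overline M^i)^{b_i}]=\mbb{E}[(p_\tau\circ X_n)(p_{\tau'}\circ\overline X_n)]$ (using that $[V_n^*]$ records the conjugate eigenvalues). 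By \cref{lemma.extract-distribution-via-hall}, together with the joint analogue supplied by the expansion of the joint $\sigma$-moment generating function displayed in \cref{main.random-matrix}-(4), these equal $\langle\mbb{E}[\Exp_\sigma(X_nh_1)],p_\tau\rangle$ and $\langle\mbb{E}[\Exp_\sigma(X_nh_1+\overline X_n\overline h_1)],p_\tau\overline p_{\tau'}\rangle$ respectively. The error term in \cref{theorem.body-random-matrices} drops out: it lies in $\Lambda^\wedge_{\mr{mult}>n}$, resp. $(\Lambda\otimes\Lambda)^\wedge_{\mr{mult}>n}$, hence is a combination of $m_\gamma$, resp. $m_\gamma\overline m_{\gamma'}$, with $||\gamma||>n$ (or $||\gamma'||>n$); but the Hall inner product is graded, so $\langle m_\gamma,p_\tau\rangle=0$ unless $|\gamma|=|\tau|$, and then $||\gamma||\le|\gamma|=|\tau|=\sum_i i a_i\le n$ under the hypotheses of the corollary, contradicting $||\gamma||>n$.

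It remains to compute $\langle\Exp_\sigma(h_2),p_\tau\rangle$, $\langle\Exp_\sigma(e_2),p_\tau\rangle$, and $\langle\Exp_\sigma(h_1\overline h_1),p_\tau\overline p_{\tau'}\rangle$. By \cref{lemma.exponential-power-sum}, using that $p_k\circ$ is a ring endomorphism of $\Lambda$ with $p_k\circ p_j=p_{jk}$ and the identities $h_2=\tfrac12(p_1^2+p_2)$, $e_2=\tfrac12(p_1^2-p_2)$, $h_1\overline h_1=p_1\overline p_1$, one gets
\[ \Exp_\sigma(h_2)=\prod_{k\ge1}\exp\Bigl(\tfrac{p_k^2+p_{2k}}{2k}\Bigr),\qquad \Exp_\sigma(e_2)=\prod_{k\ge1}\exp\Bigl(\tfrac{p_k^2-p_{2k}}{2k}\Bigr),\qquad \Exp_\sigma(h_1\overline h_1)=\prod_{k\ge1}\exp\Bigl(\tfrac{p_k\overline p_k}{k}\Bigr). \]
By \cref{eq.power-sum-inner-product}, pairing with $p_\tau$, resp. $p_\tau\overline p_{\tau'}$, returns $z_\tau$, resp. $z_\tau z_{\tau'}$, times the coefficient of $p_\tau$, resp. $p_\tau\overline p_{\tau'}$, in the expansion in the $p$-basis. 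The unitary case is then immediate: the coefficient of $p_\tau\overline p_{\tau'}$ in $\prod_k\exp(p_k\overline p_k/k)$ equals $1/z_\tau$ if $\tau=\tau'$ and $0$ otherwise, so the moment is $z_\tau z_{\tau'}\cdot(1/z_\tau)=z_\tau=\prod_i i^{a_i}a_i!$ when $(a_i)=(b_i)$ and $0$ otherwise --- which is \cref{eq.ds-trace-unitary}.

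For the orthogonal identity $\langle\Exp_\sigma(h_2),p_\tau\rangle=\prod_i g_i(a_i)$ --- a statement about symmetric functions, valid for every $\tau$ --- I would avoid expanding the product directly, since its factors couple $p_k$ to $p_{2k}$ along dyadic chains and the term-by-term bookkeeping becomes an unwieldy combinatorial induction; this coupling is the only step with genuine content. Instead, multiply the asserted identity by $\prod_k t_k^{a_k}/(a_k!\,k^{a_k})$ and sum over all finitely supported tuples $(a_k)$. On the left, $z_\tau=\prod_k a_k!\,k^{a_k}$ cancels the normalizing factors and the residual sum is the substitution $p_k\mapsto t_k$, giving $\prod_k\exp(\tfrac{t_k^2+t_{2k}}{2k})$; on the right one obtains $\prod_k\sum_a g_k(a)\tfrac{(t_k/k)^a}{a!}=\prod_k\mbb{E}[e^{(\sqrt k\,Z+\eta_k)t_k/k}]=\prod_k\exp(\tfrac{\eta_k t_k}{k}+\tfrac{t_k^2}{2k})$, where $Z$ is a standard Gaussian and $\eta_k=1$ for $k$ even, $0$ for $k$ odd, this being exactly the content of the definition of $g_k$ (namely that $g_k(a)$ is the $a$-th moment of $\sqrt k\,Z+\eta_k$). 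Reindexing $\sum_k\eta_k t_k/k=\sum_k t_{2k}/(2k)$ shows the two products coincide, establishing the identity and hence \cref{eq.ds-trace-orthogonal}. The symplectic identity follows either by rerunning this computation with $p_{2k}$ replaced by $-p_{2k}$, or from the orthogonal case via the involution $\omega$: since $\omega(h_2)=e_2$, $h_2$ is even, $\omega$ is a Hall isometry, and $\omega(p_\tau)=(-1)^{|\tau|-\ell(\tau)}p_\tau$, \cref{lemma.lambda-involution-even} gives $\langle\Exp_\sigma(e_2),p_\tau\rangle=(-1)^{|\tau|-\ell(\tau)}\langle\Exp_\sigma(h_2),p_\tau\rangle=(-1)^{\sum_i(i-1)a_i}\prod_i g_i(a_i)$, which is \cref{eq.ds-trace-symplectic}.
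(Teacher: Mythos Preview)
Your proof is correct and follows the same overall architecture as the paper's: express the desired moments as Hall inner products of $p_\tau$ (resp.\ $p_\tau\overline p_{\tau'}$) against the $\sigma$-moment generating function, invoke \cref{theorem.body-random-matrices} to replace the latter by $\Exp_\sigma(h_2)$, $\Exp_\sigma(e_2)$, $\Exp_\sigma(h_1\overline h_1)$ (the degree bound on $p_\tau$ kills the error term exactly as you argue), and then expand via \cref{lemma.exponential-power-sum} in the power-sum basis. The unitary case is handled identically in both.

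The only divergence is in how the coefficient of $p_\tau$ is extracted from $\Exp_\sigma(h_2)$. The paper regroups directly: since $\prod_k\exp\bigl(\tfrac{p_k^2+p_{2k}}{2k}\bigr)=\prod_k\exp\bigl(\tfrac{p_k^2}{2k}\bigr)\cdot\prod_k\exp\bigl(\tfrac{p_{2k}}{2k}\bigr)$ and the second product re-indexes as $\prod_{j\text{ even}}\exp(p_j/j)$, the whole expression factors over $i$ into terms depending only on $p_i$, from which the coefficient of $p_i^{a_i}$ is read off and matched to $g_i(a_i)/(a_i!\,i^{a_i})$ by inspection --- so the ``unwieldy dyadic coupling'' you anticipated dissolves in one line. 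Your generating-function argument (sum against $\prod_k t_k^{a_k}/(a_k!\,k^{a_k})$ and recognise $g_k(a)$ as the $a$-th moment of $\sqrt k\,Z+\eta_k$) reaches the same endpoint by a more probabilistic route; it is a pleasant repackaging but not materially different. For the symplectic case the paper reruns the direct computation with the sign on $p_{2k}$ flipped, whereas you deduce it from the orthogonal case via the Hall isometry $\omega$ and \cref{lemma.lambda-involution-even}; the paper notes this alternative is available but does not take it in the proof. Both derivations arrive at the sign $(-1)^{\sum_i(i-1)a_i}=\prod_{i\text{ even}}(-1)^{a_i}$ in front of $\prod_i g_i(a_i)$, matching each other and the paper's own computation.
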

\begin{proof}[Proof of \cref{corollary.trace-distributions}]
We first treat the orthogonal case. For $V$ the standard representation of $O(n)$ and with notation as in \cref{theorem.body-random-matrices}, we have 
\[ \mbb{E}\left[\prod \mr{Tr}(M^i)^{a_i}\right] = \mbb{E}\left[\prod_i p_i^{a_i} \circ [V]\right]=\langle \mbb{E}\left[\Exp_{\sigma}([V]h_1)\right], \prod_i p_i^{a_i} \rangle, \]
where the last equality follows from \cref{lemma.extract-distribution-via-hall}. We claim that, under the condition $\sum i a_i \leq n$, \cref{theorem.body-random-matrices} implies this final inner product is equal to 
\[\langle \Exp_{\sigma}(h_2), \prod_i p_i^{a_i} \rangle. \]
Indeed, $\prod_i p_i^{a_i}$ lives in degree $\sum i a_i \leq n$, and the degree $\leq n$ part of the moment generating function and $\Exp_{\sigma}(h_2)$ agree by \cref{theorem.body-random-matrices} (since $|\tau|\geq ||\tau||$). 

We now compute this inner product by expanding $\Exp_{\sigma}(h_2)$ in terms of power sum monomials. Using \cref{lemma.exponential-power-sum}, we have
\begin{align*}
    \Exp_{\sigma}(h_2)&=\Exp_{\sigma}\left(\frac{p_1^2+p_2}{2}\right)\\
    &=\prod_{i=1}^\infty \exp\left(\frac{p_i}{i}\circ \frac{p_1^2+p_2}{2}  \right) \\
    &=\prod_{i=1}^\infty \exp\left(\frac{p_i^2+p_{2i}}{2i}\right)  \\
    &=\prod_{\textrm{$i$ odd}}\exp\left(\frac{p_i^2}{2i}\right) \prod_{\textrm{$i$ even}}\exp\left(\frac{p_i^2}{2i}\right)\exp\left(\frac{p_i}{i}\right) \\
    &=\prod_{\textrm{$i$ odd}}\left(\sum_{k\geq 0}\frac{p_i^{2k}}{(2i)^k k!}\right) \prod_{\textrm{$i$ even}}\left(\sum_{k\geq 0}\frac{p_i^{2k}}{(2i)^k k!}\right) \left(\sum_{k\geq 0}\frac{p_i^{k}}{k!i^k }\right)\\
    &=\prod_{\textrm{$i$ odd}}\left(\sum_{k\geq 0}\frac{p_i^{2k}}{(2k)!!i^k}\right) \prod_{\textrm{$i$ even}}\left(\sum_{k\geq 0}\frac{p_i^{2k}}{(2k)!!i^k}\right) \left(\sum_{k\geq 0}\frac{p_i^{k}}{k! i^k }\right)
\end{align*}
where $(2k)!!=(2k)(2k-2)(2k-4)\ldots... (2).$ Expanding, the coefficient of $\prod p_i^{a_i}$ is
\begin{align*} \prod_{i \textrm{ odd}} \left(\begin{cases} \frac{1}{a_i!! i^{a_i/2}}  & \textrm{ if $a_i$ even} \\
0 & \textrm{ if $a_i$ odd} \end{cases} \right) \prod_{i \textrm{  even}} \left(\sum_{2k \leq a_i} \frac{1}{(2k)!!(a_i-2k)!i^{a_i - k}} \right)
=\prod_i \frac{g_i(a_i)}{a_i! i^{a_i}}. \end{align*}
We then obtain the formula on the right-hand side of \cref{eq.ds-trace-orthogonal} by taking the Hall inner product with $\prod_i p_i^{a_i}$ and using \cref{eq.power-sum-inner-product} (which shows that distinct power sum monomials are orthogonal and that $\langle \prod_i p_i^{a_i}, \prod_i p_i^{a_i}\rangle=\prod_i a_i!i^{a_i}$). 

The argument in the symplectic case is the same except for the inner product computation, which differs only in a sign (alternatively, one can apply \cref{lemma.lambda-involution-even} to the computation above since $\omega(h_2)=e_2$):
\begin{align*} \Exp_{\sigma}(e_2)&=\Exp_{\sigma}\left(\frac{p_1^2-p_2}{2}\right)\\
    &=\prod_{i=1}^\infty \exp\left(\frac{p_i}{i}\circ \frac{p_1^2-p_2}{2}  \right) \\
    &=\prod_{i=1}^\infty \exp\left(\frac{p_i^2-p_{2i}}{2i}\right)  \\
    &=\prod_{\textrm{$i$ odd}}\exp\left(\frac{p_i^2}{2i}\right) \prod_{\textrm{$i$ even}}\exp\left(\frac{p_i^2}{2i}\right)\exp\left(\frac{-p_i}{i}\right) \\
    &=\prod_{\textrm{$i$ odd}}\left(\sum_{k\geq 0}\frac{p_i^{2k}}{(2i)^k k!}\right) \prod_{\textrm{$i$ even}}\left(\sum_{k\geq 0}\frac{p_i^{2k}}{(2i)^k k!}\right) \left(\sum_{k\geq 0}\frac{(-p_i)^{k}}{k!i^k }\right)\\
    &=\prod_{\textrm{$i$ odd}}\left(\sum_{k\geq 0}\frac{p_i^{2k}}{(2k)!!i^k}\right) \prod_{\textrm{$i$ even}}\left(\sum_{k\geq 0}\frac{p_i^{2k}}{(2k)!!i^k}\right) \left(\sum_{k\geq 0}\frac{(-p_i)^{k}}{k! i^k }\right)
\end{align*}
so that the coefficient of $\prod p_i^{a_i}$ is 
\begin{align*} \prod_{i \textrm{ odd}} \left(\begin{cases} \frac{1}{a_i!! i^{a_i/2}}  & \textrm{ if $a_i$ even} \\
0 & \textrm{ if $a_i$ odd} \end{cases} \right) \prod_{i \textrm{  even}} (-1)^{a_i}\left(\sum_{2k \leq a_i} \frac{1}{(2k)!!(a_i-2k)!i^{a_i - k}} \right) \\
=\prod_i \frac{(-1)^{a_i}g_i(a_i)}{a_i! i^{a_i}}. \end{align*}
and we obtain the desired inner product giving \cref{eq.ds-trace-symplectic} exactly as in the orthogonal case treated above. 

Finally we treat the unitary case, following the same strategy. For $V$ the standard representation of $U(n)$ and with notation as in \cref{theorem.body-random-matrices}, we have 
\begin{align*} \mbb{E}\left[\prod\mr{Tr}(M^i)^{a_i}\mr{Tr}(\overline{M}^i)^{b_i}\right] &= \mbb{E}\left[ \left(\prod p_i^{a_i} \circ [V]\right) \left(\prod p_i^{b_i} \circ [V^*]\right)\right] \\&= \langle \mathbb{E}[\Exp_{\sigma}([V]h_1 + [V^*] \overline{h}_i)], \prod_i p_i^{a_i} \prod_i \overline{p}_i^{b_i}.\rangle\end{align*}
where on $(\Lambda \otimes \Lambda)^\wedge$ we have taken the obvious tensor extension of the Hall inner product and are using the analog of \cref{lemma.extract-distribution-via-hall} in this setting. 

We claim that, under the condition $\sum i a_i \leq n$ and $\sum ib_i \leq n$, \cref{theorem.body-random-matrices} implies this final inner product is equal to 
\[\langle \Exp_{\sigma}(h_1 \overline{h}_1), \prod p_i^{a_i} \prod_i \overline{p}_i^{b_i} \rangle. \]
Indeed, $\prod_i p_i^{a_i}$ lives in degree $\sum i a_i \leq n$ while $\prod_i \overline{p}_i^{b_i}$ lives in degree $\sum i b_i \leq n$, and the part of the moment generating function where each degree is $\leq n$ and $\Exp_{\sigma}(h_1 \overline{h}_1)$ agree by \cref{theorem.body-random-matrices} (again since $||\tau_i||\leq|\tau_i|$). We now compute this inner product by expanding $\Exp_{\sigma}(h_1 \overline{h}_1)$ in terms of power sum monomials. 
\begin{align*} \Exp_{\sigma}(h_1 \overline{h}_1)&=\Exp_{\sigma}(p_1 \overline{p}_1)\\
    &=\prod_{i=1}^\infty \exp\left(\frac{p_i}{i}\circ p_1\overline{p}_1\right) \\
        &=\prod_{i=1}^\infty \exp\left(\frac{p_i\overline{p}_i}{i}\right)  \\
    &=\prod_{i}\left(\sum_{k\geq 0} \frac{p_i^k \overline{p}_i^k}{i^k k!} \right). 
\end{align*}
The coefficient of $\prod p_i^{a_i} \prod \overline{p}_i^{b_i}$ is thus zero if $(a_i) \neq (b_i)$, and $\prod_i \frac{1}{i^{a_i}a_i!}$ otherwise. Thus, by \cref{eq.power-sum-inner-product}, taking the inner product with $\prod p_i^{a_i} \prod \overline{p}_i^{b_i}$ gives zero if $(a_i) \neq (b_i)$ and $\prod_i \frac{(i^{a_i}a_i!)^2}{i^{a_i}a_i!}=\prod_i i^{a_i}a_i!$ if $(a_i)=(b_i)$, recovering the right hand side of \cref{eq.ds-trace-unitary}.    
\end{proof}

\begin{remark} \label{remark.gaussians-and-convergence-rate}
In \cite{DiaconisShahshahani.OnTheEigenvaluesOfRandomMatrices}, \cref{corollary.trace-distributions} is interpreted as an asymptotic description of traces of powers as independent Gaussian random variables. The control on the moments in \cref{corollary.trace-distributions} is enough to give a bound on the rate of convergence, and Johansson \cite{Johansson.OnRandomMatricesFromTheCompactClassicalGroups} has established a much stronger estimate. Our method of proving \cref{main.random-matrix} shows that each coefficient in the moment generating function is non-negative and bounded above by the limiting value. Using the second fundamental theorem of invariant theory, one can in principle control the discrepancy exactly. It would be interesting to understand what bounds this method gives on the rate of convergence, and to compare to the method of \cite{Johansson.OnRandomMatricesFromTheCompactClassicalGroups} and related work. 
\end{remark}

\subsection{Cycle counting in the symmetric group}\label{ss.cycle-counting}
We now explain how to recover \cite[Theorem 7]{DiaconisShahshahani.OnTheEigenvaluesOfRandomMatrices} on the joint moments of cycle counting functions on symmetric groups from \cref{main.random-matrix}/\cref{theorem.body-random-matrices}-(3).

We adopt the notation of \cref{theorem.body-random-matrices}-(3). 
First, we note that, by \cref{def.binomial-and-poisson}/\cref{lemma.poisson-and-binomial-distributions}, \cref{theorem.body-random-matrices} implies that 
\[ \mbb{E}\left[(1+h_1)^{[V_n]}\right] \equiv \Exp_{\sigma}(h_1) \mod \Lambda^{\wedge}_{\deg > n}. \]
Indeed, the degree for the congruence does not change when switching between moment generating functions because $\deg h_\tau=\deg c_\tau$ (note, however, that the $c_\tau$ are not homogeneous). 
\begin{remark}
    One can also make a combinatorial argument to directly compute the falling moments to obtain 
    \[ \mbb{E}\left[(1+h_1)^{[V_n]}\right] \equiv \Exp_{\sigma}(h_1) \mod \Lambda^{\wedge}_{\deg > n}.\]
    Moreover, in the spirit of \cref{remark.falling-vs-sigma}, because $[V_n]$ is secretly a combinatorial random variable, the argument is much simpler in the falling moment case: here to obtain the coefficient of $m_\tau$, one is counting orbits in the configuration space $C^\tau(\{1,\ldots,n\})$, and there is exactly one orbit if it is non-empty. 
\end{remark}

Letting $C_k$ denote the $k$th cycle counting function, we have
\[ (1+h_1)^{[V_n]}=\prod_{i=1}^\infty \exp( C_i \log(1+ p_i)) \]
where the $\exp$ and $\log$ on the right are computed as the classical exponential and logarithm series. This identity can be verified by hand, or viewed as a special case of \cite[Lemma 3.5]{Howe.MotivicRandomVariablesAndRepresentationStabilityIConfigurationSpaces}. Then, since $\Exp_{\sigma}(h_1)=\prod_{i=1}^\infty \exp(\frac{p_i}{i})$ by \cref{lemma.exponential-power-sum}, we deduce that
\[ \mbb{E}\left[\prod_{i\geq 1} \exp(C_i \log(1+p_i))]\right] \equiv \prod_{i \geq 1}\exp\left(\frac{p_i}{i}\right) \mod \Lambda^{\wedge}_{\deg > n} \]
Note that $\exp(C_i\log(1+p_i))=(1+p_i)^{C_i}$, where the power is interpreted in the classical sense. Since the $p_i$ are a polynomial basis for $\Lambda$, this congruence says that, up to terms of degree $>n$, the joint falling moments of the cycle counting functions $C_i$ are the joint falling moments of independent Poission distributions of mean $\frac{1}{i}$, which is precisely the statement of \cite[Theorem 7]{DiaconisShahshahani.OnTheEigenvaluesOfRandomMatrices}. 

\section{Abstract point-counting}\label{s.abstract-pc}
We are interested in studying the $L$-functions of $\ell$-adic local systems and constructible sheaves on varieties over a finite field $\fq$. Although a priori defined as Euler products over closed points, these are rational functions by the results of \cite{Deligne.LaConjectureDeWeilII}, and we want to study the distribution of their zeroes, viewed as multisets of complex numbers by a fixed isomorphism $\overline{\mbb{Q}_\ell} \cong \mbb{C}$, and discuss convergence properties for the archimedean topology on $\mbb{C}$. As in \cite{BiluDasHowe.SpecialValuesOfMotivicEulerProducts}, it is thus natural to pass from $\mbb{Z}[\mbb{C}]$ to its completion $W(\mbb{C})\cong 1+t\mbb{C}[[t]]$ (for the point-counting topology of loc. cit.), in which case we do not actually need to know that the $L$-functions are rational, since already the Euler products give us elements of $1+t\mbb{C}[[t]]$. Moreover, the notion of an Euler product does not depend on the actual variety, just on its set of closed points, or equivalently the $\fqbar$-points equipped with the action of $\mbb{Z}$ where $1$ acts by the (geometric) Frobenius, i.e. the action sending coordinates to their $q$-powers. Thus we find that the structure needed to study this problem via $\lambda$-probability is very simple and can be abstracted completely away from algebraic geometry: we need to look at $\mbb{Z}$-invariant $W(\mbb{C})$-valued functions on (admissible) $\mbb{Z}$-sets, and take an expectation modeled on an Euler product. This is the theory developed here. 

In \cref{ss.wittC} we recall some coordinates on $W(\mbb{C})$. In \cref{ss.admissible-z-sets} we define the category of admissible $\mathbb{Z}$-sets and explain the relation with zeta functions and other basic properties. In particular, in \cref{prop.powers-euler-product} we give a description of certain powers (in the sense of \cref{ss.powers}) that will be used in the proofs of \cref{main.dirichlet-characters} and \cref{main.hypersurface-L-functions}. In \cref{ss.wc-valued-functions} we introduce the relevant rings of $W(\mbb{C})$-valued functions and study their functoriality properties; in particular, we explain how to integrate these over an admissible $\mbb{Z}$-set. In \cref{ss.relation-admissible-finite-integration} (see also \cref{lemma.motivic-to-finite-probability-spaces}) we explain a key relation between integration over admissible $\mathbb{Z}$-sets and integration over finite sets that we will use to reduce the proofs of \cref{main.dirichlet-characters} and \cref{main.hypersurface-L-functions} to na\"{i}ve point-counting problems that can be accessed by Poonen's sieve. In \cref{ss.abstract-pc-prob-spaces}, we define the relevant $\lambda$-probability space for an admissible $\mbb{Z}$-set (generalizing the simple case of $W(\mbb{C})$-valued functions on a finite set with the uniform measure).  

Finally, in \cref{ss.varieties-elladic-sheaves-L} we briefly explain the compatibility with the theory of constructible $\ell$-adic sheaves and their $L$-functions; let us emphasize though that most of the results and proofs of this paper can be phrased so that they are completely independent of this interpretation!

\subsection{The $\lambda$-ring $W(\mbb{C})$ and zeta functions}\label{ss.wittC}
\newcommand{\bfk}{\mathbf{k}}
Recall from \cref{ss.witt-and-lambda} that $W(\mathbb{C})$ is the ring $\Hom_{\mr{Ring}}(\Lambda, \mathbb{C})$. We recall here some more explicit interpretations in this case (see, e.g., \cite[\S3]{BiluDasEtAl.ZetaStatisticsAndHadamardFunctions} for more details and several related constructions).

We identify $W(\mathbb{C})$ as an additive group with $1+t\mathbb{C}[[t]]$ by 
\[ \alpha \in \Hom_{\mr{Ring}}(\Lambda, \mathbb{C}) \mapsto \sum_{k\geq0} \alpha(h_k)t^k \in 1+t\mathbb{C}[[t]]. \]
We identify $W(\mathbb{C})$ as a ring with $\prod_{i \geq 1} \mathbb{C}$ by 
\[ \alpha \mapsto (\alpha(p_i))_{i \geq 1}, \]
i.e. by passing to the ghost coordinates (which gives an isomorphism $W(R)=\prod_{i \geq 1} R$ whenever $R$ is a $\mbb{Q}$-algebra). To pass from the first identification to the second, one applies $t d\log$ to an element of $1+t\mathbb{C}[[t]]$ and then takes the coefficients, i.e. for $\alpha \in W(\mathbb{C})$,
\[ td\log\left(\sum_{k\geq0} \alpha(h_k)t^k\right)=\sum_{i\geq 1} \alpha(p_i) t^i. \]
This identity follows from \cref{eq.complete-powersum-relation}. 
Note that, in the identification with $\prod_{i \geq 1} \mathbb{C}$, the plethystic action of $\Lambda$ on $W(\mathbb{C})$ is determined by 
\begin{equation}\label{eq.wc-coord-plethy-power-sum} p_j \circ (a_i)_{i}=(a_{ij})_i.\end{equation}

Given $f\in \mbb{C}[[t]]$ and $k$ a positive integer, we write $f(t^k)$ for the series obtained by substituting $t^k$ for $t$. Note that, for $f \in 1+t\mbb{C}[[t]]$, 
\begin{equation}\label{eq.t-dlog} td\log(f(t^k))=k( d\log(f(t)))(t^k). \end{equation}
Given $\alpha \in W(\mathbb{C})$, by $\alpha(t^k)$ we will always mean the element obtained by viewing $\alpha$ as an element of $1+t\mathbb{C}[[t]]$ and then substituting $t^k$. 
Under the identification $W(\mathbb{C})=\prod_{i \geq 1} \mathbb{C}$, \cref{eq.t-dlog} shows this substitution acts as
\begin{equation}\label{eq.subs-tk-ghost} (a_i)_{i} \mapsto (ka_{i/k})_i, \textrm{ where $a_{i/k}:=0$ if $i/k \not \in \mathbb{Z}_{\geq 1}$}. \end{equation}
It follows that, for $[\bfk]$ the element of $W(\mathbb{C})$ corresponding to $\frac{1}{1-t^k}\in 1+t \mathbb{C}[[t]]$ and thus to $(\overbrace{0,\ldots,0,k}^{k},\overbrace{0,\ldots,0,k}^{k}, \ldots) \in \prod_{i\geq 1}\mathbb{C}$,   
\begin{equation}\label{eq.subst-lambda-op} (p_k \circ \alpha)(t^k)= [\bfk] \cdot \alpha \textrm{ and } p_k \circ (\alpha(t^k))=k\alpha.\end{equation}

\subsection{Admissible $\mathbb{Z}$-sets}\label{ss.admissible-z-sets}
\begin{definition}A $\mathbb{Z}$-set is a set with an action of $\mathbb{Z}$.
\begin{enumerate}
    \item  An $\mathbb{Z}$-set $S$ is \emph{admissible}\footnote{An admissible $\mathbb{Z}$-set is, in particular, a $\hat{\mbb{Z}}$-set, and the terminology is by analogy with the theory of admissible representations of profinite groups. In the literature, admissible $\mathbb{Z}$-sets are also called almost finite cyclic $\mathbb{Z}$-sets.} if each point $s \in S$ is stabilized by a finite index subgroup $k\mathbb{Z}$ , and $S^{k\mathbb{Z}}$ is finite for each $k\in \mathbb{Z}_{>0}$. 
    \item For $k$ a positive integer, we write $\bfk$ for the admissible $\mathbb{Z}$-set $\mathbb{Z}/k\mathbb{Z}$.
    \item For $S$ an admissible $\mathbb{Z}$-set, and $s \in S$, we write $|s|$ for the orbit of $s$ and $\deg(s)=\deg(|s|)$ for the size of the orbit.  We write $S(\bfk):=\Hom_{\mathbb{Z}\mathrm{-set}}(\bfk, S)$. 
\end{enumerate}
\end{definition}

\begin{example}\label{example.adm-z-sets}\hfill
\begin{enumerate}
\item Any finite set is an admissible $\mathbb{Z}$-set when equipped with the trivial action. This identifies finite sets with a full subcategory of admissible $\mathbb{Z}$-sets. 
\item Let $\mbb{F}_q$ be a finite field and let $X/\mathbb{F}_q$ be a variety. Then $X(\overline{\mathbb{F}}_q)$ is an admissible $\mathbb{Z}$-set,
where $1 \in \mathbb{Z}$ acts by the geometric Frobenius (which, if $X$ is affine, acts by raising coordinates to the $q$th power). We have $|X(\overline{\mathbb{F}}_q)|=|X|$, the set of closed points of $X$, and $(X(\overline{\mathbb{F}}_q))(\bfk)=X(\mathbb{F}_{q^k})$. For $x \in X(\fqbar)$, $\deg (x) = [\mbb{F}_q(x):\mbb{F}_q]$. 
\end{enumerate}
\end{example}

For $S$ an admissible $\mathbb{Z}$-set, its zeta function is defined by 
\[ Z_S(t):= \prod_{s \in |S|} \frac{1}{1-t^{\deg(s)}} \in 1+t\mbb{C}[[t]]=W(\mathbb{C}). \]
We also write $[S]=Z_S(t)$. We note that, with this definition $[\bfk]=\frac{1}{1-t^k}$, so that this notation does not conflict with the notation introduced above in \cref{ss.wittC}. 
\begin{lemma}\label{lemma.admissible-z-set-class}
    Let $S$ be an admissible $\mathbb{Z}$-set. Then 
    \[ td\log(Z_S(t))=\sum_{k \geq 1} \#S(\bfk)t^k.\]
    In particular, under our identification $W(\mbb{C})= \prod_{i \geq 1}\mbb{C}$, 
    \[[S]=(\#S(\mbf{1}),\#S(\mbf{2}), \ldots)\] 
    and we have
    \[ Z_{S_1 \sqcup S_2}(t)=Z_{S_1}(t) +_W Z_{S_2}(t)=Z_{S_1}(t)Z_{S_2}(t) \textrm{ and } Z_{S_1 \times S_2}(t)=Z_{S_1}(t) \cdot_W Z_{S_2}(t), \]
    were the subscript $W$ emphasizes addition and multiplication in $W(\mbb{C})$ vs $\mbb{C}[[t]]$. 
\end{lemma}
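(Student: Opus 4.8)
The plan is to deduce all three assertions from the defining product formula for $Z_S(t)$ together with the description of the ghost coordinates on $W(\mbb{C})$ recalled in \cref{ss.wittC}; admissibility of $S$ guarantees that every sum appearing below is finite, so $Z_S(t)$ is a well-defined element of $1+t\mbb{C}[[t]]$ and all manipulations make sense coefficient by coefficient.

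First I would establish the formula for $td\log(Z_S(t))$. Since $td\log$ is additive for the multiplication on $1+t\mbb{C}[[t]]$, applying it term by term to $\prod_{s\in|S|}\frac{1}{1-t^{\deg(s)}}$ and using $td\log\frac{1}{1-t^d}=\frac{d\,t^d}{1-t^d}=\sum_{j\geq 1}d\,t^{dj}$ yields
\[ td\log(Z_S(t))=\sum_{s\in|S|}\sum_{j\geq 1}\deg(s)\,t^{\deg(s)j}=\sum_{k\geq 1}\Big(\sum_{\substack{s\in|S|\\ \deg(s)\mid k}}\deg(s)\Big)t^k. \]
It then remains to match the inner coefficient with $\#S([\bfk])$: a $\mbb{Z}$-equivariant map $\bfk=\mbb{Z}/k\mbb{Z}\to S$ is determined by the image of $0$, which may be any point fixed by $k\mbb{Z}$, so $\#S([\bfk])=\#S^{k\mbb{Z}}$; and a point lying in an orbit of size $d$ is fixed by $k\mbb{Z}$ exactly when $d\mid k$, in which case the entire orbit is fixed, giving $\#S^{k\mbb{Z}}=\sum_{\deg(s)\mid k}\deg(s)$.

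Part (2) is then immediate, since by \cref{ss.wittC} the passage from the identification $W(\mbb{C})\cong 1+t\mbb{C}[[t]]$ to the ghost coordinates in $\prod_{i\geq 1}\mbb{C}$ is precisely ``apply $td\log$ and read off the coefficient of $t^k$''. For part (3), the disjoint-union case follows from $|S_1\sqcup S_2|=|S_1|\sqcup|S_2|$: the product formula factors as $Z_{S_1}(t)Z_{S_2}(t)$ in $\mbb{C}[[t]]$, and the additive law $+_W$ on $W(\mbb{C})$ is by definition exactly this multiplication on $1+t\mbb{C}[[t]]$. For the product case I would argue on ghost coordinates using part (2): as the $\mbb{Z}$-action on $S_1\times S_2$ is diagonal, $(S_1\times S_2)^{k\mbb{Z}}=S_1^{k\mbb{Z}}\times S_2^{k\mbb{Z}}$, hence $\#(S_1\times S_2)([\bfk])=\#S_1([\bfk])\cdot\#S_2([\bfk])$, which is precisely the assertion $Z_{S_1\times S_2}=Z_{S_1}\cdot_W Z_{S_2}$ since $\cdot_W$ is componentwise in ghost coordinates.

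There is no real obstacle here; the only points requiring care are the elementary identity $\#S^{k\mbb{Z}}=\sum_{\deg(s)\mid k}\deg(s)$ and the bookkeeping of the three ring operations in play ($\mbb{C}[[t]]$-multiplication, $+_W$, and $\cdot_W$), the interplay of which is itself the content of part (3).
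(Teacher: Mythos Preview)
Your proof is correct and follows essentially the same approach as the paper's own proof, which simply remarks that the $td\log$ computation is ``the usual elementary expansion as for zeta functions of varieties over finite fields'' and that the remaining equalities follow since the coefficients of $td\log$ give the ghost-coordinate identification $W(\mbb{C})=\prod_{i\geq 1}\mbb{C}$. You have spelled out exactly those details, including the orbit-counting identity $\#S^{k\mbb{Z}}=\sum_{\deg(s)\mid k}\deg(s)$ and the componentwise verification of $\cdot_W$ for products.
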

\begin{proof}
    The computation of $td\log$ is the usual elementary expansion as for zeta functions of varieties over finite fields. The remaining equalities follow immediately since the coefficients of $td\log$ give the identification $W(\mathbb{C})=\prod_{i \geq 1}\mathbb{C}$. 
\end{proof}

\begin{remark}
For varieties over $\mathbb{F}_q$, the identities of \cref{lemma.admissible-z-set-class} are well known; see, e.g., \cite[Theorem 2.1]{Ramachandran.ZetaFunctionsGrothendieckGroupsAndTheWittRing} and the references therein. We note also that, by \cite[Corollary 1]{DressSiebeneicher.TheBurnsideRingOfProfiniteGroupsAndTheWittVectorConstruction} the assignment $S \mapsto [S]=Z_{s}(t)$ extends to an identification $K_0(\textrm{Admissible $\mathbb{Z}$-sets})=W(\mathbb{Z})\subseteq W(\mathbb{C})$, which explains the notation. The $\lambda$-ring structure is determined on the Grothendieck ring by $h_k \circ [S]=[\Sym^k S]$. 
\end{remark}

\begin{definition}\label{def.fiber-admissible-Z-set}
    Let $\varphi: V \rightarrow S$ be a morphism of admissible $\mathbb{Z}$-sets. For $s \in S$, we define the fiber of $\varphi$ over $s$, $V_s$ to be the admissible $\mathbb{Z}$-set $\varphi^{-1}(s)$ with $\mathbb{Z}$-action obtained from the $\mathbb{Z}$-action on $V$ by precomposition with multiplication by $\deg(s)$. 
\end{definition}

\begin{example}\label{example.fiber-base-change}
    If $S$ is an admissible $\mbb{Z}$-set, then the fiber $(S \times \bfk)_1$ over $1 \in \bfk$ is naturally identified with the set $S$ but where the $\mathbb{Z}$ action is precomposed with multiplication by $k$. In particular, let $\mbb{F}_q$ be a finite field and let $X/\mbb{F}_q$ be a variety, and let $X'/\mathbb{F}_{q^k}$ be the base change $X_{\mbb{F}_{q^k}}=X \times_{\Spec \mathbb{F}_q} \mathbb{F}_{q^k}$. Then $X(\fqbar)=X'(\fqbar)$ as sets, but as admissible $\mathbb{Z}$-sets we have $X'(\fqbar)= (X(\fqbar) \times \bfk)_1$. 
\end{example}

We will need the following result computing certain powers (in the sense of \cref{ss.powers}) in this setting.
\begin{proposition}\label{prop.powers-euler-product}
 Let $V$ be an admissible $\mathbb{Z}$-set, and let $F \in W(\mathbb{C})[[\ul{t}_{\mbb{N}}]]$ with constant coefficient $1$. Then, using subscripts to denote projection to coordinates in $W(\mbb{C})=\prod_{i\geq 1}\mbb{C}$, 
    \[ (F^{[V]})_i = \prod_{|v| \in | (V\times \mathbf{i})_1|} F_{i\deg(|v|)}(\ul{t}_{\mbb{N}}^{\deg(|v|)}) \textrm{ in } \mathbb{C}[[\ul{t}_{\mbb{N}}]]. \]
    In particular, for $V/\mathbb{F}_q$ a variety, using \cref{example.fiber-base-change},
    \[ (F^{[V(\fqbar)]})_i=\prod_{|v| \in |V_{\mathbb{F}_{q^i}}|} F_{i\deg(|v|)}(\ul{t}_{\mbb{N}}^{\deg(v)}) \] 
    where, $|V_{\mathbb{F}_{q^i}}|$ denotes the set of closed points and in computing the degree of a closed point $|v| \in |V_{\mathbb{F}_{q^i}}|$, $V_{\mathbb{F}_{q^i}}$ is treated as a variety over $\mathbb{F}_{q^i}$ (rather than over $\mathbb{F}_q$).   
\end{proposition}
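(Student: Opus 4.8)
The plan is to linearise the identity by taking logarithms: after applying $\log$, both sides become $\mbb{C}$-linear in $\Log_\sigma F$, and the proposition collapses to an elementary count of fixed points. Write $G:=\Log_\sigma(F)\in\mf{m}$, so that $F=\Exp_\sigma(G)$ and $F^{[V]}=\Exp_\sigma([V]\,G)$ by \cref{def.powers}, and write $\pi_i\colon W(\mbb{C})\to\mbb{C}$ for the $i$-th ghost projection (a continuous ring homomorphism, since $W(\mbb{C})=\prod_{i\geq 1}\mbb{C}$ as a ring), extended coefficientwise to $W(\mbb{C})[[\ul{t}_{\mbb{N}}]]\to\mbb{C}[[\ul{t}_{\mbb{N}}]]$. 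The one computation to record first is a formula for $\pi_i$ of a plethystic exponential. Since $W(\mbb{C})[[\ul{t}_{\mbb{N}}]]$ is torsion-free, \cref{lemma.exponential-power-sum} gives $\Exp_\sigma(H)=\prod_{k\geq 1}\exp(\tfrac1k\,p_k\circ H)$; applying the ring homomorphism $\pi_i$ and then $\log$, and using \cref{lemma.monoid-pleth-is-pre-lambda-and-formulas} to compute $p_k\circ$ on monomials together with \cref{eq.wc-coord-plethy-power-sum} to get $\pi_i(p_k\circ a)=\pi_{ki}(a)$ for $a\in W(\mbb{C})$, one obtains
\[ \log \pi_i\big(\Exp_\sigma(H)\big)\;=\;\sum_{k\geq 1}\tfrac1k\,(\pi_{ki}H)(\ul{t}_{\mbb{N}}^{k}), \]
where $\ul{t}_{\mbb{N}}^{k}$ denotes the substitution $t_a\mapsto t_a^k$.

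Next I would feed both sides of the proposition through this formula. For the left-hand side, take $H=[V]\,G$: since $\pi_m$ is multiplicative and $\pi_m([V])=\#V^{m\mbb{Z}}$ (the number of points of $V$ fixed by $m$) by \cref{lemma.admissible-z-set-class}, this yields
\[ \log (F^{[V]})_i\;=\;\sum_{k\geq 1}\frac{\#V^{ki\mbb{Z}}}{k}\,(\pi_{ki}G)(\ul{t}_{\mbb{N}}^{k}). \]
For the right-hand side, apply the formula with $H=G$ to compute $\log F_m$, observe that the substitution $\ul{t}_{\mbb{N}}\mapsto\ul{t}_{\mbb{N}}^{e}$ commutes with $\log$, and sum over the orbits $|w|$ of $(V\times\mathbf{i})_1$ — the infinite product converges $\ul{t}$-adically because $(V\times\mathbf{i})_1$ is admissible and the factor attached to $|w|$ has no non-constant term of degree below $\deg|w|$. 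Regrouping by $k:=l\deg|w|$ and using the elementary identity $\sum_{|w|\colon\deg|w|\mid k}\deg|w|=\#\big((V\times\mathbf{i})_1\big)^{k\mbb{Z}}$, the logarithm of the right-hand side becomes
\[ \sum_{k\geq 1}\frac{\#\big((V\times\mathbf{i})_1\big)^{k\mbb{Z}}}{k}\,(\pi_{ki}G)(\ul{t}_{\mbb{N}}^{k}). \]

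It then remains only to compare the two series. By \cref{example.fiber-base-change}, $(V\times\mathbf{i})_1$ is $V$ equipped with the $\mbb{Z}$-action precomposed by multiplication by $i$, so a point is fixed by $k$ there exactly when it is fixed by $ki$ in $V$; hence $\big((V\times\mathbf{i})_1\big)^{k\mbb{Z}}=V^{ki\mbb{Z}}$ as subsets of $V$, the two series agree coefficient by coefficient, and since $\log$ is injective on $1+\mf{m}_{\mbb{C}[[\ul{t}_{\mbb{N}}]]}$ (both sides having constant term $1$) the first identity follows. The displayed variety version is then immediate from \cref{example.fiber-base-change}. The step I expect to demand the most care is the ghost-component formula for $\Exp_\sigma$ above — in particular the fact that $\pi_i$ does \emph{not} commute with $p_k\circ$, but rather $\pi_i(p_k\circ a)=\pi_{ki}(a)$, so that $p_k\circ$ simultaneously multiplies the ghost index by $k$ and raises the $\ul{t}$-variables to the $k$-th power — together with the $\ul{t}$-adic convergence bookkeeping; the genuine content is simply that passing to $\log$ makes everything linear in $G$, turning the statement about powers into fixed-point counting. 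One could alternatively reduce to $V$ a single orbit and $F$ a single factor $\tfrac{1}{1-[\alpha]\ul{t}^{\ul{j}}}$ and compute directly with $d$-th roots of unity, but the logarithmic argument avoids that case analysis.
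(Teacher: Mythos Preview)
Your argument is correct and takes a genuinely different route from the paper's. The paper first reduces to a single orbit $V=\mathbf{d}$ via $F^{[V]}=\prod_{|v|\in|V|}F^{[|v|]}$, and then for $V=\mathbf{d}$ expands $\Exp_\sigma([\mathbf{d}]\Log_\sigma F)$ via \cref{lemma.exponential-power-sum}, carries out an explicit index manipulation involving $\mu=d/\gcd(i,d)$, and finally matches the result to the orbit structure of $(\mathbf{d}\times\mathbf{i})_1$ (which has $\gcd(i,d)$ orbits of degree $\mu$). You instead work globally: you take $\log\pi_i$ of both sides, use the same expansion \cref{lemma.exponential-power-sum} to obtain a series in $(\pi_{ki}G)(\ul{t}^k)$, and identify the crux as the fixed-point identity $\#\big((V\times\mathbf{i})_1\big)^{k\mathbb{Z}}=\#V^{ki\mathbb{Z}}$. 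This bypasses the reduction to a single orbit and the explicit $\gcd$ bookkeeping; what the paper's approach buys in exchange is that the single-orbit computation is completely concrete and one sees the exponent $d/\mu$ arise directly. Both rest on the same analytic input (\cref{lemma.exponential-power-sum}), but your packaging via ghost coordinates and orbit counting is cleaner and more conceptual.
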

\begin{proof}
We first note that 
\[ F^{[V]}=F^{\sum_{|v|\in|V|} |v|}=\prod_{|v| \in |V|} F^{|v|} .\]
Thus we are reduced to treating the case of a single orbit, i.e. $V=\bf{d}$, so that 
\[ [V]=[\mbf{d}]=\frac{1}{1-t^d}=(\overbrace{0,\ldots,0,d}^{\textrm{$d$ terms}}, \overbrace{0,\ldots,0,d}^{\textrm{$d$ terms}}, \ldots) \]

Now, let $\mu=\frac{\lcm(i,d)}{i}= \frac{d}{\gcd(i,d)}$. In the following we will write $p_i \ast F$ for the plethystic action of power sum functions on $F$ acting only on the coefficients and not the variables; we note that this commutes with the usual plethystic action $p_i \circ$ used in \cref{ss.symmetric-power-series}. Expanding via \cref{lemma.exponential-power-sum} and using that the plethysm $p_j \circ$ is a $\lambda$-ring homomorphism, we find 
\begin{align*} (\Exp_{\sigma}([V] \Log_{\sigma}(F)))_i &=  \prod_{j \geq 1} \exp\left( \frac{p_j \circ [V]\Log_{\sigma}(F)}{j}\right)_{i}\\ 
&=  \prod_{j \geq 1} \exp\left( \frac{(p_j \circ [V])\Log_{\sigma}(p_j \circ F)}{j}\right)_{i}\\ 
&=\prod_{j \geq 1} \exp\left( \frac{(p_{ij} \circ [V])\Log_{\sigma}(p_{j} \circ p_i \ast F))}{j}\right)_1 \\ 
&=\prod_{k \geq 1} \exp\left( \frac{ \Log_{\sigma}(p_{k\mu} \circ p_i \ast F)}{k\mu/d}\right)_1 \\
&=\prod_{k \geq 1} \exp\left( \frac{d}{\mu} \cdot \frac{ p_{k}\circ \Log_{\sigma}(p_{\mu} \circ p_i \ast F)}{k}\right)_1\\
&=(\Exp_{\sigma}(\Log_{\sigma} (p_{\mu} \circ p_i \ast F))^{d/\mu})_1\\
&=(p_{\mu} \circ p_i \ast F)_1^{d/\mu}  \\
&= (F_{\mu i}(\ul{t}_{\mathbb{N}}^\mu))^{d/\mu}.
\end{align*}

Since $(\mbf{d} \times \mbf{i})_{1}$ has $d/\mu=\gcd(i,d)$ orbits, each consisting of points of degree $\mu$ over $\mathbb{F}_{q^i}$, we conclude. 
\end{proof}

\subsection{$W(\mathbb{C})$-valued functions}\label{ss.wc-valued-functions}

\begin{definition}
For any admissible $\mathbb{Z}$-set $S$, we write $C(S, W(\mathbb{C}))$ for the $\lambda$-ring of $\mathbb{Z}$-equivariant functions from $S$ to $W(\mathbb{C})$, where the action of $\mathbb{Z}$ on $W(\mathbb{C})$ is trivial and the $\lambda$-ring structure is pointwise. 
\end{definition}

Note that, by the $\mathbb{Z}$-equivariance, any $f \in C(S,W(\mathbb{C}))$ is constant on each orbit of $\mathbb{Z}$ acting on $S$, so this ring is naturally identified with $C(|S|, W(\mathbb{C}))$. 

\begin{definition}\label{def.admissible-z-set-integration}
Given an admissible $\mathbb{Z}$-set $V$ and $f \in C(V, W(\mathbb{C}))$, we define 
\[ \int_V f= \sum_{|v|\in |V|} f(t^{\deg(|v|)}) \in W(\mbb{C})\]
where we emphasize that the addition law on the right is that of $W(\mbb{C})$. In particular,  if we identify $W(\mathbb{C})=1+t \mathbb{C}[[t]]$, then this becomes the Euler product
\[  \prod_{|v| \in |V|} f(t^{\deg(|v|)}), \]
so that the infinite sum in $W(\mathbb{C})$ makes sense since, in this product, there only finitely many terms contributing to each power of $t$. 
\end{definition}

\begin{lemma}\label{lemma.integration-components}
If we identify $W(\mathbb{C})=\prod_{i \geq 1} \mathbb{C}$, then, 
\[\left(\int_V f\right)_i = \sum_{|v| \in |V| \textrm{ s.t. } \deg(|v|) | i} \deg(|v|)f_{i/\deg(|v|)}(|v|) = \sum_{v \in V \textrm{ s.t. } \deg(v) | i} f_{i/\deg(v)}(v).  \]
\end{lemma}
\begin{proof}
    Immediate using \cref{eq.subs-tk-ghost}. 
\end{proof}

\begin{definition}
Given a map $\varphi: V \rightarrow S$ of admissible $\mbb{Z}$-sets, we define
\begin{enumerate}
\item pull back of functions $\varphi^*: C(S, W(\mathbb{C})) \rightarrow C(V, W(\mathbb{C}))$ by 
\[ \varphi^*f (v):= p_{\frac{\deg(|v|)}{\deg(|\varphi(v)|)}} \circ f(\varphi(v)) \textrm{ for $v \in V$} \]
\item integration over fibers $\varphi_!: C(V, W(\mathbb{C})) \rightarrow C(S,W(\mathbb{C}))$ by
\[ (\varphi_!f)(s):= \int_{V_s} f|_{V_s} \textrm{ for $s \in S$}, \]
where the fiber $V_s$ is as defined in \cref{def.fiber-admissible-Z-set}. Equivalently, viewing $f$ and $\varphi_!f$ as functions on orbits,
\[ (\varphi_!f)(|s|) =\sum_{|v| \subseteq \varphi^{-1}(|s|)}(f(|v|))(t^{\frac{\deg(|v|)}{\deg(|s|)}}). \]
\end{enumerate}
\end{definition}

\begin{example}\label{example.zeta-funciton-integral-of-constant}
    For the constant function $1$ in $C(V, W(\mathbb{C}))$,
    \[ \int_{V} 1=\sum_{|v|} 1(t^{\deg(|v|)})= \prod_{|v|} \frac{1}{1-t^{\deg(|v|)}}=Z_V(t)=[V], \]
    where in the second equality we use the identification of  $W(\mathbb{C})$ with $1+t\mathbb{C}[[t]]$. 
  
    More generally, if $\varphi: V \rightarrow S$ is a map of admissible $\mbb{Z}$-sets. Then we define $[V/S]$ to be the function sending $s$ to $[V_s]$ as in \cref{def.fiber-admissible-Z-set}, so that $[V/S]=\varphi_! 1$. 
\end{example}

Note that, since the power sum plethysms are $\lambda$-ring endomorphism on any $\lambda$-ring (see \cref{ss.witt-and-lambda}), $\varphi^*$ is a $\lambda$-ring morphism. 

\begin{example}\label{example.final-morphism}
    Let $\varphi: V \rightarrow \{\ast\}$ be the final map. Then, $\varphi^*: W(\mathbb{C}) \rightarrow C(V,W(\mathbb{C}))$ sends $w \in W(\mathbb{C})$ to the function $f_w$ defined by
 \begin{equation}\label{eq.witt-product-embedding} f_w(v) =  p_{\deg(v)}\circ w \textrm{ for } v\in V.\end{equation}
 In particular, the $W(\mathbb{C})$-algebra structure given by $\varphi^*$  agrees with the $W(\mathbb{C})$-algebra structure coming from literal constant $W(\mathbb{C})$-valued functions only when $V$ is a finite set viewed as an admissible $\mbb{Z}$-set under the embedding of \cref{example.adm-z-sets}-(1) (i.e. when $V$ contains only points of degree 1). Note however that, for elements of $\mbb{C}$ embedded diagonally in $W(\mbb{C})=\prod_{i \geq 1} \mbb{C}$, the pullback structure and the constant structure always agree (using \cref{eq.wc-coord-plethy-power-sum}). 
\end{example}

{\noindent\bf Warning}:
We always equip $C(V, W(\mathbb{C}))$ with the $W(\mathbb{C})$-$\lambda$-algebra structure obtained by pullback from the final map $V\rightarrow \{\ast\}$ as in \cref{example.final-morphism}, and never the algebra structure from literal constant functions. In particular, if $w \in W(\mathbb{C})$, if we write $\int_V w$ this means to treat $w$ as pulled back from the point, i.e. to integrate $f_w$ from \cref{example.final-morphism}.\\

For $\varphi: V \rightarrow S$ a map of admissible $\mathbb{Z}$-sets, the integration over fibers map $\varphi_!$ is additive but does not typically preserve products or plethysms. However, it does preserve the $W(\mathbb{C})$-module structure, and even more:

\begin{lemma}[Projection formula]\label{lemma.projection-formula}
Given a map $\varphi: V \rightarrow S$ of admissible $\mbb{Z}$-sets, we view $C(V,W(\mathbb{C}))$ as a $C(S,W(\mathbb{C}))$-module by $\varphi^*$. Then, $\varphi_!$ is a morphism of $C(S,W(\mathbb{C}))$-modules. In particular, $\varphi_!$ is always a morphism of $W(\mathbb{C})$-modules. 
\end{lemma}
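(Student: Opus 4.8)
The plan is to reduce the statement, via the explicit orbit-wise formulas for $\varphi^*$ and $\varphi_!$, to a single pointwise identity in $W(\mbb{C})$ relating the Adams operation $p_k\circ$ and the substitution $t\mapsto t^k$, and then to verify that identity in ghost coordinates. Since $\varphi_!$ is visibly additive (as already recorded in the discussion preceding the statement), it suffices to establish the projection identity
\[ \varphi_!\big(\varphi^*(g)\cdot f\big) = g\cdot \varphi_!(f) \qquad\text{for } g\in C(S,W(\mbb{C})),\ f\in C(V,W(\mbb{C})), \]
where on the right $g$ acts by pointwise multiplication; combined with additivity this is exactly the assertion that $\varphi_!$ is a morphism of $C(S,W(\mbb{C}))$-modules.

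Both sides are $\mbb{Z}$-equivariant functions on $S$, so I would evaluate at an orbit $|s|\in|S|$ and set $d=\deg(|s|)$. For an orbit $|v|\subseteq\varphi^{-1}(|s|)$, the map $\varphi$ carries $|v|$ onto $|s|$, so $d\mid\deg(|v|)$; writing $k_{|v|}=\deg(|v|)/d$, the definition of $\varphi^*$ gives that $\varphi^*(g)$ takes the value $p_{k_{|v|}}\circ g(|s|)$ on $|v|$. Plugging this into the orbit formula $(\varphi_! h)(|s|)=\sum_{|v|\subseteq\varphi^{-1}(|s|)} h(|v|)(t^{\deg(|v|)/d})$ (sum in $W(\mbb{C})$, convergent by \cref{def.admissible-z-set-integration}), the left-hand side at $|s|$ becomes $\sum_{|v|}\big((p_{k_{|v|}}\circ g(|s|))\cdot f(|v|)\big)(t^{k_{|v|}})$, whereas the right-hand side is $\sum_{|v|} g(|s|)\cdot\big(f(|v|)(t^{k_{|v|}})\big)$ — here I use that multiplication by the fixed element $g(|s|)\in W(\mbb{C})$ distributes over the $W$-sum, which is clear in ghost coordinates since that sum is finite in each component. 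So everything comes down to proving, for all $w,a\in W(\mbb{C})$ and every positive integer $k$, the identity
\[ w\cdot\big(a(t^k)\big) = \big((p_k\circ w)\cdot a\big)(t^k). \]

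To prove this I would pass to ghost coordinates $W(\mbb{C})=\prod_{i\geq1}\mbb{C}$, where multiplication is componentwise, $p_k\circ$ acts by $(w_i)_i\mapsto(w_{ik})_i$ by \cref{eq.wc-coord-plethy-power-sum}, and $\alpha\mapsto\alpha(t^k)$ acts by $(a_i)_i\mapsto(b_i)_i$ with $b_i=k\,a_{i/k}$ if $k\mid i$ and $b_i=0$ otherwise by \cref{eq.subs-tk-ghost}. A direct comparison shows both sides have $i$-th ghost coordinate equal to $k\,w_i\,a_{i/k}$ when $k\mid i$ and $0$ otherwise: on the left it is $w_i\cdot(k\,a_{i/k})$, and on the right it is $k\cdot(p_k\circ w)_{i/k}\cdot a_{i/k}=k\,w_i\,a_{i/k}$, using $(i/k)k=i$. (Alternatively, one can bootstrap from the two special cases recorded in \cref{eq.subst-lambda-op}.) Finally, the ``in particular'' about $W(\mbb{C})$-modules follows for free: the $W(\mbb{C})$-module structures on $C(S,W(\mbb{C}))$ and $C(V,W(\mbb{C}))$ are pulled back from the final map, the final map of $V$ factors through $\varphi$, and $\varphi^*$ is a ring homomorphism, so $W(\mbb{C})$ acts on $C(V,W(\mbb{C}))$ through $C(S,W(\mbb{C}))$ and $C(S,W(\mbb{C}))$-linearity of $\varphi_!$ specializes to $W(\mbb{C})$-linearity.

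The one genuinely substantive point is the pointwise identity $w\cdot(a(t^k))=((p_k\circ w)\cdot a)(t^k)$ — a Frobenius–Verschiebung projection formula in the Witt ring — and the reason it takes this shape is precisely that $t\mapsto t^k$ is only additive, not multiplicative, on $W(\mbb{C})$, so the Adams operation $p_k\circ$ must intervene on the right; once this is in hand, the remainder is bookkeeping with the orbit decomposition of $\varphi^{-1}(|s|)$ and the local finiteness of the relevant $W$-sums.
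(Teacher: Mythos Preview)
Your proof is correct and follows essentially the same approach as the paper. Both arguments verify the projection formula by working in ghost coordinates on $W(\mbb{C})$; the paper first reduces to the case $S=\{\ast\}$ (by passing to the fiber $V_s$) and then computes $\big(\int_V f_w\,g\big)_k$ directly as a sum over orbits, whereas you evaluate at a fixed $|s|$ and isolate the single termwise identity $w\cdot(a(t^k))=((p_k\circ w)\cdot a)(t^k)$ before verifying it. The underlying computation is the same, though your formulation makes the Frobenius--Verschiebung-type identity in $W(\mbb{C})$ explicit as the heart of the matter.
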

\begin{proof}
The last ``in particular" follows from the first claim after observing that, for any composable morphisms $\varphi_1$ and $\varphi_2$, $(\varphi_2 \circ \varphi_1)^*f = \varphi_1^*\varphi_2^*f$. 

For the first claim, by taking fibers one immediately reduces to the case $S=\{\ast\}$ so that $\varphi_! = \int_V$. We for $w \in W(\mbb{C})$, we write $f_w$ for its pullback to $V$ as in \cref{example.final-morphism}. To verify $\int_V$ is $W(\mathbb{C})$-linear, we note that for $w = (a_1, a_2, \ldots) \in W(\mathbb{C})$ and $v \in |V|$ of degree $d$, $f_w(v)$ is identified, via $td\log$, with $(a_d, a_{2d}, \ldots)$. It follows that, for $v$ of degree $d$ and $g \in C(V,W(\mbb{C}))$, $f_w g (v)=(a_d g_1(v), a_{2d} g_2(v), \ldots)$. Applying \cref{lemma.integration-components}, we find
\begin{align*} \left(\int_{V} f_w g\right)_k &= \sum_{v \textrm{ s.t. } \deg(v)|k} (\deg(v) g_{\frac{k}{\deg(v)}}(v)a_{k})\\
&= a_k \sum_{v \textrm{ s.t. } \deg(v)|k} (\deg(v) g_{\frac{k}{\deg(v)}}(v) ) \\
&= a_k \left(\int_{V} g\right)_k.
\end{align*}
Since this holds for all $k$, the map is $W(\mbb{C})$-linear. 
\end{proof}

\subsection{A relation between integration over finite sets and integration over admissible $\mathbb{Z}$-sets}\label{ss.relation-admissible-finite-integration}
\newcommand{\rest}{\mathrm{res}}
For $S$ an admissible $\mathbb{Z}$-set and $k \geq 1$, there is a natural map of admissible $\mbb{Z}$-sets
\[ c_k: S(\bfk) \times \bfk \rightarrow S, (s, i) \mapsto (s(i)) \]
where here the finite set $S(\bfk)$ is viewed as an admissible $\mathbb{Z}$-set with the trivial action as in \cref{example.adm-z-sets}-(1). By pullback we obtain a map of $W(\mathbb{C})$-$\lambda$-algebras.  
\[ c_k^*: C(S, W(\mathbb{C})) \rightarrow C( S(\bfk) \times \bfk, W(\mathbb{C})) \]
We can identify the right-hand side with $W(\mathbb{C})$-valued functions on $|S(\bfk)\times \bfk|=S(\bfk)$ in the usual way, or equivalently by restricting to $S(\bfk)\times 1$. We thus obtain a map of $W(\mathbb{C})$-$\lambda$-algebras 
\[ \rest_k: C(S, W(\mathbb{C})) \rightarrow C(S(\bfk), W(\mathbb{C}))^{(k)} \]
where the superscript on the right indicates that we take the $W(\mathbb{C})$-algebra structure via $p_k \circ: W(\mathbb{C}) \rightarrow W(\mathbb{C})$. Explicitly,
\[ \rest_k(f)(s: \bfk \rightarrow S) =p_{\frac{k}{\deg(s(|\bfk|))}} \circ f(s(|\bfk|)). \]

\begin{proposition}\label{prop.integration-finite-set-relation}
    For $S$ an admissible $\mathbb{Z}$-set, $k\geq 1$, and $f \in C(S, W(\mathbb{C}))$, 
    \[ \left(\int_S f\right)_k = \left(\int_{S(\bf{k})} \rest_k(f)\right)_1. \]
    where the subscripts denote components under $W(\mbb{C})=\prod_{i \geq 1} \mbb{C}$. 
\end{proposition}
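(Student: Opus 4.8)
The plan is to pass to ghost coordinates on both sides and reduce the identity to an elementary count of orbits. Throughout I use the identification $W(\mathbb{C}) = \prod_{i \geq 1}\mathbb{C}$ of \cref{ss.wittC}, under which addition in $W(\mathbb{C})$ is coordinatewise, the substitution operation $\alpha \mapsto \alpha(t^j)$ acts by \cref{eq.subs-tk-ghost} as $(a_i)_i \mapsto (j\,a_{i/j})_i$ (with $a_{i/j} := 0$ unless $j \mid i$), and the plethystic operation $p_j \circ$ acts by \cref{eq.wc-coord-plethy-power-sum} as $(a_i)_i \mapsto (a_{ij})_i$; in particular $(p_j \circ \alpha)_1 = \alpha_j$.

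First I would compute the left-hand side. Writing the integral over orbits as in \cref{def.admissible-z-set-integration}, $\int_S f = \sum_{|s| \in |S|} f(|s|)\bigl(t^{\deg(|s|)}\bigr)$, so extracting the $k$-th ghost coordinate via \cref{eq.subs-tk-ghost} gives
\[ \left(\int_S f\right)_k = \sum_{\substack{|s| \in |S| \\ \deg(|s|) \mid k}} \deg(|s|)\cdot f(|s|)_{k/\deg(|s|)}. \]
Next I would compute the right-hand side. Since $S(\bfk)$ is a finite set with trivial $\mathbb{Z}$-action, all of its orbits have degree $1$, so for any $g \in C(S(\bfk), W(\mathbb{C}))$ we have $\int_{S(\bfk)} g = \sum_{s \in S(\bfk)} g(s)$ in $W(\mathbb{C})$, and hence $\bigl(\int_{S(\bfk)} g\bigr)_1 = \sum_{s \in S(\bfk)} g(s)_1$. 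Applying this to $g = \rest_k(f)$ and using the explicit formula $\rest_k(f)(s) = p_{k/\deg(s(|\bfk|))} \circ f(s(|\bfk|))$ together with $(p_j \circ \alpha)_1 = \alpha_j$, each $s \colon \bfk \to S$ contributes $f\bigl(s(|\bfk|)\bigr)_{k/\deg(s(|\bfk|))}$, where $s(|\bfk|) \in |S|$ denotes the image orbit (whose degree automatically divides $k$).

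Then I would organize this sum along the map $S(\bfk) \to |S|$ sending $s$ to its image orbit. A $\mathbb{Z}$-equivariant map $\bfk = \mathbb{Z}/k\mathbb{Z} \to S$ is the same datum as a point of $S$ fixed by $k\mathbb{Z}$; an orbit $|s|$ of degree $d$ contains such a point iff $d \mid k$, in which case all $d$ of its points are $k\mathbb{Z}$-fixed. Hence the fiber of $S(\bfk) \to |S|$ over $|s|$ has size $\deg(|s|)$ when $\deg(|s|) \mid k$ and is empty otherwise, giving
\[ \left(\int_{S(\bfk)} \rest_k(f)\right)_1 = \sum_{\substack{|s| \in |S| \\ \deg(|s|) \mid k}} \deg(|s|)\cdot f(|s|)_{k/\deg(|s|)}, \]
which matches the expression for $\bigl(\int_S f\bigr)_k$ above, completing the proof.

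I do not expect a serious obstacle: the statement is a bookkeeping identity. The one point requiring care is keeping straight the two distinct operations on $W(\mathbb{C})$ that both rescale the ghost index — substitution $\alpha \mapsto \alpha(t^j)$, which multiplies the index by $j$ \emph{and} multiplies the value by $j$, versus the Adams operation $p_j \circ$, which only reindexes — and checking they interact correctly with the passage between points and orbits (the factor $\deg(|s|)$ on the left arises from substitution, while on the right it arises from the orbit count). A useful consistency check before writing is the case $f = 1$, where the identity degenerates to $\#S^{k\mathbb{Z}} = \#\Hom_{\mathbb{Z}\text{-}\mathrm{set}}(\mathbb{Z}/k\mathbb{Z}, S)$.
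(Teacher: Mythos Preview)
Your proposal is correct and follows essentially the same approach as the paper: both arguments pass to ghost coordinates, expand the left-hand side via \cref{eq.subs-tk-ghost} as $\sum_{\deg(|s|)\mid k}\deg(|s|)\,f(|s|)_{k/\deg(|s|)}$, and match this with the right-hand side by using the explicit formula for $\rest_k$ together with the fact that each orbit $|s|$ of degree dividing $k$ contributes exactly $\deg(|s|)$ elements of $S(\bfk)$. The only cosmetic difference is that the paper writes this as a single chain of equalities from left to right, while you compute both sides separately and meet in the middle.
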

\begin{proof}
For $f=(f_1, f_2, \ldots)$ in $C(S,W(\mathbb{C}))$, we apply \cref{lemma.integration-components} to see
    \begin{align*} \left(\int_S f\right)_k &= \sum_{s \in S(\bfk)} f_{\frac{k}{\deg(s(\bfk))}}(s(\bfk)) \\
    &=\sum_{s \in S(\bfk)} (p_{\frac{k}{\deg(s(\bfk))}}\circ f(s(\bfk)))_1 \\
    &= \sum_{s \in S(\bfk)} (\rest_k(f) (s))_1. 
    \end{align*}    
\end{proof}

\subsection{Probability spaces}\label{ss.abstract-pc-prob-spaces}
Let $S$ be an admissible $\mathbb{Z}$-set. Then, it follows from \cref{lemma.admissible-z-set-class} that 
\[ [S] =Z_S(t) \in 1+t\mbb{C}[[t]]]=W(\mathbb{C})=\prod_{i\geq 1} \mathbb{C} \]
is invertible if and only if $S(\bf{1}) \neq \emptyset$ (since this is equivalent to  $S({\bfk})\neq \emptyset$ for all $k \geq 1$). In this case, we can define 
\[ \mathbb{E}: C(S, W(\mathbb{C})) \rightarrow W(\mathbb{C}), E[f]=\frac{\int_S f}{[S]}. \]
By \cref{example.zeta-funciton-integral-of-constant}, this is an expectation (i.e. it sends $1$ to $1$) and is $W(\mathbb{C})$-linear.  Note that, for $k \geq 1$, the projection to the $k$th component $\mathbb{E}_k$ is given by $\frac{(\int_S f)_k}{\#S(\bfk)}$, so that we can define $\mathbb{E}_{k}$ as long as $S(\bfk)\neq\emptyset$. 

Using the restriction maps of \cref{prop.integration-finite-set-relation}, we can relate the projections $\mathbb{E}_k$ to probability on the finite set of order $k$ points of $S$:
\begin{lemma}\label{lemma.motivic-to-finite-probability-spaces}
Let $S$ be an admissible $\mathbb{Z}$-set and $k \geq 1$. If $S(\bfk) \neq \emptyset$, then the map $\rest_k$ of \cref{prop.integration-finite-set-relation} induces a map of $\lambda$-probability spaces 
\[ (C(S, W(\mathbb{C})), \mathbb{E}_k) \rightarrow (C(S(\bfk), W(\mathbb{C}))^{(k)}, \mathbb{E}_1). \] 
\end{lemma}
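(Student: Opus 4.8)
The plan is to verify the two things required of a morphism of $\lambda$-probability spaces: that $\rest_k$ is a homomorphism of pre-$\lambda$-rings sending $1$ to $1$, and that --- the coefficient ring of both $\mbb{E}_k$ and $\mbb{E}_1$ being $\mbb{C}$, so that the identity serves as the map of coefficient rings --- it intertwines $\mbb{E}_k$ with $\mbb{E}_1$. The first requirement is already contained in the construction recalled in \cref{ss.relation-admissible-finite-integration}: $\rest_k$ is the composite of the pullback $c_k^*$ along the morphism of admissible $\mathbb{Z}$-sets $c_k\colon S(\bfk)\times\bfk \to S$ --- a homomorphism of pre-$\lambda$-rings, since the power-sum plethysms are $\lambda$-ring endomorphisms of $W(\mbb{C})$ --- with the tautological identification $C(S(\bfk)\times\bfk, W(\mbb{C}))\cong C(S(\bfk), W(\mbb{C}))$ coming from $|S(\bfk)\times\bfk| = S(\bfk)$. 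Both steps preserve the pointwise pre-$\lambda$-structure and the unit (and, after twisting the target $W(\mbb{C})$-algebra structure by $p_k\circ$, the $W(\mbb{C})$-algebra structure as well), so nothing new is needed here.

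For the expectation compatibility, I would first note that both functionals are defined: $S(\bfk)\neq\emptyset$ by hypothesis, so the $k$th component of $[S]$ equals $\#S(\bfk)\neq 0$ and $\mbb{E}_k$ makes sense on $C(S, W(\mbb{C}))$; moreover $S(\bfk)$, being a finite set with trivial $\mathbb{Z}$-action, has $[S(\bfk)]$ with first component $\#S(\bfk)\neq 0$, so $\mbb{E}_1$ makes sense on $C(S(\bfk), W(\mbb{C}))$. Then for $f\in C(S, W(\mbb{C}))$ I would simply chase definitions:
\[ \mbb{E}_1\bigl[\rest_k(f)\bigr] = \frac{\bigl(\int_{S(\bfk)} \rest_k(f)\bigr)_1}{\#S(\bfk)} = \frac{\bigl(\int_S f\bigr)_k}{\#S(\bfk)} = \mbb{E}_k[f], \]
the middle equality being precisely \cref{prop.integration-finite-set-relation}. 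Since every point of $S(\bfk)$ has degree $1$, the integral $\int_{S(\bfk)}$ of \cref{def.admissible-z-set-integration} is just an ordinary finite sum, so $\mbb{E}_1$ is the honest uniform average of first components over the finite set $S(\bfk)$, which is the point of the lemma.

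There is no genuine obstacle here: all the substance has been isolated in \cref{prop.integration-finite-set-relation}, and what remains is bookkeeping. The only point needing care is the matching of the normalizing constant $\#S(\bfk)$ on the two sides --- that the ``number of degree-one points'' of $S(\bfk)$ is literally $\#S(\bfk)$, because of the trivial action --- together with keeping straight that the twisted structure $C(S(\bfk), W(\mbb{C}))^{(k)}$ is used only to make $\rest_k$ a morphism of $W(\mbb{C})$-algebras, whereas $\mbb{E}_1$ is the same $\mathbb{Z}$-linear $\mbb{C}$-valued functional regardless of which $W(\mbb{C})$-algebra structure is in play.
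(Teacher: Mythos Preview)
Your proposal is correct and takes essentially the same approach as the paper, whose proof is simply ``Immediate by \cref{prop.integration-finite-set-relation}.'' You have spelled out the bookkeeping (that $\rest_k$ is a pre-$\lambda$-ring homomorphism, that the normalizing constants match, and that the $(k)$-twist affects only the $W(\mbb{C})$-algebra structure and not the $\mbb{Z}$-linear functional) which the paper leaves implicit.
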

\begin{proof}
    Immediate by \cref{prop.integration-finite-set-relation}.
\end{proof}

\subsection{Varieties, $\ell$-adic sheaves, and $L$-functions}\label{ss.varieties-elladic-sheaves-L}
\newcommand{\Cons}{\mathrm{Cons}}
Let $\fq$ be a finite field of order $q$, fix an algebraic closure $\fqbar/\fq$, and let $S/\fq$ be a variety. For $\ell$ a prime coprime to $q$ and $\overline{\mathbb{Q}}_\ell$ an algebraic closure of $\mathbb{Q}_\ell$, we write $K_0(\Cons(S,\overline{\mbb{Q}}_\ell))$ for the Grothendieck ring of constructible $\overline{\mathbb{Q}}_\ell$ sheaves on $S$ (as in, e.g., \cite[\S6]{BhattScholze.TheProEtaleTopologyForSchemes}). By definition, any element of this ring can be written as a formal sum of the extensions by zero of local systems on locally closed subsets, and this sum is unique up to passing to a common constructible refinement. There is thus a unique $\lambda$-ring structure on $K_0(\Cons(S,\overline{\mbb{Q}}_\ell))$ such that, for $\mc{V}$ a $\overline{\mbb{Q}}_\ell$  local system on a locally closed $Z \subseteq S$, $h_k \circ [j_!\mc{V}] = [j_! \Sym^k \mc{V}]$ for $j: Z \hookrightarrow S$ the inclusion. For $f: V \rightarrow S$ a morphism of varieties over $\fq$, there is a natural map 
\[ f^*: K_0(\Cons(S,\overline{\mbb{Q}}_\ell)) \rightarrow K_0(\Cons(V,\overline{\mbb{Q}}_\ell)) \]
induced by pullback of constructible sheaves and an integration over fibers map
\[ f_!: K_0(\Cons(V,\overline{\mbb{Q}}_\ell)) \rightarrow K_0(\Cons(S,\overline{\mbb{Q}}_\ell)) \]
induced by (derived) proper pushforward of complexes. The map $f^*$ is a map of $\lambda$-rings, and by the projection formula, $f_!$ is $K_0(\Cons(V,\overline{\mbb{Q}}_\ell))$-linear.

We fix an isomorphism $\overline{\mbb{Q}}_\ell \rightarrow \mbb{C}$. Then, we obtain a map 
\[ \iota_S: K_0(\Cons(S,\overline{\mbb{Q}}_\ell))) \rightarrow C(S(\fqbar), W(\mbb{C}))\]
by sending $[\mc{V}]$ for $\mc{V}$ a constructible sheaf to 
\[ f_{\mc{V}}: s \in S(\fqbar) \mapsto L_{\mc{V}_s}(t) \in 1+t\mbb{C}[[t]]]=W(\mbb{C})\]
where $L_{\mc{V}_s}(t)$ is the characteristic power series of the (geometric) Frobenius element for $\mbb{F}_q(s)$ acting on the finite dimensional $\overline{\mbb{Q}}_\ell$ vector space $\mc{V}_s$, viewed as a $\mbb{C}$-vector space by the fixed isomorphism. Viewed as a sequence in $\prod_{i \geq 1} \mbb{C}$, $f_{\mc{V}}(s)$ is given by the sequences of traces of powers of the $q^{\deg(s)}$-power (geometric) Frobenius on $\mc{V}_s$. 

\begin{proposition}
    The map $\iota_S$ is an injective map of $\lambda$-rings, and, for each map of varieties $f: V \rightarrow S$, $\iota_V \circ f^*=(f|_{V(\fqbar)})^*\iota_S$ and $\iota_S \circ f_!=(f|_{V(\fqbar)})_! \circ \iota_V$.   
\end{proposition}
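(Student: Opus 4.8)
The plan is to reduce the $\lambda$-ring assertion to a pointwise statement, deduce injectivity from the theorem of Laumon already invoked in the introduction, and obtain the two functoriality statements from the behaviour of Frobenius under pullback of stalks and the Grothendieck--Lefschetz trace formula, respectively.

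\textbf{The $\lambda$-ring claim.} Both $K_0(\Cons(S,\overline{\mbb{Q}}_\ell))$ and $C(S(\fqbar),W(\mbb{C}))$ are generated under addition by classes of extensions by zero $j_!\mc{V}$ of local systems on locally closed subsets, and every operation in play is additive, so it suffices to work with honest constructible sheaves, and in fact pointwise: for fixed $s\in S(\fqbar)$, the ``stalk at $s$'' assignment sends a constructible $\mc{F}$ to the finite-dimensional vector space $\mc{F}_s$ with its action of the geometric Frobenius $\Frob_s$ of $\mbb{F}_q(s)$, and listing eigenvalues with multiplicity gives a map $K_0(\Cons(S,\overline{\mbb{Q}}_\ell))\to C(S(\fqbar),\mbb{Z}[\mbb{C}])$ whose composition with the $\lambda$-ring embedding $\mbb{Z}[\mbb{C}]\hookrightarrow W(\mbb{C})$, $[z]\mapsto(z,z^2,\ldots)$, applied coefficientwise, is exactly $\iota_S$. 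It therefore remains to check the eigenvalue map is a $\lambda$-ring morphism: additivity holds because the stalk of a short exact sequence of constructible sheaves is exact and $\Frob_s$-equivariant, so characteristic polynomials multiply; multiplicativity holds because $(\mc{F}\otimes\mc{G})_s=\mc{F}_s\otimes\mc{G}_s$ and the Frobenius eigenvalues on a tensor product are pairwise products, matching $[\alpha][\beta]=[\alpha\beta]$ in $\mbb{Z}[\mbb{C}]$; and since $h_k\circ[j_!\mc{V}]=[j_!\Sym^k\mc{V}]$ while the Frobenius eigenvalues on $\Sym^k(\mc{F}_s)$ are the degree-$k$ monomials in those on $\mc{F}_s$, compatibility with the $\sigma$-operations is precisely the pre-$\lambda$ identity $h_k\circ\big(\sum_i[\alpha_i]\big)=\sum_{\sum m_i=k}\prod_i\alpha_i^{m_i}$ in $\mbb{Z}[\mbb{C}]$ (\cref{def.pre-lambda} and \cref{lemma.monoid-pleth-is-pre-lambda-and-formulas}).

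\textbf{Injectivity and the two functorialities.} For injectivity: the data recorded by $\iota_S$ includes, for every $n$ and every $s\in S(\mbb{F}_{q^n})$, the trace $\Tr(\Frob_s\mid\mc{V}_s)$, so $\iota_S(x)=0$ makes the trace function of $x$ vanish on $S(\mbb{F}_{q^n})$ for all $n$, whence $x=0$ by \cite[Th\'{e}or\`{e}me (1.1.2)]{Laumon.TransformationDeFourierConstantsDEquationsFonctionellesEtConjectureDeWeil}. For $f^*$: by additivity test on $[\mc{F}]$ and pointwise; for $v\in V(\fqbar)$ with $s=f(v)$ we have $(f^*\mc{F})_v=\mc{F}_s$ with $\mbb{F}_q(s)\subseteq\mbb{F}_q(v)$ and $\Frob_v=\Frob_s^{\deg(v)/\deg(s)}$, so the characteristic polynomial of $\Frob_v$ on $(f^*\mc{F})_v$ is obtained from that of $\Frob_s$ on $\mc{F}_s$ by raising eigenvalues to the power $\deg(v)/\deg(s)$, which in $\mbb{Z}[\mbb{C}]\subseteq W(\mbb{C})$ is $p_{\deg(v)/\deg(s)}\circ$, exactly the operation in the definition of $(f|_{V(\fqbar)})^*$. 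For $f_!$: again test on $[\mc{F}]$, where $f_![\mc{F}]=\sum_i(-1)^i[R^if_!\mc{F}]$; for $s\in S(\fqbar)$, proper base change gives $(Rf_!\mc{F})_s\cong R\Gamma_c(f^{-1}(s)_{\fqbar},\mc{F})$ as an object with $\Frob_s$-action, and applying the Grothendieck--Lefschetz trace formula to the fiber $f^{-1}(s)$ over $\mbb{F}_q(s)$ and over each of its finite extensions identifies the associated $L$-function in $W(\mbb{C})$ with the zeta-style sum $\int_{V_s}\iota_V([\mc{F}])|_{V_s}$ of \cref{def.admissible-z-set-integration}, where $V_s$ carries the $\mbb{Z}$-action precomposed by $\deg(s)$ as in \cref{def.fiber-admissible-Z-set}; this is exactly $((f|_{V(\fqbar)})_!\iota_V([\mc{F}]))(s)$.

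\textbf{Main obstacle.} The only step requiring genuine care is the compatibility with $f_!$: one must match the intrinsic definition of integration over fibers of $W(\mbb{C})$-valued functions --- with its substitutions $t\mapsto t^{\deg(v)}$ and the precomposition of $\mbb{Z}$-actions on fibers of admissible $\mbb{Z}$-sets --- against the Grothendieck trace formula applied over the residue field $\mbb{F}_q(s)$ of a closed point and all its extensions, keeping track of the fact that degrees of points of the fiber are measured relative to $\mbb{F}_q(s)$ rather than $\mbb{F}_q$. Everything else is an unwinding of definitions, the one external input being Laumon's theorem for injectivity.
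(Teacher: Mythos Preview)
Your proof is correct and follows essentially the same approach as the paper: the paper's proof is a terse sketch that invokes the factorization through $\mbb{Z}[\mbb{C}]$ for the $\lambda$-ring structure, Laumon's theorem for injectivity, calls the $f^*$-compatibility ``trivial'', and cites the Grothendieck--Lefschetz fixed point formula for $f_!$. You have simply filled in the details of each of these steps, including the careful bookkeeping of relative degrees in the $f_!$ case that the paper leaves implicit.
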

\begin{proof}
    To see that it is a map of $\lambda$-rings, we can note that any $f_{\mc{V}}$ as above takes values in $\mbb{Z}[\mbb{C}] \subseteq W(\mbb{C})$ and compare the symmetric power plethysms. That it is injective follows from \cite[Th\'{e}or\'{e}me (1.1.2)]{Laumon.TransformationDeFourierConstantsDEquationsFonctionellesEtConjectureDeWeil}. The commutativity for $f^*$ is trivial, and the commutativity for $f_!$ follows, in the usual way, from the Grothendieck-Lefschetz fixed point formula.  
\end{proof}

\begin{example}
Let $f: V \rightarrow S$ be a map of varieties. Then, for $[V(\fqbar)/S(\fqbar)]$ as in \cref{example.zeta-funciton-integral-of-constant}, 
\[ [V(\fqbar)/S(\fqbar)] = \iota_S\left( \sum (-1)^i [R^if_!\overline{\mathbb{Q}}_\ell]\right). \]
\end{example}

\section{An application of Poonen's sieve}\label{s.application-of-poonen}
In this section, we apply \cite[Theorem 1.3]{Poonen.BertiniTheoremsOverFiniteFields} to compute the $\lambda$-distributions of some natural $W(\mbb{C})$-valued probability problems on finite sets of homogeneous polynomials over a finite field $\mbb{F}_q$. The main result is \cref{prop.point-counting-asymptotics}. This ``na\"{i}ve" point-counting result will be used in conjunction with \cref{lemma.motivic-to-finite-probability-spaces} in the next sections to prove our motivic point-counting results, \cref{main.dirichlet-characters} and \cref{main.hypersurface-L-functions}.

\subsection{Setup}\label{ss.poonen-sieve-setup}
We fix an integer $n$ and let $S_d(\fq)$ denote the set of degree $d$ homogeneous polynomials in $n+1$ variables $x_1, \ldots, x_{n+1}$ with coefficients in  $\mathbb{F}_q$. For every closed point $P \in |\mathbb{P}^n_{\mbb{F}_q}|$, we fix a nonnegative integer $M_P$ and a nonvanishing coordinate $x_{j_P}$. Given $F \in S_d(\fq)$, we write $F_P$ for the image of $F/x_{j_P}^d$ in 
$\mathcal{O}_{\mathbb{P}^n_{\mathbb{F}_q},P}/\mf{m}_P^{M_P+1}.$ Given $A_P \subseteq \mathcal{O}_{\mathbb{P}^n_{\mathbb{F}_q,P}}/\mf{m}_P^{M_P+1}$,
we write
\[ \mu_P(A_P):=\frac{\# A_P}{\#\left(\mathcal{O}_{\mathbb{P}^n_{\mathbb{F}_q},P}/\mf{m}_P^{M_P+1}\right)}. \]

We fix, for each $P$, a non-empty subset $A_P \subseteq \mathcal{O}_{\mathbb{P}^n_{\mathbb{F}_q},P}/\mf{m}_P^{M_P+1}$ of ``allowable" Taylor expansions. We suppose furthermore that these have been chosen so that there exist smooth quasi-projective subvarieties $X_1, \ldots, X_u \subseteq \mathbb{P}^n$ of dimensions $m_i=\dim X_i$, such that, for all but finitely many $P$, $A_P$ contains $F_P$ for any $F \in S_d(\fq)$ such that $V(F) \cap X_i$ is smooth of dimension $m_i-1$ at $P$ for all $i$. We let $U_d(\mathbb{F}_q)$ denote the subset of $S_d(\fq)$ consisting of all $F \in S_d(\fq)$ such that $F \in A_P$ for all $P \in |\mathbb{P}^n_{\mathbb{F}_q}|$. By \cite[Theorem 1.3 and subsequent remark]{Poonen.BertiniTheoremsOverFiniteFields},
\[ \lim_{d \rightarrow \infty} \frac{ \# U_d(\mathbb{F}_q) }{\# S_d}=\prod_{P \in |\mathbb{P}^n_{\mbb{F}_q}|} \mu_P(A_P) \neq 0. \]
In particular, for $d \gg 0$, $U_d(\mathbb{F}_q) \neq 0$, and we can consider the $\lambda$-probability space
\[ ( C(U_d(\fq), W(\mbb{C})), \mbb{E}_1) \textrm{ for } \mbb{E}_1(X)=\frac{1}{\#U_d(\fq)}\sum_{u \in U_d(\fq)} X(u)_1, \]
where the subscript $1$ denotes projection to the first coordinate in $W(\mbb{C})=\prod_{i \geq 1} \mbb{C}$. 

\begin{remark}
    We could take the $W(\mbb{C})$-valued expectation $\mbb{E}$ everywhere below instead of its first component, but it simplifies the proof to use $\mbb{E}_1$ and this will suffice for computing the motivic $\lambda$-distributions via \cref{prop.integration-finite-set-relation}. 
\end{remark}

\subsection{Random variables}
Continuing with the above notation, for each $P$, we fix a function $X_P: A_P \rightarrow W(\mathbb{C}) = 1 + t \mathbb{C}[[t]]$. We may view $X_P$ as a $W(\mathbb{C})$-valued random variable on the finite set $A_P$, equipped with the uniform measure. It induces a random variable $X_{P,d}$ on $U_d(\mathbb{F}_q)$ sending $F$ to $X_P(F_P)$. 

\begin{proposition}\label{prop.point-counting-asymptotics}
    With notation as above, suppose that, for any $M > 0$, for all but finitely many closed points $P \in |\mathbb{P}^n_{\mbb{F}_q}|$, $X_P$ factors through $1+t^M\mathbb{C}[[t]]$. Then, for any $d$,
    $\sum_{P \in |\mathbb{P}^n_{\mathbb{F}_q}|} X_{P,d}$ converges to a function $X_d: U_d(\mathbb{F}_q) \rightarrow W(\mathbb{C})$, where we note that the sum is point-wise in $W(\mathbb{C})$, i.e. product of power series in $1+t \mathbb{C}[[t]]$, and 
    \[ \lim_{d \rightarrow \infty} \mathbb{E}_1[\Exp_{\sigma}(X_{d}h_1)]=\prod_{P \in |\mathbb{P}^n_{\mathbb{F}_q}|} \mathbb{E}_1[\Exp_{\sigma}(X_Ph_1)]. \]

    More generally, suppose for each point $P$ given $X_{i,P}$, $i = 1, \ldots, m$ satisfying this property, and define $X_{i,P,d}$, $X_{i,d}$ similarly. Then 
     \begin{multline*} \lim_{d \rightarrow \infty} \mathbb{E}_1[\Exp_{\sigma}(X_{1,d}h_1(t_{1,1},t_{1,2}+\ldots ) + X_{2,d}h_1(t_{2,1},t_{2,2}+\ldots )+ \ldots)]= \\ \prod_{P \in |\mathbb{P}^n_{\mathbb{F}_q}|} \mathbb{E}_1[\Exp_{\sigma}(X_{1,P}h_1(t_{1,1},t_{1,2}+\ldots ) + X_{2,P}h_1(t_{2,1},t_{2,2}+\ldots )+ \ldots)]. \end{multline*} 
\end{proposition}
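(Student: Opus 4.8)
The plan is to reduce the identity, one $m_\tau$-coefficient at a time, to a computation at finitely many closed points, where \cite[Theorem 1.3]{Poonen.BertiniTheoremsOverFiniteFields} supplies asymptotic equidistribution of Taylor expansions and the local random variables become independent; the whole computation is then governed by the expansion $\Exp_\sigma(Xh_1)=\sum_\tau(h_\tau\circ X)m_\tau$ of \cref{example.how-to-expand-expsigmah1} and the homomorphism property of $\Exp_\sigma$ from \cref{lemma.expsigma-group-iso}.

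First I would check convergence of $X_d$: for a fixed $F\in U_d(\fq)$ the product $\prod_P X_P(F_P)$ --- which is the sum $\sum_P X_{P,d}(F)$ in $W(\mbb{C})=1+t\mbb{C}[[t]]$ --- converges $t$-adically, since by hypothesis for every $M$ all but finitely many $P$ have $X_P$ valued in $1+t^M\mbb{C}[[t]]$, so all but finitely many factors are $\equiv 1\bmod t^M$; this defines $X_d\in C(U_d(\fq),W(\mbb{C}))$ and hence $\Exp_\sigma(X_dh_1)=\sum_\tau(h_\tau\circ X_d)m_\tau$. Now fix a partition $\tau$ with largest part $\tau_1$ and set $T:=\{P:X_P\text{ does not factor through }1+t^{\tau_1+1}\mbb{C}[[t]]\}$, a finite set. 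Because $h_\tau=\prod_j h_{\tau_j}$, the map $f\mapsto f\circ w$ is a ring homomorphism $\Lambda\to W(\mbb{C})$ for each $w$, and (by \cref{eq.complete-powersum-relation}) each $h_k$ is a polynomial without constant term in $p_1,\dots,p_k$, the first ghost component $(h_\tau\circ w)_1$ is a polynomial without constant term in $w_1,\dots,w_{\tau_1}$, using $(p_k\circ w)_1=w_k$; in particular it depends only on $w\bmod t^{\tau_1+1}$ and vanishes when $w\equiv 1$. Writing $X_{T,d}:=\sum_{P\in T}X_{P,d}$, the ratio $X_d(F)/X_{T,d}(F)=\prod_{P\notin T}X_P(F_P)$ lies in $1+t^{\tau_1+1}\mbb{C}[[t]]$, so $(h_\tau\circ X_d(F))_1=(h_\tau\circ X_{T,d}(F))_1$ for all $F$ and $d$; hence the $m_\tau$-coefficient of $\mbb{E}_1[\Exp_\sigma(X_dh_1)]$ equals $\mbb{E}_1[h_\tau\circ X_{T,d}]$, which depends only on the finitely many Taylor data $(F_P)_{P\in T}$.

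Then I would apply \cite[Theorem 1.3]{Poonen.BertiniTheoremsOverFiniteFields} both to the family $(A_P)$ and to the families obtained by replacing $A_P$ with a singleton for the finitely many $P\in T$ --- a modification at finitely many points, which does not affect the tail hypothesis --- to deduce that the distribution of $(F_P)_{P\in T}$ for $F$ uniform in $U_d(\fq)$ converges, as $d\to\infty$, to the product of the uniform measures on the $A_P$, $P\in T$. Therefore $\lim_d\mbb{E}_1[h_\tau\circ X_{T,d}]$ is the expectation of $(h_\tau\circ\sum_{P\in T}X_P)_1$ on $\prod_{P\in T}A_P$ with the product measure and functional $\frac{1}{\#\prod_{P\in T}A_P}\sum(\cdot)_1$; on this finite product space the $X_P$ are independent (Fubini, since $(\cdot)_1:W(\mbb{C})\to\mbb{C}$ is a ring homomorphism), so by iterating \cref{lemma.independent-moment-generating-functions} it equals the $m_\tau$-coefficient of $\prod_{P\in T}\mbb{E}_1[\Exp_\sigma(X_Ph_1)]$. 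Finally, for $P\notin T$ the factor $\mbb{E}_1[\Exp_\sigma(X_Ph_1)]$ contributes only its constant term $1$ to the $m_\tau$-coefficient: its $m_\gamma$-coefficient is $\mbb{E}_1[h_\gamma\circ X_P]$, which vanishes for $\gamma\neq\emptyset$ with all parts $\leq\tau_1$ by the vanishing noted above, and since parts only add when monomial symmetric functions are multiplied, a factor $m_\gamma$ with a part $>\tau_1$ can never contribute to $m_\tau$. Hence $\prod_{P\in T}\mbb{E}_1[\Exp_\sigma(X_Ph_1)]$ and $\prod_P\mbb{E}_1[\Exp_\sigma(X_Ph_1)]$ have the same $m_\tau$-coefficient, and comparing coefficients over all $\tau$ yields the first identity; the statement for several families $X_{i,P}$ follows by the same argument with $T$ enlarged so that, for each block of variables $\ul{t}_i$, the tail points $P\notin T$ have $X_{i,P}$ factoring through the appropriate $1+t^{(\tau_i)_1+1}\mbb{C}[[t]]$. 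The main obstacle is really just the careful application of Poonen's sieve --- in particular, checking that its hypotheses survive the singleton modifications at the points of $T$ --- since everything else is bookkeeping with ghost coordinates and with the group homomorphism $\Exp_\sigma$.
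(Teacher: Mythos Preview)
Your proof is correct and follows the same strategy as the paper's: both reduce to finitely many closed points by observing that $(h_\tau\circ w)_1$ only sees the low-order ghost coordinates, then invoke \cite[Theorem~1.3]{Poonen.BertiniTheoremsOverFiniteFields} (with the Taylor conditions refined at those finitely many points) to obtain asymptotic independence of the local data. The differences are purely cosmetic---you truncate by the largest part $\tau_1$ rather than by $|\tau|$, and you conclude directly on the product space $\prod_{P\in T}A_P$ where the paper instead routes through its asymptotic-independence formalism (\cref{lemma.asympt-ind-moment-gen}).
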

\begin{proof}
We treat only the case of the $\sigma$-moment generating function when given a single random variable at each point; the argument for the joint $\sigma$-moment generating function given $m$ random variables is essentially the same. 

 Let $S_m$ be the finite collection of points such that $X_P$ factors through $1+t^{m+1}\mathbb{C}[[t]]$ for all $P \not\in S_m$. We claim that, for all $f \in \Lambda$  with $\deg f \leq m$ and any $d>0$, we have an identity of $\mathbb{C}$-valued functions on $U_d(\mathbb{F}_q)$
\[ (f \circ X_d)_1=  (f \circ \sum_{P \in S_m} X_{d,P})_1.\]

To establish this claim, it suffices to show that, for any power sum symmetric function $p_i$ with $i \leq m$ and any $P\not\in S_m$, $(p_i \circ X_P)_1=0$ (since this implies that, for all $d$, $(p_i \circ X_{P,d})_1=0$). If we write $X_P=(X_{P,1}, X_{P,2}, \ldots)$, then for $P \not\in S_m$, $X_{P,k}=0$ for all $k \leq m$ since $td\log (1+t^{m+1} \mathbb{C}[[t]]) \subseteq t^{m+1}\mathbb{C}[[t]]$. Then, as $(p_i \circ X_P)_1=X_{P,i}$, we find that $(p_i \circ X_P)_1=0$ for $P \not\in S_m$. 

With this claim established, we obtain from \cref{eq.sigma-moment-expansion} that
\[  \mathbb{E}_1 [\Exp_{\sigma}(X_{d}h_1)] \equiv  \mathbb{E}_1\left[\Exp_{\sigma}\left( \left(\sum_{P \in S_m}X_{d,P}\right)h_1\right)\right]  \mod  \Lambda_{\mbb{C}, \deg > m}^\wedge. \]

Similarly, we find that
\[ \prod_{P \in |\mathbb{P}^n_{\mathbb{F}_q}|} \mathbb{E}_1[\Exp_{\sigma}(X_Ph_1)] \equiv  \prod_{P \in S_m} \mathbb{E}_1[\Exp_{\sigma}(X_Ph_1)]  \mod \Lambda_{\mbb{C}, \deg > m}^\wedge. \]

Thus, by \cref{lemma.asympt-ind-moment-gen}, it suffices to show that for any $P$, for the expectation $\mathbb{E}_{1}$, the $\Lambda$-distribution of $X_{P,d}$ limits as $d \rightarrow \infty$ to the $\Lambda$-distribution of $X_{P}$, and, for any $m$, the sequence of collections of random variables $(X_{P,d})_{P \in S_m}$ are asymptotically independent as $d \rightarrow \infty$. 

To that end, we note that, by \cite[Theorem 1.3]{Poonen.BertiniTheoremsOverFiniteFields}, for any choice of subsets $B_{P} \subseteq A_P$ for each $P \in S_m$,
\[ \lim_{d \rightarrow \infty} \mbb{P}(\substack{F_P \in B_{P} \textrm{ for all $P \in S_m$ and }\\ F_P\in A_P \textrm{ for all $P \not\in S_m$}})=\prod_{P \in S_m} \mu_P(B_P) \prod_{P \not\in S_m} \mu_P(A_P). \]
It follows that for any collection $\mathbb{C}$-valued functions $g_P$, $P\in S_m$, the classical random variables $F \mapsto g_P(F_P)$ on $U_d(\mathbb{F}_q)$ with the uniform measure are asymptotically independent, and the distribution of each approaches the distribution of $g_P$ treated as a classical random variable on $A_P$ with the uniform measure. Applying this to $g_P$ of the form $(f_P \circ X_P)_1$ for $f_P \in \Lambda$, we obtain the result. 
\end{proof}

\section{$L$-functions of characters}\label{s.L-characters}

In this section we prove \cref{theorem.motivic-characters}, which computes the limiting distributions in \cref{main.dirichlet-characters}, and discuss some complements --- in particular, in \cref{example.stable-homology-quadratic}, we compare the case of quadratic characters to some results in \cite{BergstromDiaconuPetersenWesterland.HyperellipticCurvesTheScanningMapAndMomentsOfFamiliesOfQuadraticLFunctions} (see also \cref{remark.quadratic-case-stable-traces}). The proof of \cref{main.dirichlet-characters} will be completed in \cref{s.comparison} by comparing the limit $\sigma$-moment generating functions to those in the associated random matrix statistics (\cref{prop.statistics-comparison-characters}). 

To prove \cref{theorem.motivic-characters}, we use \cref{lemma.motivic-to-finite-probability-spaces} to reduce to a problem over a finite probability space, which we analyze using \cref{prop.point-counting-asymptotics}. 

\subsection{Setup and a note about a normalization}\label{ss.characters-setup}
We fix a finite field $\mbb{F}_q$, a prime $\ell | q-1$, and a non-trivial character $\chi:\mu_\ell(\mbb{F}_q) \rightarrow \mbb{C}^\times$. For any $\ell$-power free monic $f \in \mbb{F}_q[x]$, we write $\chi_f$ for the character $\Gal(\overline{\fq(x)}/\fq) \rightarrow \mbb{C}^\times$ obtained by composing $\chi$ with the map $\Gal(\overline{\fq(x)}/\fq) \rightarrow \mu_{\ell}(\mbb{F}_q)=\mu_\ell(\mbb{F}_q(x))$ determined from $f$ by Kummer theory. In other words, $\chi_f(\sigma)=\chi\left(\sigma(f^{1/\ell})/f^{1/\ell}\right).$  

We write $L^\infty(\chi_f,s)$ for the associated $L$-function with the Euler factor at $\infty$ removed. If $\ell \nmid \deg f$, then $\chi_f$ is ramified at $\infty$ and $L^\infty(\chi_f,s)=L(\chi_f,s)$. In general, if we set $\chi(0)=0$, then we have an Euler product 
\[ L^\infty(\chi_f, s)= \mc{L}(\chi_f, t)|_{t=q^{-s}} \textrm{ for } \mc{L}(\chi_f, t)=\prod_{|z| \in |\mbb{A}^1_{\mbb{F}_q}|} \frac{1}{1-t^{\deg(z)}\chi\left(f(|z|)^{\frac{q^{\deg(|z|)}-1}{\ell}}\right)}. \]

The function $\mc{L}(\chi_f, t)$ is a polynomial with constant term $1$ whose degree depends only on the degree of $f$ and, when $\ell \nmid \deg f$, its roots have absolute value $q^{-\frac{1}{2}}$ so that the roots of $\mc{L}(\chi_f, t q^{-\frac{1}{2}})$ all have absolute value $1$ (this can be deduced from the Weil conjectures for the curve with affine model $y^\ell=f(x)$, but we do not actually need this fact for anything below except motivation). 

We would like to study the distribution of symmetric functions of these roots using $\lambda$-probability. To that end, let us highlight an issue regarding normalizations: if we write the multiset of roots of $\mc{L}(\chi_f, tq^{-\frac{1}{2}})$ as $\zeta_1,\ldots, \zeta_m$, then the class 
\[ [\zeta_1^{-1}] + \ldots + [\zeta_m^{-1}] = [\overline{\zeta}_1] + \ldots + [\overline{\zeta}_m] \]
in $\mbb{Z}[\mbb{C}]$ is identified, by our embedding into $W(\mbb{C})=1+t\mbb{C}[[t]]$, with $\mc{L}(\chi_f, tq^{-\frac{1}{2}})^{-1}$. This is the element of $W(\mbb{C})$ that will appear below. The statement of \cref{main.dirichlet-characters} is symmetric in passing to the conjugate roots, so this does not effect the final result. 

\subsection{The point-counting result}

Continuing with the notation of the previous subsection, we let $U_{d,\ell}(\fq)$ be the set of degree $d$ monic $\ell$-power free polynomials in $\mbb{F}_q[x]$. From the discussion above, it follows that the random variable on $U_{d,\ell}(\fq)$ sending $f$ to the multiset of zeroes of $\mc{L}(\chi_f, tq^{-\frac{1}{2}})$, viewed as an element of $Z[\mbb{C}]$ is identified, after embedding $Z[\mbb{C}]$ in to $W(\mbb{C})$, with the random variable $X_{\chi,q,d}$ on $U_{d,\ell}(\fq)$ defined by
\begin{multline*} X_{\chi,q,d}(f)=\mc{L}(\chi_f, tq^{-\frac{1}{2}})^{-1}=\prod_{|z| \in |\mbb{A}^1_{\mbb{F}_q}|} \left( 1-(tq^{-\frac{1}{2}})^{\deg(|z|)}\chi\left(f(|z|)^{\frac{q^{\deg(|z|)}-1}{\ell}}\right) \right)\\ \in 1+t\mbb{C}[[t]]=W(\mbb{C}), \end{multline*}
where, for $|z|$ a closed point of $\mbb{A}^1_{\mbb{F}_q}$, $f(|z|)$ denotes the image of the $f$ in the residue field $\fq(|z|)$. 
In particular, for any $g \in \Lambda$, $\mbb{E}_1[g \circ X_{\chi,q,d}]$ is the average value on $U_{d,\ell}(\mbb{F}_q)$ of the symmetric function $g$ evaluated on the roots of $\mc{L}(\chi_f, tq^{-\frac{1}{2}})$. 

\begin{remark}
Let us emphasize this random variable can be defined without knowing that $\mc{L}(\chi_f,t)$ is a polynomial or the absolute value of its roots, since the Euler product makes sense immediately as an element of $W(\mbb{C})$! 
\end{remark}

Since the infinite product of power series is a sum in $W(\mbb{C})$, 
\[ X_{\chi,q,d}=\sum_{|z| \in |\mbb{A}^1_{\mbb{F}_q}|} X_{\chi,q,d,|z|} \textrm{ where } 
 X_{\chi,q,d,|z|}(f)= 1-(tq^{-\frac{1}{2}})^{\deg(|z|)}\chi\left(f(|z|)^{\frac{q^{\deg(|z|)}-1}{\ell}}\right). \]
We will compute its asymptotic $\sigma$-moment generating function for the expectation $\mbb{E}_1$ on $C(U_{d,\ell}(\fq),W(\mbb{C}))$ using \cref{prop.point-counting-asymptotics}. 

\begin{theorem}\label{theorem.point-counting-characters}Let \[ c_\ell=\frac{[q^{\ell-1}]}{[q]^{\ell-1}+[q]^{\ell-2} + \ldots + 1} \in W(\mbb{C}) \textrm{ so that } (c_\ell)_i=\frac{q^{i(\ell-1)}}{q^{i(\ell-1)}+q^{i(\ell-2)} + \ldots + 1} \in \mbb{C}.\]
For $\chi$ of order $\ell=2$, the asymptotic $\sigma$-moment generating function of $X_{\chi,q,d}$ is
\begin{align*}
    \lim_{d\rightarrow \infty} \mbb{E}_1[\Exp_{\sigma}(X_{\chi,q, d}h_1)] & = \left(\left(1 + c_2 \sum_{k\geq 1} [q^{-k}] e_{2 k}\right)^{[q]}\right)_1 \\
    &= \prod_{P \in |\mbb{A}^1_{\mbb{F}_q}|} \left(1+(c_{2})_{\deg(P)}\sum_{k\geq 1}q^{-k\deg(P)} (p_{\deg(P)}\circ e_{2k})\right).
\end{align*}
where the subscript denotes projection to the first component in $W(\mbb{C})$ (note that the power on the right-hand side lies, by definition, in $\Lambda_{W(\mbb{C})}^\wedge)$). 

Similarly, for $\chi$ of order $\ell > 2$ prime, the asymptotic joint $\sigma$-moment generating function of $X_{\chi,q,d}$ and its complex conjugate $\overline{X}_{\chi, q,d}$ is 
\begin{multline*}
\lim_{d\rightarrow \infty} \mbb{E}_1[\Exp_{\sigma}(X_{\chi,q,d}h_1+\overline{X}_{\chi,q,d}\overline{h}_1)] = \\\left(\left(1 + c_\ell \sum_{\substack{k_1 \geq 1\textrm{ or } k_2\geq1 \\ k_1 \equiv k_2 \mod \ell}} (-1)^{\ell k}[q^{-\frac{k_1+k_2}{2}}] e_{k_1}\overline{e}_{k_2}\right)^{[q]}\right)_1  =\\
\prod_{P \in |\mbb{A}^1_{\fq}|}\left(1 + (c_\ell)_{\deg(P)} \sum_{\substack{k_1 \geq 1\textrm{ or } k_2\geq1 \\ k_1 \equiv k_2 \mod \ell}} (-1)^{k_1+k_2}q^{-\frac{k_1+k_2}{2}\deg(P)} (p_{\deg(P)}\circ (e_{k_1}\overline{e}_{k_2}))\right).
\end{multline*}
\end{theorem}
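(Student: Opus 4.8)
The plan is to reduce, via Poonen's sieve, to an Euler product of purely local $\sigma$-moment generating functions indexed by the closed points of $\mathbb{A}^1_{\mathbb{F}_q}$, to compute each local factor explicitly, and to recognize the resulting product as the asserted power using \cref{prop.powers-euler-product}. \emph{Step 1 (reduction to local factors).} Being $\ell$-power-free is a condition imposed point by point: for $P \in |\mathbb{A}^1_{\mathbb{F}_q}|$ take $M_P = \ell-1$ and let $A_P$ be the set of nonzero Taylor expansions in $\mathcal{O}_{\mathbb{A}^1_{\mathbb{F}_q},P}/\mathfrak{m}_P^{\ell}$, i.e. $v_P(f) \le \ell-1$. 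This places $U_{d,\ell}(\mathbb{F}_q)$ in the framework of \cref{ss.poonen-sieve-setup} (with $n=1$, the monic-of-degree-$d$ normalization handled by a condition at $\infty$, and the smoothness hypothesis holding with $X_1 = \mathbb{P}^1$ since $v_P(f)=1$ already forces $f \in A_P$). Each summand $X_{\chi,q,d,|z|}$ depends on $f$ only through the residue $f(|z|) \in \mathbb{F}_q(|z|)$, hence is pulled back from a function $X_{\chi,|z|}\colon A_{|z|} \to W(\mathbb{C})$ with values in $1 + t^{\deg|z|}\mathbb{C}[[t]]$, so the decay hypothesis of \cref{prop.point-counting-asymptotics} holds; since $X_{\chi,q,d} = \sum_{|z|} X_{\chi,q,d,|z|}$, \cref{prop.point-counting-asymptotics} gives
\[ \lim_{d\to\infty}\mathbb{E}_1[\Exp_\sigma(X_{\chi,q,d}h_1)] = \prod_{P\in|\mathbb{A}^1_{\mathbb{F}_q}|}\mathbb{E}_1[\Exp_\sigma(X_{\chi,P}h_1)], \]
and likewise for the joint version with $\overline{X}_{\chi,q,d}$ when $\ell>2$.

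\emph{Step 2 (the local factor).} Fix $P$ with $e = \deg P$. As $f$ ranges uniformly over $A_P$, the residue $f(P)$ has valuation $0$ with probability $\frac{(q^e-1)q^{e(\ell-1)}}{q^{e\ell}-1} = \frac{q^{e(\ell-1)}}{q^{e(\ell-1)}+\cdots+1} = (c_\ell)_e$; on this event $\chi(f(P)^{(q^e-1)/\ell})$ is uniform on $\mu_\ell$ (the map $x\mapsto x^{(q^e-1)/\ell}$ from $\mathbb{F}_{q^e}^\times$ onto $\mu_\ell$ has equal fibers and $\chi$ is injective on $\mu_\ell$), while if $v_P(f)\ge 1$ then $\chi(f(P)^{(q^e-1)/\ell}) = \chi(0) = 0$. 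Put $\zeta = \chi(f(P)^{(q^e-1)/\ell})$ and $w = q^{-e/2}\zeta$. Factoring $1 - ws^e = \prod_{\eta^e=w}(1-\eta s)$ exhibits $X_{\chi,P}(f) = 1 - wt^e$ as the class $-\sum_{\eta^e=w}[\eta]$ in $\mathbb{Z}[\mathbb{C}] \subseteq W(\mathbb{C})$. Since $\Exp_\sigma$ is a group homomorphism and $\Exp_\sigma([\eta]h_1) = \prod_i(1-[\eta]t_i)^{-1}$ by \cref{example.how-to-expand-expsigmah1}, this gives $\Exp_\sigma(X_{\chi,P}(f)h_1) = \prod_i\prod_{\eta^e=w}(1-[\eta]t_i)$; applying the first-ghost-component ring homomorphism $W(\mathbb{C})\to\mathbb{C}$, $[z]\mapsto z$, coefficientwise collapses this to $\prod_i\prod_{\eta^e=w}(1-\eta t_i) = \prod_i(1-wt_i^e) = 1 + \sum_{k\ge1}(-w)^k(p_e\circ e_k)$ in $\Lambda_{\mathbb{C}}^\wedge$ (using $e_k(t_1^e,t_2^e,\dots) = p_e\circ e_k$, and reading as $1$ when $\zeta=0$).

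\emph{Step 3 (averaging and assembly).} Averaging over $A_P$ via Step 2 and the orthogonality relation $\frac{1}{\ell}\sum_{\zeta\in\mu_\ell}\zeta^k$, which is $1$ if $\ell\mid k$ and $0$ otherwise, yields $\mathbb{E}_1[\Exp_\sigma(X_{\chi,P}h_1)] = 1 + (c_\ell)_e\sum_{\ell\mid k,\,k\ge1}(-1)^k q^{-ke/2}(p_e\circ e_k)$, which for $\ell=2$ (writing $k=2m$) is the asserted local factor $1 + (c_2)_e\sum_{m\ge1}q^{-me}(p_e\circ e_{2m})$. For $\ell>2$ the same computation applied to $\Exp_\sigma(X_{\chi,P}h_1+\overline{X}_{\chi,P}\overline{h}_1) = \Exp_\sigma(X_{\chi,P}h_1)\,\Exp_\sigma(\overline{X}_{\chi,P}\overline{h}_1)$, using $\overline{\zeta} = \zeta^{-1}$, produces the cross terms $(-1)^{k_1+k_2}q^{-(k_1+k_2)e/2}(p_e\circ(e_{k_1}\overline{e}_{k_2}))$, which survive the average exactly when $k_1\equiv k_2\pmod\ell$; that is the asserted local factor in that case. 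Finally, the product over $P$ is the Euler-product form of the stated power by \cref{prop.powers-euler-product} with $i=1$, $V = \mathbb{A}^1_{\mathbb{F}_q}$ (so $[V] = Z_{\mathbb{A}^1}(t) = [q]$), and $F$ the displayed base series: $(F^{[q]})_1 = \prod_P F_{\deg P}(\underline{t}^{\,\deg P})$, and $F_{\deg P}$ with each $t_i$ replaced by $t_i^{\deg P}$ is exactly the local factor above, since $([q^{-k}])_e = q^{-ke}$ and $c_\ell$ has $e$-th ghost component $(c_\ell)_e$.

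\emph{Main obstacle.} Given Step 1, the content lies entirely in the local analysis of Steps 2--3. The two points needing care are: (i) matching the probability that $v_P(f)=0$ with the $e$-th ghost component of the universal constant $c_\ell \in W(\mathbb{C})$, which is precisely what makes the Euler product assemble into a clean power rather than a general motivic Euler product; and (ii) observing that passing to the first ghost component converts the virtual class $-\sum_{\eta^e=w}[\eta]$ into the honest polynomial $1-wt^e$, so that each local $\sigma$-moment generating function is just $\prod_i(1-wt_i^e)$ averaged over $\zeta$. Once these are in hand, everything else is formal manipulation with \cref{prop.point-counting-asymptotics} and \cref{prop.powers-euler-product}.
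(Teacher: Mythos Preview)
Your proof is correct and follows essentially the same route as the paper: set up Poonen's sieve on $\mathbb{P}^1$ with the $\ell$-power-free Taylor condition, apply \cref{prop.point-counting-asymptotics} to reduce to an Euler product of local factors, compute each local factor, then identify the product as the first component of the stated power via \cref{prop.powers-euler-product}. The one place you diverge is the computation of $(\Exp_\sigma(X_Ph_1))_1$: the paper uses the expansion $\Exp_\sigma(Xh_1)=\sum_\tau (h_\tau\circ X)m_\tau$ together with $(h_k\circ\alpha)_1=\alpha(h_k)$ to see that only the $m_{\deg(P)^k}=p_{\deg(P)}\circ e_k$ terms survive, whereas you factor $1-wt^e=\prod_{\eta^e=w}(1-\eta t)$, interpret this as $-\sum_\eta[\eta]$, and push the first ghost component through the product; both arrive at $1+\sum_{k\ge 1}(-w)^k(p_{\deg P}\circ e_k)$ and the remaining averaging and assembly are identical.
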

\begin{proof}
We first note that the first components of the powers appearing in the statement can be expanded as Euler products as claimed by invoking the $i=1$ case of \cref{prop.powers-euler-product} (to see this, note that, for any $z\in \mbb{C}$, $[z]=(z,z^2,z^3,\ldots)$ as an element of $W(\mbb{C})=\prod_{i\geq}\mbb{C}$, and that $p_{\deg P} \circ f=f(t_1^{\deg P}, t_2^{\deg P}, \ldots)$ for $f \in \Lambda$). 

We now explain how to put ourselves into the setup of \cref{prop.point-counting-asymptotics}. In the notation of \cref{s.application-of-poonen}, we work with $n=1$, i.e. on $\mathbb{P}^1$ with variables $[x_0:x_1]$. For $\infty=[0:1]$, we set $M_\infty=0$, $j_\infty=0$, and $A_\infty=1 \in \mbb{F}_q=\mc{O}_{\mbb{P}^1_{\fq},\infty}/\mf{m}_\infty$. For $P \in |\mbb{A}^1_{\fq}|=|\mbb{P}^1_{\fq}|\backslash \infty$, we set $M_P=\ell-1$, $j_\infty=1$, and $A_P$ to be the complement of $0 \in \mc{O}_{\mbb{P}^1_{\fq},P}/\mf{m}_P^\ell$. 

The condition at $\infty$ says that, for $F \in S_d(\fqbar)$ and $x:=\frac{x_0}{x_1}$, $F_\infty \in A_\infty$ if and only if $F/x_1^d$ is monic of degree $d$ in $\mbb{F}_q[x]$. Thus, the set $U_d(\fq)$ of \cref{s.application-of-poonen} can be viewed, by $F \mapsto F/x_1^d$, as a subset of the degree $d$ monic polynomials. Then, the conditions at the other primes says exactly that $U_d(\fq)$ consists of those monic polynomials $f$ that are $\ell$th-power free, i.e. $U_d(\fq)=U_{d,\ell}(\fq).$ 

Now, we define our random variable $X_\infty$ on $A_\infty$ to be the constant random variable with value $1\in 1+t\mbb{C}[[t]]=W(\mbb{C})$ (i.e. the zero element of $W(\mbb{C})$), and, for any $P\in |\mbb{A}^1_{\mbb{F}_q}|$, we define a random variable $X_P$ on $A_P$ as follows: first, to simplify notation, we write $q_P=q^{\deg(P)}$ and, for $g$ the germ of a function at $P$, we write $\chi_P(g):=\chi\left(g(P)^{\frac{q_P-1}{\ell}}\right)$. Then, we set
\[ X_P(g) = 1- (q_P^{-\frac{1}{2}}\chi_P(g))t^{\deg(P)}  \in 1+t\mbb{C}[[t]] = W(\mbb{C}). \]

The random variables $X_{\chi,q,d,P}$ we have defined above are the associated random variables $X_{P,d}$ of \cref{s.application-of-poonen}, thus by \cref{prop.point-counting-asymptotics}, 
\begin{align*} \textrm{for $\ell=2$:} \lim_{d\rightarrow \infty} \mbb{E}_1[\Exp_{\sigma}(X_{\chi,q,d}h_1)] &= \prod_{P \in |\mbb{P}^1_{\mbb{F}_q}|} \mbb{E}_1[\Exp_{\sigma}(X_Ph_1)] \textrm{ and,} \\
\textrm{for $\ell >2$:}
\lim_{d\rightarrow \infty} \mbb{E}_1[\Exp_{\sigma}(X_{\chi,q,d}h_1 + \overline{X}_{\chi,q,d}\overline{h}_1)] &= \prod_{P \in |\mbb{P}^1_{\mbb{F}_q}|} \mbb{E}_1[\Exp_{\sigma}(X_Ph_1 + \overline{X}_P\overline{h}_1)]. \end{align*}
The factors at $P=\infty$ on the right are $1$. We must compute them for $P \in |\mbb{A}^1_{\mbb{F}_q}|.$

We first observe that 
\begin{equation}\label{eq.character-proof-local-moments} \left(\Exp_{\sigma}(X_P h_1)\right)_1=1+ \sum_{k \geq 1} (-q_P^{-\frac{1}{2}}\chi_P)^k p_{\deg(P)}\circ e_k\end{equation}

Indeed, when we write an element $\alpha \in  W(\mbb{C}) = \Hom(\Lambda, \mbb{C})$ as a power series in $1+t\mbb{C}[[t]]$, then the coefficient of $t^k$ is $\alpha(h_k)$. On the other hand, when we pass to the first component, we are evaluating on $p_1$, and thus 
\[ (h_k \circ \alpha)_1=(h_k \circ \alpha)(p_1)=\alpha(p_1 \circ h_k)=\alpha(h_k). \]
Since in our power series only the coefficient of $t^{\deg(P)}$ is non-zero, in the expansion of \cref{eq.exp-symmetric-monomial-expansion},
\[ \Exp_\sigma(X_Ph_1)=\sum_{\tau} (h_{\tau}\circ X_P) m_\tau, \]
only the terms corresponding to $h_\tau$ of the form $h_{\deg(P)}^k$ are non-zero. These are the terms corresponding to the monomial symmetric functions \[ m_{\deg(P)^k}=p_{\deg(P)} \circ m_{1^k}=p_{\deg(P)}  \circ e_k \]
where $i^k$ denotes the partition $(\overbrace{i,\ldots,i}^k)$. Moreover, the coefficient of such a term is exactly the coefficient of $t^{\deg(P)}$ to the $k$th power, giving \cref{eq.character-proof-local-moments}. 

We now specialize to the case $\ell=2$. Then, 
\[ \mbb{E}_1[\Exp_{\sigma}(X_P h_1)]=\mbb{E}[ \left(\Exp_{\sigma}(X_P h_1)\right)_1] \]
where the right-hand side is classical expectation (coefficient-wise in the power-series) for the uniform probability measure on the finite set $A_P$. Now, for each $\zeta=\pm 1$, $\chi_P\left(g\right)$ is equal to $\zeta$ with probability 
\[ \frac{1}{2} \frac{(q_P-1)q_P}{q_P^{2}-1} = \frac{1}{2}(c_{2})_{\deg(P)}, \]
and the rest of the time it is equal to $0$. The expectation of the coefficient $(-q_P^{-\frac{1}{2}}\chi_P)^k$ of $p_{\deg(P)}\circ e_k$, $k\geq 1$ in \cref{eq.character-proof-local-moments} is thus $0$ if $k$ is odd and $q_P^{-\frac{k}{2}}$ if $k$ is even. So,
\[ \mbb{E}_1[\Exp_{\sigma}(X_Ph_1)] = 1+ (c_{2})_{\deg(P)}\sum_{k\geq 1}q_P^{-k} p_{\deg(P)}\circ e_{2k}. \] 
Plugging this back into the product and recalling $q_P=q^{\deg(P)}$, we find
\[ \lim_{d\rightarrow \infty} \mbb{E}_1[\Exp_{\sigma}(X_{d,q}h_1)] = \prod_{P \in |\mbb{A}^1_{\mbb{F}_q}|} \left(1+ (c_{2})_{\deg(P)} \sum_{k\geq 1} q^{-k\deg(P)} p_{\deg(P)}\circ e_{2k}\right).\]
This concludes the proof for $\ell=2$. 

We now treat the case $\ell >2$, arguing similarly. Then 
\begin{align*} \Exp_{\sigma}(X_Ph_1 + \overline{X}_P\overline{h}_1) &= \Exp_{\sigma}(X_P h_1)\Exp_{\sigma}(\overline{X}_P\overline{h}_1)\\
&= 1+ \sum_{k_1 \geq 1\textrm{ or } k_2\geq1} (-q_P^{-1/2})^{k_1+k_2} \chi_P^{k_1-k_2} (p_{\deg(P)}\circ e_{k_1}\overline{e}_{k_2}). \end{align*}
Note that, for each $\ell$th root of unity $\zeta$, $\chi_P(g)$ is equal to $\zeta$ with probability
\[ \frac{1}{\ell} \frac{(q_P-1)q_P^{(\ell-1)}}{q_P^{\ell}-1} = \frac{1}{2}(c_{\ell})_{\deg(P)}, \]
and the rest of the time it is equal to $0$. The expectation of the coefficient $(-q_P^{-1/2})^{k_1+k_2} \chi_P^{k_1-k_2}$ of $(p_{\deg(P)}\circ e_{k_1}\overline{e}_{k_2})$ is thus $0$ if $k_1 \not\equiv k_2 \mod \ell$ (since the sum of all $\ell$th roots of unity is zero), and $(c_{\ell})_{\deg(P)}(-q_P^{-1/2})^{k_1+k_2}$ if $k_1 \equiv k_2 \mod \ell$. So, 
\[ \mbb{E}_1[\Exp_{\sigma}(X_Ph_1)] = 1+ (c_{\ell})_{\deg(P)}\sum_{\substack{k_1 \geq 1\textrm{ or } k_2\geq1 \\ k_1 \equiv k_2 \mod \ell}} (-1)^{k_1+k_2}q_P^{-\frac{k_1+k_2}{2}} (p_{\deg(P)}\circ e_{k_1}\overline{e}_{k_2}). \] 
Plugging this back into the Euler product, we conclude. 
\end{proof}

\begin{example}\label{example.stable-homology-quadratic}
Consider, in \cref{theorem.point-counting-characters}, the case $\ell=2$ for the quadratic character $\chi$. Then, for $d$ odd, the random variable $[q^{1/2}]X_{\chi,d,q}$ for $X_{\chi, d,q}$ as in \cref{theorem.point-counting-characters} is the random variable associated to $\mc{V}_d$, the local system of first cohomology of the universal hyperelliptic curve $y^2=f(x)$ over $C_d$, the space of monic square free polynomials. We note that, because the Schur polynomials $s_\tau$ are an orthonormal basis for the Hall inner product, \cref{lemma.extract-distribution-via-hall} implies that, for any random variable $X$, its $\sigma$-moment generating function can be written as
\[ \mbb{E}_1[\Exp_{\sigma}(Xh_1)]] = \sum_{\tau} \mbb{E}_1[s_\tau\circ X]s_\tau.\]

We note that $s_\tau \circ [\mc{V}_d] = [S^\tau (\mc{V}_d)]$ for $S^\tau$ the Schur functor associated to $\tau$. Applying this expression of the moment generating function to $[\mc{V}_d]$ and using \cref{theorem.point-counting-characters}, we find 
\[ \sum_{\tau} \lim_{d \rightarrow \infty}\left(\mbb{E}_1[S^\tau(\mc{V}_d)]\right)s_\tau = \left(1 + \frac{[q]}{[q]+1} \sum_{k\geq 1} e_{2k}\right)^{[q]}  \]
where $S^\tau(\bullet)$ is the Schur functor associated to $\tau$. By the Grothendieck-Lefschetz formula, the left-hand side expands as\footnote{We are projecting everywhere from $W(\mbb{C})$ to its first component in $\mbb{C}$ below, but we drop the subscripts $1$ since the formulas are already complicated. Using our motivic results in the next section, one finds the computation holds already in $W(\mbb{C})$!}
\[ \sum_{\tau} \lim_{d \rightarrow \infty}\left(\mbb{E}_1[S^\tau(\mc{V}_d)]\right)s_\tau =\sum_{\tau} \lim_{d \rightarrow \infty}\left(\frac{1}{[C_d]}\sum_{i=0}^\infty (-1)^i [H^i_c(C_d, S^\tau(\mc{V}_d))]\right)s_\tau. \]
Using that $\lim_{d \rightarrow \infty} [C_d]/[q^d]=1-[q^{-1}]$, and noting that, by Poincar\'{e} duality,
\[ [H_i(C_d,S^{\tau}(\mc{V}_d))]=\frac{[H^i_c(C_d,S^\tau(\mc{V}_d))]}{[q^d]},\]
we find that 
\begin{equation}\label{eq.quad-char-cohom-generating-function-1}\sum_{\tau} \lim_{d \rightarrow \infty}\left(\sum_{i=0}^\infty (-1)^i [H_i(C_d, S^\tau(\mc{V}_d))]\right)s_\tau = (1-[q^{-1}])\left(1 + \frac{[q]}{[q]+1} \sum_{k\geq 1} e_{2k}\right)^{[q]}.  \end{equation}
We claim this  formula is equivalent to a computation of the stable traces of Frobenius for Schur functors applied to $\mc{V}_d$ given in \cite[Theorem 11.2.3]{BergstromDiaconuPetersenWesterland.HyperellipticCurvesTheScanningMapAndMomentsOfFamiliesOfQuadraticLFunctions}\footnote{Note that to form the exponential in the formula in loc. cit., $q^{-1}$ and $q$ should be viewed as elements of $\Lambda((q^{-1}))$ with $h_k(q^{\pm{1}})=q^{\pm k}.$}:
\begin{equation}\label{eq.quad-char-cohom-generating-function-2} \sum_{\tau} \lim_{d \rightarrow \infty}\left(\sum_{i=0}^\infty (-1)^i [H_i(C_d, S^\tau(\mc{V}_d))]\right)s_{\tau'} = \Exp_{\sigma}\left([q]\Log_{\sigma}\left([q^{-1}]+\sum_{k\geq0}h_{2k}\right)-1\right).\end{equation}

Here $\tau'$ denotes the partition conjugate to $\tau$. Indeed, \cref{eq.quad-char-cohom-generating-function-2} can be obtained from \cref{eq.quad-char-cohom-generating-function-1} by applying the standard involution $\omega$ on symmetric functions of \cref{ss.involution-omega} (which extends naturally to $\Lambda_{W(\mathbb{C})}^\wedge)$. This is immediate on the left-hand side since $\omega(s_\tau)=s_{\tau'}$. We now explain how to make this comparison on the right-hand side. We begin by rewriting the right-hand side of \cref{eq.quad-char-cohom-generating-function-1} by substituting $(1-[q^{-1}])=\Exp_{\sigma}(-[q^{-1}])$ and expressing the $[q]$th-power in terms of the $\sigma$-exponential:
\[ (1-[q^{-1}])\left(1 + \frac{[q]}{[q]+1} \sum_{k\geq 1} e_{2k}\right)^{[q]} = \Exp_{\sigma}\left([q] \Log_\sigma\left(1 + \frac{[q]}{[q]+1} \sum_{k\geq 1} e_{2k}\right) -[q^{-1}]\right). \]
We then factor out $\frac{[q]}{[q]+1}$ inside the logarithm to obtain
\[\Exp_{\sigma}\left([q] \Log_\sigma\left(\frac{[q]+1}{[q]} +  \sum_{k\geq 1} e_{2k}\right)+[q]\Log_{\sigma}\left(\frac{[q]}{[q]+1}\right)-[q^{-1}]\right) \]
We now simplify the fraction in the first logarithm term, and make the substitution $\Log_{\sigma}\left(\frac{[q]}{[q]+1]}\right)=-[q^{-1}]+[q^{-2}]$, which follows from the computation 
\[ \Exp_{\sigma}(-[q^{-1}]+[q^{-2}])=\frac{\Exp_{\sigma}(-[q^{-1}])}{\Exp_{\sigma}(-[q^{-2}])}=\frac{1-[q^{-1}]}{1-[q^{-2}]}=\frac{1}{1+[q^{-1}]}=\frac{[q]}{[q]+1}, \]
to obtain
\[\Exp_{\sigma}\left([q] \Log_\sigma\left([q^{-1}] +  \sum_{k\geq 0} e_{2k}\right)-1\right). \]
Now, applying $\omega$, we obtain the right-hand side of \cref{eq.quad-char-cohom-generating-function-2}: because all of the symmetric functions appearing are of \emph{even} degree, we can apply \cref{lemma.lambda-involution-even} to commute $\omega$ with the $\sigma$-exponential and $\sigma$-logarithm, and the comparison then follows because $\omega(e_i)=h_i$. 
\end{example}

\subsection{The motivic result}
We now upgrade \cref{theorem.point-counting-characters} to a motivic version that encodes the result simultaneously for all finite extensions of $\mbb{F}_q$ and has the advantage that it is a statement at the level of $W(\mbb{C})$ rather than a statement after projection to the first component. 

We fix a finite field $\fq$, an algebraic closure $\fqbar$, a prime $\ell | q-1$, and a non-trivial character $\chi:\mu_\ell(\mbb{F}_q) \rightarrow \mbb{C}^\times$. For $d>0$, let $U_{d,\ell}(\fqbar)$ be the admissible $\mbb{Z}$-set of degree $d$ monic $\ell$th-power free polynomials in $\fqbar[x]$. It admits a degree $1$ point, so we have the $\lambda$-probability space of \cref{ss.abstract-pc-prob-spaces},
\[ (C(U_{d,\ell}(\fqbar), W(\mbb{C})), \mbb{E} ) \]

For $z \in \fqbar$, we write 
\[ \mc{X}_{\chi,d} \in C(U_{d,\ell}(\fqbar) \times \mbb{A}^1(\fqbar), W(\mbb{C})) \] 
for the function sending $(f,z)$ to 
\[ \left( 1-(tq^{-\frac{1}{2}})^{[\mbb{F}_q(z,f):\mbb{F}_q(f)]}\chi_f\left(f(z)^{\frac{q^{[\mbb{F}_q(z,f):\mbb{F}_q]-1}}{\ell}}\right) \right) \in 1+t\mbb{C}[[t]]=W(\mbb{C}) \]
We view $\mc{X}_{\chi,d}$ as a family of random variables on $U_{d,\ell}(\fqbar)$ parameterized by $\mbb{A}^1(\fqbar)$. The random variable we are interested is then obtained by integrating this family over $\mbb{A}^1(\fqbar)$: 
\[ X_{\chi,d}:={\pi_1}_! \mc{X}_{\chi,d} \in C(U_{d,\ell}(\fqbar), W(\mbb{C})). \]
It follows from the definition of ${\pi_1}_!$ and the discussion of \cref{ss.characters-setup} that $X_{\chi,d}$ sends $f\in U_{d,\ell}(\fqbar)$ to $\mc{L}(\chi_f, tq^{-[\fq(f):\fq]/2})^{-1} \in 1+t\mbb{C}[[t]]=W(\mbb{C})$, where $\chi_f$ is a representation of $\Gal(\overline{\fq(f)(x)}/\fq(f)(x))$ for $\fq(f)$ the field generated by the coefficients of $f$. As in \cref{ss.characters-setup}, under the embedding $\mbb{Z}[\mbb{C}]\hookrightarrow W(\mbb{C})$, this is identified with the multiset of zeroes of $\mc{L}(\chi_f, tq^{-[\fq(f):\fq]/2})$, so that this is precisely the random variable appearing in \cref{main.dirichlet-characters}.

Recall the restriction functors $\rest_i$ of \cref{ss.relation-admissible-finite-integration}. 
\begin{lemma}\label{lemma.restriction-random-variable-character}
    For each $i \geq 1$, $\rest_i(X_{\chi,d})$ is the random variable $X_{\chi,d,q^{i}}$ on 
    $U_{d,\ell}(\mbb{F}_{q^i})$ of \cref{theorem.point-counting-characters} (where in \cref{theorem.point-counting-characters} we work over $\mbb{F}_{q^i}$). 
\end{lemma}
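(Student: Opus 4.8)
The plan is to unwind the two sides and reduce the statement to a pointwise identity in $W(\mbb{C}) = 1 + t\mbb{C}[[t]]$. Fix $i \geq 1$ and $f \in U_{d,\ell}(\mbb{F}_{q^i})$; identifying $U_{d,\ell}(\fqbar)(\mathbf{i})$ with $U_{d,\ell}(\mbb{F}_{q^i})$ as in \cref{example.adm-z-sets}, let $|f|$ be the orbit of $f$ in $U_{d,\ell}(\fqbar)$ and $\delta = \deg(|f|) = [\mbb{F}_q(f):\mbb{F}_q]$, so $\delta \mid i$. By the explicit formula for $\rest_i$ in \cref{ss.relation-admissible-finite-integration} one has $\rest_i(X_{\chi,d})(f) = p_{i/\delta}\circ X_{\chi,d}(|f|)$, and we have already recorded that $X_{\chi,d}(|f|) = \mc{L}(\chi_f, t q^{-\delta/2})^{-1}$, while working over $\mbb{F}_{q^i}$ in \cref{theorem.point-counting-characters} gives $X_{\chi,d,q^i}(f) = \mc{L}^{\mbb{F}_{q^i}}(\chi_f, t q^{-i/2})^{-1}$. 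So the lemma is equivalent to the identity
\[ p_{i/\delta}\circ \mc{L}(\chi_f, t q^{-\delta/2})^{-1} = \mc{L}^{\mbb{F}_{q^i}}(\chi_f, t q^{-i/2})^{-1} \quad\text{in } W(\mbb{C}), \]
where on the left $p_{i/\delta}\circ$ denotes the plethystic (Adams) operation on $W(\mbb{C})$.

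I would prove this identity working entirely with the Euler product descriptions, so as to avoid invoking that $\mc{L}(\chi_f,t)$ is a polynomial or anything about its zeroes. Both sides are sums in $W(\mbb{C})$ of the local Euler factors attached to closed points of $\mbb{A}^1$ --- on the left over closed points of $\mbb{A}^1_{\mbb{F}_q(f)}$, on the right over closed points of $\mbb{A}^1_{\mbb{F}_{q^i}}$. Since $p_{i/\delta}\circ$ is additive and a ring homomorphism on $W(\mbb{C})$ (\cref{ss.witt-and-lambda}), it suffices to verify the identity factor by factor after grouping the closed points $|z'|$ of $\mbb{A}^1_{\mbb{F}_{q^i}}$ by the closed point $|z|$ of $\mbb{A}^1_{\mbb{F}_q(f)}$ beneath them: a point $|z|$ of degree $m$ over $\mbb{F}_q(f)$ splits into $g := \gcd(i/\delta, m)$ closed points over $\mbb{F}_{q^i}$, each of degree $m/g$ over $\mbb{F}_{q^i}$. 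Using the description of $p_k\circ$ on $W(\mbb{C}) = 1+t\mbb{C}[[t]]$ recalled in \cref{ss.wittC} (it acts on ghost coordinates by $(a_j)_j\mapsto(a_{jk})_j$, equivalently by $[\zeta]\mapsto[\zeta^k]$ on the inverse roots; cf. \cref{eq.wc-coord-plethy-power-sum} and \cref{eq.subst-lambda-op}), this factor-by-factor statement reduces to the elementary identity $p_e\circ(\beta(t^m)) = \bigl((p_{e/g}\circ\beta)^{g}\bigr)(t^{m/g})$ with $e = i/\delta$, together with the check that $p_{e/g}\circ$ carries the local factor $1 - (t q^{-1/2})^{m}\,\chi_f\!\bigl(f(z)^{(q^{\delta m}-1)/\ell}\bigr)$ over $\mbb{F}_q(f)$ to $1 - (t q^{-1/2})^{m/g}\,\chi_f\!\bigl(f(z')^{(q^{i m/g}-1)/\ell}\bigr)$ over $\mbb{F}_{q^i}$; the latter is exactly the compatibility of the $\ell$-th power residue symbol with the extension $\mbb{F}_q(f)\hookrightarrow\mbb{F}_{q^i}$ (the exponent of $\chi_f$ is transported by the norm of that extension).

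The only real difficulty is the bookkeeping: keeping straight at once the splitting degrees and multiplicities of closed points of $\mbb{A}^1$ under base change, the substitutions $t\mapsto t^{\deg}$ and the $q^{-1/2}$-normalization built into the definitions of ${\pi_1}_!$ and $\mc{X}_{\chi,d}$, and the explicit form of the Adams operations in the $1+t\mbb{C}[[t]]$ model of $W(\mbb{C})$. A cleaner organization, which I would also consider, is to first establish the base-change compatibility $\rest_i \circ {\pi_1}_! = (\pi_1')_! \circ \rest_i$ on the nose --- an elementary property of integration over fibers of admissible $\mbb{Z}$-sets (using \cref{def.fiber-admissible-Z-set} and \cref{example.fiber-base-change}, and mirroring the compatibility of $f_!$ with pullback recorded in \cref{ss.varieties-elladic-sheaves-L}) --- and thereby reduce everything to the single pointwise statement that the Euler-factor data defining the universal family $\mc{X}_{\chi,d}$ is compatible with extension of the base field, which is again just the compatibility of Kummer theory with field extensions.
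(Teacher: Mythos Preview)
Your approach is correct and is essentially the same as the paper's: both reduce to the observation that passing from $\mbb{F}_q(f)(x)$ to $\mbb{F}_{q^i}(x)$ changes the $L$-function by $p_{i/\delta}\circ$, verified factor-by-factor on the Euler product via the formulas of \cref{ss.wittC} (the paper cites \cref{eq.subst-lambda-op}, you spell out the splitting of closed points and the compatibility of the $\ell$th-power residue symbol). A cosmetic slip: in your displayed local factors the normalizations $q^{-1/2}$ should read $q^{-\delta/2}$ (over $\mbb{F}_q(f)$) and $q^{-i/2}$ (over $\mbb{F}_{q^i}$), but this does not affect the argument.
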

\begin{proof}
    This follows from the definition of $\rest_i$, and the observation that, if we restrict $\chi_f$ to a character of $\mbb{F}_{q^i}(x)$ rather than $\mbb{F}_q(f)(x)$, then the $L$-function changes by an application of $p_{\frac{i}{\deg(x)}} \circ$ ---  this is a very general statement, but if one prefers to deduce it directly from the Euler product, it suffices to apply \cref{eq.subst-lambda-op} to see that the local factors at the closed points of $\mbb{A}^1_{\mbb{F}_{q^i}}$ above a closed point $|z|$ of $\mbb{A}^1_{\mbb{F}_q(f)}$ give a contribution equal to $p_{\frac{i}{\deg(x)}} \circ$ applied to the local factor at $|z|$. 
\end{proof}

Combining this with \cref{theorem.point-counting-characters}, we obtain
\begin{theorem}\label{theorem.motivic-characters}
Let \[ c_\ell=\frac{[q^{\ell-1}]}{[q]^{\ell-1}+[q]^{\ell-2} + \ldots + 1} \in W(\mbb{C}).\]
For $\chi$ of order $\ell=2$, the asymptotic $\sigma$-moment generating function of $X_{\chi,d}$ is
\begin{align*}
    \lim_{d\rightarrow \infty} \mbb{E}[\Exp_{\sigma}(X_{\chi,d}h_1)] = \left(1 + c_2 \sum_{k\geq 1} [q^{-k}] e_{2 k}\right)^{[q]}. \\
\end{align*}
For $\chi$ of order $\ell > 2$ prime, the asymptotic joint $\sigma$-moment generating function of $X_{\ell,d}$ and its complex conjugate $\overline{X}_{\ell, d}$ is
\begin{multline*}
\lim_{d\rightarrow \infty} \mbb{E}[\Exp_{\sigma}(X_{\ell,d}h_1+\overline{X}_{\ell,d}\overline{h}_1)] = \\\left(1 + c_\ell \sum_{\substack{k_1 \geq 1\textrm{ or } k_2\geq1 \\ k_1 \equiv k_2 \mod \ell}} (-1)^{k_1+k_2}[q^{-\frac{k_1+k_2}{2}}] e_{k_1}\overline{e}_{k_2}\right)^{[q]}.
\end{multline*}
\end{theorem}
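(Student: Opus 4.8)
The plan is to deduce \cref{theorem.motivic-characters} from the point-counting statement \cref{theorem.point-counting-characters} by testing the asserted identity coordinate-by-coordinate in $W(\mbb{C})=\prod_{i\geq 1}\mbb{C}$, using the comparison of probability spaces \cref{lemma.motivic-to-finite-probability-spaces}, the restriction identity \cref{lemma.restriction-random-variable-character}, and the Euler-product expansion of powers \cref{prop.powers-euler-product}. Concretely, since $\Lambda_{W(\mbb{C})}^\wedge$ carries the topology in which a sequence converges precisely when each $m_\tau$-coefficient converges in $W(\mbb{C})$ (for the product topology), it suffices to fix $i\geq 1$ and a partition $\tau$ --- or a pair $(\tau,\overline{\tau})$ in the case $\ell>2$ --- and to compute the limit as $d\to\infty$ of the $i$-th coordinate of the $m_\tau$-coefficient (resp.\ $m_\tau\overline{m}_{\overline{\tau}}$-coefficient) of the relevant moment generating function. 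By \cref{eq.sigma-moment-expansion} this coefficient is $\mbb{E}[h_\tau\circ X_{\chi,d}]$ (resp.\ $\mbb{E}[(h_\tau\circ X_{\chi,d})(h_{\overline{\tau}}\circ\overline{X}_{\chi,d})]$), an element of $C(U_{d,\ell}(\fqbar),W(\mbb{C}))$ to which $\mbb{E}$ is applied; its $i$-th coordinate is the value of $\mbb{E}_i$ on that element.

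Next I would descend to $\mbb{F}_{q^i}$. By \cref{lemma.motivic-to-finite-probability-spaces} the restriction map $\rest_i$ is a morphism of $\lambda$-probability spaces $(C(U_{d,\ell}(\fqbar),W(\mbb{C})),\mbb{E}_i)\to(C(U_{d,\ell}(\mbb{F}_{q^i}),W(\mbb{C}))^{(i)},\mbb{E}_1)$, so $\mbb{E}_i[g]=\mbb{E}_1[\rest_i(g)]$; since $\rest_i$ is a homomorphism of rings and of $\lambda$-rings, $\rest_i(h_\tau\circ X_{\chi,d})=h_\tau\circ\rest_i(X_{\chi,d})$, and similarly for the product in the case $\ell>2$. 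Here one uses that the twist $(-)^{(i)}$ only relabels the external $W(\mbb{C})$-algebra structure and leaves untouched both the pointwise $\lambda$-structure (hence the meaning of $h_\tau\circ$) and the coordinate-$1$ averaging defining $\mbb{E}_1$. By \cref{lemma.restriction-random-variable-character}, $\rest_i(X_{\chi,d})=X_{\chi,d,q^i}$, the analogous random variable of \cref{theorem.point-counting-characters} taken over the base field $\mbb{F}_{q^i}$, and since complex conjugation on $\prod_{i\geq 1}\mbb{C}$ commutes with the Adams operations it also commutes with $\rest_i$, so $\rest_i(\overline{X}_{\chi,d})=\overline{X}_{\chi,d,q^i}$. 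Thus the $i$-th coordinate of the $m_\tau$-coefficient in question equals the $m_\tau$-coefficient of $\mbb{E}_1[\Exp_{\sigma}(X_{\chi,d,q^i}h_1)]$ (resp.\ of the joint moment generating function), whose limit as $d\to\infty$ is given by \cref{theorem.point-counting-characters} over $\mbb{F}_{q^i}$, namely as the corresponding coefficient of an explicit Euler product over $|\mbb{A}^1_{\mbb{F}_{q^i}}|$.

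It then remains to check that the $i$-th coordinate of the claimed closed form reproduces exactly this Euler product. For $\ell=2$, write the right-hand side as $F^{[q]}$ with $F=1+c_2\sum_{k\geq 1}[q^{-k}]e_{2k}\in\Lambda_{W(\mbb{C})}^\wedge$ and $[q]=[\mbb{A}^1(\fqbar)]$ (since $\#\mbb{A}^1(\mbb{F}_{q^n})=q^n$), and apply the variety form of \cref{prop.powers-euler-product} with $V=\mbb{A}^1$ to obtain $(F^{[q]})_i=\prod_{|v|\in|\mbb{A}^1_{\mbb{F}_{q^i}}|}F_{i\deg(|v|)}(\ul{t}^{\deg(|v|)})$. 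Unwinding $F_{i\deg(|v|)}$ --- using $([q^{-k}])_{i\deg(|v|)}=(q^i)^{-k\deg(|v|)}$, $(c_2)_{i\deg(|v|)}=q^{i\deg(|v|)}/(q^{i\deg(|v|)}+1)$, which is precisely the value over $\mbb{F}_{q^i}$ of the constant $c_2$ at a point of degree $\deg(|v|)$, and $g(\ul{t}^{\deg(|v|)})=p_{\deg(|v|)}\circ g$ --- turns each local factor into the corresponding local factor of the Euler product of \cref{theorem.point-counting-characters} over $\mbb{F}_{q^i}$; this substitution is the only computation in the argument. Since this holds for all $i$, the two sides of the theorem agree, proving the case $\ell=2$. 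The case $\ell>2$ is handled identically, expanding the $[q]$-th power of $1+c_\ell\sum(-1)^{k_1+k_2}[q^{-(k_1+k_2)/2}]e_{k_1}\overline{e}_{k_2}$ by \cref{prop.powers-euler-product} in the two families of variables $\ul{t}$ and $\overline{\ul{t}}$ and comparing with the joint Euler product of \cref{theorem.point-counting-characters}. The step I expect to require the most care --- though it is organizational rather than substantive --- is the second one: tracking how the coordinate projections $W(\mbb{C})\to\mbb{C}$, the restriction maps $\rest_i$, the twist $(-)^{(i)}$, and the pointwise plethystic operations fit together, the point being that passing to the $i$-th coordinate is exactly base change from $\mbb{F}_q$ to $\mbb{F}_{q^i}$ at the level of $\lambda$-probability spaces. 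No analytic input is needed beyond Poonen's sieve, which is already built into \cref{theorem.point-counting-characters}.
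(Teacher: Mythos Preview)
Your proposal is correct and follows essentially the same strategy as the paper's own proof: reduce to the $i$-th coordinate via \cref{lemma.motivic-to-finite-probability-spaces} and \cref{lemma.restriction-random-variable-character}, invoke \cref{theorem.point-counting-characters} over $\mbb{F}_{q^i}$, and then identify the resulting Euler product with the $i$-th coordinate of the claimed power using \cref{prop.powers-euler-product}. The paper's version is terser and does not spell out the compatibility of $\rest_i$ with the plethystic operations or with complex conjugation, but the logical structure is the same.
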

\begin{proof}
    For the case $\ell=2$, combining \cref{lemma.motivic-to-finite-probability-spaces} and \cref{lemma.restriction-random-variable-character}, we find the $i$th component of the left-hand side is 
    \[ \lim_{d \rightarrow \infty} \mbb{E}_1[\Exp_{\sigma}(X_{2,d,q^i}h_1)].\]
    By \cref{theorem.point-counting-characters}, this is 
    \[ \prod_{P \in |\mbb{A}^1_{\mbb{F}_{q^i}}|} \left(1+  (c_2)_{i\deg(P)} \sum_{k\geq 1}(q^i)^{-k{\deg(P)}} (p_{\deg(P)}\circ e_{2k}) \right). \]
    By \cref{prop.powers-euler-product}, this is the $i$th component of 
    \[ \left(1 + c_2 \sum_{k\geq 1} [q^{-k}] e_{2 k}\right)^{[q]} \]
   This concludes the case $\ell=2$. 
    For $\ell>2$, the argument is almost identical. 
\end{proof}

\begin{remark}
    In the statement \cref{theorem.point-counting-characters}, the moment generating function was also expanded as an Euler product over the closed points of $\mbb{A}^1_{\mbb{F}_q}$. As explained in \cite{BertucciHowe.EquidistributionAndArithmeticLambdaDistributions}, the powers appearing in \cref{theorem.motivic-characters} can also be expanded as \emph{motivic} Euler products over $\mbb{A}^1_{\mbb{F}_q}$.
\end{remark}

\section{$L$-functions for the vanishing cohomology of smooth hypersurface sections}\label{s.L-vanishing}

In this section we compute the limiting $\Lambda$-distribution in \cref{main.hypersurface-L-functions} (here as \cref{theorem.vanishing-cohom-L-functions}). The remainder of \cref{main.hypersurface-L-functions}, which gives a comparison between this limiting $\Lambda$-distribution and the random matrix statistics of \cref{main.random-matrix} / \cref{theorem.body-random-matrices}, is treated in \cref{s.comparison} as \cref{prop.statistics-comparison-hypersurface}. 

To compute the limit in question, it would be possible to argue directly with the random variable sending a smooth hypersurface section to the reciprocal roots of the $L$-function of vanishing cohomology as in \cref{s.L-characters}. However, the argument is much clearer if we instead first prove a result about the geometric random variable which counts points on smooth hypersurface sections, then subtract off the constant part of cohomology to obtain the random variable we are interested in. We prove the point counting version for this geometric random variable in \cref{theorem.geometric-hypersurface-point-counting} and then the motivic version for this geometric random variable in \cref{theorem.geometric-hypersurface-motivic}.  Note that part of the point-counting version can also be deduced from \cite[Theorem C]{Howe.MotivicRandomVariablesAndRepresentationStabilityIIHypersurfaceSections} (see \cref{remark.proof-via-mrvii}), and that \cref{main.hypersurface-L-functions}/\cref{theorem.vanishing-cohom-L-functions} implies, by projection to the first component, a much more precise version of \cite[Theorem B]{Howe.MotivicRandomVariablesAndRepresentationStabilityIIHypersurfaceSections} (see \cref{remark.vanishing-cohomology-relation-to-other-work}). 

\subsection{Setup}
We fix a finite field $\mbb{F}_q$ and an algebraic closure $\fqbar/\fq$. 

We fix a prime $\ell$ coprime to $q$ in order to use $\ell$-adic \'{e}tale cohomology, and for any variety $V/\mbb{F}_q$, we write $H^i(V)$ as an abbreviation for $H^i_\et(V_{\fqbar}, \mbb{Q}_\ell)$, which is equipped with an action of $\mbb{Z}$ where the generator $1$ acts by the (geometric) $q$-power Frobenius. We write $[H^i(V)]$ for the associated element of $W(\mbb{C})$, which, viewed as an element of $1+t\mbb{C}[[t]]$, is the characteristic power series of Frobenius, or, viewed as an element of $\prod_{i\geq 1} \mbb{C}$, is the sequence of traces of powers of Frobenius. If $V$ is smooth and projective of dimension $n$, $H^i(V)$ is zero for $i<0$ or $i> 2n$, and, fixing an embedding $\mbb{Q}_\ell \hookrightarrow \mbb{C}$, by the Weil conjectures the eigenvalues of Frobenius on $H^i(V)$ have absolute value $q^{i/2}$; by the Grothendieck-Lefschetz formula, we have 
\begin{equation}\label{eq.groth-lefschetz-decomp} [V]=\sum (-1)^i [H^i(V)] \end{equation}
where we recall that we write $[V]=[V(\fqbar)]$ for the element of $W(\mbb{C})$ that, viewed as an element of $1+t\mbb{C}[[t]]$, is the zeta function $Z_V(t)$, and, viewed as element of $\prod_{i \geq 1}\mbb{C}$, is the sequence $(\#V(\mbb{F}_q), \#V(\mbb{F}_{q^2}), \ldots).$ 

Let $Y \subseteq \mbb{P}^m_{\mbb{F}_q}$ be an $n+1$-dimensional smooth 
closed irreducible subvariety. For $F$ a degree $d$ homogeneous polynomial in $m+1$ variables with coefficients in $\mbb{F}_q$, we write $V(F)$ for the vanishing locus of $F$, a closed subvariety of $\mbb{P}^m_{\mbb{F}_q}$. If $V(F)$ intersects $Y$ transversely (in the scheme-theoretic sense; equivalently, the first order Taylor expansion of $F|_Y$ at any closed point of $Y$ is non-zero), then $Z=V(F) \cap Y \subseteq Y$ is a smooth projective subvariety of dimension $n$. By the weak Lefschetz theorem \cite[4.1.6]{Deligne.LaConjectureDeWeilII}, we have $H^i(Z)=H^i(Y)$ for $i < n$ and $H^{n}(Y) \hookrightarrow H^n(Z)$. The vanishing cohomology $\mc{V}_Z \subseteq H^n(Z)$ is a complement to this inclusion (see \cite[\S4]{Howe.MotivicRandomVariablesAndRepresentationStabilityIIHypersurfaceSections} for more details and further references); in particular, we have $[H^n(Z)]=[H^n(Y)]+[\mc{V}_Z]$ (note that for our purposes below we could also simply define the class $[\mc{V}_Z]$ to be $[H^n(Z)]-[H^n(Y)]$ without knowing anything about vanishing cohomology). 
The remaining degrees of cohomology of $Z$ are also determined by Poincar\'{e} duality, or, even more simply, the hard Lefschetz theorem \cite[Th\'{e}or\`{e}me 4.1.1]{Deligne.LaConjectureDeWeilII}:
\[ \textrm{ for $i<n$, } [H^{2n-i}(Z)]=[H^{i}(Z)][q^{n-i}]. \]
Combining these computations with \cref{eq.groth-lefschetz-decomp}, we find that 
\begin{equation}\label{eq.hypersurface-section-cohomology} [Z]=(-1)^n[\mc{V}_Z] + (-1)^n[H^n(Y)] + \sum_{i=0}^{n-1} (-1)^i (1+[q^{n-i}]) [H^i(Y)]. \end{equation}

The reciprocal roots of the normalized $L$-function for vanishing cohomology $\mc{L}_Z(t q^{-n/2})$ as appearing in \cref{main.hypersurface-L-functions}, viewed as a multiset of complex numbers and then, via $\mbb{Z}[\mbb{C}] \hookrightarrow W(\mbb{C})$, an element of $W(\mbb{C})$, is given by $[q^{-n/2}][\mc{V}_Z]$. We can solve for this quantity in \cref{eq.hypersurface-section-cohomology} to obtain 
\begin{equation}\label{eq.vanishing-cohom-computation} [q^{-n/2}][\mc{V}_Z] = [q^{-n/2}]\left ((-1)^{n} [Z] - [H^n(Y)] - (-1)^n\sum_{i=0}^{n-1} (-1)^i (1+[q^{n-i}]) [H^i(Y)]\right). \end{equation}

In particular, if we vary $Z$ and treat $[\mc{V}_Z]$ as a random variable, then the only term on the right that is not constant is $[Z]$. Since any random variable is independent to a constant random variable, we can compute the $\sigma$ moment generating function of $[q^{-n/2}][\mc{V}_Z]$ as soon as we can compute the $\sigma$-moment generating function of $(-1)^n[q^{-n/2}][Z]$, or, since scaling by a power of $[q]$ has a simple effect, just $(-1)^n [Z]$. The difference in parity here accounts for the differences between the odd and even cases in \cref{main.hypersurface-L-functions}/\cref{theorem.vanishing-cohom-L-functions}. The strategy is thus to first compute the distribution of $(-1)^n[Z]$ for all $d$, then apply \cref{eq.vanishing-cohom-computation} to deduce the distribution of $(-1)^n[q^{-n/2}][\mc{V}_Z]$. As in \cref{s.L-characters}, to compute the motivic distribution of $(-1)^n[Z]$ we first make a computation on finite probability spaces using \cref{prop.point-counting-asymptotics} and then apply \cref{lemma.motivic-to-finite-probability-spaces} and \cref{prop.powers-euler-product} to obtain the motivic distribution. 

\begin{remark}\footnote{The question raised in this remark has now been resolved in  \cite{Howe.TheNegativeSigmaMomentGeneratingFunction}.}\label{remark.sigma-moment-minus}
For a random variable $X$, we do not have a nice way to extract the $\sigma$-moment generating function for $-X$ from the $\sigma$-moment generating function for $X$. Indeed, by \cref{lemma.exponential-minus}, to obtain the $\sigma$-moment generating function for $-X$, one needs to know the value of the $\Lambda$-distribution on the elementary symmetric functions $e_\tau$, however, by \cref{lemma.extract-distribution-via-hall}, this amounts to computing the inner products with the $e_\tau$, and it is not clear to us how to do this in a useful way in general. Even for a binomial random variable, we do not see how to do it without using a stronger notion of independence as discussed in \cref{remark.independence-and-summing} that allows to make the interpretation as a sum of independent Bernoulli random variables parameterized by an admissible $\mbb{Z}$-set literal. Thus below we make the computations for both $[Z]$ and $-[Z]$ at the same time; fortunately neither is more difficult. 
\end{remark}

\subsection{The geometric random variable: point-counting}
We fix an integer $m$ and a smooth closed subvariety $Y \subseteq \mbb{P}^m_{\mbb{F}_q}$. We let $U_d(\mbb{F}_q)$ be the set degree $d$ homogeneous equations $F$ in $m+1$ variables such that $V(F)$ intersects $Y$ transversely. By \cite[Theorem 1.1]{Poonen.BertiniTheoremsOverFiniteFields}, $U_d(\mbb{F}_q)\neq \emptyset$ for $d \gg 0$, so we can consider the $\lambda$-probability space
\[ (C(U_d(\mbb{F}_q), W(\mbb{C})), \mbb{E}_1). \]
We write $Z_{q,d}$ for the random variable on $U_d(\mbb{F}_q)$ that sends $F$ to $[V(F) \cap Y] \in W(\mbb{C})$, i.e. to $Z_{V(F) \cap Y}(t) \in 1+t\mbb{C}[[t]]=W(\mbb{C}).$ 

\begin{theorem}\label{theorem.geometric-hypersurface-point-counting}
    With notation as above and denoting with a subscript $1$ the projection from $W(\mbb{C})=\prod_{i \geq 1} \mbb{C}$ to the first component, 
    \begin{align*} \lim_{d\rightarrow \infty}\mbb{E}_1[ \Exp_{\sigma}(Z_{q,d}h_1) ]&= \left(\left(1+\frac{[q]^{n+1}-1}{[q]^{n+2}-1}(h_1 + h_2 + h_3 + ...)\right)^{[Y]}\right)_1 \\ 
    &=\prod_{P \in |Y|} \left( 1+ \frac{q^{(n+1)\deg(P)}-1}{q^{(n+2)\deg(P)}-1}p_{\deg(P)}\circ(h_1+h_2+h_3+ \ldots) \right) \end{align*}
    and 
    \begin{align*} \lim_{d\rightarrow \infty}\mbb{E}_1[ \Exp_{\sigma}(-Z_{q,d}h_1)] &= \left(\left(1+\frac{[q]^{n+1}-1}{[q]^{n+2}-1}(-e_1 + e_2 -e_3 + ...)\right)^{[Y]} \right)_1 \\
        &=\prod_{P \in |Y|} \left( 1+ \frac{q^{(n+1)\deg(P)}-1}{q^{(n+2)\deg(P)}-1}p_{\deg(P)}\circ(-e_1+e_2-e_3+ \ldots) \right) 
    \end{align*}
\end{theorem}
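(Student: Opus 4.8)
The plan is to realize $Z_{q,d}$ as a sum over the closed points $P$ of $\mbb{P}^m_{\fq}$ of Bernoulli-type random variables recording whether $P$ lies on $V(F)\cap Y$, apply \cref{prop.point-counting-asymptotics} to turn the asymptotic $\sigma$-moment generating function into an Euler product of local factors, compute each local factor by a jet count, and finally recognize the Euler product as a power via \cref{prop.powers-euler-product}.

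First I would set up the framework of \cref{ss.poonen-sieve-setup} with ambient space $\mbb{P}^m_{\fq}$ and the single smooth subvariety $X_1=Y$ of dimension $n+1$. For $P\notin Y$ take $M_P=0$ and $A_P$ all of $\mc{O}_{\mbb{P}^m,P}/\mf{m}_P$; for $P\in Y$ take $M_P=1$ and $A_P\subseteq \mc{O}_{\mbb{P}^m,P}/\mf{m}_P^2$ the set of first-order jets $g$ such that $V(g)\cap Y$ is smooth of dimension $n$ at $P$ (nonempty, since any $g$ with $g(P)\neq 0$ qualifies because then $P\notin V(g)$). With these choices $U_d(\fq)$ is exactly the set of $F$ with $V(F)$ transverse to $Y$. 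Define $X_P\colon A_P\to W(\mbb{C})=1+t\mbb{C}[[t]]$ by $X_P(g)=\tfrac{1}{1-t^{\deg P}}$ if $g(P)=0$ and $X_P(g)=1$ (the zero of $W(\mbb{C})$) otherwise; since $\tfrac{1}{1-t^{\deg P}}\in 1+t^{\deg P}\mbb{C}[[t]]$ and there are finitely many points of each degree, the hypothesis of \cref{prop.point-counting-asymptotics} is satisfied, and for $F\in U_d(\fq)$ the pointwise sum $\sum_P X_{P,d}(F)=\prod_{|z|\in|V(F)\cap Y|}\tfrac{1}{1-t^{\deg z}}=Z_{V(F)\cap Y}(t)=Z_{q,d}(F)$. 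Hence \cref{prop.point-counting-asymptotics} gives $\lim_d \mbb{E}_1[\Exp_{\sigma}(Z_{q,d}h_1)]=\prod_{P\in|Y|}\mbb{E}_1[\Exp_{\sigma}(X_Ph_1)]$, the factors at $P\notin Y$ being $1$; the same argument with $-X_P$ in place of $X_P$ handles $-Z_{q,d}$.

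Next I would carry out the local computations. For $w=[\mathbf{k}]=\tfrac{1}{1-t^k}\in W(\mbb{C})$ one has $\Exp_{\sigma}(wh_1)=(1+h_1+h_2+\cdots)^{w}$ by \cref{example.expsigmaNh1-as-power}, so \cref{prop.powers-euler-product} (with $V=\mathbf{k}$, $F=1+h_1+h_2+\cdots$) yields $(\Exp_{\sigma}([\mathbf{k}]h_1))_1=1+p_k\circ(h_1+h_2+\cdots)$; and since $\Exp_{\sigma}(-h_1)=(1+h_1+h_2+\cdots)^{-1}=1-e_1+e_2-\cdots$ by \cref{eq.complete-elementary-inversion}, one gets similarly $(\Exp_{\sigma}(-[\mathbf{k}]h_1))_1=1+p_k\circ(-e_1+e_2-e_3+\cdots)$. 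As $X_P$ takes the value $[\mathbf{\deg P}]$ with probability $\rho_P$ (the proportion of jets in $A_P$ vanishing at $P$) and the value $0$ otherwise, the local factor is $\mbb{E}_1[\Exp_{\sigma}(X_Ph_1)]=1+\rho_P\, p_{\deg P}\circ(h_1+h_2+\cdots)$, and likewise with $(-e_1+e_2-e_3+\cdots)$ for $-X_P$. It remains to count $\rho_P$: writing $q_P=q^{\deg P}$, we have $\#(\mc{O}_{\mbb{P}^m,P}/\mf{m}_P^2)=q_P^{m+1}$; among the jets with $g(P)=0$, smoothness of $V(g)\cap Y$ at $P$ of codimension $1$ says the image of the linear part of $g$ in the $(n+1)$-dimensional cotangent space $\mf{m}_{P,Y}/\mf{m}_{P,Y}^2$ is nonzero, giving $q_P^{m-n-1}(q_P^{n+1}-1)$ jets; together with the $(q_P-1)q_P^{m}$ jets with $g(P)\neq 0$, this gives $\#A_P=q_P^{m+1}-q_P^{m-n-1}$ and hence $\rho_P=\tfrac{q_P^{n+1}-1}{q_P^{n+2}-1}$. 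Substituting $q_P=q^{\deg P}$ produces exactly the two claimed Euler products.

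Finally, to rewrite these Euler products as powers I would apply \cref{prop.powers-euler-product} once more, in its form for a variety $V/\fq$: with $c=\tfrac{[q]^{n+1}-1}{[q]^{n+2}-1}\in W(\mbb{C})$ (whose $i$th ghost component is $\tfrac{q^{i(n+1)}-1}{q^{i(n+2)}-1}$) and $F=1+c\,(h_1+h_2+\cdots)\in W(\mbb{C})[[\ul{t}_{\mbb{N}}]]$, the first component of $F^{[Y]}=F^{[Y(\fqbar)]}$ is $\prod_{|v|\in|Y|}F_{\deg|v|}(\ul{t}^{\,\deg|v|})=\prod_{|v|\in|Y|}\big(1+\tfrac{q^{(n+1)\deg|v|}-1}{q^{(n+2)\deg|v|}-1}\,p_{\deg|v|}\circ(h_1+h_2+\cdots)\big)$, which matches the Euler product above; the case of $-Z_{q,d}$ is identical with $-e_1+e_2-e_3+\cdots$ replacing $h_1+h_2+\cdots$. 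The substantive step is the jet count giving $\rho_P$, which is where the transversality hypothesis and Poonen's sieve interact; the main obstacle I anticipate is purely bookkeeping, namely keeping straight the $W(\mbb{C})$-valued random variables versus their first components, and multiplication in $1+t\mbb{C}[[t]]$ (the additive group of $W(\mbb{C})$) versus in $\Lambda_{W(\mbb{C})}^{\wedge}$.
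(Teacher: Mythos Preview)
Your proposal is correct and follows essentially the same approach as the paper's proof: set up the Taylor conditions so that $U_d(\fq)$ cuts out the transverse $F$, write $Z_{q,d}$ as a sum over closed points of Bernoulli variables detecting $g(P)=0$, apply \cref{prop.point-counting-asymptotics}, compute the local factor via the jet count giving $\rho_P=\tfrac{q_P^{n+1}-1}{q_P^{n+2}-1}$, and reassemble as a power via \cref{prop.powers-euler-product}. The only cosmetic differences are that the paper takes $M_P=1$ uniformly (rather than $M_P=0$ off $Y$), and computes $(\Exp_\sigma([\mathbf{k}]h_1))_1$ by directly expanding \cref{eq.exp-symmetric-monomial-expansion} rather than via \cref{example.expsigmaNh1-as-power} and \cref{prop.powers-euler-product}; both routes give $1+p_k\circ(h_1+h_2+\cdots)$.
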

\begin{proof}
The identity of the first components of the powers and the claimed Euler products follows from \cref{prop.powers-euler-product}. To compute the expectations, we will use \cref{prop.point-counting-asymptotics}. 

We explain how to put ourselves into the situation of \cref{prop.point-counting-asymptotics}. In the notation of \cref{s.application-of-poonen}, for each $P \in |\mbb{P}^m_{\mbb{F}_q}|$, we take $M_P=1$ so that we are looking at first-order Taylor expansions; the choice of $j_P$ to fix a trivializing coordinate $x_{j_P}$ does not effect the definition of anything below so we ignore it. For $P \not\in |Y|$ we take $A_P=\mc{O}_{\mbb{P}^m_{\mbb{F}_q},P}/\mf{m}_P^2$ to be all first order expansions and for $P \in |Y|$ we take $A_P$ to be the pre-image in $\mc{O}_{\mbb{P}^m_{\mbb{F}_q},P}/\mf{m}_P^2$ of the complement of $0$ in $\mc{O}_{Y,P}/\mf{m}_P^2$. For $P \not\in|Y|$ we take $X_P$ the be the constant random variable with value $0 \in W(\mbb{C})$ (i.e. $1 \in 1+t\mbb{C}[[t]]$) and for $P \in |Y|$ we define, 
\[ X_P(g) = \begin{cases} \frac{1}{1-t^{\deg(P)}} \in 1+t\mbb{C}[[t]]=W(\mbb{C}) & \textrm{ if $g(P)=0$, i.e. $g \in \mf{m}_P/\mf{m}_P^2$} \\
1 \in 1+t\mbb{C}[[t]]=W(\mbb{C}) & \textrm{otherwise}. \end{cases}\]

Then, for $X_{P,d}$ the associated random variables as in \cref{s.application-of-poonen}, we have
\[ Z_{q,d}=\sum_{P \in |\mbb{P}^n_{\mbb{F}_q}|} X_{P,d} = \sum_{P \in |Y|} X_{P,d}. \]
Indeed: the sum is in $W(\mbb{C})$ so is multiplication of power series in $1+t\mbb{C}[[t]]$ and thus, when evaluating on $F$ this is just the usual Euler product expansion of $Z_{q,d}(F)=Z_{V(F) \cap Y}(t)$. Then, \cref{prop.point-counting-asymptotics} implies
\[ \lim_{d \rightarrow \infty} \mbb{E}_1[\Exp_{\sigma}(Z_{q,d}h_1)]=\prod_{P \in |\mbb{P}^n_{\mbb{F}_q}|}\mbb{E}_1[\Exp_{\sigma}(X_P h_1)]=\prod_{P \in |Y|} \mbb{E}_1[\Exp_{\sigma}(X_P h_1)]. \]
Arguing as in the proof of \cref{theorem.point-counting-characters}, we have 
\[ (\Exp_{\sigma}(X_P h_1))_1= \begin{cases} 1+ p_{\deg(P)} \circ (h_1+h_2 + h_3+\ldots) & \textrm{ if $g(P)$=0 } \\
1 & \textrm{otherwise}. 
\end{cases}\]
Indeed, the argument given in the proof of \cref{theorem.point-counting-characters} shows that $(h_k \circ \frac{1}{1-t^{\deg(P)}})_1$ is $1$ if $\deg(P)|k$ and $0$ otherwise, so that in \cref{eq.exp-symmetric-monomial-expansion}, only the terms $m_\tau$ with $\tau$ a multiple of $\deg(P)$ survive, and each with coefficient $1$; this gives exactly the series written. Since $g(P)=0$ with probability $\frac{q^{(n+1)\deg(P)}-1}{q^{(n+2)\deg(P)}-1}$, taking expectation and passing to the product we obtain 
\[ \lim_{d \rightarrow \infty} \mbb{E}_1[\Exp_{\sigma}(Z_{q,d}h_1)] = \prod_{P \in |Y|} \left( 1+ \frac{q^{(n+1)\deg(P)}-1}{q^{(n+2)\deg(P)}-1}p_{\deg(P)}\circ(h_1+h_2+h_3+ \ldots) \right). \] 
This concludes the computation for $Z_{q,d}$. 

To treat $-Z_{q,d}$, we argue in exactly the same way using the sum
\[ -Z_{q,d}=\sum_{P \in |Y|} -X_{P,d}. \]
Then, since, for $P \in |Y|$, 
\[ -X_P(g) = \begin{cases} {1-t^{\deg(P)}} & \textrm{ if $g(P)=0$, i.e. $g \in \mf{m}_P/\mf{m}_P^2$} \\
1 & \textrm{otherwise}. \end{cases}\]
arguing as in the positive case above (cf. also the proof of \cref{theorem.point-counting-characters} where a similar minus sign appears), we find
\[ (\Exp_{\sigma}(-X_P h_1))_1= \begin{cases} 1+ p_{\deg(P)} \circ (-e_1+e_2 - e_3+\ldots) & \textrm{ if $g(P)$=0 } \\
1 & \textrm{otherwise}. 
\end{cases}\]
Passing to expectation and taking the product, we conclude exactly as in the positive case.
\end{proof} 

\begin{remark}\label{remark.proof-via-mrvii}
    We can also deduce the computation of $\lim_{d\rightarrow \infty}\mbb{E}_1[ \Exp_{\sigma}(Z_{q,d}h_1) ]$ in \cref{theorem.geometric-hypersurface-point-counting} from \cite[Theorem C]{Howe.MotivicRandomVariablesAndRepresentationStabilityIIHypersurfaceSections} by an application of \cref{lemma.poisson-and-binomial-distributions} --- indeed, the statement of the result in loc. cit. can be shown to be equivalent to the computation of the falling moment generating function $\mbb{E}[(1+h_1)^{Z_{q,d}}]$ (see \cref{ss.moment-generating-functions}). However, the method we use to prove \cref{theorem.geometric-hypersurface-point-counting} is already closely related to the proof \cite[Theorem C]{Howe.MotivicRandomVariablesAndRepresentationStabilityIIHypersurfaceSections}, just made much better organized and more robust via the use of $\lambda$-probability. Similarly the method of proof in \cref{theorem.geometric-hypersurface-point-counting} could also be applied to directly compute the falling moment generating function and, since $Z_{q,d}$ is a ``combinatorial random variable," the computation of the local factors at closed points $P$ would be even simpler for the falling moments (see \cref{remark.falling-vs-sigma} for more on this along with an explanation of why we did not proceed in this way). 
\end{remark}

\subsection{The geometric random variable motivically}

\begin{theorem}\label{theorem.geometric-hypersurface-motivic}Let $Y \subseteq \mbb{P}^m_{\mbb{F}_q}$ be an $(n+1)$-dimensional smooth closed sub-variety, and let $U_d/\mbb{F}_q$ be the variety of nonzero homogeneous degree $d$ polynomials $F$ such that $V(F)$ intersects $Y$ transversely. Let $Z_d$ be the $W(\mbb{C})$-valued random variable on $U_d(\fqbar)$ sending $F \in U_d(\fqbar)$ to $Z_{V(F) \cap Y}(t)$, where $V(F)\cap Y$ is a variety over $\fq(F)$, the extension of $\fq$ generated by the coefficients of $F$. Then the sequence of random variables $Z_d$ converge in distribution to a binomial distribution with parameters $N=[Y/\mbb{F}_q]$ and $p=\frac{[q]^{n+1}-1}{[q]^{n+2}-1}$ as in \cref{def.binomial-and-poisson}, i.e.  
\[ \lim_{d \rightarrow \infty}\mbb{E}[\Exp_{\sigma}(Z_dh_1)] = \left(1+\frac{[q]^{n+1}-1}{[q]^{n+2}-1}(h_1 + h_2 + h_3 + ...)\right)^{[Y]}\]
Moreover, we also have 
\[ \lim_{d \rightarrow \infty}\mbb{E}[\Exp_{\sigma}(-Z_d h_1)] = 
\left(1+\frac{[q]^{n+1}-1}{[q]^{n+2}-1}(-e_1 + e_2 -e_3 + ...)\right)^{[Y]}.
\]
\end{theorem}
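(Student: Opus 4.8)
The plan is to mirror the proof of \cref{theorem.motivic-characters}: for each $i \geq 1$ I would compute the $i$th ghost coordinate of $\mbb{E}[\Exp_{\sigma}(Z_d h_1)] \in W(\mbb{C})$ by reducing to the finite point-counting statement \cref{theorem.geometric-hypersurface-point-counting} over $\mbb{F}_{q^i}$ via \cref{lemma.motivic-to-finite-probability-spaces}, let $d \to \infty$, and then recognize the resulting Euler product as the $i$th component of the claimed power using \cref{prop.powers-euler-product}. The same argument run with $-Z_d$ in place of $Z_d$ gives the second formula, and the reformulation as convergence to the $(N,p)$-binomial distribution of \cref{def.binomial-and-poisson} is then immediate from \cref{lemma.poisson-and-binomial-distributions} once the two $\sigma$-moment generating functions are matched.

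The one step requiring a new (though routine) verification, playing the role of \cref{lemma.restriction-random-variable-character}, is the identification $\rest_i(Z_d) = Z_{q^i,d}$, where $Z_{q^i,d}$ denotes the random variable of \cref{theorem.geometric-hypersurface-point-counting} with $\mbb{F}_q$ replaced everywhere by $\mbb{F}_{q^i}$ and $Y$ by $Y_{\mbb{F}_{q^i}}$. Unwinding the definition of $\rest_i$ from \cref{ss.relation-admissible-finite-integration}, for $F \in U_d(\mbb{F}_{q^i}) = U_d(\fqbar)(\mbf{i})$ with field of definition $\mbb{F}_q(F) = \mbb{F}_{q^e}$ (so $e \mid i$), one has $\rest_i(Z_d)(F) = p_{i/e} \circ [(V(F)\cap Y)/\mbb{F}_{q^e}]$. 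So the claim reduces to the general fact that base change of a variety $W$ from $\mbb{F}_{q^e}$ to $\mbb{F}_{q^i}$ intertwines the zeta element with the $(i/e)$th Adams operation, i.e. $[W_{\mbb{F}_{q^i}}/\mbb{F}_{q^i}] = p_{i/e} \circ [W/\mbb{F}_{q^e}]$ in $W(\mbb{C})$; this is immediate on ghost coordinates by \cref{eq.wc-coord-plethy-power-sum}, since both sides have $k$th coordinate $\#W(\mbb{F}_{q^{ik}})$. Together with the base-change compatibility of the parameter spaces (\cref{example.fiber-base-change}, giving $(U_d)_{\mbb{F}_{q^i}}$ as the parameter space of transverse sections over $\mbb{F}_{q^i}$), this yields $\rest_i(Z_d) = Z_{q^i,d}$, and the identical argument gives $\rest_i(-Z_d) = -Z_{q^i,d}$.

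With this in hand the assembly is formal. By \cref{lemma.motivic-to-finite-probability-spaces}, the $i$th component of $\mbb{E}[\Exp_{\sigma}(Z_d h_1)]$ equals $\mbb{E}_1[\Exp_{\sigma}(Z_{q^i,d} h_1)]$ for the finite probability space over $\mbb{F}_{q^i}$; letting $d \to \infty$ and invoking \cref{theorem.geometric-hypersurface-point-counting} over $\mbb{F}_{q^i}$ rewrites this as $\prod_{P \in |Y_{\mbb{F}_{q^i}}|} \bigl( 1 + \tfrac{q^{i(n+1)\deg P}-1}{q^{i(n+2)\deg P}-1}\, p_{\deg P}\circ(h_1 + h_2 + \cdots) \bigr)$. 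By \cref{prop.powers-euler-product}, applied with $V = Y(\fqbar)$ and $F = 1 + \tfrac{[q]^{n+1}-1}{[q]^{n+2}-1}(h_1 + h_2 + \cdots)$, this is precisely the $i$th ghost coordinate of $\bigl(1 + \tfrac{[q]^{n+1}-1}{[q]^{n+2}-1}(h_1 + h_2 + \cdots)\bigr)^{[Y]}$, using that the $m$th ghost coordinate of $\tfrac{[q]^{n+1}-1}{[q]^{n+2}-1}$ is $\tfrac{q^{m(n+1)}-1}{q^{m(n+2)}-1}$. Since this holds for all $i$, the first formula follows, and the second follows by running the same computation with $-Z_d$ and the local factors coming from $-e_1 + e_2 - e_3 + \cdots$. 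I do not anticipate any real obstacle: the analytic input is entirely contained in \cref{theorem.geometric-hypersurface-point-counting}, the combinatorial input in \cref{prop.powers-euler-product}, and the only thing to be careful about is the degree bookkeeping in the restriction lemma, which is completely parallel to \cref{lemma.restriction-random-variable-character}.
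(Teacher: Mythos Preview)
Your proposal is correct and follows essentially the same approach as the paper: identify $\rest_i(Z_d)$ with $Z_{q^i,d}$, apply \cref{lemma.motivic-to-finite-probability-spaces} and \cref{theorem.geometric-hypersurface-point-counting} over $\mbb{F}_{q^i}$, and recognize the resulting Euler product as the $i$th component of the power via \cref{prop.powers-euler-product}. In fact you give more detail on the restriction step than the paper does (which simply asserts $\rest_i Z_d = Z_{q^i,d}$), and your justification via the Adams-operation compatibility of zeta elements under base change is exactly right.
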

\begin{proof}
    As in the proof of \cref{theorem.motivic-characters}, one computes immediately that $\rest_i Z_d=Z_{q^i,d}$, for the latter the random variable on $U_d(\mbb{F}_q^{i})$ of \cref{theorem.geometric-hypersurface-point-counting} for $\mbb{F}_{q^i}$.  Then, applying \cref{lemma.motivic-to-finite-probability-spaces}, we find 
    \[ \lim_{d \rightarrow \infty}\mbb{E}_i[\Exp_{\sigma}(Z_d h_1)] = \lim_{d\rightarrow \infty}\mbb{E}_1[\Exp_{\sigma}(Z_{q^i,d}h_1)]. \]
    We then find that this is the $i$th component of the power by comparing the description of the limit as an Euler product in \cref{theorem.geometric-hypersurface-point-counting} (keeping in mind we are applying it over $\mbb{F}_{q^i}$) to the description of the $i$th component of the power in \cref{prop.powers-euler-product}. This concludes for $Z_d$; for $-Z_d$ we argue the same way. 
\end{proof}

\subsection{The cohomological random variable} Let $Y \subseteq \mathbb{P}^n_{\mbb{F}_q}$ be a smooth closed irreducible subvariety. For $Z_d$ the $W(\mbb{C})$-valued random variable on $U_d(\fqbar)$ of \cref{theorem.geometric-hypersurface-motivic}, we let $X_d$ be the $W(\mbb{C})$-valued random variable on $U_d(\fqbar)$ defined by 
\begin{equation}\label{eq.vanishing-cohom-rv} X_d=[q^{-n/2}]\left ((-1)^{n} Z_d - [H^n(Y)] - (-1)^n\sum_{i=0}^{n-1} (-1)^i (1+[q^{n-i}]) [H^i(Y)]\right) \end{equation}
By \cref{eq.vanishing-cohom-computation} and the surrounding discussion this is precisely the random variable sending $F$ to the reciprocal roots of $L_{V(F)\cap Y}( t(\#\mbb{F}_q(F))^{-n/2})$, where $V(F)\cap Y$ is viewed as a variety over $\mbb{F}_q(F)$, the extension generated by the coefficients of $F$. 

\begin{remark}
Note that this identification depends crucially on the fact that when we view an element of $W(\mbb{C})$ such as $[H^n(Y)]$ as an element of $C(U_d(\fqbar),W(\mbb{C}))$ it is always via pullback from the point, which includes an application of $p_{\deg(z)} \circ$ at any point $z$, instead of as the literal constant function! (See \cref{example.final-morphism} and the warning immediately afterwards). 
\end{remark}

In particular, $X_d$ is the random variable appearing in the statement of \cref{main.hypersurface-L-functions}. In the present notation, the computation of the limit in \cref{main.hypersurface-L-functions} becomes:
\begin{theorem}\label{theorem.vanishing-cohom-L-functions} With notation as above,
    \[ \lim_{d \rightarrow \infty}\mbb{E}[\Exp_{\sigma}(X_dh_1)]= (1 + p\sum_{i \geq 1} [q^{-in/2}] \epsilon^i f_i)^{[Y/\mbb{F}_q]} \cdot \Exp_\sigma(\mu h_1)  \]
where $f_i=e_i$ and $\epsilon=-1$ if $n$ is odd and $f_i=h_i$ and $\epsilon=1$ if $n$ is even, 
\[ p = \frac{[q]^{n+1}-1}{[q]^{n+2}-1} \textrm{, and } \mu=-\epsilon \left(\sum_{i=0}^{n-1} (-1)^i\left([q^{\frac{-n}{2}}]+[q^{\frac{n-2i}{2}}]\right)[H^i(Y)]\right) - [q^{-n/2}][H^{n}(Y)]. \]
\end{theorem}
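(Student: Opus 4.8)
The plan is to deduce \cref{theorem.vanishing-cohom-L-functions} from the geometric computation \cref{theorem.geometric-hypersurface-motivic} by two elementary manipulations: peeling off a constant random variable, and twisting by the line element $[q^{-n/2}]$.

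First I would rewrite the defining formula \cref{eq.vanishing-cohom-rv} for $X_d$, recalling (as stressed in the remark following \cref{eq.vanishing-cohom-rv}) that the $W(\mbb{C})$-valued quantities there are interpreted inside $C(U_d(\fqbar),W(\mbb{C}))$ via pullback from the point. Collecting the constant terms and using $[q^{-n/2}](1+[q^{n-i}])=[q^{-n/2}]+[q^{(n-2i)/2}]$ together with $(-1)^n=\epsilon$, one gets $X_d=\epsilon\,[q^{-n/2}]\,Z_d+\mu$, where $\mu\in W(\mbb{C})$ is precisely the constant in the statement. Since a constant random variable is independent of any other (as used in \cref{example.standard-rep}), \cref{lemma.independent-moment-generating-functions} gives
\[ \mbb{E}[\Exp_\sigma(X_dh_1)] = \mbb{E}\!\left[\Exp_\sigma\!\big(\epsilon[q^{-n/2}]Z_d\,h_1\big)\right]\cdot\mbb{E}[\Exp_\sigma(\mu h_1)], \]
and expanding $\Exp_\sigma(\mu h_1)=\sum_\tau (h_\tau\circ\mu)\,m_\tau$ with each $h_\tau\circ\mu$ a pulled-back constant shows $\mbb{E}[\Exp_\sigma(\mu h_1)]=\Exp_\sigma(\mu h_1)$, the second factor of the theorem.

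It remains to identify $\lim_d\mbb{E}[\Exp_\sigma(\epsilon[q^{-n/2}]Z_d h_1)]$. I would write $\epsilon[q^{-n/2}]Z_d=[q^{-n/2}](\epsilon Z_d)$ and note that $[q^{-n/2}]$ — lying in the image of $\mbb{Z}[\mbb{C}]\hookrightarrow W(\mbb{C})$, hence also its pullback to $C(U_d(\fqbar),W(\mbb{C}))$ — is a line element. For a line element $\ell$ of a torsion-free $\lambda$-ring one has $h_k\circ(\ell x)=\ell^k(h_k\circ x)$ (immediate from $\psi_k(\ell)=\ell^k$ and \cref{lemma.exponential-power-sum}), whence $h_\tau\circ(\ell x)=\ell^{|\tau|}(h_\tau\circ x)$; applying $\mbb{E}$, which is $W(\mbb{C})$-linear, coefficientwise then gives, for every random variable $Y$,
\[ \mbb{E}[\Exp_\sigma([q^{-n/2}]Y h_1)] = \theta\big(\mbb{E}[\Exp_\sigma(Yh_1)]\big), \qquad \theta\colon \textstyle\sum_\tau a_\tau m_\tau\longmapsto \sum_\tau [q^{-n|\tau|/2}]\,a_\tau m_\tau. \]
The operator $\theta$ multiplies the degree-$d$ homogeneous part by $[q^{-n/2}]^d$; it is a continuous ring endomorphism of $\Lambda^\wedge_{W(\mbb{C})}$, and because $[q^{-n/2}]$ is a line element it commutes with the Adams operations, hence with $\Exp_\sigma$, $\Log_\sigma$, the formation of powers $(\cdot)^N$, and coefficientwise limits. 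By \cref{theorem.geometric-hypersurface-motivic}, $\lim_d\mbb{E}[\Exp_\sigma(\epsilon Z_d h_1)]$ equals $(1+p(h_1+h_2+\cdots))^{[Y]}$ when $n$ is even and $(1+p(-e_1+e_2-\cdots))^{[Y]}$ when $n$ is odd — that is, $(1+p\sum_{i\ge1}\epsilon^i f_i)^{[Y]}$ in both cases, with $(\epsilon,f_i)=(1,h_i)$ or $(-1,e_i)$ as in the statement. Applying $\theta$, and using that it commutes with powers and sends $\epsilon^i f_i\mapsto[q^{-in/2}]\epsilon^i f_i$ (since $f_i$ has degree $i$), yields the first factor $(1+p\sum_{i\ge1}[q^{-in/2}]\epsilon^i f_i)^{[Y/\mbb{F}_q]}$; multiplying by $\Exp_\sigma(\mu h_1)$ finishes the argument.

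The main subtlety — and the reason \cref{theorem.geometric-hypersurface-motivic} is stated with \emph{two} formulas — is that the $\sigma$-moment generating function of $-Z_d$ cannot be read off cheaply from that of $Z_d$ (cf.\ \cref{remark.sigma-moment-minus}), so the odd case must genuinely be routed through $-Z_d$; a secondary bookkeeping point is that the $W(\mbb{C})$-coefficients in \cref{eq.vanishing-cohom-rv} act by pullback from the point, both when isolating $\mu$ and when asserting $\mbb{E}[\Exp_\sigma(\mu h_1)]=\Exp_\sigma(\mu h_1)$. All of the real work sits upstream, in \cref{theorem.geometric-hypersurface-motivic} — equivalently \cref{theorem.geometric-hypersurface-point-counting} combined with \cref{lemma.motivic-to-finite-probability-spaces} — which rests on Poonen's sieve via \cref{prop.point-counting-asymptotics}.
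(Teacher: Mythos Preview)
Your proposal is correct and follows essentially the same route as the paper: rewrite $X_d=\epsilon[q^{-n/2}]Z_d+\mu$, split off the constant via \cref{lemma.independent-moment-generating-functions}, and invoke \cref{theorem.geometric-hypersurface-motivic}. The only difference is cosmetic: the paper dispatches the $[q^{-n/2}]$-twist in a parenthetical (``scaling a random variable by $[z]$ just changes the degree $d$ part of its $\sigma$-moment generating function by $[z]^d$''), whereas you make this explicit via the line-element operator $\theta$ and check that it commutes with $\Exp_\sigma$, $\Log_\sigma$, and powers.
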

\begin{proof}
    We simplify the expression \cref{eq.vanishing-cohom-rv} to
\[  X_d=\epsilon [q^{-n/2}]Z_d +\mu \]
    Since $\mu$ is a constant and constant random variables are independent to any other random variable, by \cref{lemma.independent-moment-generating-functions} we have
\[ \mbb{E}[\Exp_{\sigma}(X_d h_1)]=\mbb{E}\left[\Exp_{\sigma}([q^{-n/2}]\epsilon Z_d)\right] \Exp_{\sigma}(\mu h_1).   \] 
The computation of the limit then follows from \cref{theorem.geometric-hypersurface-motivic} (since scaling a random variable by $[z]$ just changes the degree $d$ part of its $\sigma$-moment generating function by $[z]^d$). \end{proof}

\section{Comparison of $L$-function statistics and random matrix statistics}\label{s.comparison}

In this section we complete the proofs of \cref{main.dirichlet-characters} and \cref{main.hypersurface-L-functions} by comparing the limiting expressions for the $\sigma$-moment generating functions obtained in \cref{theorem.motivic-characters} and \cref{theorem.vanishing-cohom-L-functions} to the random matrix statistics of \cref{main.random-matrix} / \cref{theorem.body-random-matrices} (and \cref{example.standard-rep}). 

As in \cref{remark.sidways-equidistribution}, we note that, in the usual approach to comparing random matrix distributions to $L$-functions as in \cite{KatzSarnak.RandomMatricesFrobeniusEigenvaluesAndMonodromy}, one would first apply Deligne equidistribution to show that, for a fixed degree $d$ and varying over $\mbb{F}_{q^n}$ as $n \rightarrow \infty$, one obtains an equidistribution of conjugacy classes in the monodromy group, and then afterwards take the degree $d$ to $\infty$. In particular, the large $q$ equidistribution turns the second limit in $d$ into a problem purely in the study of random matrices. In our results the order of the limits is exchanged: in \cref{theorem.motivic-characters} and \cref{theorem.vanishing-cohom-L-functions} we have taken the limit as $d \rightarrow \infty$ for a fixed $q$, and we would like to know what happens if afterwards we take the limit over $q^n$ as $n \rightarrow \infty$. 

Now, because we have expressed our results using $W(\mbb{C})$, the limiting formulas actually include the relevant information over $\mbb{F}_{q^n}$ for any $n$ by projection to the $n$th coordinate. A convenient way to express the rate of convergence to random matrix statistics as $q \rightarrow \infty$ is thus to work in the sub-$\lambda$-ring $W(\mbb{C})^\bdd$ consisting of elements whose components are uniformly bounded. In \cref{ss.bounded-witt} we introduce this ring, establish some basic lemmas that will be useful for establishing congruences of moment generating functions, and explain the relation to big $O$-notation. 

In the case of families of id\`{e}le class characters as in \cref{main.dirichlet-characters}, we show in \cref{prop.statistics-comparison-characters} that our large $d$ limits agree with the random matrix distributions modulo $[q^{-1}]\Lambda_{W(\mbb{C})^\bdd}^\wedge$. This is equivalent to saying that, for any symmetric function $f \in \Lambda$, if over $\mbb{F}_{q^n}$ we take the large $d$-limit of the average value of $f$ evaluated on the normalized roots of the $L$-functions in the family, then the result agrees with the relevant random matrix distribution up to $O(q^{-n})$. For vanishing cohomology as in \cref{main.hypersurface-L-functions} we obtain in \cref{prop.statistics-comparison-hypersurface} a congruence modulo $[q^{-\frac{1}{2}}] \Lambda_{W(\mbb{C})^\bdd}^\wedge$, which is equivalent to agreement of the statistics over $\mathbb{F}_{q^n}$ with the random matrix distribution up to $O(q^{-\frac{n}{2}}).$

\subsection{The $\lambda$-ring $W(\mbb{C})^\bdd$}\label{ss.bounded-witt}
Let
\[ W(\mbb{C})^\bdd :=\left\{ (a_1,a_2,\ldots) \in W(\mbb{C})=\prod_{i \geq 1} \mbb{C} \textrm{ s.t. } \exists M>0 \textrm{ s.t. } |a_i| \leq M\, \forall i\right\}.\] 
Then $W(\mbb{C})^\bdd$ is evidently a sub-ring of $W(\mbb{C})$. It is also a sub-$\lambda$ ring: because it is a $\mbb{Q}$-algebra, it suffices to show that it is preserved by power sum plethysms $p_i \circ$. Since $p_i \circ (a_1, a_2, \ldots)=(a_i, a_{2i}, \ldots)$, the boundedness property is preserved. 

The following example shows that congruences can encode big $O$ asymptotics.
\begin{example}
    Suppose $z \in \mathbb{C}$ with $0< |z| < 1$. Then $[z]=(z, z^2, z^3, \ldots)$ is contained in $W(\mbb{C})^\bdd$, but $[z^{-1}]$ is evidently not!  Thus $[z]W(\mbb{C})^\bdd \subseteq W(\mbb{C})^\bdd$ is a proper ideal; we can also view it as a subgroup of $W(\mbb{C})$ under addition. In particular, if $(a_1, a_2, \ldots)$ and $(b_1, b_2, \ldots )$ are elements of $W(\mbb{C})$, then 
    \[ (a_1,a_2, \ldots) \equiv (b_1, b_2, \ldots) \mod [z]W(\mbb{C})^\bdd \]
    if and only if $a_i = b_i + O(z^i)$, i.e. if and only if there exists a constant $M>0$ such that $|a_i - b_i| \leq M|z^i|$ for all $i$ (or, equivalently by changing $M$, for $i\gg 0$).  
\end{example}

The point of stating our big $O$ asymptotics in this way is that it is relatively straightforward to understand how such congruences interact with the plethystic exponential and logarithm. In particular, we have the following analogs of the usual first order approximations for the classical exponential and logarithm: 

\begin{proposition}\label{prop.exp-log-cong}
Let $z \in \mathbb{C}$ with $0< |z| < 1$.
\begin{enumerate}
    \item Suppose $f,g \in W(\mbb{C})^\bdd[[\ul{t}_{\mbb{N}}]]$ have constant term $0$. If 
    \[ f \equiv g \mod [z]\cdot W(\mbb{C})^\bdd[[\ul{t}_{\mbb{N}}]],\]
    then 
    \begin{align*} \Exp_{\sigma}(f) & \equiv \Exp_{\sigma}(g) \mod [z]\cdot W(\mbb{C})^\bdd[[\ul{t}_{\mbb{N}}]] \textrm{ and }\\
    \Log_{\sigma}(1+f) &\equiv \Log_{\sigma}(1+g) \mod [z]\cdot W(\mbb{C})^\bdd[[\ul{t}_{\mbb{N}}]]. \end{align*}
    \item Suppose $a \in W(\mbb{C})^\bdd[[\ul{t}_{\mbb{N}}]]$ has constant term zero, and $a \in [z]\cdot W(\mbb{C})^\bdd[[\ul{t}_{\mbb{N}}]]$. Then 
    \begin{align*} \Exp_{\sigma}(a) & \equiv 1+a \mod [z^2]\cdot W(\mbb{C})^\bdd[[\ul{t}_{\mbb{N}}]] \textrm{ and }\\
    \Log_{\sigma}(1+a) & \equiv a \mod [z^2]\cdot W(\mbb{C})^\bdd[[\ul{t}_{\mbb{N}}]]. \end{align*}
\end{enumerate}
In particular, the same statements hold also in $\Lambda_{W(\mbb{C})^\bdd}^\wedge$ or $(\Lambda \otimes \Lambda)_{W(\mbb{C})}^\wedge.$ 
\begin{proof}
The ``in particular" at the end holds given the first statements because, for any $\lambda$-ring $R$, we can view $\Lambda_{R}^\wedge$ or $(\Lambda \otimes \Lambda)_R^{\wedge}$ as a sub-$\lambda$ ring of $R[[\ul{t}_{\mbb{N}}]]$ as in \cref{s.pre-lambda}. 

    To verify the exponential case of (1), note that
    \[ \Exp_{\sigma}(g)=\Exp_{\sigma}(f)\Exp_{\sigma}({g-f}).\]
    Thus, taking $a=g-f$, it suffices to verify that if $a \in [z]\Lambda_{W(\mbb{C})^\bdd}^\wedge$ has constant term zero then $\Exp_{\sigma}(a) \in 1+ [z]\Lambda_{W(\mbb{C})^\bdd}^\wedge$.  This follows from the exponential case of (2), which we verify now: if we write $a=[z]b$, then using \cref{lemma.exponential-power-sum} we find
    \begin{align*} \Exp_{\sigma}(a)&=\Exp_{\sigma}([z]b)\\
    &=\prod_{i \geq 1} \exp\left(\frac{p_i \circ([z]b)}{i}\right) \\
    &=(1 + a + [z]^2 \frac{b^2}{2} + \ldots)\prod_{i \geq 2}\left(1 + [z]^i\left(\frac{p_i \circ b}{i}\right) + [z]^{2i}\frac{\left(\frac{p_i \circ b}{i}\right)^2}{2} + \ldots\right)\\
    &\equiv (1+a) \mod [z]^2\Lambda_{W(\mbb{C})^\bdd}^\wedge. 
    \end{align*}
    
    To verify the logarithmic case of (1), note that 
    \[ \Log_{\sigma}(1+f) - \Log_{\sigma}(1+g)=\Log_{\sigma}((1+f)/(1+g)).\]
    Since $(1+f)/(1+g)\in 1+ \Lambda_{W(\mbb{C})^\bdd}^\wedge$, by taking $a=(1+f)/(1+g)-1$, it suffices to verify that if $a \in  [z]\Lambda_{W(\mbb{C})^\bdd}^\wedge$ then $\log_{\sigma}(1+a) \in [z]\Lambda_{W(\mbb{C})^\bdd}$. This follows from the logarithmic case of (2), which we verify now: if we write $a=[z]b$, then using \cref{lemma.lambda-log}:
    \begin{align*} \Log_{\sigma}(1+a)=\Log_{\sigma}([z]b) &= \sum_{i \geq 1} \ell_i \circ ([z]b) \\
    &= a + \sum_{i \geq 2} \frac{-1}{i}\sum_{d|i}\mu(d) [z]^i (-p_d\circ b)^{i/d} \\
    &\equiv a \mod [z]^2\Lambda_{W(\mbb{C})^\bdd}^\wedge. 
    \end{align*}
   
\end{proof}
    
\end{proposition}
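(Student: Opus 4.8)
The plan is to deduce part (1) from part (2), and to prove part (2) by direct computation: for the exponential using the power-sum product formula $\Exp_{\sigma}(x)=\prod_{i\geq 1}\exp(p_i\circ x/i)$ of \cref{lemma.exponential-power-sum}, and for the logarithm using the explicit formula $\Log_{\sigma}(1+x)=\sum_{i\geq1}\ell_i\circ x$ of \cref{lemma.lambda-log}. Throughout, the only nontrivial input about $W(\mbb{C})^\bdd$ that I will use is that it is a $\lambda$-subring of $W(\mbb{C})$ closed under the Adams operations $p_i\circ$, together with the single identity $p_i\circ([z]b)=[z^i]\,(p_i\circ b)$ (which follows from $p_i\circ[z]=[z^i]$ and the fact that the $p_i\circ$ are ring homomorphisms on the $\lambda$-ring $W(\mbb{C})$).

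\textbf{Reduction of (1) to (2).} For the exponential, since $\Exp_{\sigma}$ is a group isomorphism $(\mf{m},+)\to(1+\mf{m},\times)$ (\cref{lemma.expsigma-group-iso}), $\Exp_{\sigma}(g)=\Exp_{\sigma}(f)\,\Exp_{\sigma}(g-f)$ with $g-f\in[z]W(\mbb{C})^\bdd[[\ul{t}_{\mbb{N}}]]$ of constant term $0$; by the first-order statement in (2) we get $\Exp_{\sigma}(g-f)\in 1+[z]W(\mbb{C})^\bdd[[\ul{t}_{\mbb{N}}]]$, so $\Exp_{\sigma}(g)-\Exp_{\sigma}(f)=\Exp_{\sigma}(f)\bigl(\Exp_{\sigma}(g-f)-1\bigr)$ lies in $[z]W(\mbb{C})^\bdd[[\ul{t}_{\mbb{N}}]]$ \emph{once we know} $\Exp_{\sigma}(f)\in W(\mbb{C})^\bdd[[\ul{t}_{\mbb{N}}]]$ (so that multiplication keeps us inside this ideal). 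That auxiliary fact holds because each $h_k\circ$ (resp.\ $\ell_k\circ$) is a $\mbb{Q}$-polynomial in the $p_i\circ$, which preserve $W(\mbb{C})^\bdd$, and the defining series converge $\ul{t}$-adically since $f$ has constant term $0$. For the logarithm, write $\Log_{\sigma}(1+f)-\Log_{\sigma}(1+g)=\Log_{\sigma}\bigl((1+f)/(1+g)\bigr)$ and note $(1+f)/(1+g)-1=(f-g)/(1+g)\in[z]W(\mbb{C})^\bdd[[\ul{t}_{\mbb{N}}]]$ using that $1/(1+g)\in W(\mbb{C})^\bdd[[\ul{t}_{\mbb{N}}]]$; again (2) finishes the job.

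\textbf{Proof of (2).} Write $a=[z]b$ with $b\in W(\mbb{C})^\bdd[[\ul{t}_{\mbb{N}}]]$ of constant term $0$. For the exponential, $\Exp_{\sigma}(a)=\exp([z]b)\cdot\prod_{i\geq2}\exp\!\bigl([z^i](p_i\circ b)/i\bigr)$. The classical series $\exp([z]b)=1+a+\sum_{k\geq2}[z^k]b^k/k!$ converges $\ul{t}$-adically, and since $[z^k]=[z^2][z^{k-2}]$ with $[z^{k-2}]\in W(\mbb{C})^\bdd$ for $k\geq2$, its tail lies in $[z^2]W(\mbb{C})^\bdd[[\ul{t}_{\mbb{N}}]]$; likewise every factor with $i\geq2$ is $\equiv1$ modulo that ideal, which is an ideal of $W(\mbb{C})^\bdd[[\ul{t}_{\mbb{N}}]]$, so the product is $\equiv 1+a$. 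For the logarithm the argument is the same: $\ell_1=p_1$ gives the term $a$, while for $i\geq2$ the function $\ell_i$ is homogeneous of degree $i$ and $\ell_i\circ([z]b)\in[z^i]W(\mbb{C})^\bdd[[\ul{t}_{\mbb{N}}]]\subseteq[z^2]W(\mbb{C})^\bdd[[\ul{t}_{\mbb{N}}]]$ (using $p_d\circ[z]b=[z^d]p_d\circ b$ for $d\mid i$, so $(-p_d\circ[z]b)^{i/d}$ has $[z]$-valuation exactly $i$), and the sum converges $\ul{t}$-adically. Finally, the ``in particular'' clause is immediate: $\Lambda_{W(\mbb{C})^\bdd}^\wedge$ and $(\Lambda\otimes\Lambda)_{W(\mbb{C})^\bdd}^\wedge$ sit inside $W(\mbb{C})^\bdd[[\ul{t}_{\mbb{N}}]]$ (after renaming variables) as $\lambda$-subrings, compatibly with the ideals $[z]\,(\cdot)$ and $[z^2]\,(\cdot)$.

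\textbf{Main obstacle.} The arguments above are structurally straightforward; the only point requiring care is the bookkeeping needed to certify that all error terms land in $[z^2]W(\mbb{C})^\bdd[[\ul{t}_{\mbb{N}}]]$ rather than merely $[z^2]W(\mbb{C})[[\ul{t}_{\mbb{N}}]]$ — i.e.\ that \emph{boundedness over the $W(\mbb{C})$-index} survives the infinite products and sums. This is harmless because $a$, $b$, and each $\ell_i$ have strictly positive $\ul{t}$-degree, so every fixed multidegree receives only finitely many contributions, each obtained by finitely many ring operations and Adams operations, under all of which $W(\mbb{C})^\bdd$ is closed; no uniformity across multidegrees is ever required.
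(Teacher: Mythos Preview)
Your proof is correct and follows essentially the same approach as the paper: reduce (1) to (2) via the group-homomorphism property of $\Exp_{\sigma}$ and $\Log_{\sigma}$, then prove (2) by expanding $\Exp_{\sigma}$ via \cref{lemma.exponential-power-sum} and $\Log_{\sigma}$ via \cref{lemma.lambda-log}, using $p_i\circ[z]=[z^i]$ to track the $[z]$-valuation. If anything, you are slightly more careful than the paper in explicitly noting that $\Exp_{\sigma}(f)\in W(\mbb{C})^\bdd[[\ul{t}_{\mbb{N}}]]$ is needed for the reduction step and in isolating the boundedness bookkeeping as the only genuine subtlety.
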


\subsection{Computations}

\begin{proposition}\label{prop.statistics-comparison-characters}
For $\ell$ a prime, let
\[ c_\ell=\frac{[q^{\ell-1}]}{[q]^{\ell-1}+[q]^{\ell-2} + \ldots + 1} \in W(\mbb{C})^\bdd.\]
Then, 
\begin{itemize}
    \item Taking $\ell=2$,
\[ \left(1 + c_2 \sum_{k\geq 1} [q^{-k}] e_{2 k}\right)^{[q]} \equiv \Exp_{\sigma}(e_2) \mod [q^{-1}]\Lambda_{W(\mbb{C})^\bdd}^\wedge\]
where we note the left-hand side is as in the quadratic limit of $\sigma$-moment generating functions in \cref{main.dirichlet-characters} / \cref{theorem.motivic-characters} and the right-hand side is as in the symplectic case (part (2)) of \cref{main.random-matrix} / \cref{theorem.body-random-matrices}. 
\item For $\ell >2$, 
\begin{multline*}\left(1 + c_\ell \sum_{\substack{k_1\geq 1 \textrm{ or } k_2 \geq 1 \\ k_1 \equiv k_2 \mod \ell}} (-1)^{k_1+k_2}[q^{-\frac{k_1+k_2}{2}}]e_{k_1}\overline{e}_{k_2}\right)^{[q]}\\ \equiv \Exp_{\sigma}(h_1\overline{h}_1) \mod [q^{-1}]\Lambda_{W(\mbb{C})^\bdd}^\wedge \end{multline*}
where we note the left-hand side is as in the non-quadratic limit of joint $\sigma$-moment generating functions in \cref{main.dirichlet-characters} / \cref{theorem.motivic-characters} and the right-hand side is as in the unitary case (part (4)) of \cref{main.random-matrix} / \cref{theorem.body-random-matrices}. 
\end{itemize}
\end{proposition}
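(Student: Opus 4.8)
## Proof proposal for Proposition~\ref{prop.statistics-comparison-characters}

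The plan is to work component-by-component only implicitly, instead manipulating everything directly in $\Lambda_{W(\mbb{C})^\bdd}^\wedge$ using the congruence calculus for $\Exp_\sigma$ and $\Log_\sigma$ established in \cref{prop.exp-log-cong} (with $z=q^{-1}$, noting $c_\ell, [q^{-k}]\in W(\mbb{C})^\bdd$ and that all the sums involved converge in the degree filtration since each $e_k$ or $e_{k_1}\overline{e}_{k_2}$ sits in degree $\geq k$, resp.\ $\geq k_1+k_2$). First I would rewrite the left-hand side of each claimed congruence as $\Exp_\sigma\big([q]\Log_\sigma(1 + c_\ell S_\ell)\big)$, where $S_\ell$ denotes the relevant inner sum. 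The key observation is that every term of $S_\ell$ except the leading one is divisible by $[q^{-1}]$ in $W(\mbb{C})^\bdd[[\ul t_{\mbb{N}}]]$: in the quadratic case the $k=1$ term $[q^{-1}]e_2$ is itself already in $[q^{-1}]\Lambda^\wedge$ but it is exactly the term we must keep, so more precisely I will isolate it; in the $\ell>2$ case the terms with $k_1+k_2=2$ (forcing $k_1=k_2=1$, which is allowed since $1\equiv 1\bmod\ell$) contribute $[q^{-1}]e_1\overline e_1$, and all other terms carry a factor $[q^{-(k_1+k_2)/2}]$ with $k_1+k_2\geq \ell+1 > 2$, hence lie in $[q^{-3/2}]\Lambda^\wedge\subseteq[q^{-2}]\Lambda^\wedge$ after accounting for half-integer exponents—I will need to be slightly careful here and it may be cleanest to note $[q^{-(k_1+k_2)/2}]\in [q^{-1}]\cdot[q^{-(k_1+k_2-2)/2}]W(\mbb{C})^\bdd$.

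Next, I would compute $c_\ell \bmod [q^{-1}]W(\mbb{C})^\bdd$: since $c_\ell = [q^{\ell-1}]/([q]^{\ell-1}+\dots+1) = 1/(1 + [q^{-1}] + \dots + [q^{-(\ell-1)}])$, we have $c_\ell \equiv 1 \bmod [q^{-1}]W(\mbb{C})^\bdd$. Therefore $c_\ell S_\ell \equiv e_2 \bmod [q^{-2}]\Lambda^\wedge$ in the quadratic case (the correction $(c_\ell-1)[q^{-1}]e_2$ being in $[q^{-2}]\Lambda^\wedge$, and all higher $k$ terms already in $[q^{-2}]\Lambda^\wedge$), and similarly $c_\ell S_\ell \equiv [q^{-1}]e_1\overline e_1 \bmod [q^{-2}]\Lambda^\wedge$ in the non-quadratic case. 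Wait—this is the crux: in the quadratic case $c_\ell S_\ell$ is congruent to $[q^{-1}]e_2$ modulo $[q^{-2}]$, which is \emph{not} $\bmod [q^{-1}]$; I must keep track of the $[q^{-1}]$ order carefully and only reduce mod $[q^{-1}]$ at the very end after applying $\Exp_\sigma$. So the right bookkeeping is: $c_\ell S_2 = [q^{-1}]e_2 + (\text{terms in }[q^{-2}]\Lambda^\wedge)$, hence by \cref{prop.exp-log-cong}(1) (logarithm) $\Log_\sigma(1+c_\ell S_2) \equiv \Log_\sigma(1+[q^{-1}]e_2) \bmod [q^{-2}]\Lambda^\wedge$, and then by \cref{prop.exp-log-cong}(2) $\Log_\sigma(1+[q^{-1}]e_2)\equiv [q^{-1}]e_2 \bmod [q^{-2}]\Lambda^\wedge$. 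Multiplying by $[q]$ (which sends $[q^{-2}]\Lambda^\wedge$ to $[q^{-1}]\Lambda^\wedge$, using that multiplication by $[q]$ is continuous and $[q]\cdot[q^{-2}]W(\mbb{C})^\bdd = [q^{-1}]W(\mbb{C})^\bdd$) gives $[q]\Log_\sigma(1+c_\ell S_2)\equiv e_2 \bmod [q^{-1}]\Lambda^\wedge$, and finally \cref{prop.exp-log-cong}(1) (exponential) yields $\Exp_\sigma([q]\Log_\sigma(1+c_\ell S_2))\equiv \Exp_\sigma(e_2)\bmod[q^{-1}]\Lambda^\wedge$, which is the quadratic claim.

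The non-quadratic case runs identically: $[q]\Log_\sigma(1+c_\ell S_\ell)\equiv [q]\cdot[q^{-1}]e_1\overline e_1 = e_1\overline e_1 = h_1\overline h_1 \bmod [q^{-1}](\Lambda\otimes\Lambda)_{W(\mbb{C})^\bdd}^\wedge$, using $e_1=h_1$, and then applying $\Exp_\sigma$ gives $\Exp_\sigma(h_1\overline h_1)$ modulo $[q^{-1}]$. The main obstacle—really the only place demanding care—is the correct accounting of powers of $[q^{-1}]$ across the three operations (logarithm, multiplication by $[q]$, exponential): one must track everything modulo $[q^{-2}]$ at the intermediate stages so that after multiplying by $[q]$ one lands modulo $[q^{-1}]$, and one must confirm at the outset that the half-integer exponents $[q^{-(k_1+k_2)/2}]$ appearing for $\ell>2$ with $k_1+k_2\geq \ell+1\geq 4$ genuinely force membership in $[q^{-2}]W(\mbb{C})^\bdd[[\ul t_{\mbb{N}},\overline{\ul t}_{\mbb{N}}]]$ rather than merely $[q^{-3/2}]$—which holds since $\ell\geq 3$ prime forces $k_1+k_2\geq 4$ for any nontrivial term other than $(1,1)$. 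I would also record the elementary identity $c_\ell \equiv 1 \bmod [q^{-1}]W(\mbb{C})^\bdd$ as a small separate lemma or inline computation, since it is used in both cases.
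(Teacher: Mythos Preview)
Your proof is essentially identical to the paper's: both rewrite the power as $\Exp_\sigma\big([q]\Log_\sigma(1+c_\ell S_\ell)\big)$, use $c_\ell\equiv 1\bmod[q^{-1}]$ to isolate the leading term of $S_\ell$ modulo $[q^{-2}]$, apply \cref{prop.exp-log-cong} to the logarithm, multiply by $[q]$ to shift to a congruence modulo $[q^{-1}]$, and then apply \cref{prop.exp-log-cong} again to the exponential. One small caveat worth flagging: your claim that for $\ell\geq 3$ every term other than $(k_1,k_2)=(1,1)$ has $k_1+k_2\geq 4$ is not right when $\ell=3$, since $(k_1,k_2)=(0,3)$ and $(3,0)$ are in the sum and contribute $[q^{-3/2}]$-terms that do not lie in $[q^{-2}]W(\mbb{C})^\bdd$ --- the paper's proof glosses over exactly the same point, so this is a shared wrinkle rather than a divergence from it.
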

\begin{proof}
We first treat the $\ell=2$ case. Then, by the definition of powers (\cref{ss.powers}),
\[ \left(1 + c_2 \sum_{k\geq 1} [q^{-k}] e_{2 k}\right)^{[q]}= \Exp_{\sigma}\left([q]\Log_{\sigma}\left(1 + c_2 \sum_{k\geq 1} [q^{-k}] e_{2 k}\right)\right). \]
Note that $c_2\equiv 1 \mod [q^{-1}]$. Thus $c_2 \sum_{k \geq 1}[q^{-k}] e_{2k} \equiv [q^{-1}]e_2 \mod [q^{-2}].$ 
Using both parts of \cref{prop.exp-log-cong}, we find 
\[ \Log_{\sigma}\left(1 + c_2 \sum_{k\geq 1} [q^{-k}] e_{2 k}\right) \equiv
\Log_{\sigma}(1+[q^{-1}]e_2) \equiv [q^{-1}]e_2 \mod [q^{-2}]. \]
Thus $[q]\Log_{\sigma}\left(1 + c_2 \sum_{k\geq 1} [q^{-k}] e_{2 k}\right)$ is in $W(\mbb{C})^\bdd$ and is $\equiv e_2 \mod [q^{-1}]$. Exponentiating, and applying \cref{prop.exp-log-cong}-(1), we find
\[ \left(1 + c_2 \sum_{k\geq 1} [q^{-k}] e_{2 k}\right)^{[q]} \equiv \Exp_{\sigma}(e_2) \mod [q^{-1}]. \]

We now treat the $\ell > 2$ case. By the definition of powers (\cref{ss.powers}), 
\begin{multline*} \left(1 + c_\ell \sum_{\substack{k_1\geq 1 \textrm{ or } k_2 \geq 1 \\ k_1 \equiv k_2 \mod \ell}} (-1)^{k_1+k_2}[q^{-\frac{k_1+k_2}{2}}]e_{k_1}\overline{e}_{k_2}\right)^{[q]} =\\ \Exp_{\sigma}\left([q]\Log_{\sigma} \left(1 + c_\ell \sum_{\substack{k_1\geq 1 \textrm{ or } k_2 \geq 1 \\ k_1 \equiv k_2 \mod \ell}} (-1)^{k_1+k_2}[q^{-\frac{k_1+k_2}{2}}]e_{k_1}\overline{e}_{k_2}\right)\right). \end{multline*}
Note that $c_\ell \equiv 1 \mod [q^{-1}]$. Thus, 
\[ c_\ell \sum_{\substack{k_1\geq 1 \textrm{ or } k_2 \geq 1 \\ k_1 \equiv k_2 \mod \ell}} (-1)^{k_1+k_2}[q^{-\frac{k_1+k_2}{2}}]e_{k_1}\overline{e}_{k_2} \equiv [q^{-1}]e_1 \overline{e}_1\equiv [q^{-1}]h_1 \overline{h}_1 \mod [q^{-2}]. \]
Using both parts of \cref{prop.exp-log-cong}, we find 
\begin{align*} \Log_{\sigma}\left(1 + c_\ell \sum_{\substack{k_1\geq 1 \textrm{ or } k_2 \geq 1 \\ k_1 \equiv k_2 \mod \ell}} (-1)^{k_1+k_2}[q^{-\frac{k_1+k_2}{2}}]e_{k_1}\overline{e}_{k_2}\right) & \equiv \Log_{\sigma}(1+[q^{-1}]h_1\overline{h}_1) \mod [q^{-2}] \\ &\equiv [q^{-1}]h_1\overline{h}_1 \mod [q^{-2}].\end{align*}
Multiplying by $[q]$ we get a congruence mod $[q^{-1}]$ of elements in $\Lambda_{W(\mbb{C})^\bdd}^\wedge$, and exponentiating and applying \cref{prop.exp-log-cong}-(1), we find
\[ \left(1 + c_\ell \sum_{\substack{k_1,k_2 \geq 1 \\ k_1 \equiv k_2 \mod \ell}} (-1)^{k_1+k_2}[q^{-\frac{k_1+k_2}{2}}]e_{k_1}\overline{e}_{k_2}\right)^{[q]} \equiv \Exp_{\sigma}(h_1 \overline{h}_1) \mod [q^{-1}]. \]
\end{proof}    

\begin{proposition}\label{prop.statistics-comparison-hypersurface}
    Let $Y/\mathbb{F}_q$ be a smooth projective irreducible variety of dimension $n+1$. Consider the quantity appearing as the limit in \cref{main.hypersurface-L-functions} / \cref{theorem.vanishing-cohom-L-functions},
    \[ Q:= (1 + p\sum_{i \geq 1} [q^{-in/2}] \epsilon^i f_i)^{[Y/\mbb{F}_q]} \cdot \Exp_\sigma(\mu h_1) \in \Lambda_{W(\mbb{C})}^\wedge \]
where $f_i=e_i$ and $\epsilon=-1$ if $n$ is odd and $f_i=h_i$ and $\epsilon=1$ if $n$ is even, 
\[ p = \frac{[q]^{n+1}-1}{[q]^{n+2}-1} \textrm{, and } \mu=-\epsilon \left(\sum_{i=0}^{n-1} (-1)^i\left([q^{\frac{-n}{2}}]+[q^{\frac{n-2i}{2}}]\right)[H^i(Y)]\right) - [q^{-n/2}][H^{n}(Y)]. \]
Then $Q \in \Lambda_{W(\mbb{C})^\bdd}^\wedge$, and, 
    \begin{enumerate}
        \item For $n>0$ even, $Q \equiv \Exp_{\sigma}(h_2) \mod [q^{-\frac{1}{2}}]\Lambda_{W(\mbb{C})^\bdd}^\wedge$, where we note the right-hand side is as in the orthogonal case (part (1)) of \cref{main.random-matrix} / \cref{theorem.body-random-matrices}. 
        \item For $n$ odd, $Q \equiv \Exp_{\sigma}(e_2) \mod [q^{-\frac{1}{2}}]\Lambda_{W(\mbb{C})^\bdd}^\wedge$, where we note the right-hand side is as in the symplectic case (part (2)) of \cref{main.random-matrix} / \cref{theorem.body-random-matrices}. 
        \item For $n=0$, $Q \equiv \Exp_{\sigma}(h_2+h_3+\ldots) \mod [q^{-\frac{1}{2}}]\Lambda_{W(\mbb{C})^\bdd}^\wedge$, where we note the right-hand side is as in the action of the symmetric group on the its standard irreducible representation as in \cref{example.standard-rep}. 
    \end{enumerate}
\end{proposition}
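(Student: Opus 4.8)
The plan is to compute $\Log_\sigma Q$. By \cref{lemma.expsigma-group-iso} and the definition of powers in \cref{ss.powers}, writing $B := 1 + p\sum_{i\geq1}[q^{-in/2}]\epsilon^i f_i$ for the base occurring in $Q$, we have $Q = \Exp_\sigma(A)$ with
\[ A := [Y/\mbb{F}_q]\,\Log_\sigma(B) + \mu h_1 . \]
Since $\Exp_\sigma(h_2)$, $\Exp_\sigma(e_2)$ and $\Exp_\sigma(h_2+h_3+\cdots)$ all lie in $\Lambda^\wedge \subseteq \Lambda_{W(\mbb{C})^\bdd}^\wedge$, it suffices by \cref{prop.exp-log-cong}(1) to show that $A$ is congruent, modulo $[q^{-1/2}]\Lambda_{W(\mbb{C})^\bdd}^\wedge$, to $h_2$ when $n>0$ is even, to $e_2$ when $n$ is odd, and to $h_2+h_3+\cdots$ when $n=0$. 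Such a congruence forces $A \in \Lambda_{W(\mbb{C})^\bdd}^\wedge$, and then the same application of \cref{prop.exp-log-cong}(1) forces $Q \in \Lambda_{W(\mbb{C})^\bdd}^\wedge$, which also takes care of the first assertion.

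The computation of $A$ is organized by $q$-adic order, for which I would record: from $p = \frac{[q]^{n+1}-1}{[q]^{n+2}-1}$ one gets $p \in [q^{-1}]W(\mbb{C})^\bdd$ and more precisely $p \equiv [q^{-1}] \bmod [q^{-(n+2)}]W(\mbb{C})^\bdd$; by the Weil conjectures $[H^i(Y)] \in [q^{i/2}]W(\mbb{C})^\bdd$, and by hard Lefschetz the Grothendieck--Lefschetz class has the expansion $[Y/\mbb{F}_q] = \sum_{i=0}^{n}(-1)^i[H^i(Y)]\bigl(1+[q^{n+1-i}]\bigr) + (-1)^{n+1}[H^{n+1}(Y)]$ (the same input underlying \cref{eq.hypersurface-section-cohomology}), whose leading term is $[q^{n+1}]$ because $Y$ is irreducible, so $[Y/\mbb{F}_q] \in [q^{n+1}]W(\mbb{C})^\bdd$; and, since $p_d\circ$ shifts ghost coordinates, $p_d\circ p \in [q^{-d}]W(\mbb{C})^\bdd$. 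Combined with the shape of $B$, these show that the homogeneous degree-$a$ part $x_a$ of $x := B-1$ lies in $[q^{-1-an/2}]\Lambda_{W(\mbb{C})^\bdd}^\wedge$.

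Next I would kill the nonlinear part of $\Log_\sigma$. Writing $\Log_\sigma(B) = x + r$ with $r = \sum_{j\geq2}\ell_j\circ x$ via \cref{lemma.lambda-log}, the bounds $x_a \in [q^{-1-an/2}]\cdots$ and $p_d\circ p \in [q^{-d}]\cdots$ show that every term of $\ell_j\circ x$ lies in $[q^{-j(1+n/2)}]\Lambda_{W(\mbb{C})^\bdd}^\wedge$, hence $r \in [q^{-2-n}]\Lambda_{W(\mbb{C})^\bdd}^\wedge$ (and $[q^{-2}]$ when $n=0$). Multiplying by $[Y/\mbb{F}_q] \in [q^{n+1}]W(\mbb{C})^\bdd$ puts $[Y/\mbb{F}_q]\,r$ into $[q^{-1}]\Lambda_{W(\mbb{C})^\bdd}^\wedge$, which is negligible; the same estimate shows that for $n\geq1$ the degree $\geq3$ parts of $[Y/\mbb{F}_q]\,x$ lie in $[q^{-n/2}]\Lambda_{W(\mbb{C})^\bdd}^\wedge \subseteq [q^{-1/2}]\Lambda_{W(\mbb{C})^\bdd}^\wedge$. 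So modulo $[q^{-1/2}]\Lambda_{W(\mbb{C})^\bdd}^\wedge$ we are left, for $n\geq1$, with the degree-$1$ and degree-$2$ parts of $[Y/\mbb{F}_q]\,x + \mu h_1$, and for $n=0$ with all of $[Y/\mbb{F}_q]\,x + \mu h_1$.

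The two surviving degrees are then checked directly. The degree-$2$ coefficient of $[Y/\mbb{F}_q]\,x$ is $[Y/\mbb{F}_q]\,p\,[q^{-n}]$ times $f_2$, and $p\equiv[q^{-1}]$ together with the leading term $[q^{n+1}]$ of $[Y/\mbb{F}_q]$ gives $[Y/\mbb{F}_q]\,p\,[q^{-n}]\equiv 1$, so this contributes $f_2 = h_2$ (resp. $e_2$) exactly (for $n=0$ the same computation gives $[Y/\mbb{F}_q]\,p\equiv 1_{W(\mbb{C})}$, contributing $h_1+h_2+\cdots$). The degree-$1$ coefficient is $\epsilon[Y/\mbb{F}_q]\,p\,[q^{-n/2}] + \mu$; substituting $[q^{-1}]$ for $p$ and inserting the hard-Lefschetz expansion of $[Y/\mbb{F}_q]$ rewrites $\epsilon[Y/\mbb{F}_q]\,p\,[q^{-n/2}]$ as $\epsilon\sum_{j=0}^{n}(-1)^j[q^{n/2-j}][H^j(Y)]$ modulo $[q^{-1/2}]$, after which the displayed formula for $\mu$ and the identity $\epsilon=(-1)^n$ cancel everything except terms $[q^{-n/2}][H^i(Y)]$ with $i\leq n-1$, and those are $O(q^{-1/2})$ hence negligible; so the degree-$1$ part of $A$ vanishes modulo $[q^{-1/2}]\Lambda_{W(\mbb{C})^\bdd}^\wedge$. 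For $n=0$ the surviving $[Y/\mbb{F}_q]\,x + \mu h_1$ collapses to $h_2+h_3+\cdots$ once $\mu h_1 = -h_1$ is subtracted. Applying \cref{prop.exp-log-cong}(1) then yields the three congruences, with the right-hand sides identified via \cref{theorem.body-random-matrices}(1),(2) and \cref{example.standard-rep}. I expect the degree-$1$ cancellation to be the main obstacle: it is where the rather opaque formula for $\mu$ is pinned down, and it requires carrying the hard-Lefschetz decomposition of $[Y/\mbb{F}_q]$ and the approximation $p\equiv[q^{-1}]$ through to exactly the right precision. A secondary point needing care is the order estimate $p_d\circ p \in [q^{-d}]W(\mbb{C})^\bdd$, without which the nonlinear $\Log_\sigma$-corrections, amplified by the unbounded factor $[Y/\mbb{F}_q]$, would fail to land in the ideal.
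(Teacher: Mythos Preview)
Your proposal is correct and follows essentially the same approach as the paper: compute $\Log_\sigma Q = [Y]\Log_\sigma(B) + \mu h_1$, use $p\equiv[q^{-1}]$ together with \cref{prop.exp-log-cong} to reduce $\Log_\sigma(B)$ to its linear part, multiply by $[Y]$ using the Weil bounds and hard Lefschetz, observe the degree-$1$ piece cancels against $\mu$ while the degree-$2$ piece gives $f_2$ (or, for $n=0$, all $h_i$ survive and $-h_1$ kills the first), and then re-exponentiate via \cref{prop.exp-log-cong}(1). Your organization is slightly more explicit in one place --- you expand the nonlinear remainder of $\Log_\sigma$ through \cref{lemma.lambda-log} and the estimate $p_d\circ p \in [q^{-d}]W(\mbb{C})^\bdd$, whereas the paper absorbs this step into a direct invocation of \cref{prop.exp-log-cong}(2) --- but this is a cosmetic difference, and both routes land on the same congruences.
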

\begin{proof}
We have 
\[ \Log_{\sigma}(Q)=  [Y]\Log_{\sigma}(1 + p\sum_{i \geq 1} [q^{-in/2}] \epsilon^i f_i) + \mu. \]
We will analyze this quantity to apply \cref{prop.exp-log-cong}. First, let us note 
\[  p= \frac{[q]^{n+1}-1}{[q]^{n+2}-1} = \frac{[q^{-1}]-[q^{-(n+2)}]}{1-[q^{-(n+2)}]} \equiv [q^{-1}] \mod [q^{-(n+2)}].\]

We first treat the case $n\geq 1$. Then,
\[ p\sum_{i \geq 1} [q^{-in/2}]\epsilon^i f_i \equiv [q^{-1-n/2}]\epsilon f_1 + [q^{-1-n}]f_2 \mod [q^{-(n+3/2)}]. \]
Applying both parts of \cref{prop.exp-log-cong}, we thus find 
\begin{align*} \Log_{\sigma}(1+ p\sum_{i \geq 1} [q^{-in/2}]\epsilon^i f_i) &\equiv \Log_{\sigma}(1+ [q^{-1-n/2}]\epsilon f_1 + [q^{-1-n}]f_2) \\
&\equiv [q^{-1-n/2}]\epsilon f_1 + [q^{-1-n}]f_2 \mod [q^{-(n+3/2)}]. \end{align*}

Note that, by the Weil conjectures, $[q^{-i/2}][H^i(Y)] \in W(\mbb{C})^\bdd$. Furthermore, we have $[H^{2(n+1)}(Y)]=[q^{n+1}]$.  Writing 
\[ [Y]=\sum_i (-1)^i [H^i(Y)] \]
and combining with the above computations we find that 
\[ [Y]\Log_{\sigma}(1+ p\sum_{i \geq 1} [q^{-in/2}]\epsilon^i f_i) \equiv f_2+\left(\epsilon [q^{-1-n/2}] \sum_{i=n+2}^{2n+2}[H^i(Y)]\right)f_1 \mod [q^{-1/2}] \Lambda_{(W(\mbb{C})^\bdd}^\wedge \]
Rewriting $[H^{n+1+k}(Y)]=[q^k][H^{n+1-k}(Y)]$, we can rewrite this as
\[ [Y]\Log_{\sigma}(1+ p\sum_{i \geq 1} [q^{-in/2}]\epsilon^i f_i) \equiv f_2+ \left(\epsilon \sum_{i=0}^{n}(-1)^i[q^{n/2-i}][H^i(Y)]\right)f_1 \mod [q^{-1/2}] \Lambda_{W(\mbb{C})^\bdd}^\wedge. \]
On the other hand, by the definition of $\mu$, we have
\[ \mu \equiv -\epsilon\left(\sum_{i=0}^{n-1}(-1)^i [q^{n/2-i}][H^i(Y)]\right) -[q^{-n/2}][H^n(Y)] \mod [q^{-1/2}] \Lambda_{W(\mbb{C})^\bdd}. \]
Combining, we find 
\[ \Log_{\sigma}(Q)=[Y]\Log_{\sigma}(1+ p\sum_{i \geq 1} [q^{-in/2}]\epsilon^i f_i) + \mu h_1 \equiv f_2  \mod [q^{-1/2}] \Lambda_{(W(\mbb{C})^\bdd}. \]
In particular, $\Log_{\sigma}(Q) \in W(\mbb{C})^\bdd$, and thus by \cref{prop.exp-log-cong}-(1) we can exponentiate to obtain
\[ Q \equiv \Exp_{\sigma}(f_2) \mod [q^{-1/2}] \Lambda_{(W(\mbb{C})^\bdd}.\]

It remains to treat the case $n=0$. Note that, in this case, $\mu=-[H^0(Y)]=-1$, $\epsilon=1$, and $f_i=h_i$. In particular
\[  \Log_{\sigma}(Q)=  [Y]\Log_{\sigma}(1 + p\sum_{i \geq 1} h_i) - h_1. \]
Since $p\equiv [q^{-1}] \mod [q^{-2}]$, applying both parts of \cref{prop.exp-log-cong} we have 
\[ \Log_{\sigma}(1+p\sum_{i \geq 1} h_i) \equiv \Log_{\sigma}(1+[q^{-1}]\sum_{i \geq 1} h_i) \equiv [q^{-1}]\sum_{i \geq 1} h_i \mod [q^{-2}]. \]
Then, expanding $[Y]=[H^0(Y)]-[H^1(Y)]+[H^2(Y)]=[1]-[H^1(Y)]+[q]$ and using the bounds on  $[H^1(Y)]$ as in the case above, we find 
\[ \Log_{\sigma}(Q)=[Y]\Log_{\sigma}(1+p\sum_{i \geq 1} h_i) - h_1\equiv \sum_{i \geq  2} h_i \mod [q^{-1/2}]\Lambda_{W(\mbb{C})^\bdd}^\wedge. \]
In particular, $\Log_{\sigma}(Q)$ is in $W(\mbb{C})^\bdd$, and exponentiating and applying \cref{prop.exp-log-cong}-(1), we obtain
\[ Q \equiv \Exp_\sigma\left(\sum_{i \geq 2} h_i\right) \mod [q^{-1/2}]\Lambda_{W(\mbb{C})^\bdd}^\wedge. \]  
\end{proof}

\bibliographystyle{plain}
\bibliography{references, preprints}

\begin{thebibliography}{10}

\bibitem{AumonierDas.HomologicalStabilityForTheSpaceOfHypersurfacesWithMarkedPoints}
Alexis Aumonier and Ronno Das.
\newblock Homological stability for the space of hypersurfaces with marked points.
\newblock {\em ar{X}iv:2312.03355}, 2023.

\bibitem{BergstromDiaconuPetersenWesterland.HyperellipticCurvesTheScanningMapAndMomentsOfFamiliesOfQuadraticLFunctions}
Jonas Bergström, Adrian Diaconu, Dan Petersen, and Craig Westerland.
\newblock Hyperelliptic curves, the scanning map, and moments of families of quadratic l-functions.
\newblock {\em ar{X}iv:2302.07664}, 2024.

\bibitem{BertucciHowe.EquidistributionAndArithmeticLambdaDistributions}
Matthew Bertucci and Sean Howe.
\newblock Equidistribution and arithmetic $\lambda$-distributions.
\newblock {\em ar{X}iv:2505.24748}.

\bibitem{BhattScholze.TheProEtaleTopologyForSchemes}
Bhargav Bhatt and Peter Scholze.
\newblock The pro-\'etale topology for schemes.
\newblock {\em Ast\'erisque}, (369):99--201, 2015.

\bibitem{Bilu.MotivicEulerProductsAndMotivicHeightZetaFunctions}
Margaret Bilu.
\newblock Motivic {E}uler products and motivic height zeta functions.
\newblock {\em Mem. Amer. Math. Soc.}, 282(1396):v+185, 2023.

\bibitem{BiluDasHowe.SpecialValuesOfMotivicEulerProducts}
Margaret Bilu, Ronno Das, and Sean Howe.
\newblock Special values of motivic euler products.
\newblock {\em In preparation}.

\bibitem{BiluDasEtAl.ZetaStatisticsAndHadamardFunctions}
Margaret Bilu, Ronno Das, and Sean Howe.
\newblock Zeta statistics and {H}adamard functions.
\newblock {\em Adv. Math.}, 407:Paper No. 108556, 68, 2022.

\bibitem{BiluHowe.MotivicRandomVariables}
Margaret Bilu and Sean Howe.
\newblock Motivic random variables.
\newblock {\em In preparation}.

\bibitem{Deligne.LaConjectureDeWeilII}
Pierre Deligne.
\newblock La conjecture de {W}eil. {II}.
\newblock {\em Inst. Hautes \'Etudes Sci. Publ. Math.}, (52):137--252, 1980.

\bibitem{DiaconisShahshahani.OnTheEigenvaluesOfRandomMatrices}
Persi Diaconis and Mehrdad Shahshahani.
\newblock On the eigenvalues of random matrices.
\newblock volume 31A, pages 49--62. 1994.
\newblock Studies in applied probability.

\bibitem{DressSiebeneicher.TheBurnsideRingOfProfiniteGroupsAndTheWittVectorConstruction}
Andreas~W.M. Dress and Christian Siebeneicher.
\newblock The burnside ring of profinite groups and the witt vector construction.
\newblock {\em Advances in Mathematics}, 70(1):87--132, 1988.

\bibitem{GetzlerKapranov.ModularOperads}
E.~Getzler and M.~M. Kapranov.
\newblock Modular operads.
\newblock {\em Compositio Math.}, 110(1):65--126, 1998.

\bibitem{GuseinZadeLuengoEtAl.OnThePowerStructureOverTheGrothendieckRingOfVarietiesAndItsApplications}
S.~M. Guse{\u\i}n-Zade, I.~Luengo, and A.~Melle-{\`E}rnandez.
\newblock On the power structure over the {G}rothendieck ring of varieties and its applications.
\newblock {\em Tr. Mat. Inst. Steklova}, 258(Anal. i Osob. Ch. 1):58--69, 2007.

\bibitem{Hazewinkel.WittVectorsI}
Michiel Hazewinkel.
\newblock Witt vectors. {I}.
\newblock In {\em Handbook of algebra. {V}ol. 6}, volume~6 of {\em Handb. Algebr.}, pages 319--472. Elsevier/North-Holland, Amsterdam, 2009.

\bibitem{Howe.MotivicRandomVariablesAndRepresentationStabilityIIHypersurfaceSections}
Sean Howe.
\newblock Motivic random variables and representation stability {II}: {H}ypersurface sections.
\newblock {\em Adv. Math.}, 350:1267--1313, 2019.

\bibitem{Howe.MotivicRandomVariablesAndRepresentationStabilityIConfigurationSpaces}
Sean Howe.
\newblock Motivic random variables and representation stability {I}: {C}onfiguration spaces.
\newblock {\em Algebr. Geom. Topol.}, 20(6):3013--3045, 2020.

\bibitem{Howe.RandomMatrixStatisticsAndZeroesOfLFunctionsViaProbabilityInLambdaRings}
Sean Howe.
\newblock Random matrix statistics and zeroes of {L}-functions via probability in $\lambda$-rings.
\newblock {\em ar{X}iv:2412.19295}, 2024.

\bibitem{Howe.TheNegativeSigmaMomentGeneratingFunction}
Sean Howe.
\newblock The negative $\sigma$-moment generating function.
\newblock {\em ar{X}iv:2505.01205}, 2025.

\bibitem{Johansson.OnRandomMatricesFromTheCompactClassicalGroups}
Kurt Johansson.
\newblock On random matrices from the compact classical groups.
\newblock {\em Ann. of Math. (2)}, 145(3):519--545, 1997.

\bibitem{KatzSarnak.RandomMatricesFrobeniusEigenvaluesAndMonodromy}
Nicholas~M. Katz and Peter Sarnak.
\newblock {\em Random matrices, {F}robenius eigenvalues, and monodromy}, volume~45 of {\em American Mathematical Society Colloquium Publications}.
\newblock American Mathematical Society, Providence, RI, 1999.

\bibitem{KraftProcesi.ClassicalInvariantTheoryAPrimer}
Hanspeter Kraft and Claudio Procesi.
\newblock {\em Classical invariant theory – A primer}.
\newblock Course notes available at \url{https://dmi.unibas.ch/fileadmin/user_upload/dmi/Personen/Kraft_Hanspeter/Classical_Invariant_Theory.pdf}, 1996.

\bibitem{Laumon.TransformationDeFourierConstantsDEquationsFonctionellesEtConjectureDeWeil}
G.~Laumon.
\newblock Transformation de {F}ourier, constantes d'\'equations fonctionnelles et conjecture de {W}eil.
\newblock {\em Inst. Hautes \'Etudes Sci. Publ. Math.}, (65):131--210, 1987.

\bibitem{Macdonald.SymmetricFunctionsAndHallPolynomials}
I.~G. Macdonald.
\newblock {\em Symmetric functions and {H}all polynomials}.
\newblock Oxford Classic Texts in the Physical Sciences. The Clarendon Press, Oxford University Press, New York, second edition, 2015.
\newblock With contribution by A. V. Zelevinsky and a foreword by Richard Stanley.

\bibitem{Poonen.BertiniTheoremsOverFiniteFields}
Bjorn Poonen.
\newblock Bertini theorems over finite fields.
\newblock {\em Ann. of Math. (2)}, 160(3):1099--1127, 2004.

\bibitem{Ramachandran.ZetaFunctionsGrothendieckGroupsAndTheWittRing}
Niranjan Ramachandran.
\newblock Zeta functions, {G}rothendieck groups, and the {W}itt ring.
\newblock {\em Bull. Sci. Math.}, 139(6):599--627, 2015.

\bibitem{Stanley.EnumerativeCombinatoricsVol2}
Richard~P. Stanley.
\newblock {\em Enumerative combinatorics. {V}ol. 2}, volume~62 of {\em Cambridge Studies in Advanced Mathematics}.
\newblock Cambridge University Press, Cambridge, 1999.
\newblock With a foreword by Gian-Carlo Rota and appendix 1 by Sergey Fomin.

\bibitem{Weyl.TheClassicalGroupsTheirInvariantsAndRepresentations}
H.~Weyl.
\newblock {\em The Classical Groups: Their Invariants and Representations}.
\newblock Number no. 1, pt. 1 in Princeton Landmarks in Mathematics and Physics. Princeton University Press, 1946.

\end{thebibliography}

\end{document}